
\documentclass[a4paper, reqno, twoside, 12pt, dvipsnames]{amsart}
\usepackage{etex}
\usepackage{amsfonts,amssymb,graphics,amsthm,amsmath}
\usepackage{mathtools,thmtools}
\usepackage{latexsym}
\usepackage[2emode]{psfrag}
\usepackage{mathrsfs}
\usepackage[all]{xy}
\usepackage{enumerate}
\usepackage[inline]{enumitem}
\usepackage[latin1]{inputenc}
\usepackage{lscape}
\usepackage{a4wide}
\usepackage[backref=page]{hyperref}  %
\usepackage{version}
\usepackage{etoolbox}

\usepackage{lmodern}
\usepackage{microtype}
\usepackage{xcolor}

\usepackage{txfonts}

\usepackage[all]{pstricks}
\usepackage{pst-text,pst-node,pst-coil}
\usepackage{tikz}

\usepackage{cancel}
\usepackage[normalem]{ulem}
\usetikzlibrary{arrows,shapes,backgrounds}

\usepgflibrary{arrows}
\usetikzlibrary{topaths}
\usetikzlibrary{er}

\makeatletter
\patchcmd{\@setaddresses}{\indent}{\noindent}{}{}
\patchcmd{\@setaddresses}{\indent}{\noindent}{}{}
\patchcmd{\@setaddresses}{\indent}{\noindent}{}{}
\patchcmd{\@setaddresses}{\indent}{\noindent}{}{}
\makeatother

\makeatletter																																													
\DeclareMathSizes{12}{12}{7}{5}																																				
\makeatother																																													

\setcounter{MaxMatrixCols}{10}

\newtheorem{theorem}{Theorem}[section]
\newtheorem{proposition}[theorem]{Proposition}

\newtheorem{lemma}[theorem]{Lemma}
\newtheorem{corollary}[theorem]{Corollary}
\newtheorem*{theorem*}{Theorem}
\newtheorem*{proposition*}{Proposition}
\newtheorem{alphthm}{Theorem}

\newtheorem{alphprop}[alphthm]{Proposition}

\theoremstyle{definition}
\newtheorem{definition}[theorem]{Definition}

\newtheorem{example}[theorem]{Example}

\theoremstyle{remark}
\declaretheorem[name=Remark,qed={$\blacktriangle$},sibling=theorem]{remark}

\newtheoremstyle{questions}{}{}{\color{red}}{}{\color{blue}\bfseries}{}{ }{}
\theoremstyle{questions}

\newcommand{\tensor}[1]{\otimes_{#1}} 

\newcommand{\env}[1]{#1\tensor{}#1^{o}}
\newcommand{\Sf}[1]{\mathsf{#1}}

\newcommand{\rmod}[1]{\Sf{Mod}_{#1}}
\newcommand{\lmod}[1]{{}_{#1}\Sf{Mod}}
\newcommand{\bimod}[1]{{}_{#1}{\Sf{ Mod}}_{#1}}
\newcommand{\Bimod}[2]{{}_{#1}{\Sf{Mod}}_{#2}}

\newcommand{\bara}[1]{\overline{#1}}

\newcommand{\op}[1]{#1^{\mathrm{o}}} 

\newcommand{\End}[2]{\operatorname{End}_{#1}\left(#2\right)}

\newcommand{\lcomod}[1]{ {}^{#1}\mathsf{Comod}}
\newcommand{\rcomod}[1]{ \mathsf{Comod}{}^{#1}}

\newcommand{\Ae}{A^{\mathrm{e}}}
\newcommand{\Ao}{A^{\mathrm{o}}}
\newcommand{\LR}[1]{\left\{\underset{}{} #1 \right\}}

\newcommand{\B}[1]{\boldsymbol{#1}}
 
 \newcommand{\id}{\operatorname{Id}}


\newcommand{\Gg}{\mathscr{G}}
\newcommand{\Hh}{\mathscr{H}}

\newcommand{\Nn}{\mathscr{N}}
\newcommand{\Oo}{\mathscr{O}}

\newcommand{\cA}{{\mathcal A}}
\newcommand{\cB}{{\mathcal B}}
\newcommand{\cC}{{\mathcal C}}
\newcommand{\cD}{{\mathcal D}}

\newcommand{\cH}{{\mathcal H}}

\newcommand{\cK}{{\mathcal K}}
\newcommand{\cL}{{\mathcal L}}

\newcommand{\cP}{{\mathcal P}}

\newcommand{\cR}{{\mathcal R}}
\newcommand{\cS}{{\mathcal S}}
\newcommand{\cT}{{\mathcal T}}
\newcommand{\cU}{{\mathcal U}}
\newcommand{\cV}{{\mathcal V}}

\newcommand{\Sscript}[1]{#1} 
\newcommand{\due}[3]{{}_{#2} {#1} {}_{#3}\!\mathop{}}
\newcommand{\rcaction}[1]{\delta_{#1}} 


\newcommand{\ls}[1]{{}_{\Sscript{s}} {#1}}
\newcommand{\lt}[1]{{}_{\Sscript{t}} {#1}}
\newcommand{\rs}[1]{{#1}{}_{\Sscript{s}} }
\newcommand{\rt}[1]{{#1}{}_{\Sscript{t}}}
\newcommand{\tM}[1]{\lt{#1}}
\newcommand{\Mt}[1]{\rt{#1}}
\newcommand{\Mto}[1]{{#1}{}_{\Sscript{\op{t}}}}
\newcommand{\sM}[1]{\ls{{#1}}}
\newcommand{\sMt}[1]{{}_{\Sscript{s}} {#1}{}_{\Sscript{t}}}
\newcommand{\tMs}[1]{{}_{\Sscript{t}} {#1}{}_{\Sscript{s}}}
\newcommand{\sMs}[1]{{}_{\Sscript{s}} {#1}{}_{\Sscript{s}}}
\newcommand{\tMt}[1]{{}_{\Sscript{t}} {#1}{}_{\Sscript{t}}}
\newcommand{\sMto}[1]{{}_{\Sscript{s}} {#1}{}_{\Sscript{\op{t}}}}
\newcommand{\soMt}[1]{{}_{\Sscript{\op{s}}} {#1}{}_{\Sscript{t}}}
\newcommand{\tMso}[1]{{}_{\Sscript{t}} {#1}{}_{\Sscript{\op{s}}}}
\newcommand{\toMs}[1]{{}_{\Sscript{\op{t}}} {#1}{}_{\Sscript{s}}}
\newcommand{\NN}{\mathbb{N}}
\newcommand{\ZZ}{\mathbb{Z}}

\newcommand{\K}{\Bbbk}

\newcommand{\mf}[1]{\mathfrak{#1}} 

\newcommand{\coinv}[2]{{#1}^{{\sf co}{#2}}} 

\newcommand{\ot}{\otimes}
\newcommand{\tak}[1]{\times_{\Sscript{#1}}}
\newcommand{\wla}{\triangleright} 
\newcommand{\wra}{\triangleleft} 
\newcommand{\bla}{~{\raisebox{+1pt}{$\scriptstyle \blacktriangleright$}}~} 
\newcommand{\bra}{~{\raisebox{+1pt}{$\scriptstyle \blacktriangleleft$}}~} 

\newcommand{\cl}[1]{\overline{#1}}
\newcommand{\vectk}{\mathsf{Vect}_{\K}}

\newcommand{\LHopf}[2]{\prescript{#1}{#2}{\rm HopfMod}}

\newcommand{\crho}{\mathfrak{r}}
\newcommand{\clambda}{\mathfrak{l}}
\newcommand{\calpha}{\mathfrak{a}}

\newcommand{\img}{\operatorname{Im}}
\newcommand{\ad}{\operatorname{Ad}}
\newcommand{\calg}{\mathrm{CAlg}}
\newcommand{\aff}{\mathrm{Aff}}
\newcommand{\set}{\mathrm{Set}}
\newcommand{\iso}[1]{{#1}_{(i)}}
\newcommand{\lcoinv}[2]{{{}^{#2}{#1}}}
\newcommand{\rcoinv}[2]{{{#1}{}^{#2}}}

\newdir{ >}{{}*!/-10pt/@{>}}



\setlength{\marginparwidth}{2cm}
\usepackage[textsize=scriptsize]{todonotes}

\setcounter{tocdepth}{4}

\begin{document}
\allowdisplaybreaks

\title[]{Correspondence theorems for Hopf algebroids with applications to affine groupoids.
}

\author{Laiachi El Kaoutit}
\address{Universidad de Granada, Departamento de \'{A}lgebra and IMAG. Facultad de Ciencias. Fuente Nueva s/n. E810071 Granada, Spain}
\email{kaoutit@ugr.es}
\urladdr{\url{http://www.ugr.es/~kaoutit/}}

\author{Aryan Ghobadi}
\address{School of Mathematics, Queen Mary University of London, Mile End Road, E1 4NS, London, UK}
\email{a.ghobadi@qmul.ac.uk}
\urladdr{\url{https://sites.google.com/view/aghobadimath}}

\author{Paolo Saracco}
\address{D\'epartement de Math\'ematique, Universit\'e Libre de Bruxelles, Boulevard du Triomphe, B-1050 Brussels, Belgium.}
\email{paolo.saracco@ulb.be}
\urladdr{\url{https://sites.google.com/view/paolo-saracco}}
\urladdr{\url{https://paolo.saracco.web.ulb.be}}

\author{Joost Vercruysse}
\address{D\'epartement de Math\'ematique, Universit\'e Libre de Bruxelles, Boulevard du Triomphe, B-1050 Brussels, Belgium.}
\email{joost.vercruysse@ulb.be}
\urladdr{\url{http://joost.vercruysse.web.ulb.be}}

\date{\today}
\subjclass[2020]{Primary  ; Secondary }
\thanks{}

\begin{abstract}
We provide a correspondence between one-sided coideal subrings and one-sided ideal two-sided coideals in an arbitrary bialgebroid. We prove that, under some expected additional conditions, this correspondence becomes bijective for Hopf algebroids. As an application, we investigate normal Hopf ideals in commutative Hopf algebroids (affine groupoid schemes) in connection with the study of normal affine subgroupoids.
\end{abstract}

\keywords{Hopf algebroids; Normal ideals; Affine groupoids;  Normal  affine subgroupoids; Comodule algebras; Finite groupoids; Hopf algebroids of functions; Galois correspondence.}
\maketitle

\begin{small}
\tableofcontents
\end{small}


\pagestyle{headings}


\section*{Introduction}

\subsection*{Motivations and overviews}
One of the fundamental results in the theory of affine groups states that the category of affine abelian groups is an abelian category, where epimorphisms are faithfully flat morphisms and monomorphism are closed immersions \cite[III, \S3, n$^\circ$7, 7.4 Corollaire, page 355]{DemGab:GATIGAGGC}.  A purely algebraic proof of this theorem was given by M.~Takeuchi in \cite[Corollary 4.16]{Takeuchi:1972}, which asserts, in algebraic terms, that the category of commutative and cocommutative Hopf algebras over a field $\Bbbk$ is an abelian category, hereby generalizing a well-known result from Grothendieck, who proved the same result under the additional condition of finite dimensionality. The main ingredient in Takeuchi's proof is a Galois-type one-to-one correspondence between all sub-Hopf algebras and all normal Hopf ideals of a given commutative Hopf algebra. This bijection associates any sub-Hopf algebra with the ideal generated by the kernel of its counit. It is noteworthy that injectivity follows from the fact that any Hopf algebra is faithfully flat over its arbitrary sub-Hopf algebras \cite[Theorem 3.1]{Takeuchi:1972}. The same correspondence can be found in \cite[\S 4, page 201]{Abe}, although with a slightly different proof. This correspondence does not rely on commutativity or cocommutativity of the Hopf algebras, and allowed for example to an extension of Takeuchi's result showing that the category of cocommutative (but not necessarily commutative) Hopf algebras is semi-abelian \cite{GSV}

The aim of this research is to extend the aforementioned correspondence to the ``multi-object'' setting, that is, from (affine) groups to (affine) groupoids. In Hopf algebraic terms, groupoids can be described as weak Hopf algebras or Hopf algebroids.
We will therefore firstly provide a Galois correspondence between (certain classes of) Hopf ideals and sub Hopf algebroids of general Hopf algebroids (in fact, even bialgebroids) and specializing to the commutative case, we obtain results for affine groupoids. Similar to the classical case explained above, the motivation of this work is that it could lead ultimately to a better understanding of the exactness properties of the category of (abelian) affine $\Bbbk$-groupoids. 
Furthermore, our results have applications on a wide verity of topics, since the interest in Hopf algebroids is widespread in different branches of mathematics: from algebraic topology, algebraic geometry and differential geometry (see \cite{Ravenel:1986, Deligne:1990, LaiachiPaolo-big}), to the study of linear differential equations (see \cite{LaiachiPaolo-Dlin, LaiachiGomez}), noncommutative differential calculus \cite{ghobadi2020} and in the study of the fundamental groupoid of quivers \cite{ghobadi2021isotopy}, to mention only a few.

\subsection*{Description of the main results}

Recall that a \emph{left Hopf algebroid} over $\Bbbk$ (in the sense of Schauenburg \cite{Schauenburg}) is a pair $(A, \cH)$ of $\Bbbk$-algebras together with a family of structure maps that make of $\cH$ an $\env{A}$-ring and an $A$-coring in a compatible way (see Definition \ref{def:bialgebroid}) and whose Hopf-Galois map (described in equation \eqref{Eq:betamap} below) is bijective. 
The so-called \emph{translation map} $\gamma:\cH\to \cH\times_{A^o}\cH$ is obtained from the inverse of the Hopf-Galois map and it is explicitly defined in equation \eqref{Eq:gammamap} below.  
For a ring $B$ over which a Hopf algebroid $(A, \cH)$ coacts, we will denote the subring of coinvariant elements by $\coinv{B}{\cH}$, and for any subset $\cK$ of $\cH$, the symbol $\cK^{+}$ stands for the intersection of the kernel of the counit of $\cH$ with $\cK$. Finally, recall that a functor $\cL \colon \cC \to \cD$ admitting a right adjoint $\cR$ is said to be comonadic if the comparison functor $\cK \colon \cC \to \cD^{\cL\cR}$ to the Eilenberg-Moore category of coalgebras for the comonad $\cL\cR$ is an equivalence.

With these notations at hand, our first main result in the general context of noncommutative Hopf algebroids, is the following.
The functor $\cH \tensor{B} -$ in the statement below denotes the extension of scalars functor ${_B{\sf Mod}}\to{_{\cH}{\sf Mod}}$.

\begin{alphthm}[{Theorem \ref{mainthm:Galois}}]\label{thmA}
Let $(A,\cH)$ be a left Hopf algebroid over $\Bbbk$ such that $\sM{\cH} = {}_{{A\tensor{}\op{1}}}\cH$ is $A$-flat. We have a well-defined inclusion-preserving bijective correspondence
\[
\begin{gathered}
\xymatrix @R=0pt{
{\left\{ \begin{array}{c} \text{left ideal } 2\text{-sided coideals } I \text{ in } \cH \\ \text{such that } \cH \tensor{B}- \text{ is comonadic,} \\ \text{where } B \coloneqq \coinv{\cH}{\frac{\cH}{I}}
\end{array} \right\}} \ar@{<->}[r] & {\left\{ \begin{array}{c} \text{right } \cH\text{-comodule } \op{A}\text{-subrings } B \text{ of } \cH \\   \text{via } t \text{ such that } \cH \tensor{B} - \text{ is comonadic} \\ \text{and } \gamma(B)\subseteq B \tensor{\op{A}}\cH \end{array} \right\}} \\
I \ar@{|->}[r] & \coinv{\cH}{\frac{\cH}{I}} \\
\cH B^+ & B \ar@{|->}[l]
}
\end{gathered}
\]
\end{alphthm}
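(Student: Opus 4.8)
The plan is to construct the two assignments explicitly and verify they are mutually inverse, reducing as much as possible to a descent/comonadicity dictionary for the coaction. First I would set up the right-hand assignment: given a left ideal two-sided coideal $I$ in $\cH$, the quotient $\cH/I$ is an $A$-coring and $\cH\to\cH/I$ is a morphism of corings, so $\cH/I$ becomes a right $\cH/I$-comodule, hence a right $\cH$-comodule via corestriction. One then shows that $B\coloneqq\coinv{\cH}{(\cH/I)}$ is an $\op{A}$-subring of $\cH$ via the target map $t$ (the coinvariants form a subalgebra because the coaction on $\cH$ is multiplicative and $t(A)$ lands in the coinvariants), and, crucially, that $\gamma(B)\subseteq B\tensor{\op{A}}\cH$. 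This last inclusion is where the left Hopf (i.e. bijectivity of the Hopf--Galois map $\beta$) structure enters: for a coinvariant $b$, applying $\gamma$ and using the defining properties of the translation map (the identities $\gamma$ satisfies with respect to comultiplication and counit, recorded earlier via equations \eqref{Eq:betamap}--\eqref{Eq:gammamap}) together with the fact that $b\cdot 1=1\cdot b$ in $\cH/I$ forces the first leg of $\gamma(b)$ to be coinvariant. The $A$-flatness of $\sM{\cH}$ is used throughout to identify $\coinv{\cH}{(\cH/I)}$ with an actual equalizer/kernel and to commute coinvariants with the relevant tensor products.

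Next I would treat the left-hand assignment: given a right $\cH$-comodule $\op{A}$-subring $B$ with $\gamma(B)\subseteq B\tensor{\op{A}}\cH$, form the left ideal $\cH B^+$ where $B^+=B\cap\ker\varepsilon$. One checks $\cH B^+$ is a two-sided coideal: it is a coideal because $\Delta(b^+)\in b^+\tensor{A}\cH + \cH\tensor{A}b^+$ modulo the image of $B^+$ under the unit maps (this is a standard computation using $\varepsilon(b^+)=0$ and that $B$ is a subcoring-compatible subring), and it is stable under left multiplication by construction; the condition $\gamma(B)\subseteq B\tensor{\op{A}}\cH$ is exactly what is needed to see that $\cH B^+$ is also closed appropriately so that $\cH/\cH B^+$ inherits a coring structure for which $\cH\to\cH/\cH B^+$ is a coring map. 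The comonadicity condition on $\cH\tensor{B}-$ is carried along verbatim in both directions.

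The two roundtrips are then: (i) starting from $B$, show $\coinv{\cH}{(\cH/\cH B^+)} = B$, and (ii) starting from $I$, show $\cH B^+ = I$ where $B=\coinv{\cH}{(\cH/I)}$. For (i), the inclusion $B\subseteq\coinv{\cH}{(\cH/\cH B^+)}$ is formal (elements of $B$ are coinvariant modulo $\cH B^+$ since their coaction differs from the trivial one by terms in $B^+$); the reverse inclusion is precisely where comonadicity of $\cH\tensor{B}-$ is indispensable — it gives, via Beck's theorem, that $\cH/\cH B^+$ together with its coaction is the cofree comodule on $B$, so its coinvariants are exactly $B$. For (ii), one inclusion $\cH B^+\subseteq I$ follows because $B=\coinv{\cH}{(\cH/I)}$ maps into $A\cdot 1$ modulo... more precisely $B^+\subseteq I$ (a coinvariant in $\ker\varepsilon$ must lie in $I$ by unwinding the coaction $\cH\to\cH/I$), hence $\cH B^+\subseteq I$ since $I$ is a left ideal; the reverse inclusion $I\subseteq\cH B^+$ again uses comonadicity, identifying $\cH\to\cH/\cH B^+\to\cH/I$ and concluding the last map is an isomorphism by faithfully-flat-descent-type reasoning (comonadicity of $\cH\tensor{B}-$ implies $\cH$ is faithfully flat over $B$, so a coring quotient with the same coinvariants is the same quotient). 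Preservation of inclusions in both directions is immediate from the definitions.

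The main obstacle I expect is step (ii)'s inclusion $I\subseteq\cH B^+$, i.e. recovering the full ideal $I$ from its coinvariants: one only has $B^+\subseteq I$ on the nose, and upgrading this to $\cH B^+=I$ genuinely requires the comonadicity hypothesis to play the role that faithful flatness of a Hopf algebra over its sub-Hopf-algebras plays in Takeuchi's classical argument. A secondary delicate point is the bookkeeping of the $\op{A}$-bimodule structures (source versus target actions, the use of $\cH\tensor{\op{A}}\cH$ versus $\cH\times_{\op{A}}\cH$) when verifying $\gamma(B)\subseteq B\tensor{\op{A}}\cH$ is both well-defined and correct; this is where the precise conventions fixed in Definition \ref{def:bialgebroid} and equations \eqref{Eq:betamap}, \eqref{Eq:gammamap} must be invoked carefully rather than routinely.
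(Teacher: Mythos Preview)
Your outline has the right overall shape (set up $\Phi$ and $\Psi$, prove both roundtrips), but there are genuine gaps in the mechanism, and one explicit error.

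First, you misplace the role of the hypothesis $\gamma(B)\subseteq B\tensor{\op{A}}\cH$. It is \emph{not} needed to show that $\cH B^+$ is a two-sided coideal; that holds for any right $\cH$-comodule $\op{A}$-subring via $t$ (Proposition~\ref{prop:Phiright}), with no Hopf condition at all. Where $\gamma(B)\subseteq B\tensor{\op{A}}\cH$ actually enters is a lemma you never state: under that hypothesis the canonical map
\[
\xi\colon \cH\tensor{B}\cH \longrightarrow \frac{\cH}{\cH B^+}\tensor{A}\cH,\qquad x\tensor{B}y\longmapsto \sum \pi(x_1)\tensor{A}x_2y,
\]
is an \emph{isomorphism} (Lemma~\ref{lem:xiright}), the inverse being built from $\beta^{-1}$, and this $\xi$ is the technical core of both roundtrips.

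For roundtrip (i), $\Psi\Phi(B)=B$, the paper does not argue via ``cofree comodule on $B$'' from Beck. Comonadicity gives purity of $\cH$ over $B$ (Janelidze--Tholen), hence $B$ is the equalizer of $\cH\rightrightarrows \cH\tensor{B}\cH$; then $\xi$ identifies this with the equalizer computing $\coinv{\cH}{(\cH/\cH B^+)}$. Your Beck sketch silently assumes the comonads $\cH\tensor{B}(-)$ and $(\cH/\cH B^+)\tensor{A}(-)$ agree, which is exactly the content of $\xi$ being invertible.

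For roundtrip (ii), $\Phi\Psi(I)=I$, your parenthetical ``comonadicity of $\cH\tensor{B}-$ implies $\cH$ is faithfully flat over $B$'' is false; the implication goes the other way. What comonadicity does give is purity, and the paper uses purity to first verify $\gamma(B)\subseteq B\tensor{\op{A}}\cH$ (so $\xi$ is iso), and then invokes the structure theorem for relative Hopf modules (Theorem~\ref{thm:super!}): comonadicity plus $\xi$ iso yield an equivalence $\lmod{B}\simeq \LHopf{\cH/I}{\cH}$, whose counit at $\cH/I$ is precisely $\psi\colon\cH/\cH B^+\to\cH/I$, hence an isomorphism. Your ``a coring quotient with the same coinvariants is the same quotient'' is the right slogan, but the justification is this equivalence, not faithful flatness.
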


When specialized to the commutative setting, it induces our second main theorem.

\begin{alphthm}[Theorem \ref{thm:forsecisiamo}]\label{thmB}
Let $(A,\cH)$ be a commutative Hopf algebroid such that $\sM{\cH}$ is $A$-flat. Then we have a well-defined inclusion-preserving bijective correspondence
\[
\begin{gathered}
\xymatrix @R=0pt{
{\left\{ \begin{array}{c}  \text{normal Hopf ideals } I \text{ in } \cH \text{ such that} \\ \cH \text{ is pure over } \coinv{\cH}{\frac{\cH}{I}} \end{array} \right\}} \ar@{<->}[r] & {\left\{ \begin{array}{c} \text{sub-Hopf algebroids } \cK \subseteq \cH \text{ such that } \\ \cH \text{ is pure over } \cK  \end{array} \right\}} \\
I \ar@{|->}[r] & \coinv{\cH}{\frac{\cH}{I}} \\
\cH \cK^+ & \cK \ar@{|->}[l]
}
\end{gathered}
\]
\end{alphthm}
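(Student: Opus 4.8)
The strategy is to deduce Theorem B from Theorem A by translating the conditions appearing in the noncommutative statement into their commutative counterparts and checking that the two sides of the bijection match under this dictionary. Since $(A,\cH)$ is commutative, the algebra $\op{A}$ coincides with $A$, the base ring $\env{A} = A\tensor{}A$ is commutative, and ``right $\cH$-comodule $\op{A}$-subring via $t$'' simply means a subalgebra $\cK\subseteq\cH$ that contains the image of $t$ (equivalently of $s$, since one still has to see both structure maps land inside, but the symmetry of the commutative Hopf algebroid axioms will give this) and is a subcomodule for the right coaction. The first task is therefore a careful but essentially formal comparison: show that a subalgebra $\cK\subseteq\cH$ is a \emph{sub-Hopf algebroid} (in the sense of being closed under source, target, counit, comultiplication, and the translation/antipode map) precisely when it is a right $\cH$-comodule $A$-subring with $\gamma(\cK)\subseteq\cK\tensor{A}\cH$. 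Closure of $\cK$ under $\gamma$ is what upgrades a mere subcoring-subring to a structure on which the Hopf-Galois map restricts to a bijection, hence a genuine sub-Hopf algebroid; conversely a sub-Hopf algebroid automatically satisfies $\gamma(\cK)\subseteq\cK\tensor{A}\cH$ because its own translation map is the restriction of $\gamma$.

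Next I would handle the purity/comonadicity dictionary. The condition ``$\cH$ is pure over $\cK$'' (resp.\ over $B\coloneqq\coinv{\cH}{\cH/I}$) must be shown to be equivalent, in the commutative setting, to the comonadicity of $\cH\tensor{B}-$ (resp.\ $\cH\tensor{\cK}-$). This is where I expect to invoke a Beck-type / descent criterion: for a ring map $\cK\to\cH$ the extension-of-scalars functor is comonadic iff $\cH$ is pure as a $\cK$-module (faithfully flat descent is the familiar case, but purity suffices for the comonadicity half of the correspondence — this is the ``expected additional condition'' alluded to in the abstract). I would cite or reprove the relevant descent statement, then note that in Theorem A's left-hand set the hypothesis ``$\cH\tensor{B}-$ is comonadic'' with $B=\coinv{\cH}{\cH/I}$ becomes exactly ``$\cH$ is pure over $\coinv{\cH}{\cH/I}$,'' and in the right-hand set ``$\cH\tensor{\cK}-$ comonadic'' becomes ``$\cH$ is pure over $\cK$.'' Simultaneously I must check that, for a commutative Hopf algebroid, a \emph{left ideal two-sided coideal} $I$ with $\cH/I$ inheriting a Hopf-algebroid-quotient structure is the same thing as a \emph{normal Hopf ideal}: normality is precisely the extra demand that $I$ be a two-sided ideal stable under the antipode (equivalently that $\cH/I$ be again a commutative Hopf algebroid and the projection a morphism of such), and I would verify that on the commutative left-ideal-coideal $I$ the conditions forcing $\cH/I$ to be a Hopf algebroid coincide with this notion of normality.

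Finally, with the dictionary in place, the bijection itself transports verbatim: $I\mapsto\coinv{\cH}{\cH/I}$ and $\cK\mapsto\cH\cK^+$ are literally the same assignments as in Theorem A, so inclusion-preservation and mutual inverseness are inherited. The one point needing a small commutative-specific argument is that the output $\coinv{\cH}{\cH/I}$ of a normal Hopf ideal is not merely a comodule subring but a genuine \emph{sub-Hopf algebroid} — i.e.\ that it is $\gamma$-stable — and dually that $\cH\cK^+$ is not just a left ideal two-sided coideal but a \emph{normal} Hopf ideal; both follow by combining the structure available from Theorem A with the extra closure we established in the first step, using commutativity to see that one-sided hypotheses self-improve to two-sided ones. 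I expect the main obstacle to be the purity-versus-comonadicity equivalence together with showing it behaves well under passing back and forth across the correspondence (in particular that purity of $\cH$ over $B$ is equivalent to purity of $\cH$ over $\cH B^+$-coinvariants, so the two hypotheses in Theorem B are genuinely matched by the bijection and not merely implied in one direction); everything else is a matter of unwinding the commutative simplifications of the axioms in Definition \ref{def:bialgebroid}.
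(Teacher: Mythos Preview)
Your overall strategy---deduce Theorem~B from Theorem~A via a commutative dictionary---matches the paper's approach, and your handling of the purity/comonadicity translation is correct (the paper invokes \cite[Corollary~5.4]{JanelidzeTholen} for exactly this). But two of your dictionary entries are wrong, and they hide the real work.

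First, your identification ``sub-Hopf algebroid $\iff$ right $\cH$-comodule $A$-subring with $\gamma(\cK)\subseteq\cK\tensor{A}\cH$'' is false. A right comodule subring only gives $\Delta(\cK)\subseteq\cK\tensor{A}\cH$, and the $\gamma$-condition (with $\cH$, not $\cK$, in the second factor) does not upgrade this to $\Delta(\cK)\subseteq\cK\tensor{A}\cK$. In the paper this upgrade is the content of Proposition~\ref{prop:subHalgd}: one first uses the \emph{normality} of $I$ (specifically Lemmas~\ref{lema:proHJ} and~\ref{lem:IHisHI}) to show that $\cK=\coinv{\cH}{\cH/I}$ is simultaneously a left and a right $\cH$-comodule subalgebra, hence $\Delta(\cK)\subseteq(\cK\tensor{A}\cH)\cap(\cH\tensor{A}\cK)$; then a separate flatness-and-pullback argument shows this intersection equals $\cK\tensor{A}\cK$. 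None of this is automatic from Theorem~A's right-hand side.

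Second, you have misidentified ``normal Hopf ideal.'' In this paper (Definition~\ref{def:normal}) it is \emph{not} merely a two-sided antipode-stable coideal ideal; it must additionally satisfy $\langle s-t\rangle\subseteq I$ and an adjoint-coaction stability condition \ref{item:NI2} on $\iso{I}$. Verifying that $\cH\cK^+$ meets these extra conditions is Proposition~\ref{prop:nHideal}, whose proof of \ref{item:NI2} is a genuine computation with the adjoint coaction and the isomorphism $\cH/\cH\cK^+\tensor{A}\cH\cong\cH\tensor{\cK}{_{s\varepsilon}\cH}$---it is not a formal consequence of $\cK$ being $\cS$-stable. So your plan, as written, skips the two substantive commutative-specific steps (Propositions~\ref{prop:nHideal} and~\ref{prop:subHalgd}) that carry the theorem beyond Proposition~\ref{prop:HopfNonGalois}.
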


Given a commutative $\Bbbk$-algebra $A$, we denote by $\Gg_A$ the associated $\Bbbk$-functor, that is, the functor from all commutative $\Bbbk$-algebras to sets that sends any algebra $C$ the set of all algebra maps from $A$ to $C$.  In this way, a commutative Hopf algebroid $(A,\cH)$ gives rise to the pair of $\Bbbk$-functors $(\Gg_A, \Gg_{\cH})$ which form a presheaf of groupoids (i.e., an affine $\Bbbk$-groupoid scheme). 
If, as a working terminology, we say that a subgroupoid $\left(\Gg_A,\Gg_{\frac{\cH}{I}}\right)$ of a given groupoid $\big(\Gg_A,\Gg_\cH\big)$ is pure whenever the extension $\coinv{\cH}{\frac{\cH}{I}} \subseteq \cH$ is pure, then Theorem \ref{thmB} can be rephrased as follows.

\begin{alphprop}\label{propC}
Let $(A,\cH)$ be a commutative Hopf algebroid such that $\sM{\cH}$ is $A$-flat. Then we have an inclusion-preserving bijective correspondence
\[
\begin{gathered}
\xymatrix @R=0pt{
{\left\{ \begin{array}{c}  \text{pure normal affine subgroupoids} \\  \left(\Gg_A,\Gg_{\frac{\cH}{I}}\right) \text{ of } \big(\Gg_A,\Gg_\cH\big) \end{array} \right\}} \ar@{<->}[r] & {\left\{ \begin{array}{c} \text{sub-Hopf algebroids } (A,\cK) \text{ of } (A,\cH) \\ \text{ such that } \cK \subseteq \cH \text{ is pure}  \end{array} \right\}} \\
\left(\Gg_A,\Gg_{\frac{\cH}{I}}\right) \ar@{|->}[r] & \coinv{\cH}{\frac{\cH}{I}} \\
\left(\Gg_A,\Gg_{\frac{\cH}{\cH\cK^+}}\right) & \cK \ar@{|->}[l]
}
\end{gathered}
\]
\end{alphprop}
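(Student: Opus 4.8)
The plan is to deduce Proposition~\ref{propC} from Theorem~\ref{thmB} by transporting the latter along the (contravariant) functor of points $\Gg_{(-)}$. Recall that $A \mapsto \Gg_A$ realizes the category of commutative $\Bbbk$-algebras as (the opposite of) a full subcategory of $\Bbbk$-functors, and that by construction this functor turns the structure maps of a commutative Hopf algebroid $(A,\cH)$ into the structure maps of the affine $\Bbbk$-groupoid scheme $(\Gg_A,\Gg_\cH)$. So the only thing that has to be done is to identify, along $\Gg_{(-)}$, the objects appearing in the left-hand column of Theorem~\ref{thmB} with the pure normal affine subgroupoids $\big(\Gg_A,\Gg_{\cH/I}\big)$ of $\big(\Gg_A,\Gg_\cH\big)$; the right-hand column is literally unchanged.

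First I would record the elementary dictionary. A surjection of commutative $\Bbbk$-algebras $\cH\twoheadrightarrow \cH/I$ induces a closed immersion $\Gg_{\cH/I}\hookrightarrow \Gg_\cH$, and conversely every closed subscheme of $\Gg_\cH$ is $\Gg_{\cH/I}$ for a unique ideal $I\subseteq\cH$. If, in addition, this closed immersion is a morphism of $\Bbbk$-groupoids over $\Gg_A$, i.e. it is compatible with source, target, unit, multiplication and inversion, then these compatibilities dualize (via the Yoneda lemma, testing on the universal point $\mathrm{id}\colon\cH\to\cH$) to the statement that $I$ is a Hopf ideal of $\cH$, and conversely. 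Hence $I\mapsto\big(\Gg_A,\Gg_{\cH/I}\big)$ is an inclusion-reversing\,/\,inclusion-preserving (in the appropriate bookkeeping) bijection between Hopf ideals of $\cH$ and wide affine subgroupoids of $\big(\Gg_A,\Gg_\cH\big)$.

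The heart of the matter is to match ``normal'' with ``normal''. I would define a wide affine subgroupoid $\big(\Gg_A,\Gg_{\cH/I}\big)$ of $\big(\Gg_A,\Gg_\cH\big)$ to be normal when, for every commutative $\Bbbk$-algebra $C$, the subgroupoid $\Gg_{\cH/I}(C)\subseteq\Gg_\cH(C)$ is normal in the usual, discrete sense (it contains the full isotropy prescribed by its base and is stable under conjugation by arrows of $\Gg_\cH(C)$). The key lemma is then that this holds for all $C$ if and only if $I$ is a normal Hopf ideal of $\cH$ as in Theorem~\ref{thmB}: the ``if'' direction is obtained by pushing the algebraic normality conditions forward along an arbitrary algebra map $\cH\to C$, and the ``only if'' direction follows by the Yoneda lemma, rewriting the normality diagrams for $\Gg_{\cH/I}\subseteq\Gg_\cH$ as commutative diagrams of $\Bbbk$-algebra maps and dualizing them. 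I expect this normality lemma to be the main obstacle: one must pin down the correct base-point-free functorial notion of normal affine subgroupoid and verify its equivalence with the ring-theoretic notion used earlier, keeping careful track of the role of source/target and of the isotropy subgroup schemes.

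Finally, the purity hypotheses on the two sides match by design: the working terminology introduced before the statement declares $\big(\Gg_A,\Gg_{\cH/I}\big)$ to be pure precisely when $\coinv{\cH}{\frac{\cH}{I}}\subseteq\cH$ is pure, while on the right we retain the condition that $\cK\subseteq\cH$ is pure, exactly as in Theorem~\ref{thmB}. Composing the dictionary $I\leftrightarrow\big(\Gg_A,\Gg_{\cH/I}\big)$ with the bijection of Theorem~\ref{thmB} then yields the asserted inclusion-preserving bijective correspondence, with $\big(\Gg_A,\Gg_{\cH/I}\big)\mapsto\coinv{\cH}{\frac{\cH}{I}}$ and $\cK\mapsto\big(\Gg_A,\Gg_{\cH/\cH\cK^+}\big)$; since $\Gg_{(-)}$ and the passage to quotients are (contra)functorial, inclusions are preserved as claimed.
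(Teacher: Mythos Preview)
Your approach is essentially the same as the paper's: Proposition~\ref{propC} is presented there as a direct rephrasing of Theorem~\ref{thmB} via the functor of points, and the ``normality lemma'' you identify as the main obstacle is exactly Proposition~\ref{prop:cociente}, which establishes the bijection between normal Hopf ideals and normal affine subgroupoids. The purity condition matches by definition, as you correctly note, since the paper introduces ``pure normal affine subgroupoid'' as working terminology immediately before stating the proposition.
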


A concrete application of the above to the Hopf algebroid of functions on a finite groupoid (i.e., with a finite set of arrows) is detailed in Example \ref{ex:finitegroupoidexample}, where we show that there is a bijective correspondence between normal subgroupoids of a finite groupoid and pure sub-Hopf algebroids of the associated Hopf algebroid of functions.

As a consequence of our main theorem, we have the following remarkable result.

\begin{alphprop}[Corollaries \ref{cor:Takeuchi3.12} and \ref{cor:serve?}]\label{propD}
Let $\K$ be an algebraically closed field and let $(A,\cH)$ be a commutative Hopf algebroid over $\Bbbk$ such that $\sM{\cH}$ is $A$-flat. 
\begin{enumerate}[label=(\alph*),ref=\emph{(\alph*)}]
\item If $\big(\Gg_A(\K),\Gg_{\cH/I}(\K)\big)$ is a normal subgroupoid of $\big(\Gg_A(\K),\Gg_\cH(\K)\big)$ such that $\cH$ is faithfully flat over $\coinv{\cH}{\frac{\cH}{I}}$, then 
\[\Gg_\cH(\K) / \Gg_{\cH/I}(\K) \ \cong \ \Gg_{\coinv{\cH}{({\cH}/{I})}}(\K).\]
\item\label{item:PropCb} If $(A,\cK)$ is a sub-Hopf algebroid of the commutative Hopf algebroid $(A,\cH)$ such that $\cH$ is faithfully flat over $\cK$, then 
\[\Gg_\cH(\K)/\Gg_{\cH/\cH\cK^+}(\K) \ \cong \ \Gg_{\cK}(\K).\]
\end{enumerate}
\end{alphprop}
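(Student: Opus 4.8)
The plan is to read both statements off the Galois correspondence of Theorem~\ref{thm:forsecisiamo} after passing to the functor of points, reducing part~\ref{item:PropCb} to part~(a). Since faithfully flat ring extensions are pure, the standing hypotheses make Theorem~\ref{thm:forsecisiamo} applicable in each case. For \ref{item:PropCb}, given a sub-Hopf algebroid $(A,\cK)\subseteq(A,\cH)$ with $\cH$ faithfully flat (hence pure) over $\cK$, Theorem~\ref{thm:forsecisiamo} produces the normal Hopf ideal $I\coloneqq\cH\cK^{+}$ and, by bijectivity of the correspondence, identifies $\coinv{\cH}{\cH/I}=\cK$, over which $\cH$ is faithfully flat; since a normal Hopf ideal cuts out a normal affine subgroupoid, hence a normal subgroupoid on $\K$-points, statement~\ref{item:PropCb} is exactly part~(a) applied to this $I$ (for then $\cH/\cH\cK^{+}=\cH/I$ and $\coinv{\cH}{\cH/I}=\cK$). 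So it remains to prove~(a).

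Write $B\coloneqq\coinv{\cH}{\cH/I}$. By hypothesis $\Gg_{\cH/I}$ is normal over $\K$ and $\cH$ is faithfully flat, hence pure, over $B$, so Theorem~\ref{thm:forsecisiamo} tells us that $(A,B)$ is a sub-Hopf algebroid of $(A,\cH)$ with $I=\cH B^{+}$ and $\cH$ faithfully flat over $B$. The inclusion $B\hookrightarrow\cH$ is a morphism of Hopf algebroids over $A$, so it induces a morphism of affine $\K$-groupoids $q\colon\Gg_{\cH}\to\Gg_{B}$ which is the identity on the common base $\Gg_{A}$. Its kernel, that is, the fibre product $\Gg_{\cH}\times_{\Gg_{B}}\Gg_{A}$ of $q$ with the unit section $\Gg_{A}\to\Gg_{B}$, is represented by $\cH\otimes_{B}A=\cH/\cH B^{+}=\cH/I$; concretely, on $\K$-points an arrow $g\colon\cH\to\K$ lands in an identity of $\Gg_{B}(\K)$ iff $g$ kills $B^{+}$, iff $g$ kills $\cH B^{+}=I$, iff $g$ factors through $\cH\twoheadrightarrow\cH/I$. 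Thus $\ker q(\K)=\Gg_{\cH/I}(\K)$.

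Because $q$ is a functor that is the identity on objects, two parallel arrows of $\Gg_{\cH}(\K)$ have the same image under $q(\K)$ exactly when they differ by an arrow of $\ker q(\K)=\Gg_{\cH/I}(\K)$; invoking normality of $\Gg_{\cH/I}(\K)$, the quotient groupoid is well defined and $q(\K)$ factors as an injective morphism $\Gg_{\cH}(\K)/\Gg_{\cH/I}(\K)\hookrightarrow\Gg_{B}(\K)$. To finish it suffices to show that $q(\K)$ is surjective on arrows. Given an arrow $\beta\colon B\to\K$ of $\Gg_{B}(\K)$, the arrows of $\Gg_{\cH}(\K)$ restricting to $\beta$ are the $\K$-algebra maps out of $\cH\otimes_{B}\K$, where scalars are extended along $\beta$; faithful flatness of $\cH$ over $B$ forces $\cH\otimes_{B}\K\neq 0$, and as $\K$ is algebraically closed this nonzero commutative $\K$-algebra admits a $\K$-point by the Nullstellensatz, which is the sought lift of $\beta$. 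Hence $q(\K)$ is surjective, and the induced map $\Gg_{\cH}(\K)/\Gg_{\cH/I}(\K)\to\Gg_{B}(\K)=\Gg_{\coinv{\cH}{\cH/I}}(\K)$ is an isomorphism of groupoids.

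The delicate point is precisely this surjectivity on $\K$-rational points: faithful flatness alone only yields surjectivity on prime spectra, and a point of $\operatorname{Spec}(\cH)$ lying over a $\K$-point of $B$ may have residue field strictly larger than $\K$ (already $\K\hookrightarrow\K(t)$ is faithfully flat with empty fibre over the unique $\K$-point), so one genuinely needs $\K$ algebraically closed together with the Nullstellensatz --- which presupposes that the coordinate algebras involved are of finite type over $\K$, as is automatic for instance for finite groupoids --- or, absent finiteness, a faithfully flat descent argument controlling the relevant non-abelian $H^{1}$. The remaining ingredients --- the identification of $\ker q$ and the bookkeeping of sources, targets and composability inside the quotient groupoid --- are routine once Theorem~\ref{thm:forsecisiamo} is available.
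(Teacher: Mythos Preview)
Your overall architecture matches the paper's: the injectivity half is precisely the content of Proposition~\ref{prop:serve?} and Corollary~\ref{cor:injectivity} (identifying $\ker\Phi_\K$ with $\Gg_{\cH/I}(\K)$), and your reduction of part~(b) to part~(a) via Theorem~\ref{thm:forsecisiamo} is fine, if slightly different in packaging from the paper, which proves both parts directly from the common ingredients.

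The genuine gap is exactly the one you yourself flag in the final paragraph. Your surjectivity argument applies the Nullstellensatz to the nonzero algebra $\cH\otimes_B\K$, but Hilbert's Nullstellensatz requires finite generation over $\K$, and nothing in the hypotheses guarantees that $\cH$ (or $\cH\otimes_B\K$) is of finite type. Your own counterexample $\K\hookrightarrow\K(t)$ shows that ``nonzero commutative $\K$-algebra over an algebraically closed field'' does not imply existence of a $\K$-point. Acknowledging the issue and gesturing at ``a faithfully flat descent argument controlling the relevant non-abelian $H^1$'' is not a proof; as written, your argument establishes the result only under an unstated finiteness assumption.

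The paper closes this gap in Proposition~\ref{prop:Takeuchi3.12} by a reduction to the finitely generated case. One writes $\cH$ as the filtered union of subalgebras of the form $\cL B$ with $\cL$ a finitely generated $\K$-subalgebra. Given $\beta\colon B\to\K$, the algebra $\cL B\otimes_B\K$ \emph{is} finitely generated over $\K$; moreover, purity of $B\hookrightarrow\cH$ forces $B\hookrightarrow\cL B$ to be pure (Corollary~\ref{cor:pure}), so $\cL B\otimes_B\K\neq 0$, and now the Nullstellensatz legitimately yields an extension of $\beta$ to $\cL B$. Running this over the directed system of $\cL B$'s produces an extension to all of $\cH$. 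Once you insert this step, your proof and the paper's coincide.
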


This result generalizes corresponding known results for affine algebraic groups, compare for example with \cite[III, \S 3, n$^\circ$7, 7.2, Théorème, page 353 and 7.3 Corollaire, page 354]{DemGab:GATIGAGGC}.


\section{Preliminaries, notation and first results}\label{sec:prelim}

We work over a ground field $\K$. All vector spaces, algebras and coalgebras will be over $\K$. The unadorned tensor product $\tensor{}$ stands for $\tensor{\K}$. By a ring we always mean a ring with identity element and modules over rings or algebras are always assumed to be unital.  All over the paper, we assume a certain familiarity of the reader with the language of monoidal categories and of (co)monoids therein (see, for example, \cite[VII]{MacLane}).

We begin by collecting some facts about bimodules, (co)rings and bialgebroids that will be needed in the sequel. The aim is that of keeping the exposition self-contained. Many results and definitions we will present herein hold in a more general context and under less restrictive hypotheses, but we preferred to limit ourselves to the essentials.

Given a (preferably, non-commutative) $\K$-algebra $A$, the category of $A$-bimodules forms a non-strict monoidal category $\left(\Bimod{A}{A},\tensor{A},A,\calpha,\clambda,\crho\right)$. 
Nevertheless, all over the paper we will behave as if the structural natural isomorphisms
\begin{gather*}
\calpha_{M,N,P} \colon (M\tensor{A}N)\tensor{A}P \to M\tensor{A}(N\tensor{A}P), \quad (m\tensor{A}n)\tensor{A}p \to m\tensor{A}(n\tensor{A}p), \\
\clambda_M \colon A \tensor{A} M \to M, \quad a\tensor{A}m \mapsto a\cdot m, \qquad \text{and} \\  
\crho_M \colon M\tensor{A}A \to M, \quad m \tensor{A} a \mapsto m \cdot a,
\end{gather*}
were ``the identities'', that is, as if $\Bimod{A}{A}$ was a strict monoidal category.
If $A$ is a non-commutative algebra, then we denote by $\Ao$ its opposite algebra. In this case, an element $a \in A$ may be denoted by $\op{a}$ when it is helpful to stress that it is viewed as an element in $\op{A}$. We freely use the canonical isomorphism between the category of left $A$-module $\lmod{A}$ and that of right $\Ao$-modules $\rmod{\Ao}$.  


\subsection{The enveloping algebra and bimodules}
Let $A$ be an algebra, we denote by $\Ae \coloneqq A\tensor{}\Ao$ the enveloping algebra of $A$ and we identify the category $\lmod{\Ae}$ of left $\Ae$-modules with the category $\bimod{A}$ of $A$-bimodules. Giving a morphism of algebras $\Ae \to R$ is equivalent to providing two commuting algebra maps $s\colon  A \to R$ and $t\colon \Ao \to R$, called the source and the target, respectively. In particular, the identity of $\Ae$ gives rise to $s\colon  A \to \Ae, a \mapsto a \ot \op{1}$ and $t\colon \Ao \to \Ae, \op{a} \mapsto 1 \ot \op{a}$.

Given two $\Ae$-bimodules $M$ and $N$, there are several $A$-bimodule structures underlying $M$ and $N$. Namely,
\[(a \ot \op{b}) \cdot m \cdot (c \ot \op{d}) = s(a)t(\op{b}) m s(c)t(\op{d})\]
for all $m \in M$, $a,b,c,d\in A$. This leads to several ways of considering the tensor product over $A$ between these underlying $A$-bimodules. 
For the sake of clarity, we will adopt the following notations. Given an $\Ae$-bimodule $M$, the $A$-action by elements of the form $a\tensor{}\op{1}$ will be denoted by $\ls{M}$ or $\rs{M}$ and the $\op{A}$-action of the element of the form $1\tensor{}\op{a}$ by $\lt{M}$ or $\rt{M}$, depending on which side we are putting those elements. 
We still have other actions by the elements $\op{t(a)}$, or $\op{s(a)}$ to produce other $A$ or $\op{A}$ actions. In this case we use the notation $\op{t}$ and $\op{s}$ located in the corresponding side on which  we are declaring the action. Summing up, we denote
\begin{gather*}
\ls{M}  \coloneqq  {}_{{A\tensor{}\op{1}}}M, 
\quad 
\lt{M}  \coloneqq  {}_{{1\tensor{}\op{A}}}M, 
\quad 
\rt{M}  \coloneqq  {M}_{{1\tensor{}\op{A}}}, 
\quad  
\rs{M}  \coloneqq  {M}_{{A\tensor{}\op{1}}}, 
\\ 
\sMs{M} \coloneqq  {}_{{A\tensor{}\op{1}}}{M}_{{A\tensor{}\op{1}}}, 
\quad 
\sMt{M} \coloneqq  {}_{{A\tensor{}\op{1}}}{M}_{{1\tensor{}\op{A}}}, 
\quad 
\tMs{M} \coloneqq  {}_{{1\tensor{}\op{A}}}{M}_{{A\tensor{}\op{1}}}, 
\quad   
\tMt{M} \coloneqq  {}_{{1\tensor{}\op{A}}}{M}_{{1\tensor{}\op{A}}}, 
\end{gather*}
from which we can switch the given actions and obtain new two-sided actions involving $A$ and $\Ao$:
\[
\underset{A\text{-bimodule}}{\sMto{M} 			\coloneqq		{}_{{A\tensor{}\op{A}}}{M}}, 
\quad 
\underset{\op{A}\text{-bimodule}}{\tMso{M} 	\coloneqq 	{}_{{A\tensor{}\op{A}}}{M}}, 
\quad  
\underset{\op{A}\text{-bimodule}}{\soMt{M} 	\coloneqq		{M}_{{A\tensor{}\op{A}}}}, 
\quad
\underset{A\text{-bimodule}}{\toMs{M} 			\coloneqq		{M}_{{A\tensor{}\op{A}}}},
\]
and so on. For example, $\Mto{M} = \tM{M}$. For the sake of simplicity, we may also often resort to the following variation of the previous conventions: for $a,b \in A$, $m \in M$
\begin{equation}\label{eq:triangleactions}
a \wla m \wra b \coloneqq (a \otimes \op{b})\cdot m \qquad \text{and} \qquad a \bla m \bra b \coloneqq m\cdot (b \otimes \op{a}).
\end{equation}


\subsection{Pure extensions of rings}\label{ssec:purity}
Purity conditions will play a crucial role in establishing our main theorems. Therefore, we devote this subsection to collect a few results in details in this regard, for the convenience of the reader.

Given a ring $R$, we recall from \cite[chap.~I, \S 2, Ex.~24, p.~66]{Bou} that a morphism of left (resp. right) $R$-modules $f\colon M \to N$ is called \emph{pure} (or \emph{universally injective}) if and only if for every right (resp. left) $R$-module $P$, the morphism of abelian groups $P \tensor{R} f$ (resp. $f \tensor{R} P$) is injective. In particular, $f$ itself is injective, by taking $P = R$.

The subsequent Proposition \ref{prop:equivpure} (and its corollaries) should be well-known: it states that a ring extension $f \colon \cA \to \cB$ is pure as a morphism of right $\cA$-modules if and only if the extension-of-scalars functor $\cB \tensor{\cA} -$ is faithful. They will be of great help in what follows.

\begin{proposition}\label{prop:equivpure}
Let $R$ be a ring and let $f \colon \cA \to \cB$ be a morphism of $R$-rings. The following are equivalent.
\begin{enumerate}[leftmargin=0.7cm,ref={\it (\arabic*)}]
\item\label{item:equivpure1} $f \colon \cA \to \cB$ is a pure morphism of right $\cA$-modules;
\item\label{item:equivpure2} for every left $\cA$-module $M$, the morphism $\varrho_M\colon M \to \cB \tensor{\cA} M$, $m \mapsto 1_\cB \tensor{\cA} m$, is injective for all $m \in M$;
\item\label{item:equivpure2bis} for every morphism of left $\cA$-modules $g \colon M \to N$, $\cB \tensor{\cA} g = 0$ implies $g = 0$;
\item\label{item:equivpure4} if $M \xrightarrow{g} N \xrightarrow{h} P$ are morphisms of left $\cA$-modules such that
\[\cB \tensor{\cA} M \xrightarrow{\cB \tensor{\cA} g} \cB \tensor{\cA} N \xrightarrow{\cB \tensor{\cA} h} \cB \tensor{\cA} P\]
is an exact sequence of $\cB$-modules, then $M \xrightarrow{g} N \xrightarrow{h} P$ is an exact sequence of $\cA$-modules.
\item\label{item:equivpure3} if $g \colon M \to N$ is a morphism of left $\cA$-modules such that $\cB \tensor{\cA} g \colon \cB \tensor{\cA} M \to \cB \tensor{\cA} N$ is injective, then $g$ itself is injective.
\end{enumerate}
In addition, any of the foregoing entails that
\begin{enumerate}[leftmargin=0.7cm,ref={\it (\arabic*)},resume]
\item\label{item:equivpure6} for every left ideal $\mf{m}$ in $\cA$, we have $\mf{m} = f^{-1}(\cB\mf{m})$;
\end{enumerate}
and hence, in particular, that
\begin{enumerate}[leftmargin=0.7cm,ref={\it (\arabic*)},resume]
\item\label{item:equivpure7} for every maximal left ideal $\mf{m}$ in $\cA$, there exists a maximal left ideal $\mf{n}$ in $\cB$ such that $\mf{m} = f^{-1}(\mf{n})$.
\end{enumerate}
\end{proposition}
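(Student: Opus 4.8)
The plan is to prove the chain of implications $\ref{item:equivpure1}\Rightarrow\ref{item:equivpure2}\Rightarrow\ref{item:equivpure2bis}\Rightarrow\ref{item:equivpure3}\Rightarrow\ref{item:equivpure1}$ together with $\ref{item:equivpure2bis}\Leftrightarrow\ref{item:equivpure4}$, and then deduce \ref{item:equivpure6} and \ref{item:equivpure7} from \ref{item:equivpure3} (or \ref{item:equivpure2}). The guiding idea throughout is that although purity of $f$ is defined by tensoring \emph{right} $\cA$-modules against $f$, there is a ``free resolution'' trick: for any left $\cA$-module $M$ the canonical map $\varrho_M\colon M\to\cB\tensor{\cA}M$ is, up to the identification $\cB\tensor{\cA}\cA^{(X)}\cong\cB^{(X)}$, obtained from $f\tensor{\cA}M$ applied to a presentation of $M$, so purity on the right controls injectivity of all these unit maps.

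First I would do $\ref{item:equivpure1}\Rightarrow\ref{item:equivpure2}$: pick a free presentation $\cA^{(Y)}\xrightarrow{\ \pi\ }\cA^{(X)}\xrightarrow{\ \epsilon\ }M\to 0$ of the left $\cA$-module $M$, i.e. $M\cong\operatorname{coker}\pi$. Tensoring the right exact functor $\cB\tensor{\cA}-$ we get $\cB^{(Y)}\to\cB^{(X)}\to\cB\tensor{\cA}M\to 0$, and one checks that the square relating $\varrho_{\cA^{(X)}}$, $\varrho_M$, $\epsilon$ and $\cB\tensor{\cA}\epsilon$ commutes, with $\varrho_{\cA^{(X)}}\colon\cA^{(X)}\to\cB^{(X)}$ equal to $f^{(X)}$ under the obvious identifications. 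A diagram chase then reduces injectivity of $\varrho_M$ to the following statement: if $x\in\cA^{(X)}$ satisfies $f^{(X)}(x)\in\operatorname{im}\big(\cB\tensor{\cA}\pi\big)$, then $x\in\operatorname{im}\pi$. Equivalently, writing $N\coloneqq\operatorname{im}\pi\subseteq\cA^{(X)}$, we must show $f^{(X)^{-1}}\big(\cB N\big)\subseteq N$, which is exactly the statement that $\cB\tensor{\cA}(\cA^{(X)}/N)\to$ — no, more cleanly: it says that the map $\cA^{(X)}/N\to\cB\tensor{\cA}(\cA^{(X)}/N)$ is injective, which loops back. So instead I would argue directly: purity of $f$ as a right module map means $f\tensor{\cA}P$ is injective for every \emph{left} module $P$; applying this with $P=M$ gives injectivity of $f\tensor{\cA}M\colon\cA\tensor{\cA}M\to\cB\tensor{\cA}M$, and under $\cA\tensor{\cA}M\cong M$ this map \emph{is} $\varrho_M$. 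This is the clean proof of $\ref{item:equivpure1}\Rightarrow\ref{item:equivpure2}$ and it is essentially immediate.

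Next, $\ref{item:equivpure2}\Rightarrow\ref{item:equivpure2bis}$: given $g\colon M\to N$ with $\cB\tensor{\cA}g=0$, naturality of $\varrho$ gives $\varrho_N\circ g=(\cB\tensor{\cA}g)\circ\varrho_M=0$, and since $\varrho_N$ is injective, $g=0$. For $\ref{item:equivpure2bis}\Rightarrow\ref{item:equivpure3}$: if $\cB\tensor{\cA}g$ is injective, let $\iota\colon K\hookrightarrow M$ be the kernel of $g$; right-exactness of $\cB\tensor{\cA}-$ applied to $K\to M\to\operatorname{im}g\to 0$ together with injectivity of $\cB\tensor{\cA}g$ forces $\cB\tensor{\cA}\iota=0$, whence $\iota=0$ by \ref{item:equivpure2bis}, so $K=0$ and $g$ is injective. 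For $\ref{item:equivpure3}\Rightarrow\ref{item:equivpure1}$: given any right $\cA$-module $P$ we want $f\tensor{\cA}P$ injective; realize $P$ as a filtered colimit of finitely presented right modules and $f\tensor{\cA}-$ commutes with colimits, so it suffices to treat $P$ finitely presented — actually the slicker route is to dualize: take a presentation and use that $f\tensor{\cA}P\cong$ a map obtainable from $\varrho$ applied to a suitable left module, or simply invoke that $\ref{item:equivpure1}$ for \emph{all} right $P$ is equivalent to $\varrho_M$ injective for \emph{all} left $M$ by a symmetric version of the argument above. The implication $\ref{item:equivpure2bis}\Leftrightarrow\ref{item:equivpure4}$ is a standard homological reformulation: \ref{item:equivpure4} clearly implies \ref{item:equivpure2bis} (take $P=0$), and conversely, given the exactness of the tensored sequence, one shows $\operatorname{im}g\subseteq\ker h$ and $\ker h\subseteq\operatorname{im}g$ by feeding the relevant comparison maps to \ref{item:equivpure3} (which we have shown is equivalent to \ref{item:equivpure2bis}).

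For the ``In addition'' part, \ref{item:equivpure6}: for a left ideal $\mf{m}\subseteq\cA$, apply \ref{item:equivpure2} to $M=\cA/\mf{m}$. The unit $\varrho_{\cA/\mf{m}}\colon\cA/\mf{m}\to\cB\tensor{\cA}(\cA/\mf{m})\cong\cB/\cB\mf{m}$ sends $\bara{a}\mapsto\bara{f(a)}$, and its kernel is exactly $f^{-1}(\cB\mf{m})/\mf{m}$; injectivity then gives $f^{-1}(\cB\mf{m})=\mf{m}$. Finally \ref{item:equivpure7}: if $\mf{m}$ is a \emph{maximal} left ideal, then $\cB\mf{m}\neq\cB$ — otherwise $1\in\cB\mf{m}$ would give $1\in f^{-1}(\cB\mf{m})=\mf{m}$ by \ref{item:equivpure6}, a contradiction — so $\cB\mf{m}$ is a proper left ideal of $\cB$ and hence is contained in some maximal left ideal $\mf{n}$ of $\cB$ (Zorn's lemma). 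Then $\mf{m}\subseteq f^{-1}(\mf{n})\subsetneq\cA$ (properness since $\mf{n}\neq\cB$ and $f(1)=1\notin\mf{n}$... more precisely $f^{-1}(\mf n)$ is a proper left ideal because $1\notin f^{-1}(\mf n)$), and maximality of $\mf{m}$ forces $\mf{m}=f^{-1}(\mf{n})$.

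The main obstacle, such as it is, is getting $\ref{item:equivpure3}\Rightarrow\ref{item:equivpure1}$ (closing the loop back to the defining property involving \emph{right} modules from the left-module conditions): one must pass from the ``$\varrho_M$ injective for all left $M$'' form to ``$f\tensor{\cA}P$ injective for all right $P$''. I expect to handle this by symmetry — the proof of $\ref{item:equivpure1}\Rightarrow\ref{item:equivpure2}$ is reversible in the sense that $f\tensor{\cA}P$, for a right module $P$, is precisely the unit-type map obtained by viewing things over $\cA$, and more directly by observing that purity is a ``two-sided'' notion once one knows it on generators/colimits: finitely presented right modules suffice because $-\tensor{\cA}-$ preserves filtered colimits and injections are preserved under filtered colimits in $\Ab$, and for a finitely presented $P$ with presentation $\cA^m\to\cA^n\to P\to0$, injectivity of $f\tensor{\cA}P$ follows from a diagram chase using that $f^{(n)}=\varrho_{\cA^n}$-type maps are injective (indeed split-injective-ish) — but the cleanest is to note condition \ref{item:equivpure2bis} is left-right symmetric in disguise via the already-proven equivalence with \ref{item:equivpure3}, and that \ref{item:equivpure3} for the ring map $f$ is known to be equivalent to purity; I will spell this last equivalence out carefully since it is the one genuinely non-formal point.
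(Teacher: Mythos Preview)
Your proposal is mostly fine, but the closing implication \ref{item:equivpure3}$\Rightarrow$\ref{item:equivpure1} is where you create an artificial difficulty. You already observed, in your clean argument for \ref{item:equivpure1}$\Rightarrow$\ref{item:equivpure2}, that since $f$ is a morphism of \emph{right} $\cA$-modules, purity means exactly that $f\tensor{\cA}P$ is injective for every \emph{left} $\cA$-module $P$, and that under $\cA\tensor{\cA}P\cong P$ this map \emph{is} $\varrho_P$. So \ref{item:equivpure1} and \ref{item:equivpure2} are tautologically equivalent; there is no asymmetry to overcome, no need for filtered colimits or finitely presented modules, and your whole last paragraph is chasing a phantom. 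What you actually need is \ref{item:equivpure3}$\Rightarrow$\ref{item:equivpure2}, and the paper's argument for this is a one-liner you missed: the map
\[
\cB\tensor{\cA}\varrho_M\colon \cB\tensor{\cA}M \to \cB\tensor{\cA}\cB\tensor{\cA}M,\qquad b\tensor{\cA}m\mapsto b\tensor{\cA}1_{\cB}\tensor{\cA}m,
\]
admits the multiplication $b\tensor{\cA}b'\tensor{\cA}m\mapsto bb'\tensor{\cA}m$ as a retraction, hence is injective; then \ref{item:equivpure3} forces $\varrho_M$ itself to be injective.

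A minor slip: your ``take $P=0$'' for \ref{item:equivpure4}$\Rightarrow$\ref{item:equivpure2bis} does not do what you want. If $\cB\tensor{\cA}g=0$, the right sequence to feed into \ref{item:equivpure4} is $M\xrightarrow{\id_M}M\xrightarrow{g}N$: after tensoring you get $\cB\tensor{\cA}M\xrightarrow{\id}\cB\tensor{\cA}M\xrightarrow{0}\cB\tensor{\cA}N$, which is exact, so $M\xrightarrow{\id}M\xrightarrow{g}N$ is exact and $g=0$. (The paper simply cites the general fact that an additive functor between abelian categories is faithful iff it reflects exact sequences.) Your arguments for \ref{item:equivpure6} and \ref{item:equivpure7} agree with the paper's.
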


\begin{proof}
The equivalence between \ref{item:equivpure1} and \ref{item:equivpure2} is the definition of purity. The equivalence between \ref{item:equivpure2bis} and \ref{item:equivpure4} is the fact that an additive covariant functor between abelian categories is faithful if and only if it reflects exact sequences (see \cite[\S3.1, exercise 4]{Popescu}). 
The implication from \ref{item:equivpure2} to \ref{item:equivpure2bis} follows by commutativity of the diagram
\[
\xymatrix @R=15pt @C=30pt{
M \ar[r]^-{\varrho_M} \ar[d]_-{g} & \cB \tensor{\cA} M \ar[d]^-{\cB \tensor{\cA} g} \\
N \ar[r]_-{\varrho_N} & \cB \tensor{\cA} N.
}
\]
To prove that \ref{item:equivpure4} implies \ref{item:equivpure3}, consider the morphisms $0 \to M \xrightarrow{g} N$ and apply the functor $\cB \tensor{\cA} -$.
The implication from \ref{item:equivpure3} to \ref{item:equivpure2} follows by considering the morphism
\[\cB \tensor{\cA}\varrho_M \colon \cB \tensor{\cA} M \to \cB \tensor{\cA} \cB \tensor{\cA} M, \qquad b \tensor{\cA} m \mapsto b \tensor{\cA} 1_\cB \tensor{\cA} m,\]
and observing that it admits the retraction 
\[\cB \tensor{\cA} \cB \tensor{\cA} M \to \cB \tensor{\cA} M, \qquad b \tensor{\cA} b' \tensor{\cA} m \mapsto bb' \tensor{\cA} m,\]
whence it is injective.
Finally, let us show that \ref{item:equivpure2} $\Rightarrow$ \ref{item:equivpure6} $\Rightarrow$ \ref{item:equivpure7}. To prove that \ref{item:equivpure2} implies \ref{item:equivpure6} let $\mf{m}$ be a left ideal in $\cA$. Since $\cA/\mf{m}$ is a left $\cA$-module, \ref{item:equivpure2} entails that
\[\cA/\mf{m} \xrightarrow{f \tensor{\cA} \cA/\mf{m}} \cB \tensor{\cA} \cA/\mf{m} \cong \cB /\cB\mf{m}\] 
is injective and therefore $\mf{m} = f^{-1}(\cB\mf{m})$. Now, to show that \ref{item:equivpure6} implies \ref{item:equivpure7} observe that, by taking $\mf{m} = 0$ in \ref{item:equivpure6}, we know that $f$ is injective and hence we may assume that $\cA \subseteq \cB$. Suppose that $\mf{m}$ is a maximal ideal in $\cA$ and let $\mf{n} \subset \cB$ be a maximal ideal containing $\cB \mf{m}$. Then $\mf{m} = f^{-1}(\cB\mf{m}) \subseteq f^{-1}(\mf{n})$ and so, since $1 \notin \mf{n}$, $\mf{m} = f^{-1}(\mf{n})$ by maximality of $\mf{m}$. 
\end{proof}

Proposition \ref{prop:equivpure} is the analogue of \cite[Chapter I, \S3, n$^\circ$5, Proposition 9]{Bou} and \cite[\S 13.1, Theorem]{Waterhouse} for pure morphisms.

\begin{corollary}[of Proposition \ref{prop:equivpure}]\label{cor:eqpure}
Let $R$ be a ring and let $f \colon \cA \to \cB$ be a morphism of $R$-rings. If $f$ is pure as a morphism of right $\cA$-modules, then $(M,\varrho_M)$ is the equalizer of the pair
\[
\xymatrix @C=50pt{
\cB \tensor{\cA} M \ar@<+0.5ex>[r]^-{\lambda_\cB \tensor{\cA} M} \ar@<-0.5ex>[r]_-{\varrho_\cB  \tensor{\cA} M} & \cB \tensor{\cA} \cB \tensor{\cA} M
}
\]
in the category of left $\cA$-modules, for every left $\cA$-module $M$. In particular, we have the equalizer 
\[
\xymatrix @C=30pt{
\cA \ar[r]^-{f} & \cB \ar@<+0.5ex>[r]^-{\lambda_\cB} \ar@<-0.5ex>[r]_-{\varrho_\cB} & \cB \tensor{\cA} \cB.
}
\]
\end{corollary}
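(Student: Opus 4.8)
The plan is to reduce the claim to Proposition \ref{prop:equivpure} by means of a base-change argument: although $\cB\tensor{\cA}-$ need not be exact, under purity it reflects monomorphisms and exact sequences (items \ref{item:equivpure3} and \ref{item:equivpure4}), while \emph{after} applying $\cB\tensor{\cA}-$ the fork in the statement becomes a split equalizer, hence its relevant exactness is visible by inspection and can then be pulled back. Note at the outset that $\varrho_M$, $\lambda_\cB\tensor{\cA} M$ and $\varrho_\cB\tensor{\cA} M$ are all morphisms of left $\cA$-modules, so the equalizer is taken in $\lmod{\cA}$, and that $\varrho_M$ does equalize the parallel pair, since both composites send $m$ to $1_\cB\tensor{\cA} 1_\cB\tensor{\cA} m$.

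Next I would set $h \coloneqq (\lambda_\cB\tensor{\cA} M) - (\varrho_\cB\tensor{\cA} M)\colon \cB\tensor{\cA} M\to\cB\tensor{\cA}\cB\tensor{\cA} M$, so that the assertion amounts to the exactness of
\[
0 \longrightarrow M \xrightarrow{\ \varrho_M\ } \cB\tensor{\cA} M \xrightarrow{\ h\ } \cB\tensor{\cA}\cB\tensor{\cA} M
\]
in $\lmod{\cA}$. Apply $\cB\tensor{\cA}-$ to it and, writing $\mu\colon\cB\tensor{\cA}\cB\to\cB$ for the multiplication of the $\cA$-ring $\cB$, introduce $s\coloneqq\mu\tensor{\cA} M\colon\cB\tensor{\cA}\cB\tensor{\cA} M\to\cB\tensor{\cA} M$ and $t\coloneqq\mu\tensor{\cA}\cB\tensor{\cA} M$ (multiplication of the first two tensorands), which are well defined by associativity. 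A direct computation yields the split-equalizer identities
\begin{gather*}
s\circ(\cB\tensor{\cA}\varrho_M) = \id,\qquad t\circ(\cB\tensor{\cA}\varrho_\cB\tensor{\cA} M) = \id, \\
t\circ(\cB\tensor{\cA}\lambda_\cB\tensor{\cA} M) = (\cB\tensor{\cA}\varrho_M)\circ s .
\end{gather*}
From the first, $\cB\tensor{\cA}\varrho_M$ is (split) injective; and if $\cB\tensor{\cA} h(x)=0$, i.e. $(\cB\tensor{\cA}\lambda_\cB\tensor{\cA} M)(x)=(\cB\tensor{\cA}\varrho_\cB\tensor{\cA} M)(x)$, then applying $t$ and using the last two identities gives $x=(\cB\tensor{\cA}\varrho_M)(s(x))$; together with the fork condition this shows $\ker(\cB\tensor{\cA} h)=\img(\cB\tensor{\cA}\varrho_M)$. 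Hence the base-changed sequence $0\to\cB\tensor{\cA} M\to\cB\tensor{\cA}\cB\tensor{\cA} M\to\cB\tensor{\cA}\cB\tensor{\cA}\cB\tensor{\cA} M$ is exact.

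Finally, purity is invoked: by Proposition \ref{prop:equivpure}\ref{item:equivpure3} the injectivity of $\cB\tensor{\cA}\varrho_M$ forces $\varrho_M$ injective, and by Proposition \ref{prop:equivpure}\ref{item:equivpure4} applied to $M\xrightarrow{\varrho_M}\cB\tensor{\cA} M\xrightarrow{h}\cB\tensor{\cA}\cB\tensor{\cA} M$ the exactness at the middle term of the base-changed sequence forces $\img(\varrho_M)=\ker(h)$. Thus $(M,\varrho_M)$ is the equalizer of $\lambda_\cB\tensor{\cA} M$ and $\varrho_\cB\tensor{\cA} M$ in $\lmod{\cA}$, and the displayed ``in particular'' is the special case $M=\cA$, under the canonical identification $\cB\tensor{\cA}\cA\cong\cB$ which turns $\varrho_\cA$ into $f$ and the two parallel arrows into $\lambda_\cB$ and $\varrho_\cB$.

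I expect the only delicate point to be the index bookkeeping in the middle paragraph: checking that $s$ and $t$ are well defined and that each of the three identities is matched with the correct member of the parallel pair. This is elementary, and once it is in place the statement follows formally from Proposition \ref{prop:equivpure}. (One could also avoid the split-equalizer vocabulary and instead exhibit an explicit contracting datum for the truncated cobar-type complex, but the split-fork formulation is the most economical.)
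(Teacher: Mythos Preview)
Your proof is correct and follows essentially the same strategy as the paper's own proof: both tensor the fork with $\cB$ on the left, verify that the resulting diagram is a (split) equalizer by explicit use of the multiplication map as a retraction, and then descend along purity via Proposition~\ref{prop:equivpure}\ref{item:equivpure4}. The only cosmetic difference is that you package the verification as a Beck-style split equalizer with the two retractions $s$ and $t$, whereas the paper computes the image and kernel directly; the content is the same.
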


\begin{proof}
Consider the parallel arrows
\[
\xymatrix @C=60pt{
\cB \tensor{\cA} \cB \tensor{\cA} M \ar@<+0.5ex>[r]^-{\cB \tensor{\cA} \lambda_\cB \tensor{\cA} M} \ar@<-0.5ex>[r]_-{\cB \tensor{\cA} \varrho_\cB \tensor{\cA} M} & \cB \tensor{\cA} \cB \tensor{\cA} \cB \tensor{\cA} M.
}
\]
It is clear that the function
\[\cB \tensor{\cA} \varrho_M \colon \cB \tensor{\cA} M \to \cB \tensor{\cA} \cB \tensor{\cA} M, \qquad b \tensor{\cA} m \mapsto b \tensor{\cA} 1_\cB \tensor{\cA} m,\]
lands in the equalizer of $\cB \tensor{\cA} \lambda_\cB \tensor{\cA} M$ and $\cB \tensor{\cA} \varrho_\cB \tensor{\cA} M$ and it is injective, since it admits 
\[\cB \tensor{\cA} \cB \tensor{\cA} M \to \cB \tensor{\cA} M, \qquad b \tensor{\cA} b' \tensor{\cA} m \mapsto bb' \tensor{\cA} m,\]
as a retraction. On the other hand, any $\sum_i b_i \tensor{\cA} b_i' \tensor{\cA} m_i$ in the equalizer satisfies $\sum_i b_i \tensor{\cA} b_i' \tensor{\cA} m_i = \sum_i b_i b_i' \tensor{\cA} 1_\cB \tensor{\cA} m_i$ and hence it is in the image of $\cB \tensor{\cA} \varrho_M$. Therefore, $(\cB \tensor{\cA} M,\cB \tensor{\cA} \varrho_M)$ is the equalizer of $\cB \tensor{\cA} \lambda_\cB \tensor{\cA} M$ and $\cB \tensor{\cA} \varrho_\cB \tensor{\cA} M$, whence $(M,\varrho_M)$ is the equalizer of $\lambda_\cB \tensor{\cA} M$ and $\varrho_\cB \tensor{\cA} M$ by purity (statement \ref{item:equivpure4} in Proposition \ref{prop:equivpure}).
\end{proof}

\begin{corollary}[of Proposition \ref{prop:equivpure}]\label{cor:frompuretoflat}
Let $R$ be a ring and let $f \colon \cA \to \cB$ be a morphism of $R$-rings. If $f$ is a pure morphism of right $R$-modules and $\cB_R$ is flat, then $\cA_R$ is flat.
\end{corollary}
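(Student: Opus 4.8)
The plan is to deduce the flatness of $\cA_R$ from that of $\cB_R$ by a short diagram chase, using the purity of $f$ to transfer injectivity back from $\cB$ to $\cA$. Recall that $\cA_R$ is flat exactly when, for every injective morphism of left $R$-modules $\iota\colon M \hookrightarrow N$, the induced map $\cA \tensor{R} \iota$ is again injective. So I would fix such an $\iota$ and form the commutative square obtained by applying $f \tensor{R} -$ to it:
\[
\xymatrix @R=15pt @C=35pt{
\cA \tensor{R} M \ar[r]^-{\cA \tensor{R} \iota} \ar[d]_-{f \tensor{R} M} & \cA \tensor{R} N \ar[d]^-{f \tensor{R} N} \\
\cB \tensor{R} M \ar[r]_-{\cB \tensor{R} \iota} & \cB \tensor{R} N.
}
\]

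First I would observe that the two vertical arrows are injective: this is precisely the hypothesis that $f$ is pure as a morphism of right $R$-modules (injectivity of $f \tensor{R} P$ for $P$ any left $R$-module), applied to the left $R$-modules $P = M$ and $P = N$. Next, since $\cB_R$ is flat and $\iota$ is injective, the bottom horizontal arrow $\cB \tensor{R} \iota$ is injective as well. Consequently the composite $(f \tensor{R} N)\circ(\cA \tensor{R} \iota) = (\cB \tensor{R} \iota)\circ(f \tensor{R} M)$ is injective, being a composite of injective maps, and hence $\cA \tensor{R} \iota$ is injective. As $\iota$ was arbitrary, this shows that $\cA_R$ is flat.

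There is no serious obstacle here: the argument is purely formal once the square is written down. The only point deserving attention is the bookkeeping of the module sides — one must make sure that in the square above all tensor products are formed over $R$ with the left $R$-module structures on $M$ and $N$ and with the right $R$-module structures on $\cA$ and $\cB$, so that the purity hypothesis and the flatness hypothesis on $\cB_R$ are both invoked on the correct side. Everything else is immediate, and in particular no use of the ring structures beyond the fact that $f \colon \cA \to \cB$ is, in particular, a morphism of right $R$-modules is needed.
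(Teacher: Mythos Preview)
Your proof is correct and follows the same idea as the paper's: start from an injection $\iota\colon M \hookrightarrow N$ of left $R$-modules, use flatness of $\cB_R$ to make $\cB \tensor{R} \iota$ injective, and then use purity of $f$ to transfer this back to $\cA \tensor{R} \iota$. The only difference is presentational: the paper rewrites $\cB \tensor{R} -$ as $\cB \tensor{\cA} (\cA \tensor{R} -)$ and appeals to Proposition~\ref{prop:equivpure}\ref{item:equivpure3}, whereas your direct diagram chase uses only the injectivity of $f \tensor{R} M$ and $f \tensor{R} N$, which is exactly the stated hypothesis of $R$-purity (the paper's reference to \ref{item:equivpure3} is literally about $\cA$-purity, so your formulation is in fact slightly cleaner in matching the hypothesis).
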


\begin{proof}
Take a monomorphism of left $R$-modules $M \xrightarrow{g} N$. By flatness of $\cB$ on $R$ we have that $\cB \tensor{R} M \xrightarrow{\cB \tensor{R} g} \cB \tensor{R} N$ is injective and so
\[\cB \tensor{\cA} \cA \tensor{R} M \xrightarrow{\cB \tensor{\cA} \cA \tensor{R} g} \cB \tensor{\cA} \cA \tensor{R} N\]
is injective, too. By purity (statement \ref{item:equivpure3} in Proposition \ref{prop:equivpure}), $\cA \tensor{R} M \xrightarrow{\cA \tensor{R} g} \cA \tensor{R} N$ is injective.
\end{proof}

\begin{corollary}[of Proposition \ref{prop:equivpure}]\label{cor:pure}
Let $R$ be any ring and let $\cA \xrightarrow{g} \cT \xrightarrow{h} \cB$ be morphisms of $R$-rings. Set $f \coloneqq h \circ g$. If $f \colon \cA \to \cB$ is a pure morphism of right $\cA$-modules, then the morphism of right $R$-modules $g$ is pure as well. In particular, $f$ is pure as a morphism of right $R$-modules.
\end{corollary}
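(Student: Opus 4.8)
The plan is to verify purity directly from the definition. I must show that for every left $R$-module $P$, the map $g \tensor{R} P \colon \cA \tensor{R} P \to \cT \tensor{R} P$ is injective. So I fix such a $P$ and view both $\cA \tensor{R} P$ and $\cT \tensor{R} P$ (the latter through $g$) as left $\cA$-modules, so that $\phi \coloneqq g \tensor{R} P$ becomes a morphism of left $\cA$-modules. Since $f = h \circ g$ is pure as a morphism of right $\cA$-modules, statement \ref{item:equivpure3} of Proposition \ref{prop:equivpure} reduces the problem to showing that $\cB \tensor{\cA} \phi$ is injective.

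For this, I apply $\cB \tensor{\cA} -$ to $\phi$ and identify $\cB \tensor{\cA} \cA \tensor{R} P$ with $\cB \tensor{R} P$ along the canonical isomorphism; under this identification $\cB \tensor{\cA} \phi$ becomes the map $\psi \colon \cB \tensor{R} P \to \cB \tensor{\cA} \cT \tensor{R} P$ sending $b \tensor{R} p$ to $b \tensor{\cA} 1_{\cT} \tensor{R} p$. I then write down a retraction for $\psi$, namely $\theta \colon \cB \tensor{\cA} \cT \tensor{R} P \to \cB \tensor{R} P$, $b \tensor{\cA} t \tensor{R} p \mapsto b\, h(t) \tensor{R} p$. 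This $\theta$ is well defined precisely because $f = h \circ g$: for $a \in \cA$ one has $b f(a)\, h(t) = b\, h(g(a)\, t)$, which is exactly what is needed for $\theta$ to descend to the tensor product over $\cA$. Since $\theta \circ \psi = \id$, the map $\psi$ is (split) injective, hence $\cB \tensor{\cA} \phi$ is injective, and so $\phi = g \tensor{R} P$ is injective by purity of $f$. As $P$ was arbitrary, $g$ is a pure morphism of right $R$-modules.

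Finally, the concluding assertion follows by applying the statement just proved to the trivial factorization $\cA \xrightarrow{f} \cB \xrightarrow{\id_{\cB}} \cB$, whose composite $f$ is pure over $\cA$ by hypothesis; alternatively, one observes directly that $f \tensor{R} P$ is isomorphic to $f \tensor{\cA} (\cA \tensor{R} P)$, which is injective because $f$ is pure as a morphism of right $\cA$-modules. I do not expect a genuine obstacle here: the only delicate point is the well-definedness of the retraction $\theta$, and that is exactly where the hypothesis $f = h \circ g$ — equivalently, the compatibility of the intermediate ring $\cT$ with $\cB$ — is used; everything else is bookkeeping of the various module structures and an elementary diagram chase.
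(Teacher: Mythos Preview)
Your proof is correct and follows essentially the same approach as the paper's: both reduce via Proposition~\ref{prop:equivpure}\,\ref{item:equivpure3} to showing that $\cB \tensor{\cA} (g \tensor{R} P)$ is injective, and both establish this by exhibiting a retraction built from the multiplication of $\cB$ and the map $h$. The only cosmetic difference is that the paper first shows $\cB \tensor{\cA} (f \tensor{R} P)$ is split injective (via multiplication in $\cB$) and then deduces injectivity of its first factor $\cB \tensor{\cA} (g \tensor{R} P)$, whereas you construct the retraction $\theta$ of $\cB \tensor{\cA} (g \tensor{R} P)$ directly; your $\theta$ is precisely the paper's multiplication retraction precomposed with $\cB \tensor{\cA} h \tensor{R} P$.
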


\begin{proof}
Let $P$ be any left $R$-module. Consider the morphism
\[\cB \xrightarrow{\cong} \cB \tensor{\cA} \cA \xrightarrow{\cB \tensor{\cA} f} \cB \tensor{\cA} \cB.\]
Since it is split injective, with retraction induced by the multiplication of $\cB$, the morphism
\[\cB \tensor{R} P \xrightarrow{\cong} \cB \tensor{\cA} \cA \tensor{R} P \xrightarrow{\cB \tensor{\cA} f \tensor{R} P} \cB \tensor{\cA} \cB \tensor{R} P\]
is (split) injective as well and hence 
\[\cB \tensor{\cA} \cA \tensor{R} P \xrightarrow{\cB \tensor{\cA} f \tensor{R} P} \cB \tensor{\cA} \cB \tensor{R} P\]
is injective. Since 
\[
\cB \tensor{\cA} f \tensor{R} P = \left(\cB \tensor{\cA} h \tensor{R} P\right) \circ \left(\cB \tensor{\cA} g \tensor{R} P\right),
\]
then $\cB \tensor{\cA} g \tensor{R} P$ is injective, too. Being $\cB_\cA$ pure, the morphism $g \tensor{R} P$ is injective by statement \ref{item:equivpure3} in Proposition \ref{prop:equivpure}. The last statement follows by taking $f$ itself as $g$ and $\id_\cB$ as $h$.
\end{proof}

\begin{lemma}\label{lem:ffpure}
Let $R$ be any ring and let $\cA \xrightarrow{g} \cT \xrightarrow{h} \cB$ be morphisms of $R$-rings. If $\cB_\cA$ is faithfully flat, then the morphism of right $R$-modules $g$ is pure. In particular, $f \coloneqq h \circ g$ is pure.
\end{lemma}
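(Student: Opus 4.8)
The plan is to reduce the statement to Corollary~\ref{cor:pure}, which already handles the passage from $\cA$-linearity to $R$-linearity: it asserts that if the composite structure morphism $f = h\circ g\colon \cA \to \cB$ is pure as a morphism of right $\cA$-modules, then $g$ is pure as a morphism of right $R$-modules and, in particular, so is $f$. Thus the only thing left to establish is the classical fact that a faithfully flat ring extension is pure, i.e.\ that faithful flatness of $\cB_\cA$ forces $f\colon \cA \to \cB$ to be a pure morphism of right $\cA$-modules.

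To prove this I would verify condition~\ref{item:equivpure2} of Proposition~\ref{prop:equivpure}: for every left $\cA$-module $M$ the map $\varrho_M\colon M \to \cB\tensor{\cA}M$, $m \mapsto 1_\cB \tensor{\cA} m$, is injective. Let $K \coloneqq \ker(\varrho_M)$. Applying the exact functor $\cB\tensor{\cA}-$ yields an exact sequence $0 \to \cB\tensor{\cA}K \to \cB\tensor{\cA}M \xrightarrow{\cB\tensor{\cA}\varrho_M}\cB\tensor{\cA}\cB\tensor{\cA}M$; but $\cB\tensor{\cA}\varrho_M$ is split mono, a retraction being the multiplication $\cB\tensor{\cA}\cB\tensor{\cA}M \to \cB\tensor{\cA}M$, $b\tensor{\cA}b'\tensor{\cA}m \mapsto bb'\tensor{\cA}m$. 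Hence $\cB\tensor{\cA}K = 0$, and faithful flatness gives $K = 0$. (Equivalently, one could check condition~\ref{item:equivpure2bis}: faithful flatness makes $\cB\tensor{\cA}-$ faithful, since exactness gives $\cB\tensor{\cA}\img(\theta) = \img(\cB\tensor{\cA}\theta)$ for any $\cA$-linear $\theta$, so $\cB\tensor{\cA}\theta = 0$ implies $\img(\theta) = 0$, i.e.\ $\theta = 0$.)

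Once $f\colon \cA \to \cB$ is known to be pure as a morphism of right $\cA$-modules, Corollary~\ref{cor:pure} applied to $\cA \xrightarrow{g}\cT \xrightarrow{h}\cB$ immediately gives that $g$, and hence $f = h\circ g$, is pure as a morphism of right $R$-modules, which is the assertion. I do not expect any real obstacle here: the entire content is the (standard) implication ``$\cB_\cA$ faithfully flat $\Rightarrow$ $f$ pure over $\cA$'', after which the conclusion is a direct appeal to the results already established in this subsection.
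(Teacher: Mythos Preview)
Your proposal is correct and matches the paper's approach exactly: the paper's proof is the one-line remark ``Since a faithfully flat extension is pure, the statement follows from Corollary~\ref{cor:pure}'', and you have simply unpacked the first clause using Proposition~\ref{prop:equivpure}.
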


\begin{proof}
Since a faithfully flat extension is pure, the statement follows from Corollary \ref{cor:pure}.
\end{proof}


\subsection{The Takeuchi-Sweedler crossed product}
Let ${}_{\Ae}M{}_{\Ae}$ and ${}_{\Ae}N{}_{\Ae}$ be two $\Ae$-bimodules. We first define the $A$-bimodule 
\begin{equation}\label{eq:tensA}
M \tensor{A} N \coloneqq  \lt{M} \tensor{A}  \ls{N} = \frac{M \ot N}{\Big\langle t(\op{a})m \ot n - m \ot s(a)n ~\big\vert~ m\in M, n\in N, a\in A \Big\rangle}.
\end{equation}
This will usually be the tensor product over $A$ that we are going to consider more often, unless specified otherwise.
Then, inside $M \tensor{A} N$ we consider the subspace
\begin{equation}\label{eq:takA}
M \tak{A} N \coloneqq \left\{ \left. \sum_i m_i \tensor{A} n_i \in M\tensor{A}N \ \right| \ \sum_i m_it(\op{a}) \tensor{A} n_i = \sum_i m_i \tensor{A} n_is(a)\right\}.
\end{equation}
It is easy to see that $M \tak{A} N$ is an $A$-subbimodule (left $\Ae$-submodule) with respect to the actions
\begin{equation}\label{eq:actionstak}
a\cdot \left(\sum_i m_i \tensor{A} n_i\right) = \sum_i s(a)m_i \tensor{A} n_i \qquad \text{and} \qquad \left(\sum_i m_i \tensor{A} n_i\right)\cdot a = \sum_i m_i \tensor{A} t(\op{a})n_i
\end{equation}
for all $\sum_i m_i \tensor{A} n_i\in M\tak{A}N$ and $a\in A$, but it is also an $A$-subbimodule (right $\Ae$-submodule) with respect to the actions
\begin{equation}\label{eq:actionstak2}
a\bullet \left(\sum_i m_i \tensor{A} n_i\right) = \sum_i m_i \tensor{A} n_it(\op{a}) \qquad \text{and} \qquad \left(\sum_i m_i \tensor{A} n_i\right)\bullet a = \sum_i m_is(a) \tensor{A} n_i.
\end{equation}
In particular, it is an $\Ae$-bimodule itself.
There exist categorical ways to describe this Takeuchi-Sweedler product in terms of ends and coends (see, e.g., \cite{Sweedler-groups,Takeuchi-groups}, where the integral notation dating back to \cite{Yoneda} was used) or in terms of monoidal products (see, e.g., \cite{Bohm:handbook,Takeuchi-Morita}), but, for the convenience of the unaccustomed reader and for the sake of simplicity, we decided to opt for the more elementary description above. In particular, in $M \tak{A} N$ the following relations hold for all $\sum_i m_i \tensor{A} n_i \in M\tak{A} N$ and all $a\in A$:
\[
\sum_i t(\op{a})m_i \tensor{A} n_i \stackrel{\eqref{eq:tensA}}{=} \sum_i m_i \tensor{A} s(a)n_i \qquad \text{and} \qquad \sum_i m_it(\op{a}) \tensor{A} n_i \stackrel{\eqref{eq:takA}}{=} \sum_i m_i \tensor{A} n_is(a).
\]
In this way if $\cU$ and $\cV$ are two $\Ae$-rings, then $\cU \tak{A} \cV$ is also an $\Ae$-ring, with multiplication 
\[\left(\sum_iu_i \tensor{A} v_i\right)\cdot \left(\sum_ju'_j \tensor{A} v'_j\right) = \sum_{i,j}u_iu'_j\tensor{A}v_iv'_j\]
for all $u_i,u'_j\in \cU$, $v_i,v'_j\in \cV$ and $\K$-algebra morphism $\env{A} \to \cU \tak{A} \cV, a \ot \op{b} \mapsto s_\cU(a) \tensor{A} t_\cV\big(\op{b}\big)$. The $\Ae$-actions \eqref{eq:actionstak} and \eqref{eq:actionstak2} are induced by this $\Ae$-ring structure.


\subsection{Left bialgebroids}\label{ssec:n1}

Next, we recall the definition of a left bialgebroid. It can be considered as a revised version of the notion of a $\times_A$-bialgebra as it appears in \cite[Definition 4.3]{Schauenburg-Hbim}. However, we prefer to mimic \cite{Lu} as presented in \cite[Definition 2.2]{BrMi}.

\begin{definition}\label{def:bialgebroid}
A \emph{left bialgebroid} is the datum of
\begin{enumerate}[label=(B\arabic*),ref=(B\arabic*)]
\item a pair $\left(A,\cH\right)$ of $\K$-algebras;
\item a $\K$-algebra map $\etaup\colon  \Ae \to \cH$, inducing a source $s\colon A\to \cH$ and a target $t\colon \op{A}\to\cH$ and making of $\cH$ an $\Ae$-bimodule;
\item an $A$-coring structure $\left(\cH,\Delta,\varepsilon\right)$ on the $A$-bimodule ${}_{\Ae}\cH=\sMto{\cH}$, that is to say,
\[
\Delta \colon  \sMto{\cH} \to \sMto{\cH}\tensor{A}\sMto{\cH} \qquad \text{and} \qquad \varepsilon\colon \sMto{\cH} \to A
\]
$A$-bilinear maps such that
\begin{equation}\label{eq:coasscoun}
\left(\Delta \tensor{A} \cH\right)\circ \Delta = \left(\cH \tensor{A}\Delta\right) \circ \Delta \qquad \text{and} \qquad \left(\varepsilon\tensor{A}\cH\right)\circ \Delta = \id_\cH = \left(\cH\tensor{A}\varepsilon\right)\circ \Delta;
\end{equation}
\end{enumerate}
subject to the following compatibility conditions
\begin{enumerate}[resume*]
\item\label{item:B4} $\Delta$ takes values into $\sMto{\cH} \tak{A} \sMto{\cH}$ and $\Delta\colon  \sMto{\cH} \to \sMto{\cH}\tak{A}\sMto{\cH}$ is a morphism of $\K$-algebras;
\item\label{item:B5} $\varepsilon\Big(xs\big(\varepsilon\left(y\right)\big)\Big) = \varepsilon\left(xy\right) = \varepsilon\Big(xt\big(\op{\varepsilon\left(y\right)}\big)\Big)$ for all $x,y\in\cH$;
\item\label{item:B6} $\varepsilon(1_\cH)=1_A$.
\end{enumerate}
A $\K$-linear map $\varepsilon\colon  \cH \to A$ which is left $\Ae$-linear and satisfies \ref{item:B5} and \ref{item:B6} is called a \emph{left character} on the $\Ae$-ring $\cH$ (see \cite[Lemma 2.5 and following]{Bohm:handbook}).

A \emph{morphism} of bialgebroids $\B{\phi}\colon  (A,\cH) \to (B,\cK)$ is a pair of algebra maps $\left( \phi _{{0}} \colon A \to B,\phi _{{1}} \colon \cH \to \cK\right)$ such that
\begin{gather*}
\phi _{1}\circ s_\cH = s_\cK \circ \phi _{0}, \qquad \phi _{1}\circ t_\cH = t_\cK \circ \phi_{0}, \\ 
\varepsilon_\cK \circ \phi _{1} = \phi _{0} \circ \varepsilon_\cH ,  \qquad \Delta_\cK \circ \phi _{1} =\chi \circ \left( \phi _{1}\otimes _{A}\phi_{1}\right) \circ \Delta_\cH,
\end{gather*}
where $\chi \colon \cK \tensor{A} \cK \rightarrow \cK \tensor{B} \cK$ is the obvious projection induced by $\phi_0$, that is $\chi \left(h\tensor{A}k\right) =h\tensor{B}k$. If $A = B$ and $\phi_0 = \id_A$, then we say that $\phi_1 : \cH \to \cK$ is a morphism of bialgebroids over $A$.
\end{definition}

As a matter of terminology, a bialgebroid $(A,\cH)$ as in definition \ref{def:bialgebroid} is often referred to as \emph{a left bialgebroid $\cH$ over $A$}. Since in the following we will mainly deal with bialgebroids over a fixed base $A$, we may often omit to specify it and simply refer to $(A,\cH)$ as \emph{the left bialgebroid $\cH$}.

\begin{remark}
Let us make explicit some of the relations involved in the definition of a left bialgebroid and some of their consequences. In terms of elements of $A$ and $\cH$, and by resorting to Heyneman-Sweedler Sigma Notation, relations \eqref{eq:coasscoun} become
\[
\sum x_{11} \tensor{A} x_{12} \tensor{A} x_2 = \sum x_{1}\tensor{A} x_{21}\tensor{A} x_{22} \qquad \text{and} \qquad \sum s\big(\varepsilon\left(x_1\right)\big)x_2 = x = \sum t\big(\op{\varepsilon\left(x_2\right)}\big)x_1
\]
for all $x\in\cH$. The $A$-bilinearity of $\Delta$ forces
\begin{equation}\label{eq:DeltaLin}
\Delta\left(s\left(a\right)\right) = a\cdot \Delta\left(1_\cH\right) = s\left(a\right)\tensor{A}1_\cH \qquad \text{and} \qquad \Delta\left(t\left(\op{a}\right)\right) = \Delta\left(1_\cH\right)\cdot a = 1_\cH \tensor{A} t\left(\op{a}\right)
\end{equation}
for all $a\in A$, and its multiplicativity forces, as a consequence,
\begin{gather*}
\Delta\left(xs\left(a\right)\right) = \Delta\left(x\right)\Delta\left(s\left(a\right)\right) = \left(\sum x_1\tensor{A}x_2\right)\left(s\left(a\right)\tensor{A}1_\cH\right) = \sum x_1s\left(a\right)\tensor{A}x_2, \\
\Delta\left(xt\left(\op{a}\right)\right) = \Delta\left(x\right)\Delta\left(t\left(\op{a}\right)\right) = \left(\sum x_1\tensor{A}x_2\right)\left(1_\cH\tensor{A}t\left(\op{a}\right)\right) = \sum x_1\tensor{A}x_2t\left(\op{a}\right),
\end{gather*}
for all $x\in\cH$. In particular, $\Delta:\cH \to \cH \tak{A} \cH$ is a morphism of $\Ae$-rings.
\end{remark}

The following result is extremely well-known (it underlies the monoidality of the category of left $\cH$-modules).

\begin{proposition}\label{prop:itwaseasy}
Let $(A,\cH)$ be a left bialgebroid and let $M,N$ be two left $\cH$-modules. If we consider $M$ and $N$ as $A$-bimodules (left $\Ae$-modules) via the restriction of scalars along $\etaup$, then $M \tensor{A} N$ is a left $\cH\tak{A}\cH$-module with
\begin{equation}\label{eq:itwaseasy}
\begin{gathered}
\xymatrix @R=0pt{
\sMto{\cH} \tak{A} \sMto{\cH} \ar[r] & \End{\K}{{_\etaup M} \tensor{A} {_\etaup N}} \\
\sum_ix_i \tensor{A} y_i \ar@{|->}[r] & \Big[m\tensor{A}n \mapsto \sum_ix_i\cdot m\tensor{A}y_i\cdot n\Big].
}
\end{gathered}
\end{equation}
Moreover, \eqref{eq:itwaseasy} is a morphism of $\Ae $-bimodules if we consider $\cH \tak{A} \cH$ endowed with the actions \eqref{eq:actionstak} from the left and \eqref{eq:actionstak2} from the right and we consider $\End{\K}{M \tensor{A} N}$ endowed with the left $\Ae $-action coming from the regular $A$-bimodule structure on the codomain and the right $\Ae $-action coming from the regular $A$-bimodule structure on the domain. Equivalently,
\[(\cH \tak{A} \cH) \tensor{\Ae } (M \tensor{A} N) \to M \tensor{A} N, \qquad \left(\sum_ix_i \tensor{A} y_i\right) \tensor{\Ae } \left(m \tensor{A} n\right) \mapsto \sum_ix_i\cdot m\tensor{A}y_i\cdot n\]
is a left $\Ae $-linear action.
\end{proposition}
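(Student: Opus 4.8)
The plan is to obtain the algebra map \eqref{eq:itwaseasy} in two stages: first produce a ``diagonal'' $\K$-linear map out of the ordinary tensor product $\cH\tensor{A}\cH$, then cut it down to the Takeuchi--Sweedler product $\cH\tak{A}\cH$; afterwards unitality, multiplicativity and $\Ae$-bilinearity are straightforward, and the last displayed assertion follows from the previous one by the tensor--hom adjunction. Recall that, after restriction of scalars along $\etaup$, one has $a\cdot m=s(a)m$ and $m\cdot a=t(\op{a})m$ on $M$ (and similarly on $N$), so that $M\tensor{A}N$ is the quotient of $M\tensor{}N$ by the relations $t(\op{a})m\tensor{}n=m\tensor{}s(a)n$, as in \eqref{eq:tensA}. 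Fixing $m\in M$ and $n\in N$, I would consider the $\K$-linear map $\cH\tensor{}\cH\to M\tensor{A}N$, $x\tensor{}y\mapsto xm\tensor{A}yn$. Using associativity of the $\cH$-actions on $M$ and on $N$ together with the defining relation of $M\tensor{A}N$, one gets $(t(\op{a})x)m\tensor{A}yn=t(\op{a})(xm)\tensor{A}yn=xm\tensor{A}s(a)(yn)=xm\tensor{A}(s(a)y)n$, so this map sends the generators $t(\op{a})x\tensor{}y$ and $x\tensor{}s(a)y$ to the same element and hence descends to $\cH\tensor{A}\cH$; letting $m,n$ vary we obtain a $\K$-linear map $\Psi\colon\cH\tensor{A}\cH\to\hom{\K}{M\tensor{}N}{M\tensor{A}N}$ given by $\sum_i x_i\tensor{A}y_i\mapsto\big[m\tensor{}n\mapsto\sum_i x_i m\tensor{A}y_i n\big]$.

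The crucial step is to show that, after restricting $\Psi$ to $\cH\tak{A}\cH\subseteq\cH\tensor{A}\cH$, each $\Psi(\xi)$ factors through the projection $M\tensor{}N\twoheadrightarrow M\tensor{A}N$, and this is exactly where membership in the Takeuchi--Sweedler product, rather than merely in $\cH\tensor{A}\cH$, is used. Explicitly, for $\xi=\sum_i x_i\tensor{A}y_i\in\cH\tak{A}\cH$ one must check that $\sum_i\big(x_i t(\op{a})\big)m\tensor{A}y_i n=\sum_i x_i m\tensor{A}\big(y_i s(a)\big)n$ for all $a\in A$, $m\in M$, $n\in N$; but the left-hand side is $\Psi\big(\sum_i x_i t(\op{a})\tensor{A}y_i\big)(m\tensor{}n)$ and the right-hand side is $\Psi\big(\sum_i x_i\tensor{A}y_i s(a)\big)(m\tensor{}n)$, and the two arguments coincide in $\cH\tensor{A}\cH$ precisely by the defining condition \eqref{eq:takA} of $\cH\tak{A}\cH$. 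Since $\Psi$ is already well defined on all of $\cH\tensor{A}\cH$ by the previous step, the two sides agree, so $\Psi(\xi)$ induces an endomorphism of $M\tensor{A}N$; this produces the assignment \eqref{eq:itwaseasy}, now landing in $\End{\K}{M\tensor{A}N}$.

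It then remains to verify that \eqref{eq:itwaseasy} is a morphism of $\K$-algebras and of $\Ae$-bimodules. Unitality is clear since $1_{\cH\tak{A}\cH}=1_\cH\tensor{A}1_\cH$ acts as the identity, and multiplicativity $\Psi(\xi\xi')=\Psi(\xi)\circ\Psi(\xi')$ follows from the explicit product on $\cH\tak{A}\cH$ together with associativity of the two $\cH$-actions. For the ``moreover'' statement, one spells out the left $\Ae$-action \eqref{eq:actionstak} and the right $\Ae$-action \eqref{eq:actionstak2} on $\cH\tak{A}\cH$, and the $\Ae$-bimodule structure on $\End{\K}{M\tensor{A}N}$ induced pointwise by the regular $A$-bimodule structure of the codomain (on the left) and of the domain (on the right); the two required identities $\Psi\big((a\tensor{}\op{b})\cdot\xi\big)=(a\tensor{}\op{b})\cdot\Psi(\xi)$ and $\Psi\big(\xi\cdot(a\tensor{}\op{b})\big)=\Psi(\xi)\cdot(a\tensor{}\op{b})$ reduce once more to associativity of the actions of $s(a)$ and $t(\op{b})$ on $M$ and $N$.

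Finally, the last displayed assertion is obtained from \eqref{eq:itwaseasy} by the usual tensor--hom adjunction: a $\K$-algebra map $\cH\tak{A}\cH\to\End{\K}{M\tensor{A}N}$ that is $\Ae$-bilinear in the stated sense is exactly the datum of a left $\Ae$-linear, associative and unital action $(\cH\tak{A}\cH)\tensor{\Ae}(M\tensor{A}N)\to M\tensor{A}N$. I expect the only genuinely delicate point of the whole argument to be the well-definedness in the crucial step above: establishing that the diagonal formula descends to $M\tensor{A}N$ is precisely where the Takeuchi--Sweedler condition \eqref{eq:takA} enters, and everything else is bookkeeping with module associativity.
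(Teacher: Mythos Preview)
Your argument is correct and is exactly the standard verification one expects: you first produce a well-defined map out of $\cH\tensor{A}\cH$ into $\hom{\K}{M\tensor{}N}{M\tensor{A}N}$ using the balancing relation \eqref{eq:tensA}, and then you use the Takeuchi--Sweedler condition \eqref{eq:takA} to show that on $\cH\tak{A}\cH$ the image factors through $M\tensor{A}N$; the remaining checks (unitality, multiplicativity, $\Ae$-bilinearity, and the passage to the tensor form via adjunction) are indeed bookkeeping. The paper itself does not supply a proof for this proposition---it simply records the statement as ``extremely well-known'' since it underlies the monoidality of $\lmod{\cH}$---so there is no alternative argument to compare against; your write-up fills in precisely the details the paper elides.
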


\begin{corollary}\label{cor:itwaseasy}
If $M,N$ are left $\cH$-modules, then $M \tensor{A} N$ is a left $\cH$-module via restrictions of scalars along $\Delta$:
\begin{equation}\label{eq:HDeltact}
\cH \to \End{\K}{M \tensor{A} N}, \qquad h \mapsto \Big[ m \tensor{A} n \mapsto \sum h_1\cdot m \tensor{A} h_2 \cdot n\Big]
\end{equation}
and the latter is of $\Ae $-bimodules. Equivalently,
\[\cH \tensor{\Ae } (M \tensor{A} N) \to M \tensor{A} N, \qquad h \tensor{\Ae } \left(m \tensor{A} n\right) \mapsto \sum h_1\cdot m\tensor{A} h_2\cdot n\]
is a left $\Ae $-linear action. In particular, the category of left $\cH$-modules is monoidal with tensor product $\tensor{A}$ and unit object $A$. The left $\cH$-action on $A$ is given by
\begin{equation}\label{eq:epsiaction}
h \cdot a \coloneqq \varepsilon\big(ht(\op{a})\big) = \varepsilon\big(hs(a)\big)
\end{equation}
for all $a \in A$, $h \in \cH$.
\end{corollary}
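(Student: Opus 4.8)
The plan is to deduce everything from Proposition \ref{prop:itwaseasy} by restriction of scalars along the comultiplication. First I would recall, from axiom \ref{item:B4} and the Remark following Definition \ref{def:bialgebroid}, that $\Delta\colon \cH \to \cH \tak{A} \cH$ is a morphism of $\Ae$-rings. Pre-composing the $\K$-algebra map \eqref{eq:itwaseasy} of Proposition \ref{prop:itwaseasy} with $\Delta$ then produces a $\K$-algebra map $\cH \to \End{\K}{M \tensor{A} N}$ which is exactly \eqref{eq:HDeltact}; it is well-defined precisely because $\Delta$ takes values in $\cH \tak{A} \cH$ and not merely in $\cH \tensor{A} \cH$, so that $\sum h_1\cdot m \tensor{A} h_2 \cdot n$ is a legitimate element of $M \tensor{A} N$. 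The $\Ae$-bilinearity of \eqref{eq:HDeltact} is then inherited from that of \eqref{eq:itwaseasy}: since $\Delta$ is a morphism of $\Ae$-rings, it is $\Ae$-bilinear for the regular $\Ae$-actions on $\cH$, which \eqref{eq:DeltaLin} matches with the actions \eqref{eq:actionstak}--\eqref{eq:actionstak2} on $\cH \tak{A} \cH$. The equivalent statement that $\cH \tensor{\Ae}(M \tensor{A} N) \to M \tensor{A} N$ is a left $\Ae$-linear action is just the hom-tensor rephrasing of this.

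For the monoidal structure I would take the associativity and unit isomorphisms to be those of the ambient monoidal category $\left(\Bimod{A}{A},\tensor{A},A\right)$, so that the coherence conditions (pentagon, triangle) hold automatically and it suffices to check that these isomorphisms are morphisms of left $\cH$-modules. The $\cH$-linearity of $\calpha_{M,N,P}$ is immediate from the coassociativity in \eqref{eq:coasscoun}. For the unitors, I first equip $A$ with the action $h \cdot a \coloneqq \varepsilon\big(h t(\op a)\big)$ and check it is a unital action: $1_\cH \cdot a = \varepsilon(1_\cH)a = a$ by \ref{item:B6}, and $h \cdot (h' \cdot a) = \varepsilon\big(h s(\varepsilon(h' s(a)))\big) \stackrel{\ref{item:B5}}{=} \varepsilon(h h' s(a)) = (hh')\cdot a$, where I also use $\varepsilon\circ s = \id_A$ and $\varepsilon\big(t(\op a)\big) = a$, both consequences of the $A$-bilinearity of $\varepsilon$ together with \ref{item:B6}. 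Applying \ref{item:B5} once more with $y = t(\op a)$, since $\varepsilon\big(t(\op a)\big)=a$, yields the asserted coincidence $\varepsilon\big(h t(\op a)\big) = \varepsilon\big(h s(a)\big)$ in \eqref{eq:epsiaction}. Finally, the $\cH$-linearity of $\clambda_M\colon A \tensor{A} M \to M$ reduces, via the left counit axiom applied to $h s(a)$, to the identity $h\cdot(s(a)\cdot m) = (h s(a))\cdot m$ holding in the $\cH$-module $M$; the right unitor $\crho_M\colon M \tensor{A} A \to M$ is handled symmetrically, using the right counit axiom and the target $t$.

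Since the substantive work is carried out in Proposition \ref{prop:itwaseasy}, the corollary is essentially formal. The one point demanding genuine care is the bookkeeping of the several $A$- and $\Ae$-bimodule structures on $\cH$ ($\ls{\cH}$, $\lt{\cH}$, $\sMto{\cH}$, the Takeuchi actions \eqref{eq:actionstak}--\eqref{eq:actionstak2}), so that ``restriction of scalars along $\Delta$'' is literally meaningful and the unit constraints come out $\cH$-linear --- this is exactly the step where the conventions fixed in Section \ref{sec:prelim} do the heavy lifting.
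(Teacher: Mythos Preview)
Your proposal is correct and follows exactly the paper's intended route: the paper gives no explicit proof of this corollary, treating it as an immediate consequence of Proposition~\ref{prop:itwaseasy} by restriction along $\Delta$, and you have simply spelled out the details (well-definedness via \ref{item:B4}, $\Ae$-bilinearity from the $\Ae$-ring morphism property of $\Delta$, coherence of the monoidal structure from coassociativity and counitality, and the verification of \eqref{eq:epsiaction} via \ref{item:B5}--\ref{item:B6}).
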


\begin{definition}
Given a left bialgebroid $(A,\cH)$, a \emph{left $\cH$-module coring} is a comonoid in the monoidal category $\big(\lmod{\cH},\tensor{A},A\big)$ of left $\cH$-modules.
\end{definition}


\subsection{Module corings and relative Hopf modules}\label{ssec:Hopfmods}

Let $(A,\cH)$ be a left bialgebroid and let $\left(\cl{\cH},\cl{\Delta},\cl{\varepsilon}\right)$ be a left $\cH$-module coring. Being an $\cH$-module, $\cl{\cH}$ has an $A$-bimodule structure $\sMto{\cl{\cH}}$ given by
\[
a\cdot u \cdot b \coloneqq \etaup(a \ot \op{b}) \cdot u = s(a)t(\op{b})\cdot u
\]
for all $a,b \in A$, $u \in \cl{\cH}$, with respect to which it is an $A$-coring.
In this setting, we may consider the category
\[
\LHopf{\overline{\cH}}{\cH} \coloneqq \prescript{\overline{\cH}}{}{\Big(\lmod{\cH}\Big)}
\]
of relative $\Big(\overline{\cH},\cH\Big)$-Hopf modules: these are left comodules over the comonoid $\cl{\cH}$ in the monoidal category $\left(\lmod{\cH},\tensor{A},A\right)$. In details, they are left $\cH$-modules $(M,\mu_M\colon \cH \ot M \to M)$ together with a coassociative and counital left $\cl{\cH}$-coaction
\[
\partial_M\colon  M \to \Mto{\cl{\cH}} \tensor{A} \sM{M}, \qquad m \mapsto \sum m_{-1} \tensor{A} m_0,
\] 
which is also a morphism of left $\cH$-modules, that is, for all $h \in \cH$, $m \in M$ one has
\[\sum h_1 \cdot m_{-1} \tensor{A} h_2 \cdot m_0 = h \cdot \partial_M(m) = \partial_M(h \cdot m) = \sum (h \cdot m)_{-1} \tensor{A} (h \cdot m)_0.\]
In the present subsection we are interested in the category $\LHopf{\overline{\cH}}{\cH}$. 
A Hopf algebroid analogue of Doi's equivalence theorem \cite[Theorem 2.3]{Doi} will be of key importance in proving Theorem \ref{thm:bah} below.

Now, suppose that $\pi\colon  \cH \to \cl{\cH}$ is a morphism of left $\cH$-module corings. Recall that if $C$ is an $A$-coring with distinguished group-like element $g\in C$ and if $(N,\partial_N)$ is a left $C$-comodule, then
\[
\coinv{N}{C} \coloneqq \left\{ n\in N ~\big\vert~ \sum n_{-1}\tensor{A} n_0 = g \tensor{A} n \right\}
\]
is the so-called space of \emph{(left) coinvariant elements} in $N$. 
In the standing hypotheses, $\pi(1_\cH)$ is a distinguished group-like element in $\cl{\cH}$ and we may consider 
\[
\coinv{\cH}{\cl{\cH}} = \Big\{x\in \cH ~\big\vert~ \sum \pi(x_1) \tensor{A} x_2 = \pi(1_\cH) \tensor{A} x \Big\},
\]
(also denoted by $\coinv{\cH}{\pi}$), which is the space of (left) coinvariant elements in $\cH$ under the (left) $\cl{\cH}$-coaction $\cH \to \cl{\cH} \tensor{A} \cH$ given by
\begin{equation}\label{eq:smalldelta}
\partial \coloneqq (\pi\tensor{A}\cH)\circ \Delta.
\end{equation}

\begin{remark}
Observe that, in general, $\coinv{\cH}{\cl{\cH}}$ is just a right $A$-submodule (left $\op{A}$-submodule) with respect to the action $\wra$ from \eqref{eq:triangleactions} and a left $A$-submodule  (right $\op{A}$-submodule) with respect to the action $\bla$, because of \eqref{eq:DeltaLin}:
\begin{gather*}
\sum \pi\big((a\bla x)_1\big)\tensor{A}(a\bla x)_2 = \sum \pi(x_1)\tensor{A}x_2t(\op{a}) = \pi(1_\cH) \tensor{A} (a\bla x), \\
\sum \pi\big((x\wra a)_1\big)\tensor{A}(x\wra a)_2 = \sum \pi(x_1)\tensor{A}t(\op{a})x_2 = \pi(1_\cH) \tensor{A} (x\wra a). \qedhere
\end{gather*}
\end{remark}

\begin{lemma}\label{lem:coinvariants}
The space $\coinv{\cH}{\cl{\cH}}$ is an $\Ao$-subring of $\cH$ via $t$ and $\partial$ from \eqref{eq:smalldelta} is right $\coinv{\cH}{\cl{\cH}}$-linear.
\end{lemma}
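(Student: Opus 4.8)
The plan is to verify directly that $\coinv{\cH}{\cl{\cH}}$ is closed under multiplication, contains the image of $t$, and that $\partial$ respects the right action of $\coinv{\cH}{\cl{\cH}}$. First I would observe that the image of $t$ lands in the coinvariants: for $a \in A$, relation \eqref{eq:DeltaLin} gives $\Delta(t(\op{a})) = 1_\cH \tensor{A} t(\op{a})$, so $\partial(t(\op{a})) = \pi(1_\cH) \tensor{A} t(\op{a})$, which is exactly the coinvariance condition. Since $t \colon \op{A} \to \cH$ is an algebra map and $\coinv{\cH}{\cl{\cH}}$ contains $t(\op{A})$, it will be an $\op{A}$-subring of $\cH$ (via $t$) as soon as we know it is a subalgebra; note it automatically contains $1_\cH = t(\op{1})$.

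Next I would check closure under multiplication. Take $x, y \in \coinv{\cH}{\cl{\cH}}$, so $\sum \pi(x_1) \tensor{A} x_2 = \pi(1_\cH) \tensor{A} x$ and similarly for $y$. Using that $\Delta$ is multiplicative (condition \ref{item:B4}) and that $\pi$ is a morphism of left $\cH$-module corings — in particular left $\cH$-linear and compatible with the $A$-bimodule structures — I would compute
\[
\partial(xy) = \sum \pi(x_1 y_1) \tensor{A} x_2 y_2 = \sum \pi\big((x_1)\cdot y_1\big)\tensor{A}x_2 y_2,
\]
where $x_1 \cdot y_1$ denotes the $\cH$-action; since $\pi$ is $\cH$-linear, $\pi(x_1 y_1) = x_1 \cdot \pi(y_1)$ in $\cl\cH$. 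The coassociativity of the $\cl\cH$-coaction $\partial$ on $\cH$ (equivalently, that $\partial$ is a morphism of left $\cH$-modules $\cH \to \cl\cH \tensor{A}\cH$) lets me rewrite the coaction applied to $xy$ by first applying it to $y$, then using $x$-linearity; invoking coinvariance of $y$ collapses the inner coaction to $\pi(1_\cH)\tensor{A}y$, and then coinvariance of $x$ collapses the remaining one. The careful bookkeeping here — keeping track of which tensor leg carries which of the several $A$-actions described in Section \ref{sec:prelim}, and making sure the manipulations are well-defined over $\tensor{A}$ and $\tak{A}$ — is the main obstacle; conceptually it is just the standard fact that coinvariants of a comodule algebra form a subalgebra, transported to the bialgebroid setting.

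Finally, for the right $\coinv{\cH}{\cl{\cH}}$-linearity of $\partial$: writing $B \coloneqq \coinv{\cH}{\cl{\cH}}$, for $x \in \cH$ and $b \in B$ one has $\partial(xb) = \sum \pi(x_1 b_1) \tensor{A} x_2 b_2 = \sum \pi(x_1)\,(x_1' \text{-action on } \pi(b_1)) \tensor{A} x_2 b_2$, and using $\cH$-linearity of $\pi$ together with the coinvariance of $b$ this becomes $\sum \pi(x_1) \tensor{A} x_2 b$, which is precisely $\partial(x)\cdot b$ with $B$ acting on the right-hand tensorand. This uses nothing beyond multiplicativity of $\Delta$, $\cH$-linearity of $\pi$, and the defining property of $B$, so once the closure argument is set up correctly this step is a short variation of it. I would also remark that $\partial$ is left $A$-linear and right $\op{A}$-linear in the appropriate senses, as already noted in the Remark preceding the lemma, so the only genuinely new content is the multiplicative closure and the $B$-linearity of $\partial$.
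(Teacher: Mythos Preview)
Your proposal is correct and follows essentially the same route as the paper: both arguments use $\Delta(t(\op{a}))=1_\cH\tensor{A}t(\op{a})$ for the $\op{A}$-subring part, and for multiplicative closure and right $B$-linearity both compute $\partial(xy)=\sum x_1\cdot\pi(y_1)\tensor{A}x_2y_2$, collapse via coinvariance of $y$, then of $x$. The paper handles the ``careful bookkeeping'' you flag by explicitly invoking the diagonal left $\cH$-action \eqref{eq:HDeltact} on $\cl{\cH}\tensor{A}\cH$ together with the obvious right $\cH$-action on the second tensorand, which is exactly the structure you are reaching for (note: your parenthetical equating coassociativity of $\partial$ with its left $\cH$-linearity is not quite accurate---it is the latter, coming from multiplicativity of $\Delta$ and $\cH$-linearity of $\pi$, that does the work).
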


\begin{proof}
Since $\coinv{\cH}{\cl{\cH}}\subseteq \cH$ and clearly $1_{\cH} \in \coinv{\cH}{\cl{\cH}}$, to check that it is a $\K$-subalgebra we just need to verify that the induced multiplication is well-defined. To this aim, observe that $\cl{\cH}\tensor{A}\cH$ has a left $\cH$-module structure given by Corollary \ref{cor:itwaseasy} and it has a natural right $\cH$-module structure given by
\begin{equation}\label{eq:Hactright}
\cH \to \op{\End{\K}{\cl{\cH}\tensor{A}\cH}}, \qquad x \mapsto \left[u\tensor{A} y \mapsto u \tensor{A} yx\right].
\end{equation}
Thanks to this, we may compute directly that for $x,y\in\coinv{\cH}{\cl{\cH}}$
\begin{gather*}
\sum \pi((xy)_1) \tensor{A} (xy)_2 = \sum \pi(x_1y_1) \tensor{A} x_2y_2 = \sum x_1\cdot \pi(y_1) \tensor{A} x_2y_2 \stackrel{\eqref{eq:HDeltact}}{=} x \cdot \left(\sum \pi(y_1) \tensor{A} y_2\right) \\
 = x\cdot \left( \pi(1_\cH) \tensor{A} y \right) \stackrel{\eqref{eq:HDeltact}}{=} \sum \pi(x_1) \tensor{A} x_2y \stackrel{\eqref{eq:Hactright}}{=} \left(\sum \pi(x_1) \tensor{A} x_2\right)y = \pi(1_\cH) \tensor{A} xy,
\end{gather*}
so that $\coinv{\cH}{\cl{\cH}}$ is indeed a subalgebra. Moreover, since
\[
\sum \pi(t(\op{a})_1) \tensor{A} t(\op{a})_2 = (\pi \tensor{A}\cH)(\Delta(t(\op{a}))) \stackrel{\eqref{eq:DeltaLin}}{=} (\pi \tensor{A}\cH)(1_\cH\tensor{A}t(\op{a})) = \pi(1_\cH)\tensor{A}t(\op{a})
\]
for all $a\in A$, $t$ takes values in $\coinv{\cH}{\cl{\cH}}$ and so $\coinv{\cH}{\cl{\cH}}$ is $\op{A}$-subring of $\cH$ via $t$.

To conclude, notice that if $x \in \cH$ and $b \in \coinv{\cH}{\cl{\cH}}$ then
\begin{align*}
\partial(xb) & = \sum \pi(x_1b_1) \tensor{A} x_2b_2 \stackrel{\eqref{eq:HDeltact}}{=} x \cdot \left(\sum \pi(b_1) \tensor{A} b_2\right)  = x\cdot \left( \pi(1_\cH) \tensor{A} b \right) \\
& \stackrel{\eqref{eq:HDeltact}}{=} \sum \pi(x_1) \tensor{A} x_2b \stackrel{\eqref{eq:Hactright}}{=} \left(\sum \pi(x_1) \tensor{A} x_2\right)b = \partial(x)b. \qedhere
\end{align*}
\end{proof}

Set $B \coloneqq \coinv{\cH}{\cl{\cH}}$, for the sake of brevity. 
Since $\cH$ becomes a right $B$-module by restriction of scalars along $\iota: B \to \cH$, we can consider the category $\lmod{B}$ and the extension of scalars functor
\[F \coloneqq \cH \tensor{B} (-) \colon \lmod{B} \to \lmod{\cH},\] 
which is left adjoint to the restriction of scalars functor ${_{\iota}\big(-\big)} \colon \lmod{\cH} \to \lmod{B}.$
It turns out that the natural transformation
\[
\cH \tensor{B} (-) \to \cl{\cH} \tensor{A} \big(\cH \tensor{B} -\big), \qquad h \tensor{B} - \mapsto \sum\pi(h_1) \tensor{A} \big(h_2 \tensor{B} -\big)
\]
induces a morphism of comonads
\begin{equation}\label{eq:bigtheta}
\Theta\colon  \cH \tensor{B} {_\iota\big(-\big)} \to \cl{\cH} \tensor{A} -, \qquad h \tensor{B} (-) \mapsto \sum \pi(h_1) \tensor{A} h_2\cdot (-)
\end{equation}
as in \cite[Proposition 2.1]{Pepe} and so, since $\LHopf{\overline{\cH}}{\cH}$ is also the category of Eilenberg-Moore objects for the comonad $\cl{\cH} \tensor{A} -$ over $\lmod{\cH}$, the functor $F$ induces a functor $\cK \colon  \lmod{B} \to \LHopf{\cl{\cH}}{\cH}$. The following theorem is nothing else than \cite[Theorem 2.7]{Pepe}.   

\begin{theorem}\label{thm:super!}
Let $(A,\cH)$ be a left bialgebroid with $\sM{\cH}$ $A$-flat. If $\cl{\cH}$ is a left $\cH$-module coring together with a morphism $\pi\colon \cH \to \cl{\cH}$ of left $\cH$-module corings and if $B = \coinv{\cH}{\cl{\cH}}$, then 
\[
\cK \colon  \lmod{B} \to \LHopf{\cl{\cH}}{\cH}, \qquad M \mapsto \cH \tensor{B} M,
\]
is an equivalence of categories if and only if
\begin{enumerate}[leftmargin=0.7cm]
\item $\cH \tensor{B} - \colon  \lmod{B} \to \lmod{\cH}$ is comonadic,
\item the canonical morphism of comonads $\Theta$ is a natural isomorphism.
\end{enumerate}
\end{theorem}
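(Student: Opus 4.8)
The plan is to factor $\cK$ through the comparison functor of the adjunction defining $F \coloneqq \cH\tensor{B}-$, so that each of the two displayed conditions controls exactly one of the two factors. First I would let $U\colon \lmod{\cH}\to\lmod{B}$ be restriction of scalars along $\iota\colon B\to\cH$, so that $F\dashv U$; by definition, $F$ is comonadic precisely when the comparison functor $K_1\colon \lmod{B}\to\lmod{\cH}^{FU}$, $M\mapsto (FM, F\eta_M)$ (with $\eta$ the unit of $F\dashv U$), is an equivalence, and I write $\Phi'$, $R'$ for the forgetful and cofree functors of $\lmod{\cH}^{FU}$. On the other side, since $\sM{\cH}$ is $A$-flat, $\cl{\cH}\tensor{A}-$ is a well-defined comonad on $\lmod{\cH}$ whose Eilenberg--Moore category of coalgebras is exactly $\LHopf{\cl{\cH}}{\cH}$, with forgetful functor $\Phi$ and cofree functor $R$. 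Using \eqref{eq:bigtheta} together with \eqref{eq:HDeltact} and the coinvariance of the elements of $B$ (equivalently, invoking \cite[Proposition 2.1]{Pepe}), I would check that $\Theta\colon FU\to\cl{\cH}\tensor{A}-$ is a morphism of comonads; it then induces a functor $\overline{\Theta}\colon \lmod{\cH}^{FU}\to\LHopf{\cl{\cH}}{\cH}$ that is the identity on underlying $\cH$-modules and on morphisms and satisfies $\Phi\circ\overline{\Theta}=\Phi'$, and a direct comparison of the defining formulas shows $\cK=\overline{\Theta}\circ K_1$.

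Granting this factorization, the ``if'' direction is immediate: when $\Theta$ is a natural isomorphism, $\overline{\Theta}$ is an isomorphism of categories, and when $F$ is comonadic, $K_1$ is an equivalence, so $\cK=\overline{\Theta}\circ K_1$ is an equivalence. For the ``only if'' direction, assume $\cK$ is an equivalence. Since $\Phi\colon \LHopf{\cl{\cH}}{\cH}\to\lmod{\cH}$ is comonadic and $F=\Phi\circ\cK$ is the composite of a comonadic functor with an equivalence, $F$ is comonadic, which is condition (1); in particular $K_1$ is an equivalence, hence so is $\overline{\Theta}=\cK\circ K_1^{-1}$. To recover condition (2), I would take right adjoints in the identity $\Phi'=\Phi\circ\overline{\Theta}$, obtaining $R'\cong\overline{\Theta}^{-1}\circ R$, whence
\[
FU \ = \ \Phi'\circ R' \ \cong \ \Phi\circ\overline{\Theta}\circ\overline{\Theta}^{-1}\circ R \ \cong \ \Phi\circ R \ = \ \cl{\cH}\tensor{A}-,
\]
and tracking the units and counits of these adjunctions through the chain of isomorphisms identifies the resulting isomorphism of comonads with $\Theta$ itself. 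So $\Theta$ is invertible, which is condition (2).

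I expect two main obstacles. The first is the heavy but essentially mechanical bookkeeping: verifying that $\Theta$ really is a morphism of comonads and that $\cK$ coincides with $\overline{\Theta}\circ K_1$ (this is where $A$-flatness of $\sM{\cH}$ is used, both to make $\cl{\cH}\tensor{A}-$ a comonad on $\lmod{\cH}$ and to identify $\LHopf{\cl{\cH}}{\cH}$ with its coalgebras), all of which is carried out in \cite{Pepe}. The second, more conceptual, is the last step of the ``only if'' direction: that an equivalence between Eilenberg--Moore coalgebra categories lying over $\lmod{\cH}$ and induced by a comonad morphism must itself be induced by an isomorphism. The adjoint-uniqueness argument above reduces this to the mere existence of the cofree functors, so no extra hypotheses on $\lmod{\cH}$ are needed; alternatively one may simply appeal to \cite[Theorem 2.7]{Pepe}, of which the statement is a reformulation.
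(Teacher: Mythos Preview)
Your proposal is correct and follows the standard comonad-comparison argument that underlies \cite[Theorem 2.7]{Pepe}; the paper itself offers no independent proof but simply states that the theorem ``is nothing else than \cite[Theorem 2.7]{Pepe}.'' Since you explicitly reduce the only delicate point (recovering invertibility of $\Theta$ from the equivalence $\overline{\Theta}$ via uniqueness of right adjoints and the identification of the resulting mate with $\Theta$) to that reference as well, your approach and the paper's coincide.
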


\begin{lemma}
The natural transformation $\Theta$ of \eqref{eq:bigtheta} is a natural isomorphism if and only if
\begin{equation}\label{eq:xiisback}
\xi\colon \cH_\iota \tensor{B} {_\iota\cH} \to \Mto{\cl{\cH}} \tensor{A} \sM{\cH}, \qquad x \tensor{B} y \mapsto \sum \pi(x_1) \tensor{A} x_2y,
\end{equation}
is an isomorphism.
\end{lemma}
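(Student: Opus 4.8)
The plan is to identify $\xi$ with the component $\Theta_{\cH}$ of the natural transformation $\Theta$ from \eqref{eq:bigtheta} at the \emph{left regular} $\cH$-module, and then to propagate invertibility from this single module to all of $\lmod{\cH}$ by a free-presentation argument.

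First I would dispose of the easy implication. If $\Theta$ is a natural isomorphism, then evaluating \eqref{eq:bigtheta} at $M=\cH$ viewed as a left $\cH$-module via left multiplication gives $h_2\cdot y=h_2y$, so that $\Theta_{\cH}(x\tensor{B}y)=\sum\pi(x_1)\tensor{A}x_2y$; this is exactly the map $\xi$ of \eqref{eq:xiisback}, which is therefore an isomorphism.

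For the converse, assume $\xi=\Theta_{\cH}$ is bijective. I would first record that the source and the target of $\Theta$, regarded as endofunctors of $\lmod{\cH}$, are both right exact and preserve arbitrary coproducts: for $\cH\tensor{B}{}_{\iota}(-)$ this holds because restriction of scalars along $\iota$ preserves all colimits while $\cH\tensor{B}-$ is a left adjoint, and for $\Mto{\cl{\cH}}\tensor{A}\sM{(-)}$ it holds because colimits of left $\cH$-modules are computed on underlying $\K$-vector spaces, restriction of scalars along $\etaup\colon\Ae\to\cH$ is exact and coproduct-preserving, and tensoring a fixed bimodule over $A$ is right exact and coproduct-preserving (the $\cH$-module structure of Corollary \ref{cor:itwaseasy} being built on top of this). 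Now use that $\cH$ is a projective generator of $\lmod{\cH}$: an arbitrary left $\cH$-module $M$ fits into an exact sequence $\cH^{(J)}\xrightarrow{f}\cH^{(I)}\xrightarrow{g}M\to 0$, and applying the two functors together with the naturality of $\Theta$ yields a commutative diagram with exact rows whose first two vertical arrows are canonically identified with $\bigoplus_{J}\Theta_{\cH}$ and $\bigoplus_{I}\Theta_{\cH}$, hence are isomorphisms (a natural transformation between coproduct-preserving functors commutes with coproducts, and $\Theta_{\cH}=\xi$ is bijective by assumption). A routine diagram chase --- uniqueness of cokernels, or the five lemma --- then forces the third vertical arrow $\Theta_M$ to be an isomorphism as well, and since $M$ is arbitrary $\Theta$ is a natural isomorphism.

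I expect the only genuinely technical point to be the verification that $\Mto{\cl{\cH}}\tensor{A}\sM{(-)}$ is right exact and coproduct-preserving as an endofunctor of $\lmod{\cH}$: one must keep careful track of which of the several $A$-module structures on an $\cH$-module is used to form the tensor product, and check that composing restriction of scalars along $\etaup$ with this tensor product preserves right exactness. Everything else is formal; in particular, note that no flatness hypothesis on $\sM{\cH}$ is needed for this lemma, in contrast with Theorem \ref{thm:super!}.
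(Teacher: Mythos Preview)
Your argument is correct, but the paper takes a shorter and more direct route. Rather than invoking right exactness, preservation of coproducts, and a free presentation, the paper simply observes that $\xi$ is a morphism of \emph{right} $\cH$-modules for the regular right actions on $\cH_\iota \tensor{B} {}_\iota\cH$ and on $\Mto{\cl{\cH}} \tensor{A} \sM{\cH}$. Consequently $\xi \tensor{\cH} M$ is an isomorphism for every left $\cH$-module $M$, and the canonical identifications $(\cH_\iota \tensor{B} {}_\iota\cH)\tensor{\cH}M \cong \cH_\iota \tensor{B} {}_\iota M$ and $(\Mto{\cl{\cH}} \tensor{A} \sM{\cH})\tensor{\cH}M \cong \Mto{\cl{\cH}} \tensor{A} \sM{M}$ turn $\xi \tensor{\cH} M$ into $\Theta_M$. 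In effect the paper recognises both endofunctors as $N \tensor{\cH} -$ for suitable $(\cH,\cH)$-bimodules $N$, with $\Theta$ equal to $\xi \tensor{\cH} -$, which makes the passage from $\xi$ to all of $\Theta$ a one-liner.

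Your free-presentation argument is the standard general-purpose technique and works perfectly well here; its advantage is that you never need to verify the right $\cH$-linearity of $\xi$ or track the bimodule structures explicitly. The paper's approach trades that bookkeeping for a shorter proof with no diagram chase. Your closing remark that no flatness of $\sM{\cH}$ is needed for this lemma is correct for both approaches.
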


\begin{proof}
If $\Theta$ is a natural isomorphism, then $\Theta_{\cH} = \xi$ is an isomorphism. Conversely, since $\xi$ is right $\cH$-linear with respect to the regular right $\cH$-module structures, we may conclude that if $\xi$ is an isomorphism, then
\[\xi \tensor{\cH} M \colon \Big(\cH_\iota \tensor{B} {_\iota\cH}\Big) \tensor{\cH} M \to \left(\Mto{\cl{\cH}} \tensor{A} \sM{\cH}\right)\tensor{\cH} M, \qquad \left(x \tensor{B} y\right)\tensor{\cH}m \mapsto \sum \left(\pi(x_1) \tensor{A} x_2y \tensor{\cH} m\right),\]
is an isomorphism for every $M$ in $\lmod{\cH}$, natural in $M$. The commutativity of the diagram
\[
\xymatrix{
\Big(\cH_\iota \tensor{B} {_\iota\cH}\Big) \tensor{\cH} M \ar[r]^-{\xi \tensor{\cH} M}  \ar[d]_-{\cong} & \left(\Mto{\cl{\cH}} \tensor{A} \sM{\cH}\right)\tensor{\cH} M \ar[d]^-{\cong} \\
\cH_\iota \tensor{B} {_\iota M} \ar[r]_-{\Theta_M} & \Mto{\cl{\cH}} \tensor{A} \sM{M}
}
\]
allows us to conclude that if $\xi$ is an isomorphism, then $\Theta$ is a natural isomorphism. Notice that, due to the fact that only regular module structures are involved, the vertical isomorphisms are, in fact, the canonical isomorphism
\begin{gather*}
\Big(\cH_\iota \tensor{B} {_\iota\cH}\Big) \tensor{\cH} M \cong \cH_\iota \tensor{B} \Big({_\iota\cH} \tensor{\cH} M\Big) \cong \cH_\iota \tensor{B} {_\iota M} \qquad \text{and} \\
\left(\Mto{\cl{\cH}} \tensor{A} \sM{\cH}\right)\tensor{\cH} M \cong \Mto{\cl{\cH}} \tensor{A} \left(\sM{\cH}\tensor{\cH} M\right) \cong \Mto{\cl{\cH}} \tensor{A} \sM{M}. \qedhere
\end{gather*}
\end{proof}

In general, even when $\cK$ is not an equivalence of categories, $\cK$ still admits a right adjoint functor, as in the case for ordinary Hopf algebras.

\begin{proposition}
The construction $\coinv{M}{\cl{\cH}} \coloneqq \{ m \in M \mid \sum m_{-1} \tensor{A} m_0 = \pi(1_\cH) \tensor{A} m\}$ for every $M$ in $\LHopf{\cl{\cH}}{\cH}$ induces a functor $\coinv{(-)}{\cl{\cH}} \colon  \LHopf{\cl{\cH}}{\cH} \to \lmod{B}$ which is right adjoint to $\cK$. Unit and counit are given by
\begin{align}
\zeta_{V} \colon  & V \to \coinv{(\cH \tensor{B} V)}{\cl{\cH}}, \qquad & & v \mapsto 1 \tensor{B} v, \qquad & & \Big(V \text{ in }\lmod{B}\Big) \notag \\
\theta_M \colon  & \cH \tensor{B} \coinv{M}{\cl{\cH}} \to M, \qquad & & h \tensor{B} m \mapsto h \cdot m, \qquad & & \Big(M \text{ in }\LHopf{\cl{\cH}}{\cH}\Big) \label{eq:counit}
\end{align}
respectively.
\end{proposition}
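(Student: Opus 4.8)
The plan is to verify directly that the quadruple $\left(\cK,\ \coinv{(-)}{\cl{\cH}},\ \zeta,\ \theta\right)$ forms an adjunction, proceeding in four routine steps: first, that $\coinv{(-)}{\cl{\cH}}$ is a well-defined functor with values in $\lmod{B}$; second, that $\zeta_V$ and $\theta_M$ are well defined; third, that $\zeta$ and $\theta$ are natural; and fourth, that the two triangle identities hold.

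For the first step, the only real point is that $\coinv{M}{\cl{\cH}}$ is a $B$-submodule of $M$ for each $M$ in $\LHopf{\cl{\cH}}{\cH}$. Given $b\in B=\coinv{\cH}{\cl{\cH}}$ and $m\in\coinv{M}{\cl{\cH}}$, I would use that $\partial_M$ is left $\cH$-linear for the diagonal action on $\Mto{\cl{\cH}}\tensor{A}\sM{M}$ and that $\pi$ is left $\cH$-linear (so $\pi(b_1)=b_1\cdot\pi(1_\cH)$), obtaining $\partial_M(b\cdot m)=b\cdot\bigl(\pi(1_\cH)\tensor{A}m\bigr)=\sum\pi(b_1)\tensor{A}b_2\cdot m$; applying the $A$-balanced map $u\tensor{A}x\mapsto u\tensor{A}x\cdot m$ to the coinvariance identity $\sum\pi(b_1)\tensor{A}b_2=\pi(1_\cH)\tensor{A}b$ then yields $\partial_M(b\cdot m)=\pi(1_\cH)\tensor{A}b\cdot m$, so $b\cdot m\in\coinv{M}{\cl{\cH}}$. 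Functoriality is immediate afterwards: a morphism $f$ in $\LHopf{\cl{\cH}}{\cH}$ is $\cl{\cH}$-colinear, hence carries coinvariants to coinvariants, and left $\cH$-linear, hence $B$-linear.

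For the second and third steps, I would use that, as established in the discussion preceding Theorem \ref{thm:super!}, $\cK(V)=\cH\tensor{B}V$ carries the relative Hopf module coaction $h\tensor{B}v\mapsto\sum\pi(h_1)\tensor{A}(h_2\tensor{B}v)$. Since $\Delta(1_\cH)=1_\cH\tensor{A}1_\cH$, the element $1_\cH\tensor{B}v$ is coinvariant, so $\zeta_V$ is well defined, and it is $B$-linear because for $b\in B$ one has $b\cdot\zeta_V(v)=b\tensor{B}v=1_\cH\tensor{B}(b\cdot v)=\zeta_V(b\cdot v)$. The map $\theta_M$ is well defined over $B$ and left $\cH$-linear by associativity of the $\cH$-action, and its $\cl{\cH}$-colinearity follows from $\partial_M(h\cdot m)=h\cdot\partial_M(m)=h\cdot\bigl(\pi(1_\cH)\tensor{A}m\bigr)=\sum\pi(h_1)\tensor{A}h_2\cdot m$ for $m\in\coinv{M}{\cl{\cH}}$, which is precisely $(\cl{\cH}\tensor{A}\theta_M)$ applied to the coaction of $\cK\bigl(\coinv{M}{\cl{\cH}}\bigr)$. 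Naturality of $\zeta$ in $V$ and of $\theta$ in $M$ is a direct check on elements.

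For the last step, the triangle identities collapse to one-line computations: $\theta_{\cK(V)}\bigl(\cK(\zeta_V)(h\tensor{B}v)\bigr)=\theta_{\cK(V)}\bigl(h\tensor{B}(1_\cH\tensor{B}v)\bigr)=h\cdot(1_\cH\tensor{B}v)=h\tensor{B}v$, and $\coinv{\theta_M}{\cl{\cH}}\bigl(\zeta_{\coinv{M}{\cl{\cH}}}(m)\bigr)=\theta_M(1_\cH\tensor{B}m)=1_\cH\cdot m=m$. I expect the \emph{only} genuinely delicate aspect of the whole argument to be the bookkeeping of the several $A$-module structures — source versus target, left- versus right-handed — labelling the domains and codomains of $\partial_M$ and of the tensor products $\Mto{\cl{\cH}}\tensor{A}\sM{M}$ and $\cH\tensor{B}M$, together with checking that each displayed map is balanced over the correct ring; once those identifications are fixed, every verification above is routine.
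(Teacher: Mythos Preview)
Your proof is correct, but it takes a different route from the paper's. The paper's argument is a two-line appeal to the general machinery of \cite[Proposition 2.3]{Pepe}: since $\LHopf{\cl{\cH}}{\cH}$ is the Eilenberg--Moore category for the comonad $\cl{\cH}\tensor{A}-$ and $\cK$ is the comparison functor induced by the morphism of comonads $\Theta$, the right adjoint exists abstractly and is computed objectwise as the equalizer of the pair $\partial_M$ and $m\mapsto\pi(1_\cH)\tensor{A}m$, which is then identified with $\coinv{M}{\cl{\cH}}$. Your approach instead verifies the adjunction axioms directly and elementarily. The paper's route is shorter and situates the statement within the comonadic framework already in play (which is reused in Theorem~\ref{thm:super!}); your route is entirely self-contained and makes all the $A$- and $B$-balancing explicit, at the cost of more bookkeeping. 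Both are perfectly valid, and your caveat about the delicate tracking of the various $A$-actions is apt.
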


\begin{proof}
Given a relative Hopf module $(M,\mu_M,\partial_M)$, denote by $\Omega(M)$ its underlying $H$-module structure $(M,\mu_M)$. In view of \cite[Proposition 2.3]{Pepe}, $\cK$ admits a right adjoint which is explicitly realized on objects as the equalizer of the parallel pair
\[
\xymatrix @C=50pt{
{_\iota \Omega(M)} \ar@<+0.5ex>[r]^-{m \mapsto \pi{1_\cH} \tensor{A} m} \ar@<-0.5ex>[r]_-{\partial_M} & {_\iota \Big( \cl{\cH} \tensor{A} \Omega(M)\Big)},
}
\]
which is exactly $\coinv{M}{\cl{\cH}}$.
\end{proof}


\section{The correspondence for arbitrary Hopf Algebroids}\label{sec:N1}

This Section contains our first main result, namely,  Theorem \ref{mainthm:Galois}.


\subsection{The correspondence between left ideals two-sided coideals and right coideal subrings}\label{sec:ITCANBEDONE}

Henceforth, all bialgebroids will be left ones. 

\subsubsection{General facts about left ideals two-sided coideals and right coideal subrings}\label{ssec:GenFacts}
As a matter of notation, for $(A,\cH)$ a bialgebroid and $B\subseteq \cH$ we set
\[
\cH^+\coloneqq \ker(\varepsilon) \qquad \text{and} \qquad B^+\coloneqq B \cap \cH^+.
\]
Denote by $\iota\colon B \to \cH$ the inclusion. Assume that $B$ is an $\op{A}$-subring of $\cH$ via $t$ (as we will have later in the paper), that is, $B$ is an $\op{A}$-ring via a $\K$-algebra extension $t'\colon \op{A}\to B$ such that $\iota \circ t'= t$.
If we consider the restriction $\varepsilon' \coloneqq \varepsilon \circ \iota$ of $\varepsilon$ to $B$, then the latter is a right $A$-linear morphism (left $\op{A}$-linear, for the sake of precision) and $B^+ = \cH^+ \cap B = \ker(\varepsilon)\cap B = \ker(\varepsilon')$.

\begin{remark}\label{rem:sesright}
If $B$ is an $\op{A}$-subring of $\cH$ via $t$, then we have that
\begin{equation}\label{eq:ssesright}
\xymatrix{
0 \ar[r] & B^+ \ar[r]^{i} & B \ar[r]^{\varepsilon'} & A \ar[r] & 0
}
\end{equation}
is a short exact sequence of left $B$-modules, where $A$ has the left $B$-module structure coming from the restriction of scalars along the inclusion $\iota$ of $B$ into $\cH$. Namely,
\begin{equation}\label{eq:BonA}
b\cdot a \coloneqq \varepsilon\Big(\iota(b)t(\op{a})\Big) = \varepsilon\Big(\iota\big(bt'(\op{a})\big)\Big) = \varepsilon'\Big(bt'(\op{a})\Big)
\end{equation}
 for all $a\in A,b\in B$ (see \eqref{eq:epsiaction}). Moreover, \eqref{eq:ssesright} is a split short exact sequence of right $A$-modules (in fact, left $\op{A}$-modules via $t'$), where $t'$ provides an $A$-linear section for $\varepsilon'$ in view of \ref{item:B6}:
\[\varepsilon'\big(t'(\op{a})\big) = \varepsilon\Big(\iota\big(t'(\op{a})\big)\Big) = \varepsilon\big(t(\op{a})\big) = a. \qedhere\]
\end{remark}

Recall that an $\Ao$-subring $B$ of $\cH$ via $t$ is a \emph{right $\cH$-comodule $\Ao$-subring} if it admits a right $A$-linear coaction $\delta \colon  \Mto{B} \to \Mto{B} \tensor{A} \sMto{\cH}$ such that the following diagram commutes
\begin{equation}\label{eq:comodulesubring}
\begin{gathered}
\xymatrix @R=12pt{B \ar[r]^-{\delta} \ar[d]_-{\iota} & B \tensor{A} \cH \ar[d]^-{\iota \tensor{A} \cH} \\ \cH \ar[r]_-{\Delta} & \cH \tensor{A} \cH. }
\end{gathered}
\end{equation}

\begin{remark}\label{rem:monoidality}
It is a well-known fact that if $(A,\cH)$ is a left bialgebroid, then the category $\lcomod{\cH}$ of left $\cH$-comodules is a monoidal category with tensor product $\tensor{A}$ and unit object $A$, where the $\cH$-coaction on $A$ is provided by the source map (see, e.g., \cite[Theorem 3.18]{Bohm:handbook}, where the property is stated for the right-hand scenario) and where every left $\cH$-comodule $M$ is a right $A$-module with action
\begin{equation}\label{eq:addAction}
m \cdot a \coloneqq \sum\varepsilon\big(m_{-1}s(a)\big) \cdot m_0
\end{equation}
for all $m \in M$ and $a \in A$. By applying this result to the co-opposite left bialgebroid $(\Ao,\cH^{\mathrm{co}})$, we conclude that the category $\rcomod{\cH}$ of right comodules over the left bialgebroid $(A,\cH)$ is monoidal too, with tensor product $\tensor{\Ao}$ and unit $\Ao$, where the $\cH$-coaction on $\Ao$ is provided by the target map. In this setting, if $\sM{\cH} = {}_{{A\tensor{}\op{1}}}\cH$ is $A$-flat, then a right $\cH$-comodule $\Ao$-subring is nothing other than a monoid in $\big(\rcomod{\cH},\tensor{\Ao},\Ao\big)$. 
\end{remark}

Recall that a \emph{$2$-sided coideal} $N$ in a bialgebroid $(A,\cH)$ is an $A$-subbimodule of $\sMto{\cH}$ such that
\[
\Delta(N) \subseteq \img \Big( N \tensor{A} \cH + \cH \tensor{A} N\Big)  \qquad \text{and} \qquad \varepsilon(N)=0_A,
\] 
where $\img(-)$ denotes the canonical image in the tensor product $\sMto{\cH} \tensor{A} \sMto{\cH}$.


\subsubsection{Correspondence between left ideals two-sided coideals and quotient module corings}

\begin{proposition}\label{prop:uffachefatica}
We have a well-defined bijective correspondence
\[
\xymatrix @R=0pt{
{\left\{ \begin{array}{c} \text{left ideals} \\ 2\text{-sided coideals in } \cH \end{array} \right\}} \ar[r]^-{\Xi} & {\left\{ \begin{array}{c} \text{quotient left } \cH\text{-module} \\ \text{corings of } \cH \end{array} \right\}} \\
I \ar@{|->}[r] & \left[\cH \xrightarrow{\pi} {\cH}/{I}\right] \\
\ker(\pi) & \ar@{|->}[l] \left[\cH \xrightarrow{\pi} \cl{\cH}\right].
}
\]
\end{proposition}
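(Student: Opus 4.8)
The claim is that $\Xi$ is a bijection between left ideals that are two-sided coideals in $\cH$ and quotient left $\cH$-module corings of $\cH$, with the two maps being $I \mapsto [\cH \xrightarrow{\pi} \cH/I]$ and $\cl{\cH} \mapsto \ker(\pi)$. The plan is to verify, in turn: (i) that $\Xi$ is well-defined, i.e. if $I$ is a left ideal and two-sided coideal, then $\cH/I$ genuinely carries the structure of a left $\cH$-module coring with $\pi$ a morphism thereof; (ii) that the inverse assignment is well-defined, i.e. the kernel of a quotient module coring map is a left ideal two-sided coideal; and (iii) that the two assignments are mutually inverse, which on one side is the tautology $\ker(\cH \to \cH/I) = I$ and on the other side amounts to checking that two quotient module corings with the same kernel are ``the same'' as quotients (i.e. the identification is via the canonical factorization).

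\textbf{Step (i): well-definedness of $\Xi$.} Let $I$ be a left ideal and two-sided coideal. Since $I$ is a left ideal, $\cH/I$ is a left $\cH$-module and $\pi\colon\cH\to\cH/I$ is left $\cH$-linear. It remains to produce $\cl{\Delta}\colon\cH/I\to\cH/I\tensor{A}\cH/I$ and $\cl{\varepsilon}\colon\cH/I\to A$ making $\cH/I$ a comonoid in $(\lmod{\cH},\tensor{A},A)$ and $\pi$ a morphism of comonoids. First I would note that $I$, being an $A$-subbimodule of $\sMto{\cH}$, makes $\cH/I$ an $A$-bimodule and, via the $\cH$-action, a left $\cH$-module whose underlying $A$-bimodule agrees with the quotient one (this uses that the $A$-bimodule structure on an $\cH$-module comes from restriction along $\etaup$, as in Proposition \ref{prop:itwaseasy} and Corollary \ref{cor:itwaseasy}). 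The condition $\varepsilon(I)=0_A$ gives a factorization $\cl{\varepsilon}\colon\cH/I\to A$ of $\varepsilon$, which is automatically a morphism of left $\cH$-modules onto the unit object $A$ of $(\lmod{\cH},\tensor{A},A)$, because the $\cH$-action on $A$ is $h\cdot a=\varepsilon(ht(\op a))$ (equation \eqref{eq:epsiaction}) and $\varepsilon$ is multiplicative in the bialgebroid sense. For $\cl{\Delta}$: the composite $\cH\xrightarrow{\Delta}\cH\tensor{A}\cH\xrightarrow{\pi\tensor{A}\pi}\cH/I\tensor{A}\cH/I$ kills $I$ precisely because $\Delta(I)\subseteq\img(I\tensor{A}\cH+\cH\tensor{A}I)$, so it factors through a map $\cl{\Delta}\colon\cH/I\to\cH/I\tensor{A}\cH/I$; coassociativity and counitality of $\cl{\Delta}$ are inherited from those of $\Delta$ by surjectivity of $\pi$ (and $\pi\tensor{A}\pi$), and left $\cH$-linearity of $\cl{\Delta}$ is inherited from the fact that $\Delta$ induces the monoidal $\cH$-module structure on $\cH\tensor{A}\cH$ (Corollary \ref{cor:itwaseasy}) together with naturality of $\pi$. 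Thus $\cH/I$ is a left $\cH$-module coring and $\pi$ is a morphism of such. One mild care point here: one must check the natural candidate maps are well-defined on the balanced tensor product $\cH/I\tensor{A}\cH/I$ and that the $\cH$-flatness is \emph{not} needed at this stage (it is only needed later in the paper for Theorem \ref{thm:super!}); the quotient module coring structure itself is purely equational.

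\textbf{Steps (ii) and (iii): the inverse and bijectivity.} Given a quotient left $\cH$-module coring, i.e. a surjection $\pi\colon\cH\twoheadrightarrow\cl{\cH}$ of left $\cH$-module corings, set $I\coloneqq\ker(\pi)$. Since $\pi$ is left $\cH$-linear and surjective, $I$ is a left ideal of $\cH$ and $\cl{\cH}\cong\cH/I$ as left $\cH$-modules, the iso being compatible with $\pi$ and the projection. Since $\pi$ is a morphism of $A$-corings, $I$ is an $A$-subbimodule of $\sMto{\cH}$ with $\varepsilon(I)\subseteq\ker(\cl{\varepsilon})$-wait, more precisely $\cl{\varepsilon}\circ\pi=\varepsilon$ forces $\varepsilon(I)=0_A$; and $\cl{\Delta}\circ\pi=(\pi\tensor{A}\pi)\circ\Delta$ forces $(\pi\tensor{A}\pi)(\Delta(I))=0$, which (since $\ker(\pi\tensor{A}\pi)$, for $\pi$ surjective, is exactly $\img(I\tensor{A}\cH+\cH\tensor{A}I)$ — this uses right-exactness of the tensor product, with no flatness needed) gives $\Delta(I)\subseteq\img(I\tensor{A}\cH+\cH\tensor{A}I)$. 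Hence $I$ is a left ideal two-sided coideal, so the inverse assignment is well-defined. Finally, $\ker(\Xi(I))=\ker(\cH\to\cH/I)=I$ on the nose, and conversely, starting from $\pi\colon\cH\twoheadrightarrow\cl{\cH}$, the induced $\cH\to\cH/\ker(\pi)$ is, under the canonical identification $\cH/\ker(\pi)\cong\cl{\cH}$, the same quotient module coring; the identification is an isomorphism of left $\cH$-module corings because $\pi\tensor{A}\pi$ and $\pi$ are epimorphisms and all the structure maps are determined by $\Delta,\varepsilon$. So the two assignments are mutually inverse.

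\textbf{Main obstacle.} Essentially everything here is formal diagram-chasing; the one genuinely substantive point — and the place I would be most careful — is the identification $\ker(\pi\tensor{A}\pi)=\img(I\tensor{A}\cH+\cH\tensor{A}I)$ for a surjection $\pi\colon\cH\twoheadrightarrow\cH/I$ of $A$-bimodules. This is the ``two-sided coideal'' analogue of the familiar fact for coalgebras over a field, and over a base ring it follows from right-exactness of $-\tensor{A}-$ applied to the short exact sequences $I\hookrightarrow\cH\twoheadrightarrow\cH/I$ on each side; crucially it does \emph{not} require $\sM{\cH}$ to be $A$-flat, which is why this Proposition can be stated with no flatness hypothesis, in contrast with Theorem \ref{thm:super!}. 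Everything else is a matter of checking that the evident factorizations exist and carry the evident structure.
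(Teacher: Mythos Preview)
Your proposal is correct and follows essentially the same approach as the paper's own proof: construct the quotient coring structure from the coideal conditions, verify left $\cH$-linearity of $\cl{\Delta}$ and $\cl{\varepsilon}$, and for the inverse direction invoke the identification $\ker(\pi\tensor{A}\pi)=\img\big(\ker(\pi)\tensor{A}\cH+\cH\tensor{A}\ker(\pi)\big)$, which the paper also uses explicitly. The paper packages the $\cH$-linearity verifications into two large commutative diagrams rather than your narrative argument, but the content is the same; your remark that no flatness is needed here (only right-exactness of $\tensor{A}$) is correct and worth keeping.
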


\begin{proof}
Let us begin by picking a left ideal $2$-sided coideal $I\subseteq \cH$ and consider the canonical projection $\pi\colon \cH \to \cH/I$. Since $I$ is a left $\cH$-submodule of $\cH$, $\cH/I$ becomes a left $\cH$-module and, in particular, an $\Ae$-module with $\sMto{\cH/I}$ and $\pi$ is left $\cH$-linear and, in particular, $A$-bilinear. Since $I$ is a $2$-sided coideal, $\sMto{\cH/I}$ inherits a structure $\left(\overline{\Delta},\overline{\varepsilon}\right)$ of $A$-coring in such a way that
\[
\begin{gathered}
\xymatrix @R=15pt{
{\sMto{\cH}} \ar[r]^-{\pi} \ar[d]_-{\Delta} & {\displaystyle \sMto{\frac{\cH}{I}}} \ar[d]^-{\overline{\Delta}} \\
{\sMto{\cH} \tensor{A} {\sMto{\cH}}} \ar[r]_-{\pi\tensor{A}\pi} & { \displaystyle \sMto{\frac{\cH}{I}}  \tensor{A} \sMto{\frac{\cH}{I}} }
}
\end{gathered}
\qquad \text{and} \qquad
\begin{gathered}
\xymatrix{
{\sMto{\cH}} \ar[rr]^-{\pi} \ar[dr]_-{\varepsilon} & & {\displaystyle \sMto{\frac{\cH}{I}}} \ar[dl]^-{\overline{\varepsilon}} \\
 & A & 
}
\end{gathered}
\]
are commutative diagrams in $\Bimod{A}{A} = \lmod{\Ae}$. Let us show that both $\overline{\Delta}$ and $\overline{\varepsilon}$ are morphisms of left $\cH$-modules. Set $\cl{\cH} \coloneqq \cH/I$, for the sake of brevity. The left $\cH$-linearity of $\overline{\Delta}$ is provided by the commutativity of the following diagram
\[
\xymatrix @!0 @R=45pt @C=90pt{
\cH \tensor{\Ae} \cH \ar[rrrr]^-{\mathrm{multiplication}} \ar[dr]^-{\cH \tensor{\Ae} \pi} \ar[ddd]_-{\cH \tensor{\Ae} \Delta} & & & & \cH \ar[dl]^-{\pi} \ar[ddd]^-{\Delta} \\
 & \cH \tensor{\Ae} \overline{\cH}  \ar[rr]^-{\mathrm{multiplication}} \ar[d]_-{\cH \tensor{\Ae} \overline{\Delta}} & & \overline{\cH} \ar[d]^-{\overline{\Delta}} & \\
 & \cH \tensor{\Ae} \left(\overline{\cH} \tensor{A} \overline{\cH}\right) \ar[dr]^-{\Delta \tensor{\Ae} \left(\overline{\cH} \tensor{A} \overline{\cH}\right) } & & \overline{\cH} \tensor{A} \overline{\cH} & \\
\cH \tensor{\Ae} \left(\cH \tensor{A} \cH\right) \ar[ur]^-{\cH \tensor{\Ae} (\pi \tensor{A} \pi)} \ar[drr]_-{\Delta \tensor{\Ae} \left(\cH \tensor{A}\cH\right)} & & \left(\cH \tak{A} \cH\right) \tensor{\Ae} \left(\overline{\cH} \tensor{A} \overline{\cH} \right) \ar[ur]^-{\eqref{eq:itwaseasy}} & & \cH \tensor{A} \cH \ar[ul]_-{\pi \tensor{A} \pi} \\
 & & \left(\cH \tak{A} \cH\right) \tensor{\Ae} \left(\cH \tensor{A} \cH \right). \ar[urr]_-{\eqref{eq:itwaseasy}} \ar[u]|-{\left(\cH \tak{A} \cH\right) \tensor{\Ae} \left(\pi \tensor{A} \pi \right)} & & 
}
\]
The left $\cH$-linearity of $\overline{\varepsilon}$ instead is provided by the commutativity of the following diagram
\[
\xymatrix@R=15pt{
\cH \tensor{\Ae} \cH \ar[rrrr]^-{\cH \tensor{\Ae} \pi} \ar@/_5ex/[dddrr]_-{\cH \tensor{\Ae} \varepsilon} \ar[dr]^(0.6){\mathrm{multiplication}} & & & & \cH \tensor{\Ae} \overline{\cH} \ar[dl]_(0.6){\mathrm{multiplication}} \ar@/^5ex/[dddll]^-{\cH \tensor{\Ae} \overline{\varepsilon}} \\
 & \cH \ar[dr]_-{\varepsilon} \ar[rr]^-{\pi} & & \overline{\cH} \ar[dl]^-{\overline{\varepsilon}} & \\
 & & A & & \\
 & & \cH \tensor{\Ae} A. \ar[u]_-{\eqref{eq:epsiaction}} & & 
}
\]
Summing up, $\cH/I$ is a left $\cH$-module coring.

In the opposite direction, assume that we have a surjective morphism of vector spaces $\pi \colon  \cH \to \cl{\cH}$ which is of left $\cH$-module corings and consider $\ker(\pi) \subseteq \cH$. Since $\pi$ is of left $\cH$-modules, $\ker(\pi)$ is a left ideal in $\cH$. Moreover, being $\pi$ of $A$-corings, we have that the following diagram with exact rows commutes
\[
\xymatrix{
0 \ar[r] & \ker(\pi) \ar[r] \ar@{.>}[d] & \cH \ar[r]^{\pi} \ar[d]_-{\Delta} & \cl{\cH} \ar[r] \ar[d]^-{\cl{\Delta}} & 0 \\
0 \ar[r] & \ker(\pi \tensor{A} \pi) \ar[r] & \cH \tensor{A} \cH \ar[r]_-{\pi \tensor{A} \pi} & \cl{\cH} \tensor{A} \cl{\cH} \ar[r] & 0.
}
\]
Since $\ker(\pi \tensor{A} \pi) = \img\big(\ker(\pi) \tensor{A} \cH + \cH \tensor{A} \ker(\pi)\big)$ and since $\varepsilon\big(\ker(\pi)\big) \subseteq \cl{\varepsilon}\big(\pi\big(\ker(\pi)\big)\big) = 0_A$, it follows that $\ker(\pi)$ is also a $2$-sided coideal.
\end{proof}


\subsubsection{From left ideal two-sided coideals to right coideal subrings}

\begin{proposition}\label{prop:OneIsGone}
Under the assumption that $\sM{\cH} = {}_{{A\tensor{}\op{1}}}\cH$ is $A$-flat, we have a well-defined inclusion-preserving correspondence
\[
\xymatrix @R=0pt{
{\left\{ \begin{array}{c} \text{left ideals} \\ 2\text{-sided coideals in } \cH \end{array} \right\}} \ar[r]^-{\Psi} & {\left\{ \begin{array}{c} \text{right } \cH\text{-comodule} \\ \op{A}\text{-subrings via }t \text{ in } \cH \end{array} \right\}} \\
I \ar@{|->}[r] & \coinv{\cH}{\frac{\cH}{I}}
}
\]
\end{proposition}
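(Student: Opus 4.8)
The plan is to show that the assignment $I \mapsto \coinv{\cH}{\cH/I}$ is well-defined as a map into right $\cH$-comodule $\op{A}$-subrings via $t$, and that it preserves inclusions. By Proposition \ref{prop:uffachefatica}, a left ideal $2$-sided coideal $I$ corresponds to a quotient left $\cH$-module coring $\pi\colon \cH \to \cl{\cH} \coloneqq \cH/I$, so that the composite correspondence $\Psi$ is precisely $\Psi = \coinv{\cH}{(-)} \circ \Xi$. Thus it suffices to prove that for any quotient left $\cH$-module coring $\pi\colon \cH \to \cl{\cH}$, the space $B \coloneqq \coinv{\cH}{\cl{\cH}}$ is a right $\cH$-comodule $\op{A}$-subring of $\cH$ via $t$, and that this assignment is monotone.

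First I would recall that, by Lemma \ref{lem:coinvariants}, $B = \coinv{\cH}{\cl{\cH}}$ is already an $\op{A}$-subring of $\cH$ via $t$, so the only missing piece is the right $\cH$-comodule structure. The natural candidate for the coaction is the restriction of $\Delta$, that is, I would verify that $\Delta(B) \subseteq B \tensor{A} \sMto{\cH}$ — more precisely that $\Delta$ corestricts to a map $\delta \colon \Mto{B} \to \Mto{B} \tensor{A} \sMto{\cH}$ fitting into the commuting square \eqref{eq:comodulesubring}. Concretely, for $x \in B$ one has $\sum \pi(x_1) \tensor{A} x_2 = \pi(1_\cH) \tensor{A} x$; I want to deduce from this that each ``first leg'' $x_1$ appearing in $\Delta(x)$ can be taken in $B$. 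This is where the $A$-flatness hypothesis on $\sM{\cH} = {}_{{A\tensor{}\op{1}}}\cH$ enters: flatness guarantees that the inclusion $B \tensor{A} \cH \hookrightarrow \cH \tensor{A} \cH$ is injective (and similarly for iterated tensor products), so that $B$, being the equalizer defining coinvariants, is preserved under $\Delta$ by a diagram chase. More precisely, applying coassociativity, $\Delta(\Delta(x)) = (\cH \tensor{A} \Delta)(\Delta(x))$, and using $\sum \pi(x_1)\tensor{A} x_2 = \pi(1_\cH)\tensor{A} x$ together with the compatibility of $\pi$ with $\Delta$, one checks that $\sum \pi(x_{11}) \tensor{A} x_{12} \tensor{A} x_2 = \pi(1_\cH) \tensor{A} x_1 \tensor{A} x_2$, i.e. each $x_1 \in \coinv{\cH}{\cl{\cH}} = B$ (here flatness lets us identify $B \tensor{A} \cH$ inside $\cH \tensor{A} \cH$ to read off the membership). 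Once $\delta \colon B \to B \tensor{A} \cH$ is defined this way, coassociativity and counitality of $\delta$ are inherited directly from those of $\Delta$, and right $A$-linearity is immediate; the square \eqref{eq:comodulesubring} commutes by construction.

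Next I would observe that $\delta$ makes $B$ into a monoid in $\big(\rcomod{\cH}, \tensor{\Ao}, \Ao\big)$: the multiplicativity of $\delta$ follows from that of $\Delta$ (recalling from the Remark after Definition \ref{def:bialgebroid} that $\Delta$ is an algebra map into $\cH \tak{A} \cH$), and the unit $\op{A} \xrightarrow{t'} B$ is a comodule map because $\Delta \circ t = (1_\cH \tensor{A} t)$ by \eqref{eq:DeltaLin}. In the language of Remark \ref{rem:monoidality}, when $\sM{\cH}$ is $A$-flat this is exactly the statement that $B$ is a right $\cH$-comodule $\op{A}$-subring via $t$. Finally, monotonicity: if $I \subseteq J$ are left ideal $2$-sided coideals, then the projection $\cH/I \twoheadrightarrow \cH/J$ is a morphism of left $\cH$-module corings compatible with the counits, and a coinvariant element for $\cH/I$ is visibly coinvariant for $\cH/J$, so $\coinv{\cH}{\cH/I} \subseteq \coinv{\cH}{\cH/J}$.

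The main obstacle I anticipate is the verification that $\Delta$ corestricts to $B$, namely the step ``each first leg $x_1$ lies in $B$'': this genuinely requires the $A$-flatness of $\sM{\cH}$ to make sense of the subobject $B \tensor{A} \cH$ (and $B \tensor{A} \cH \tensor{A} \cH$) inside the corresponding tensor powers of $\cH$, and to perform the diagram chase cleanly. Everything else — the algebra and coalgebra compatibilities, right $A$-linearity, and monotonicity — is a routine transfer of the corresponding properties of $\cH$ through the projection $\pi$ and should present no difficulty.
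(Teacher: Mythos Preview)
Your proposal is correct and follows essentially the same approach as the paper: invoke Proposition~\ref{prop:uffachefatica} and Lemma~\ref{lem:coinvariants} for the $\op{A}$-subring structure, then use $A$-flatness of $\sM{\cH}$ to conclude that $B \tensor{A} \cH$ is the equalizer of the pair $\big((\pi\circ t)\tensor{A}\cH\tensor{A}\cH,\ (\pi\tensor{A}\cH\tensor{A}\cH)(\Delta\tensor{A}\cH)\big)$ and verify via coassociativity that $\Delta(x)$ equalizes it, exactly as the paper does. One small sharpening: the point of flatness is not merely that $B \tensor{A} \cH \hookrightarrow \cH \tensor{A} \cH$ is injective, but that tensoring with $\sM{\cH}$ preserves the equalizer defining $B$, so that this subobject really \emph{is} the equalizer of the tensored pair; you clearly have this in mind, but stating it that way makes the diagram chase airtight.
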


\begin{proof}
Let us begin by picking a left ideal $2$-sided coideal $I\subseteq \cH$ and consider the canonical projection $\pi\colon \cH \to \cH/I$.  
We already know from Proposition \ref{prop:uffachefatica} that $\cH/I$ is an $A$-coring and that the canonical projection $\pi\colon  \cH \to \cH/I$ is a morphism of left $\cH$-module corings. Therefore, in view of Lemma \ref{lem:coinvariants}, $\coinv{\cH}{\frac{\cH}{I}}$ is an $\op{A}$-subring of $\cH$ via $t$.
To finish checking the validity of the first correspondence, we are left to check that it is an $\cH$-comodule with respect to the obvious coaction induced by $\Delta$. To this aim, recall that $\coinv{\cH}{\frac{\cH}{I}}$ can be realized as the following equalizer in $\rmod{A}$ (in fact, in $\lmod{\op{A}}$)
\begin{equation}\label{eq:eqright}
\xymatrix{
\Mto{\coinv{\cH}{\frac{\cH}{I}}} \ar[r] & \Mto{\cH} \ar@<+0.5ex>[rr]^-{\pi t ~\tensor{A} \cH} \ar@<-0.5ex>[rr]_-{(\pi~\tensor{A}\cH)\Delta} && {\displaystyle \frac{\cH}{I} \tensor{A} \Mto{\cH}}.
}
\end{equation}
Since $\sM{\cH}$ is $A$-flat, it is enough for us to check that, for every $x\in\coinv{\cH}{\frac{\cH}{I}}$, $\sum x_1 \tensor{A} x_2$ equalizes the pair $\Big((\pi\circ t) \tensor{A} \cH \tensor{A} \cH, (\pi\tensor{A}\cH\tensor{A}\cH)(\Delta\tensor{A}\cH)\Big)$. However,
\begin{align*}
(\pi ~\tensor{A}~ & \cH \tensor{A} \cH)(\Delta\tensor{A}\cH)\left(\sum x_1 \tensor{A} x_2\right) = (\pi\tensor{A}\cH\tensor{A}\cH)(\Delta\tensor{A}\cH)\Delta(x) \\ 
& \stackrel{\eqref{eq:coasscoun}}{=} (\pi\tensor{A}\cH\tensor{A}\cH)(\cH\tensor{A}\Delta)\Delta(x) = \left(\frac{\cH}{I}\tensor{A}\Delta\right)(\pi\tensor{A}\cH)\Delta(x) \stackrel{\eqref{eq:eqright}}{=} \left(\frac{\cH}{I}\tensor{A}\Delta\right)(\pi(1_\cH)\tensor{A}x) \\
& = \sum \pi(1_\cH) \tensor{A} x_1 \tensor{A} x_2 = \big((\pi\circ t)\tensor{A}\cH\tensor{A}\cH\big)\left(\sum x_1\tensor{A}x_2\right)
\end{align*}
and hence $\coinv{\cH}{\frac{\cH}{I}}$ is indeed a right $\cH$-comodule $\op{A}$-subring via $t$.

It is clear that the correspondence is inclusion-preserving because if $I \subseteq J$, then we have a left $\cH$-linear surjective morphism of $A$-corings $\cH/I \twoheadrightarrow \cH/J$ and hence $\coinv{\cH}{\frac{\cH}{I}} \subseteq \coinv{\cH}{\frac{\cH}{J}}$.
\end{proof}


\subsubsection{From right coideal subrings to left ideals two-sided coideals}

In this section we consider a right $\cH$-comodule $\Ao$-subring $B$ of $\cH$ and utilise the notation introduced in Section~\ref{ssec:GenFacts}.
\begin{lemma}
Let $B$ be a right $\cH$-comodule $\Ao$-subring of $\cH$ via $t$ and let $\iota\colon  B \to \cH$ be the inclusion. Denote by $\pi\colon  \cH \to \cH/\cH B^+$ the canonical projection of left $\cH$-modules. Then
\begin{equation}\label{eq:piiotaB}
\pi \circ \iota = \pi \circ \iota \circ t'\circ \varepsilon' = \pi \circ t \circ \varepsilon \circ \iota.
\end{equation}
In particular,
\begin{equation}\label{eq:piiotaDelta}
\left(\pi \tensor{A} \cH\right) \circ \Delta \circ \iota = (\pi \tensor{A} \cH) \circ (t \tensor{A} \cH) \circ \iota.
\end{equation}
\end{lemma}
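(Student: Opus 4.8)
The plan is to prove the two displayed identities essentially by tracking what the defining relation of $\cH B^+$ and the comodule-subring axiom force. First I would set up the right-hand side: since $B$ is an $\op{A}$-subring via $t$, Remark~\ref{rem:sesright} gives the split short exact sequence $0 \to B^+ \to B \xrightarrow{\varepsilon'} A \to 0$ of left $\op{A}$-modules, split by $t'$. Splitting along $t' \circ \varepsilon'$ means that for every $b \in B$ we can write $b = t'(\op{\varepsilon'(b)}) + (b - t'(\op{\varepsilon'(b)}))$, and the second summand lies in $\ker(\varepsilon') = B^+$ by the section identity $\varepsilon'(t'(\op{a})) = a$. Applying $\iota$ and then $\pi$, the correction term $b - t'(\op{\varepsilon'(b)})$ maps into $\cH B^+ \subseteq \ker(\pi)$, so $\pi(\iota(b)) = \pi(\iota(t'(\op{\varepsilon'(b)})))$; this is the first equality in \eqref{eq:piiotaB}. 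The second equality then follows immediately from the compatibility relations defining the $\op{A}$-subring structure, $\iota \circ t' = t$ and $\varepsilon \circ \iota = \varepsilon'$, since $\iota \circ t' \circ \varepsilon' = t \circ \varepsilon' = t \circ \varepsilon \circ \iota$.

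Next I would deduce \eqref{eq:piiotaDelta} from \eqref{eq:piiotaB} together with the comodule-subring axiom \eqref{eq:comodulesubring}. The key point is that the commuting square \eqref{eq:comodulesubring} says $(\iota \tensor{A} \cH)\circ \delta = \Delta \circ \iota$, so $(\pi \tensor{A} \cH)\circ \Delta \circ \iota = (\pi \tensor{A} \cH)\circ(\iota \tensor{A}\cH)\circ \delta = (\pi \circ \iota \tensor{A} \cH)\circ \delta$. Now apply the first equality of \eqref{eq:piiotaB} inside the first tensor-leg: $(\pi\circ\iota \tensor{A}\cH)\circ\delta = (\pi\circ t\circ\varepsilon\circ\iota \tensor{A}\cH)\circ\delta$. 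Here I would use the right $A$-linearity and counitality of the coaction $\delta\colon \Mto{B}\to \Mto{B}\tensor{A}\sMto{\cH}$ to contract $(\varepsilon\circ\iota \tensor{A}\cH)\circ\delta = (\varepsilon'\tensor{A}\cH)\circ\delta$: by counitality of the $\cH$-comodule $B$ (together with the $A$-bimodule identifications, since $\varepsilon'$ is the restriction of the counit and the coaction is built so that $\varepsilon'$ is compatible with it), one gets $(\varepsilon'\tensor{A}\cH)\circ\delta = \operatorname{pr}$, the structure map $B \to A \to$ (appropriate $\cH$-component), i.e.\ essentially $b \mapsto $ the image of $\varepsilon'(b)$; more precisely $(t\circ\varepsilon\circ\iota \tensor{A}\cH)\circ\delta$ collapses to $\delta$ composed with the counit, yielding $(\pi\tensor{A}\cH)\circ(t\tensor{A}\cH)\circ\iota$ after re-expanding via the same square. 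I would write this contraction out carefully using Sweedler notation for $\delta(b) = \sum b_0 \tensor{A} b_1$, where counitality reads $\sum b_0 \varepsilon'(b) \ldots$—in fact the cleanest route is: $(\pi\tensor{A}\cH)\Delta(\iota(b)) = (\pi\tensor{A}\cH)(\iota\tensor{A}\cH)\delta(b) = \sum \pi(\iota(b_0))\tensor{A} b_1 = \sum \pi(t(\varepsilon(\iota(b_0))))\tensor{A}b_1 = \pi\big(t(\,\cdot\,)\big)\tensor{A}$-applied to $(\varepsilon'\tensor{A}\cH)\delta(b) = $ the coaction-counit, which by counitality of $\delta$ equals the element of $A\tensor{A}\sMto{\cH} \cong \sMto{\cH}$ recording $\iota(b)$ up to the target; tracing through gives exactly $(\pi\tensor{A}\cH)(t\tensor{A}\cH)\iota(b)$.

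Concretely, the statement $(\varepsilon'\tensor{A}\cH)\circ\delta$: I expect this to identify with the map $B \to A\tensor{A}\sMto{\cH}$ which, under $\clambda$, is $b\mapsto$ (the canonical image); but rather than invoking abstract counitality I would just use that $B$ is a right $\cH$-comodule and that the diagram \eqref{eq:comodulesubring} together with counitality of $\Delta$ forces $(\varepsilon\circ\iota \tensor{A}\cH)\delta(b) = \iota(b)$ viewed appropriately, hence $(t\circ\varepsilon\circ\iota\tensor{A}\cH)\delta(b)$ — wait, this needs care about which leg the counit hits. The correct bookkeeping: since $\varepsilon$ is a left character and $B$ coacts on the right, counitality of the right $\cH$-comodule $B$ gives $\sum b_0 \cdot \varepsilon'(b_1)$-type relation; I would instead short-circuit by noting $(\pi\tensor{A}\cH)(t\tensor{A}\cH)\iota = (\pi\tensor{A}\cH)(t\tensor{A}\cH)(\varepsilon'\tensor{A}\cH)\delta$ by right $A$-linearity of everything and then recombine via \eqref{eq:piiotaB} in the form $\pi\circ t\circ\varepsilon\circ\iota = \pi\circ\iota$.

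\textbf{Main obstacle.} The genuine subtlety — and the step I'd spend the most care on — is not \eqref{eq:piiotaB}, which is a clean consequence of the splitting in Remark~\ref{rem:sesright} and the subring compatibilities, but rather the bookkeeping in \eqref{eq:piiotaDelta}: keeping straight the various $A$- versus $\op{A}$-module structures (the coaction $\delta$ lands in $\Mto{B}\tensor{A}\sMto{\cH}$, $\pi$ is left $\cH$-linear hence $A$-bilinear for $\sMto{-}$, and $t$ intertwines these in a specific way), and using counitality of the right $\cH$-comodule $B$ together with \eqref{eq:comodulesubring} to perform the contraction correctly. I expect the proof to reduce to: apply $\pi\tensor{A}\cH$ to $\Delta\circ\iota = (\iota\tensor{A}\cH)\circ\delta$, rewrite the first leg $\pi\circ\iota$ as $\pi\circ t\circ\varepsilon\circ\iota$ using \eqref{eq:piiotaB}, then observe $(\varepsilon\circ\iota\tensor{A}\cH)\circ\delta$ is precisely (the $A$-component of) $\iota$ by counitality — which is again \eqref{eq:comodulesubring} fed through $\varepsilon\tensor{A}\cH$ and the counit axiom $(\varepsilon\tensor{A}\cH)\Delta = \id$ — giving $(\pi\tensor{A}\cH)(t\tensor{A}\cH)\iota$ on the nose.
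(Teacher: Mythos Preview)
Your proof is correct and follows essentially the same approach as the paper. For \eqref{eq:piiotaB} your argument is identical to the paper's: split $b = t'(\op{\varepsilon'(b)}) + \big(b - t'(\op{\varepsilon'(b)})\big)$ with the second summand in $B^+$, then use $\iota \circ t' = t$ and $\varepsilon' = \varepsilon \circ \iota$. For \eqref{eq:piiotaDelta}, after some detours you arrive at exactly the paper's four-line computation: apply \eqref{eq:comodulesubring} to rewrite $\Delta \circ \iota = (\iota \tensor{A} \cH)\circ \delta$, use \eqref{eq:piiotaB} on the first leg, then apply \eqref{eq:comodulesubring} \emph{again} to convert $(\iota \tensor{A} \cH)\circ \delta$ back to $\Delta \circ \iota$, and finish with the counit axiom $(\varepsilon \tensor{A} \cH)\circ \Delta = \id_\cH$ of the coring $\cH$. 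Your intermediate attempts to invoke counitality of the right comodule $B$ directly were misdirected (for a right coaction the counit hits the $\cH$-leg, not the $B$-leg), but your final paragraph recovers the clean route, which is precisely the paper's.
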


\begin{proof}
Notice that for all $b \in B$ we have that $b - t'\big(\op{\varepsilon'(b)}\big) \in B^+$ and hence $\iota\Big(b - t'\big(\op{\varepsilon'(b)}\big)\Big) \in \cH B^+$. Therefore,
\[
\pi\iota(b) = \iota(b) + \cH B^+ = \iota\left(t'\left(\op{\varepsilon'(b)}\right)\right) + \iota\big(b - t'\left(\op{\varepsilon'(b)}\right)\big) + \cH B^+ = \pi\iota\left(t'\left(\op{\varepsilon'(b)}\right)\right).
\]
The right-most equality follows by definition of $t'$ and $\varepsilon'$.
To conclude, observe that
\begin{align*}
\left(\pi \tensor{A} \cH\right) \circ \Delta \circ \iota & \stackrel{\eqref{eq:comodulesubring}}{=} \left(\pi \tensor{A} \cH\right) \circ \left(\iota \tensor{A} \cH\right) \circ \delta \\
& \stackrel{\eqref{eq:piiotaB}}{=} \left(\pi \tensor{A} \cH\right) \circ (t \tensor{A} \cH) \circ (\varepsilon \tensor{A} \cH) \circ \left(\iota \tensor{A} \cH\right) \circ \delta \\
& \stackrel{\eqref{eq:comodulesubring}}{=} \left(\pi \tensor{A} \cH\right) \circ (t \tensor{A} \cH) \circ (\varepsilon \tensor{A} \cH) \circ \Delta \circ \iota \\
& \stackrel{\phantom{\eqref{eq:piiotaB}}}{=} (\pi \tensor{A} \cH) \circ (t \tensor{A} \cH) \circ \iota. \qedhere
\end{align*}
\end{proof}

\begin{proposition}\label{prop:Phiright}
We have a well-defined inclusion-preserving correspondence
\[
\xymatrix @R=0pt{
{\left\{ \begin{array}{c} \text{right } \cH\text{-comodule} \\ \op{A}\text{-subrings via }t \text{ in } \cH \end{array} \right\}} \ar[r]^-{\Phi} & {\left\{ \begin{array}{c} \text{left ideals} \\ 2\text{-sided coideals in } \cH \end{array} \right\}} \\
B \ar@{|->}[r] & \cH B^+
}
\]
\end{proposition}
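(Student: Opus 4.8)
The plan is to verify, one at a time, the three requirements on $\Phi(B) \coloneqq \cH B^+$ — that it is a left ideal, that $\varepsilon$ kills it, and that it is a $2$-sided coideal — and then observe monotonicity. The first two come essentially for free. By construction $\cH B^+$ is the left ideal of $\cH$ generated by $B^+$, so it is a left $\cH$-submodule and in particular is stable under left multiplication by the images of $s$ and $t$, i.e. it is an $A$-subbimodule of $\sMto{\cH}$. For $h\in\cH$ and $b\in B^+$ one has $\varepsilon(hb) = \varepsilon\big(hs(\varepsilon(b))\big) = 0$ by axiom~\ref{item:B5}, since $\varepsilon(b)=\varepsilon'(b)=0_A$; hence $\varepsilon(\cH B^+)=0_A$ by linearity. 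Monotonicity is immediate: $B\subseteq B'$ forces $B^+ = B\cap\cH^+\subseteq B'\cap\cH^+ = B'^+$ and therefore $\cH B^+\subseteq\cH B'^+$.

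The substance is the $2$-sided coideal condition $\Delta(\cH B^+)\subseteq\img\big(\cH B^+\tensor{A}\cH + \cH\tensor{A}\cH B^+\big)$. Writing $\pi\colon\cH\to\cH/\cH B^+$ for the canonical projection of left $\cH$-modules, this is equivalent (recall, as in the proof of Proposition~\ref{prop:uffachefatica}, that $\ker(\pi\tensor{A}\pi)=\img(\cH B^+\tensor{A}\cH + \cH\tensor{A}\cH B^+)$) to showing that $(\pi\tensor{A}\pi)\circ\Delta$ vanishes on $\cH B^+$. I would first note that, endowing $(\cH/\cH B^+)\tensor{A}(\cH/\cH B^+)$ with the left $\cH$-module structure of Corollary~\ref{cor:itwaseasy}, the map $(\pi\tensor{A}\pi)\circ\Delta\colon\cH\to(\cH/\cH B^+)\tensor{A}(\cH/\cH B^+)$ is left $\cH$-linear — this follows from multiplicativity of $\Delta$ and left $\cH$-linearity of $\pi$ — so that, since $\cH B^+$ is the left ideal generated by $B^+$, it suffices to prove $(\pi\tensor{A}\pi)\big(\Delta(b)\big)=0$ for $b\in B^+$. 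This is where the comodule-subring hypothesis is used: by the compatibility \eqref{eq:comodulesubring}, $\Delta(b)=(\iota\tensor{A}\cH)(\delta(b))$ admits a representative $\sum b_1\tensor{A}b_2$ with all $b_1$ in $B$. I would then run the classical trick: since by Remark~\ref{rem:sesright} the sequence \eqref{eq:ssesright} splits via $t'$, each $b_1$ differs from $t\big(\op{\varepsilon(b_1)}\big)$ by an element of $B^+\subseteq\cH B^+$, so $\pi(b_1)=\pi\big(t(\op{\varepsilon(b_1)})\big)$; replacing each $b_1$ accordingly, using that $t(\op a)$ acts on the left $\cH$-module $\cH/\cH B^+$ as right $A$-multiplication by $a$, pulling that action across $\tensor{A}$ so that it becomes $s(a)$-multiplication on the second tensorand, and finally invoking the counit axiom \eqref{eq:coasscoun} together with $\pi(b)=0$, one lands at $(\pi\tensor{A}\pi)\big(\Delta(b)\big)=\pi(1_\cH)\tensor{A}\pi(b)=0$.

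The main obstacle, as usual with bialgebroids, is bookkeeping rather than conceptual: one must keep the several $A$- and $\op{A}$-module structures on $\cH$ and on the quotient straight, and in particular handle the tensor convention \eqref{eq:tensA} carefully (over which $A$-actions $\tensor{A}$ balances) when transferring the factor $t\big(\op{\varepsilon(b_1)}\big)$ from one leg to the other. A secondary pitfall to avoid is splitting $\Delta(b)$ into a sum and multiplying termwise by $\Delta(h)$ inside $\cH\tak{A}\cH$, since the individual summands need not lie in the Takeuchi product; routing the argument through the left $\cH$-linear reduction above (or, alternatively, through the $\cH\tak{A}\cH$-module structure on $\cH\tensor{A}\cH$ from Proposition~\ref{prop:itwaseasy}) sidesteps this. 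Note that no flatness hypothesis on $\sM{\cH}$ is needed for this correspondence.
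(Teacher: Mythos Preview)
Your proof is correct and follows essentially the same approach as the paper: both hinge on the identity $\pi\circ\iota = \pi\circ t\circ\varepsilon\circ\iota$ (your replacement of $b_1$ by $t\big(\op{\varepsilon(b_1)}\big)$ modulo $B^+$, the paper's \eqref{eq:piiotaB}), combined with the counit axiom to collapse $(\pi\tensor{A}\pi)\Delta$ on $\cH B^+$. The only organizational difference is that you reduce to generators $b\in B^+$ via the left $\cH$-linearity of $(\pi\tensor{A}\pi)\circ\Delta$, whereas the paper assembles a commutative diagram of $\cH$-module maps (its \eqref{eq:superdiagrright}) to handle a general element $\sum_i h_i\iota(b_i)$ directly and in the process obtains the slightly stronger intermediate formula $(\pi\tensor{A}\cH)\Delta\big(\sum_i h_i\iota(b_i)\big)=\sum_i\pi((h_i)_1)\tensor{A}(h_i)_2\iota(b_i)$, which it reuses later for Proposition~\ref{prop:unit}.
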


\begin{proof}
Assume that we have a right $\cH$-comodule $\op{A}$-subring $B$ of $\cH$, where the $\op{A}$-ring structure comes from a $\K$-algebra extension $t'\colon \op{A}\to B$ as in Remark \ref{rem:sesright}. We may tensor \eqref{eq:ssesright} by $\cH$ over $B$ on the left and eventually apply the Snake Lemma to find the following commutative diagram of left $\cH$-modules, with exact rows,
\[
\xymatrix @C=40pt{
 & \cH\tensor{B}B^+ \ar[r]^{\cH \tensor{B}i} \ar@{->>}[d]_-{\nu'} & \cH\tensor{B}B \ar[r]^{\cH\tensor{B}\varepsilon'}\ar[d]_-{\nu}^-{\cong} & \cH\tensor{B}A \ar[r] \ar[d]^-{\tilde{\nu}}_-{\cong} & 0 \\
0 \ar[r] & \cH B^+ \ar[r]^{j} & \cH \ar[r]^{\pi} & {\frac{\cH}{\cH B^+}} \ar[r] & 0
}
\]
If we consider $I \coloneqq \cH B^+ = \nu'\left( \cH\tensor{B}B^+\right)$, then this is a left $\cH$-ideal by construction. Let us prove that it is also a two-sided coideal.

First of all, let us collect some observations that will be needed afterwards. 
Recall from Corollary \ref{cor:itwaseasy} that we have a natural $\cH$-action on $\cH \tensor{A} \cH$ given by
\begin{equation}\label{eq:Hactionright}
\cH \ot (\cH \tensor{A} \cH) \xrightarrow{\cdot} \cH \tensor{A} \cH, \qquad x \ot \left(\sum_j h_j \tensor{A} k_j\right) \mapsto \sum_{j} x_1h_j \tensor{A} x_2k_j. 
\end{equation} 
and a natural action of $\cH$ on $\cH/\cH B^+ \tensor{A} \cH$ given by
\begin{equation}\label{eq:Hactionquoright}
\cH \ot \left(\frac{\cH}{\cH B^+} \tensor{A} \cH\right) \xrightarrow{\cdot} \frac{\cH}{\cH B^+} \tensor{A} \cH, \qquad x \ot \left(\sum_j \pi(h_j) \tensor{A} k_j\right) \mapsto \sum_{j} \pi(x_1h_j) \tensor{A} x_2k_j,
\end{equation}
where $\pi \colon  \cH \to \cH/\cH B^+$ is the canonical projection. Moreover, $B\subseteq \cH$ is a right $\cH$-comodule with a right $A$-linear coaction $\delta \colon  \Mto{B} \to \Mto{B} \tensor{A} \sMto{\cH}$ such that \eqref{eq:comodulesubring} commutes, that is,
for every $b \in B$ we have
\begin{equation}\label{eq:iotarhoright}
\sum \iota(b_{0}) \tensor{A} b_1 = \sum \iota(b)_1 \tensor{A} \iota(b)_2.
\end{equation}
Therefore, \eqref{eq:Hactionright} induces an action of $B$ on $\cH \tensor{A} \cH$ via restriction of scalars along $\iota$,
\[
B \otimes \left(\cH \tensor{A} \cH\right) \to \cH \tensor{A} \cH, \qquad b \ot \left(\sum_j h_j \tensor{A} k_j\right) \mapsto \sum_{j} \iota(b)_1h_j \tensor{A} \iota(b)_2k_j = \sum_{j} \iota(b_0)h_j \tensor{A} b_{1}k_j,
\]
with respect to which $\Delta\colon  \cH \to \cH \tensor{A} \cH$ becomes left $B$-linear:
\[\Delta\big(\iota(b)h\big) = \sum \iota(b)_1h_1 \tensor{A} \iota(b)_2h_2 = b \cdot \left(\sum h_1 \tensor{A} h_2\right).\]
Similarly, \eqref{eq:Hactionquoright} induces an action of $B$ on $\cH/\cH B^+ \tensor{A} \cH$. In particular, \eqref{eq:Hactionright} and \eqref{eq:Hactionquoright} themselves factor through the tensor product over $B$ (by associativity of the multiplication in $\cH$):
\[\cH \tensor{B} (\cH \tensor{A} \cH) \xrightarrow{\mu} \cH \tensor{A} \cH \qquad \text{and} \qquad \cH \tensor{B} \left(\frac{\cH}{\cH B^+} \tensor{A} \cH\right) \xrightarrow{\mu'} \frac{\cH}{\cH B^+} \tensor{A}\cH .\]
Furthermore, we have a well-defined action of $B$ on $B \tensor{A} \cH$:
\[
B \otimes \left(B \tensor{A} \cH\right) \to B \tensor{A} \cH, \qquad b \ot \left(\sum_j b'_j \tensor{A} h_j\right) \mapsto \sum_{j} b_0b'_j \tensor{A} b_{1}h_j
\]
and $\delta\colon  B \to B \tensor{A} \cH$ becomes left $B$-linear with respect to this $B$-module structure and the regular $B$-action on the domain. By summing up all the informations we collected, we have a commutative diagram of left $\cH$-modules and left $\cH$-module homomorphisms
\begin{equation}\label{eq:superdiagrright}
\begin{gathered}
\xymatrix @C=45pt{
 & \cH \tensor{B} \cH \ar[dr]^-{\cH \tensor{B} \Delta} & & \\
\cH \tensor{B} B \ar[ur]^-{\cH \tensor{B} \iota} \ar[r]_-{\cH \tensor{B} \delta} \ar[d]_-{\nu} & \cH \tensor{B} \left(B \tensor{A} \cH\right) \ar[r]_-{\cH \tensor{B} \left(\iota \tensor{A} \cH\right)} & \cH \tensor{B} \left(\cH \tensor{A} \cH\right) \ar[d]^-{\mu} \ar[r]^-{\cH \tensor{B} \left(\pi \tensor{A} \cH\right)} & {\displaystyle \cH \tensor{B} \left(\frac{\cH}{\cH B^+} \tensor{A} \cH\right)} \ar[d]^-{\mu'} \\
\cH \ar[rr]_-{\Delta} & & \cH \tensor{A} \cH \ar[r]_-{\pi \tensor{A} \cH} & {\displaystyle \frac{\cH}{\cH B^+} \tensor{A} \cH}.
}
\end{gathered}
\end{equation}
In view of this, for every $\sum_i h_i \iota(b_i) \in \cH B$ we have
\[
\begin{aligned}
(\pi \tensor{A} \cH)\left(\Delta\left(\sum_i h_i \iota(b_i)\right)\right) & \stackrel{\eqref{eq:superdiagrright}}{=} \mu' \Big(\cH \tensor{A} \big(\left(\pi \tensor{A} \cH\right)\circ \Delta \circ \iota\big)\Big)\left(\sum_i h_i \tensor{B} b_i\right) \\
 & \stackrel{\eqref{eq:piiotaDelta}}{=} \sum_i h_i \cdot \Big(\pi(1_\cH)\tensor{A} \iota(b_i)\Big) \stackrel{\eqref{eq:Hactionquoright}}{=} \sum_i \pi\left((h_i)_1\right)  \tensor{A} (h_i)_2\iota(b_i)
\end{aligned}
\]
and hence for all $\sum_i h_i \iota(b_i) \in \cH B^+$ we have
\[
(\pi \tensor{A} \pi)\left(\Delta\left(\sum_i h_i \iota(b_i) \right)\right) = \sum_i \pi\left((h_i)_1\right) \tensor{A} \pi\Big((h_i)_2 \iota(b_i)\Big) \stackrel{\eqref{eq:piiotaB}}{=} 0.
\]
Since, in addition,
\[
\varepsilon\left(\sum_i h_i \iota(b_i)\right) = \sum_i \varepsilon(h_i \iota(b_i)) \stackrel{\ref{item:B5}}{=} \sum_i \varepsilon\Big(h_i \iota \big(t'\left(\op{\varepsilon'(b_i)}\right)\big)\Big) = 0,
\]
we have that $\cH B^+$ is a two-sided coideal (in view of \cite[17.14]{BrWi}, for instance).
Again it is evident that if $B \subseteq \tilde{B}$, then $\cH B^+ \subseteq \cH \tilde{B}^+$. Thus, the correspondence is inclusion-preserving.
\end{proof}


\subsubsection{The canonical inclusions}

\begin{proposition}\label{prop:unit}
Let $(A,\cH)$ be a left bialgebroid over $A$.
Let $B$ be a right $\cH$-comodule $\op{A}$-subring via $t$ of $\cH$. In view of Proposition \ref{prop:Phiright} we know that $\cH B^+$ is a left ideal $2$-sided coideal in $\cH$. Denote by $\pi\colon \cH \to \cH/\cH B^+$ the canonical projection. Then the inclusion $\iota \colon  B \to \cH$ factors through $\coinv{\cH}{\frac{\cH}{\cH B^+}}$. That is to say, we have an inclusion $B \subseteq \coinv{\cH}{\frac{\cH}{\cH B^+}}$, which we denote by $\eta_B$, such that $\iota = j'\circ \eta_B$ where $j'\colon \coinv{\cH}{\frac{\cH}{\cH B^+}} \to \cH$ is the canonical inclusion.
\end{proposition}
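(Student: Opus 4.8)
The goal is to show that the inclusion $\iota \colon B \to \cH$ lands inside the coinvariants $\coinv{\cH}{\frac{\cH}{\cH B^+}}$, i.e. that for every $b \in B$ one has $\sum \pi(\iota(b)_1) \tensor{A} \iota(b)_2 = \pi(1_\cH) \tensor{A} \iota(b)$, where $\pi \colon \cH \to \cH/\cH B^+$ is the canonical projection. Once this identity is established, since $\coinv{\cH}{\frac{\cH}{\cH B^+}}$ is by definition a subspace of $\cH$ and $j' \colon \coinv{\cH}{\frac{\cH}{\cH B^+}} \to \cH$ is literally the inclusion, the factorization $\iota = j' \circ \eta_B$ with $\eta_B$ the corestriction of $\iota$ is automatic; one only has to note that $\eta_B$ is still an algebra map and (if one wishes) compatible with the relevant structure maps, which is immediate from the corresponding properties of $\iota$.

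**Key steps.** First I would use the right $\cH$-comodule $\op{A}$-subring structure on $B$: by \eqref{eq:iotarhoright} (equivalently, the commutativity of \eqref{eq:comodulesubring}), for $b \in B$ we have $\sum \iota(b)_1 \tensor{A} \iota(b)_2 = \sum \iota(b_0) \tensor{A} b_1$ in $\cH \tensor{A} \cH$, where $\delta(b) = \sum b_0 \tensor{A} b_1$ is the coaction with $b_0 \in B$. Hence
\[
\sum \pi(\iota(b)_1) \tensor{A} \iota(b)_2 = \sum \pi(\iota(b_0)) \tensor{A} b_1.
\]
The crucial input is then equation \eqref{eq:piiotaB} from the preceding lemma, which gives $\pi \circ \iota = \pi \circ t \circ \varepsilon \circ \iota$; applying this to $b_0$ yields $\pi(\iota(b_0)) = \pi\big(t(\op{\varepsilon(\iota(b_0))})\big) = \pi\big(t(\op{\varepsilon'(b_0)})\big)$. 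Substituting and using that $\pi$ is left $A$-linear (so that $t(\op{a})$ can be pulled across the tensor leg $\tensor{A}$ as the right $A$-action on the left factor becoming the left $A$-action on the right factor, via the balanced tensor product \eqref{eq:tensA}), one obtains
\[
\sum \pi(\iota(b)_1) \tensor{A} \iota(b)_2 = \sum \pi(1_\cH) \tensor{A} s\big(\varepsilon'(b_0)\big) b_1 = \pi(1_\cH) \tensor{A} \sum s\big(\varepsilon'(b_0)\big)b_1.
\]
Finally I would invoke counitality of the coaction $\delta$: the right $A$-linearity and the counit axiom for the $\cH$-comodule $B$ force $\sum s(\varepsilon'(b_0)) b_1$ — more precisely $\sum \varepsilon(b_1)$-type contraction combined with \eqref{eq:addAction} — to collapse to $\iota(b)$. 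Concretely, counitality of $\delta$ as a right $\cH$-comodule together with the compatibility $\iota \circ \delta = \Delta \circ \iota$ and the counit axiom $\sum t(\op{\varepsilon(x_2)})x_1 = x$ give back $\iota(b)$, which completes the identity.

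**Main obstacle.** The delicate point is bookkeeping the various $A$- and $\op{A}$-module structures and making sure the manipulations take place in the correct balanced tensor product $\cH/\cH B^+ \tensor{A} \cH = \lt{(\cH/\cH B^+)} \tensor{A} \ls{\cH}$, so that moving $t(\op{a})$ from one tensor factor to $s(a)$ on the other is legitimate (this is exactly relation \eqref{eq:tensA}). A second subtlety is identifying precisely which counit property of the comodule $B$ is being used: it is the counitality of $\delta$ read through \eqref{eq:addAction}, rather than the counit of the coring $\cH$ directly. Apart from these indexing checks, the argument is a short diagram/Sweedler-notation computation; no substantial new idea beyond \eqref{eq:piiotaB} and the comodule compatibility is needed. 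The claim that $\eta_B$ is an honest morphism (of $\op{A}$-rings, and of $\cH$-comodules) then follows since $j'$ is a mono and $\iota$ already has all these properties.
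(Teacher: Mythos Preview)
Your proposal is correct and is essentially the paper's argument spelled out in Sweedler notation: the paper simply observes that $\coinv{\cH}{\frac{\cH}{\cH B^+}}$ is the equalizer of $(\pi \tensor{A} \cH)\Delta$ and $(\pi t)\tensor{A}\cH$ and then invokes \eqref{eq:piiotaDelta}, which is exactly the identity you re-derive inline from \eqref{eq:iotarhoright} and \eqref{eq:piiotaB}. One small slip in your final step: after moving $t(\op{\varepsilon'(b_0)})$ across the tensor you need the counit identity $\sum s\big(\varepsilon(x_1)\big)x_2 = x$ applied to $x=\iota(b)$ (using \eqref{eq:iotarhoright} once more to rewrite $\sum s(\varepsilon'(b_0))b_1$ as $\sum s(\varepsilon(\iota(b)_1))\iota(b)_2$), not the $t(\op{\varepsilon(x_2)})x_1$ version, and no appeal to \eqref{eq:addAction} is required.
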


\begin{proof}
Recall that $\coinv{\cH}{\frac{\cH}{\cH B^+}}$ can be realized as the following equalizer in $\rmod{A}$
\[
\xymatrix{
\coinv{\cH}{\frac{\cH}{\cH B^+}} \ar[r]^{j'} & \cH \ar@<+0.5ex>[rr]^-{(\pi\circ t) \tensor{A} \cH} \ar@<-0.5ex>[rr]_-{(\pi\tensor{A}\cH)\Delta} && {\displaystyle \frac{\cH}{\cH B^+} \tensor{A} \cH},
}
\]
see \eqref{eq:eqright}. By \eqref{eq:piiotaDelta}, there exists $\eta_B\colon  B \to \coinv{\cH}{\frac{\cH}{\cH B^+}}$ such that $\iota\colon  B \to \cH$ factors as $j'\circ \eta_B$.
\end{proof}

\begin{proposition}\label{prop:counit}
Let $(A,\cH)$ be a left bialgebroid over $A$ such that $\sM{\cH} = {}_{{A\tensor{}\op{1}}}\cH$ is $A$-flat. Let $I$ be a left ideal $2$-sided coideal in $\cH$. In view of Proposition \ref{prop:OneIsGone} we know that $B \coloneqq \coinv{\cH}{\frac{\cH}{I}}$ is a right $\cH$-comodule $\Ao$-subring of $\cH$ via $t$. Then we have an inclusion $\cH B^+ \subseteq I$ that we denote by $\epsilon_I$.
\end{proposition}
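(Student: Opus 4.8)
The plan is to show that $\cH B^+ \subseteq I$ by exhibiting every generator $h\,b^+$ (with $h \in \cH$ and $b^+ \in B^+ = B \cap \ker\varepsilon$) as an element of $I$. Since $I$ is a left $\cH$-ideal, it suffices to prove that $B^+ \subseteq I$, i.e. that any coinvariant element $b \in \coinv{\cH}{\frac{\cH}{I}}$ with $\varepsilon(b) = 0_A$ already lies in $I$. The key tool is the explicit description of the coinvariants as the equalizer \eqref{eq:eqright}: membership of $b$ in $B = \coinv{\cH}{\frac{\cH}{I}}$ means precisely that
\[
\sum \pi(b_1) \tensor{A} b_2 = \pi(1_\cH) \tensor{A} b
\]
in $\frac{\cH}{I} \tensor{A} \sMto{\cH}$, where $\pi \colon \cH \to \cH/I$ is the canonical projection.

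The first step is to apply the $A$-linear counit $\overline{\varepsilon}$ of the quotient module coring $\cH/I$ to the first tensor factor of the coinvariance identity above. Recall from Proposition \ref{prop:uffachefatica} that $\overline{\varepsilon} \circ \pi = \varepsilon$, and that $\overline{\varepsilon}$ is a counit for $\overline{\Delta}$. Applying $\overline{\varepsilon} \tensor{A} \cH$ to the left-hand side and using the counit axiom together with $\overline{\varepsilon}\pi = \varepsilon$, one obtains (after the canonical identification $A \tensor{A} \cH \cong \cH$, and using that $\cH/I$ receives the coaction $\partial = (\pi \tensor{A} \cH)\Delta$)
\[
\sum \varepsilon(b_1) \bla b_2 = b,
\]
which is just the counit axiom in $\cH$; this alone gives nothing. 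The actual point is the other side: applying $\overline{\varepsilon}\tensor{A}\cH$ to the right-hand side $\pi(1_\cH) \tensor{A} b$ yields $\overline{\varepsilon}(\pi(1_\cH)) \bla b = \varepsilon(1_\cH) \bla b = 1_A \bla b = b$ as well, so equality is preserved trivially. Hence the right move is instead to project the coinvariance identity along $\pi \tensor{A} \cH$ composed with something that collapses the second factor: apply $\cH/I \tensor{A} \varepsilon$. On the left we get $\sum \pi(b_1) \bla \varepsilon(b_2) = \pi\big(\sum \varepsilon(b_2) \bla b_1\big)$ — wait, one must be careful about which counit identity applies on which leg. Using $\sum t(\op{\varepsilon(x_2)}) x_1 = x$ from the Remark after Definition \ref{def:bialgebroid}, applying $\cH/I \tensor{A} \varepsilon$ to the left side and using that $\pi$ is left $\cH$-linear (hence intertwines the $t$-action) gives $\pi(b)$; applying it to the right side $\pi(1_\cH) \tensor{A} b$ gives $\pi(1_\cH) \wra \varepsilon(b) = \pi\big(t(\op{\varepsilon(b)})\big)$. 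Therefore $\pi(b) = \pi\big(t(\op{\varepsilon(b)})\big)$, i.e. $b - t(\op{\varepsilon(b)}) \in \ker(\pi) = I$. If moreover $b = b^+ \in B^+$, then $\varepsilon(b^+) = 0_A$, so $t(\op{\varepsilon(b^+)}) = t(\op{0}) = 0$ and hence $b^+ \in I$. This proves $B^+ \subseteq I$, and so $\cH B^+ \subseteq \cH I = I$ by the left-ideal property of $I$; the resulting inclusion is the desired $\epsilon_I \colon \cH B^+ \hookrightarrow I$.

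The main obstacle — and the step that requires genuine care rather than routine bookkeeping — is keeping track of the various $A$- and $\op{A}$-actions (the $\bla$, $\wra$ actions of \eqref{eq:triangleactions} and the fact that the equalizer \eqref{eq:eqright} lives in $\rmod{A} = \lmod{\op{A}}$) so that applying $\cH/I \tensor{A} \varepsilon$ to the identity in $\frac{\cH}{I} \tensor{A} \sMto{\cH}$ is legitimate and lands us back in $\cH/I$ with the correct action; the $A$-flatness hypothesis on $\sM{\cH}$ is what guarantees \eqref{eq:eqright} is a genuine equalizer description of $B$ (via Proposition \ref{prop:OneIsGone}), and hence that the coinvariance identity is available to be manipulated in the first place. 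Once the bookkeeping is pinned down, the argument is the short computation above, mirroring the classical Hopf-algebra fact that $B^+ \subseteq \cH(\cH/I)^{\mathrm{co}}$ forces $B^+ \subseteq I$.
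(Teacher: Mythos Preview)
Your argument is correct. The core step—applying $\cH/I \tensor{A} \varepsilon$ to the coinvariance identity $\sum \pi(b_1)\tensor{A} b_2 = \pi(1_\cH)\tensor{A} b$ to obtain $\pi(b) = \pi\big(t(\op{\varepsilon(b)})\big)$, hence $b - t(\op{\varepsilon(b)}) \in I$—is valid, and the conclusion $B^+ \subseteq I$, whence $\cH B^+ \subseteq I$, follows. (A small quibble: the equalizer description \eqref{eq:eqright} is definitional and does not itself require flatness; flatness enters only through Proposition~\ref{prop:OneIsGone}, which is invoked in the \emph{statement} to know that $B$ is a comodule subring, not in your computation.)

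The paper takes a more structural route. It observes that $\cl{\cH}=\cH/I$ is itself an object of $\LHopf{\cl{\cH}}{\cH}$, proves $\coinv{\cl{\cH}}{\cl{\cH}} \cong A$ as left $B$-modules (this is where the same counit computation you perform is hidden), and then assembles the chain of isomorphisms
\[
\frac{\cH}{\cH B^+} \cong \cH \tensor{B} \frac{B}{B^+} \cong \cH \tensor{B} A \cong \cH \tensor{B} \coinv{\cl{\cH}}{\cl{\cH}} \xrightarrow{\theta_{\cl{\cH}}} \frac{\cH}{I},
\]
where $\theta$ is the counit of the adjunction $\cK \dashv \coinv{(-)}{\cl{\cH}}$. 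The resulting map $\psi\colon \cH/\cH B^+ \to \cH/I$, being $h+\cH B^+ \mapsto h+I$, forces $\cH B^+ \subseteq I$. Your approach is shorter and entirely elementary for this proposition in isolation; the paper's approach pays off later in Theorem~\ref{thm:bah}, where showing that $\theta_{\cl{\cH}}$ is an \emph{isomorphism} (via comonadicity) immediately yields the reverse inclusion $I \subseteq \cH B^+$, so the categorical packaging is an investment rather than overhead.
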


\begin{proof}
Notice that $\cl{\cH} = \cH/I$ is clearly an object in $\LHopf{\cl{\cH}}{\cH}$. Set $\pi\colon \cH \to \cH/I$. If we consider $\coinv{\cl{\cH}}{\cl{\cH}}$, then it is not hard to show that the correspondence defined by
\[
\xymatrix@R=0pt{
\coinv{\cl{\cH}}{\cl{\cH}} \ar@{<->}[r] & A \\
\pi(h) \ar@{|->}[r] & \cl{\varepsilon}\big(\pi(h)\big) = \varepsilon(h) \\
\pi\big(t(\op{a})\big) & \ar@{|->}[l] a
}
\]
is a left $B$-linear isomorphism, where the $B$-module structures are both induced by the $\cH$-module structures via restriction of scalars along $\iota\colon  B \to \cH$. In fact, for every $\pi(h) \in \coinv{\cl{\cH}}{\cl{\cH}}$ we have
\[\sum \pi(h)_1 \tensor{A} \pi(h)_2 = \pi(1) \tensor{A} \pi(h)\]
and so
\[\pi(h) = \left(\cl{\cH} \tensor{A} \cl{\varepsilon}\right)\left(\sum \pi(h)_1 \tensor{A} \pi(h)_2\right) = \left(\cl{\cH} \tensor{A} \cl{\varepsilon}\right)\left(\pi(1) \tensor{A} \pi(h)\right) = \pi\Big(t\big(\op{\cl{\varepsilon}(\pi(h))}\big)\Big) = \pi\Big(t\big(\op{\varepsilon(h)}\big)\Big).\]
This proves that the correspondence is bijective. To prove that it is left $B$-linear, notice that
\[\cl{\varepsilon}\Big(\pi\big(\iota(b)h\big)\Big) = \varepsilon(\iota(b)h) \stackrel{\ref{item:B5}}{=} \varepsilon(\iota(b)s\varepsilon(h)) \stackrel{\eqref{eq:BonA}}{=} b \cdot \varepsilon(h).\]
Since, in addition, we know from a short exact sequence like \eqref{eq:ssesright} that $A \cong B/B^+$ as left $B$-module, we may consider the composition
\begin{equation}\label{eq:smallpsi}
\frac{\cH}{\cH B^+} \cong \cH \tensor{B} \frac{B}{B^+} \cong \cH \tensor{B} A\cong \cH \tensor{B} \coinv{\cl{\cH}}{\cl{\cH}}  \xrightarrow{\theta_{\cl{\cH}}}  \cl{\cH} = \frac{\cH}{I}
\end{equation}
explicitly given by $h + \cH B^+ \mapsto \pi(h) = h + I$ for all $h \in \cH$, where $\theta_{\cl{\cH}}$ is the $\cl{\cH}$-component of the counit \eqref{eq:counit}.
Denote it by $\psi$ and consider the commutative diagram
\begin{equation}\label{eq:diagramcounit}
\begin{gathered}
\xymatrix{
0 \ar[r] & \cH B^+ \ar[r]^-{\subseteq} & \cH \ar[r] \ar@{=}[d] & \displaystyle \frac{\cH}{\cH B^+} \ar[r] \ar[d]^-{\psi} & 0 \\
0 \ar[r] & I \ar[r]_-{\subseteq} & \cH \ar[r]_-{\pi} & \displaystyle \frac{\cH}{I} \ar[r] & 0.
}
\end{gathered}
\end{equation}
It follows that $\cH B^+ \subseteq I$ as claimed.
\end{proof}

\begin{theorem}\label{thm:adjunction}
Let $(A,\cH)$ be a left bialgebroid such that $\sM{\cH}$ is $A$-flat.
\begin{enumerate}[leftmargin=0.7cm]
\item If $I$ is a left ideal $2$-sided coideal in $\cH$ and if $B \coloneqq \coinv{\cH}{\frac{\cH}{I}}$, then $B = \coinv{\cH}{\frac{\cH}{\cH B^+}}$. That is, $\Psi\Phi\Psi(I) = \Psi(I)$.
\item If $B$ is a right $\cH$-comodule $\Ao$-subring of $\cH$ via $t$ and if $I \coloneqq \cH B^+$, then $I = \cH \left(\coinv{\cH}{\frac{\cH}{I}}\right)^+$. That is, $\Phi\Psi\Phi(B) = \Phi(B)$.
\end{enumerate}
In other words, $\Phi$ and $\Psi$ form a \emph{monotone Galois connection} (or, equivalently, an adjunction) between the two lattices.
\end{theorem}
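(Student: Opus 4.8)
The statement asserts that the two inclusion-preserving maps $\Phi$ (sending a right $\cH$-comodule $\op{A}$-subring $B$ to the left ideal $2$-sided coideal $\cH B^+$) and $\Psi$ (sending a left ideal $2$-sided coideal $I$ to $\coinv{\cH}{\cH/I}$) form a monotone Galois connection. Concretely this means two things: that $\Phi$ and $\Psi$ are monotone (already established in Propositions~\ref{prop:OneIsGone} and~\ref{prop:Phiright}), and that for every $B$ and every $I$ one has the triangular identities $\Psi\Phi\Psi = \Psi$ and $\Phi\Psi\Phi = \Phi$, together with the unit/counit inclusions $B \subseteq \Psi\Phi(B)$ and $\Phi\Psi(I) \subseteq I$. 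The inclusions $\eta_B\colon B \subseteq \coinv{\cH}{\cH/\cH B^+} = \Psi\Phi(B)$ and $\epsilon_I\colon \cH\big(\coinv{\cH}{\cH/I}\big)^+ = \Phi\Psi(I) \subseteq I$ are exactly what Propositions~\ref{prop:unit} and~\ref{prop:counit} give us. So the content of this theorem is really the two triangular identities, and the proof is the standard abstract-nonsense argument that the unit and counit of an adjunction between posets automatically satisfy the triangle laws.

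\textbf{Key steps.} First I would recall that a monotone Galois connection between posets $(\mathcal{P},\subseteq)$ and $(\mathcal{Q},\subseteq)$ consisting of monotone maps $\Psi\colon\mathcal{P}\to\mathcal{Q}$ and $\Phi\colon\mathcal{Q}\to\mathcal{P}$ is equivalently the data of the two families of inequalities $q \subseteq \Psi\Phi(q)$ for all $q\in\mathcal{Q}$ and $\Phi\Psi(p)\subseteq p$ for all $p\in\mathcal{P}$; here $\mathcal{P}$ is the lattice of left ideal $2$-sided coideals and $\mathcal{Q}$ is the lattice of right $\cH$-comodule $\op{A}$-subrings of $\cH$ via $t$. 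Step one: monotonicity of $\Psi$ and $\Phi$ is Propositions~\ref{prop:OneIsGone} and~\ref{prop:Phiright}. Step two: the unit inequality $B \subseteq \Psi\Phi(B)$ is Proposition~\ref{prop:unit} and the counit inequality $\Phi\Psi(I) \subseteq I$ is Proposition~\ref{prop:counit}. Step three: derive the first triangular identity. Applying $\Psi$ (monotone) to $\Phi\Psi(I)\subseteq I$ gives $\Psi\Phi\Psi(I) \subseteq \Psi(I)$; on the other hand, applying the unit inequality with $B = \Psi(I)$ gives $\Psi(I)\subseteq \Psi\Phi\big(\Psi(I)\big) = \Psi\Phi\Psi(I)$. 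Antisymmetry of $\subseteq$ yields $\Psi\Phi\Psi(I) = \Psi(I)$, which is statement~(1). Step four: the mirror argument for $\Phi$: applying $\Phi$ to the unit inequality $B \subseteq \Psi\Phi(B)$ gives $\Phi(B)\subseteq\Phi\Psi\Phi(B)$, while the counit inequality with $I = \Phi(B)$ gives $\Phi\Psi\big(\Phi(B)\big) = \Phi\Psi\Phi(B)\subseteq\Phi(B)$; antisymmetry gives statement~(2).

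\textbf{Where the difficulty lies.} There is essentially no obstacle in the theorem itself once Propositions~\ref{prop:OneIsGone}, \ref{prop:Phiright}, \ref{prop:unit} and~\ref{prop:counit} are in place --- the argument is purely formal, exploiting that a poset-enriched adjunction automatically satisfies the zig-zag identities because any two parallel arrows between the same pair of objects in a poset coincide. The real work has already been done upstream: the nontrivial points are that $\Phi$ and $\Psi$ land in the claimed classes (needing the $A$-flatness of $\sM{\cH}$ and the coinvariants computations of Lemma~\ref{lem:coinvariants} and Proposition~\ref{prop:OneIsGone}), that $B$ sits inside $\coinv{\cH}{\cH/\cH B^+}$ (Proposition~\ref{prop:unit}, which uses the comodule-subring compatibility~\eqref{eq:comodulesubring} through~\eqref{eq:piiotaDelta}), and that the natural surjection $\cH/\cH B^+ \twoheadrightarrow \cH/I$ of~\eqref{eq:smallpsi} exists (Proposition~\ref{prop:counit}, built on Doi-type equivalence machinery, i.e.\ the counit $\theta$ of~\eqref{eq:counit}). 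I would simply assemble these: state that monotonicity plus unit and counit give a Galois connection, invoke antisymmetry to get the two composite identities, and remark that the passage from a Galois connection between these lattices to the bijective correspondence of Theorem~\ref{thmA} will follow once one further restricts to the sub-posets on which $\eta_B$ and $\epsilon_I$ are equalities --- precisely the comonadicity (and $\gamma(B)\subseteq B\tensor{\op{A}}\cH$) conditions appearing in the statement of Theorem~\ref{mainthm:Galois}.
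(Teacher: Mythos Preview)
Your proposal is correct and follows essentially the same approach as the paper: both arguments invoke the unit inclusion $B \subseteq \Psi\Phi(B)$ from Proposition~\ref{prop:unit}, the counit inclusion $\Phi\Psi(I) \subseteq I$ from Proposition~\ref{prop:counit}, and the monotonicity of $\Phi$ and $\Psi$ from Propositions~\ref{prop:OneIsGone} and~\ref{prop:Phiright}, then sandwich to obtain the two triangular identities. The paper simply writes out the explicit chains $B \subseteq \coinv{\cH}{\cH/\cH B^+} \subseteq \coinv{\cH}{\cH/I} = B$ and $I = \cH B^+ \subseteq \cH\big(\coinv{\cH}{\cH/I}\big)^+ \subseteq I$, which is exactly your abstract poset-adjunction argument spelled out concretely.
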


\begin{proof}
Flatness is needed to apply Proposition \ref{prop:OneIsGone}.
Recall that for any left ideal $2$-sided coideal $I$ and for any right $\cH$-comodule $\Ao$-subring of $\cH$ via $t$ we have that $B \subseteq \coinv{\cH}{\frac{\cH}{\cH B^+}}$ via $\eta_B$ and $\cH \left(\coinv{\cH}{\frac{\cH}{I}}\right)^+ \subseteq I$ via $\epsilon_I$, by Proposition \ref{prop:unit} and Proposition \ref{prop:counit} respectively.
\begin{enumerate}[leftmargin=0.7cm]
\item Let $I$ be a left ideal $2$-sided coideal in $\cH$ and let $B \coloneqq \coinv{\cH}{\frac{\cH}{I}} = \Psi(I)$. Since $\cH B^+ \subseteq I$, we have a surjective morphism of left $\cH$-module corings $\cH/\cH B^+ \twoheadrightarrow \cH/I$ such that
\[
\xymatrix @=15pt{
 & \cH \ar@{->>}[dl] \ar@{->>}[dr] & \\
{\displaystyle \frac{\cH}{\cH B^+}} \ar@{->>}[rr] & & {\displaystyle \frac{\cH}{I}}
}
\]
commutes and hence 
\[B \subseteq \coinv{\cH}{\frac{\cH}{\cH B^+}} \subseteq \coinv{\cH}{\frac{\cH}{I}} = B.\]
\item Let $B$ be a right $\cH$-comodule $\Ao$-subring of $\cH$ via $t$ and set $I \coloneqq \cH B^+ = \Phi(B)$. Since $B \subseteq \coinv{\cH}{\frac{\cH}{\cH B^+}} = \coinv{\cH}{\frac{\cH}{I}}$ we have that
\[I = \cH B^+ \subseteq \cH \left(\coinv{\cH}{\frac{\cH}{I}}\right)^+ \subseteq I. \qedhere\]
\end{enumerate} 
\end{proof}

\begin{corollary}
Let $(A,\cH)$ be a left bialgebroid such that $\sM{\cH}$ is $A$-flat. For $I$ a left ideal $2$-sided coideal in $\cH$ and $B$ a right $\cH$-comodule $\op{A}$-subring via $t$ of $\cH$, we have that 
\[\cH B^+ \subseteq I \quad \iff \quad B \subseteq \coinv{\cH}{\frac{\cH}{I}}.\]
\end{corollary}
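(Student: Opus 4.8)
The plan is to deduce the corollary formally from the material already in place, since it is precisely the adjunction (hom-set) reformulation of the monotone Galois connection established in Theorem~\ref{thm:adjunction}. Writing $\Phi(B) = \cH B^+$ and $\Psi(I) = \coinv{\cH}{\frac{\cH}{I}}$, I would recall the three ingredients needed: (i) $\Phi$ is inclusion-preserving (Proposition~\ref{prop:Phiright}); (ii) $\Psi$ is inclusion-preserving (Proposition~\ref{prop:OneIsGone}); (iii) the unit inclusion $B \subseteq \coinv{\cH}{\frac{\cH}{\cH B^+}}$ (Proposition~\ref{prop:unit}) and the counit inclusion $\cH\left(\coinv{\cH}{\frac{\cH}{I}}\right)^+ \subseteq I$ (Proposition~\ref{prop:counit}). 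The $A$-flatness of $\sM{\cH}$ is invoked only to ensure that $\Psi$ is well defined, i.e.\ that $\coinv{\cH}{\frac{\cH}{I}}$ really is a right $\cH$-comodule $\op{A}$-subring via $t$, so that $\Phi$ may subsequently be applied to it.

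For the forward implication, assume $\cH B^+ \subseteq I$. Monotonicity of $\Psi$ then gives $\coinv{\cH}{\frac{\cH}{\cH B^+}} \subseteq \coinv{\cH}{\frac{\cH}{I}}$, and precomposing with the unit inclusion of Proposition~\ref{prop:unit} yields $B \subseteq \coinv{\cH}{\frac{\cH}{\cH B^+}} \subseteq \coinv{\cH}{\frac{\cH}{I}}$. (Concretely, monotonicity of $\Psi$ here amounts to the observation that $\cH B^+ \subseteq I$ induces a surjection of left $\cH$-module corings $\cH/\cH B^+ \twoheadrightarrow \cH/I$, under which coinvariants are sent to coinvariants.) For the backward implication, assume $B \subseteq \coinv{\cH}{\frac{\cH}{I}}$; since the latter is a legitimate source object for $\Phi$, monotonicity of $\Phi$ gives $\cH B^+ \subseteq \cH\left(\coinv{\cH}{\frac{\cH}{I}}\right)^+$, and composing with the counit inclusion of Proposition~\ref{prop:counit} gives $\cH B^+ \subseteq \cH\left(\coinv{\cH}{\frac{\cH}{I}}\right)^+ \subseteq I$.

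There is essentially no obstacle here: the statement is a formal consequence of having a unit/counit pair between two monotone maps, so the only point requiring a moment's care is the bookkeeping of domains — checking that each of $\Phi$ and $\Psi$ is applied to an object lying in its declared domain, which is exactly where the flatness hypothesis enters via Proposition~\ref{prop:OneIsGone}. Alternatively, one may simply record the corollary as the standard reformulation of a monotone Galois connection $\Phi \dashv \Psi$, namely $\Phi(B) \subseteq I \iff B \subseteq \Psi(I)$, as asserted in Theorem~\ref{thm:adjunction}.
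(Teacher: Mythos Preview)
Your proposal is correct and follows the same approach as the paper: the paper's proof is the single sentence ``This is simply a restatement of the fact that $\Phi$ is left adjoint to $\Psi$,'' and you have spelled out exactly that standard unit/counit argument in detail.
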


\begin{proof}
This is simply a restatement of the fact that $\Phi$ is left adjoint to $\Psi$.
\end{proof}


\subsection{The Hopf algebroid case and the bijective correspondence}\label{ssec:Galois}

In case the bialgebroid $(A,\cH)$ is a left Hopf algebroid, one can obtain finer results on the correspondence between left ideal two-sided coideals and right comodule subrings than those in Proposition \ref{prop:OneIsGone} and Proposition \ref{prop:Phiright}, as we are going to show in the present subsection.

\begin{remark}
Let $(A,\cH)$ be a left bialgebroid over $A$. We may consider the tensor product
\[
\cH \tensor{\op{A}} \cH = \Mt{\cH} \tensor{\op{A}} \tM{\cH} \coloneqq \frac{\cH \otimes \cH}{\Big\langle xt(\op{a}) \otimes y - x \otimes t(\op{a})y ~\big\vert~ a \in A, x,y \in \cH \Big\rangle}.
\]
It becomes an $\op{A}$-bimodule via
\[
\op{a}~\cl{\wla}~\left(x \tensor{\op{A}} y\right) = xs(a) \tensor{\op{A}} y \qquad \text{and} \qquad \left(x \tensor{\op{A}} y\right)~\cl{\wra}~\op{a} = x \tensor{\op{A}} s(a)y
\]
for all $x,y \in \cH$, $a \in A$. Inside $\cH \tensor{\op{A}} \cH$, we isolate the distinguished subspace
\[
\cH \tak{\op{A}} \cH \coloneqq \left\{\left.\sum_ix_i \tensor{\op{A}} y_i ~\right|~ \sum_it(\op{a})x_i \tensor{\op{A}} y_i = \sum_ix_i \tensor{\op{A}} y_it(\op{a})\right\}.
\]
It is an $\op{A}$-subbimodule and a $\K$-algebra with unit $1 \tensor{\op{A}} \op{1}$ and multiplication
\begin{equation}\label{eq:multAop}
\left(\sum_ix_i \tensor{\op{A}} y_i\right)\left(\sum_ju_j \tensor{\op{A}} v_j\right) \coloneqq \sum_{i,j}x_iu_j \tensor{\op{A}} v_jy_i,
\end{equation}
which acts from the right on $\cH \tensor{\op{A}} \cH$ via 
\begin{equation}\label{eq:actAop}
\left(\cH \tensor{\op{A}} \cH\right)\otimes\left(\cH \tak{\op{A}} \cH\right) \to \cH \tensor{\op{A}} \cH, \qquad \left(\sum_i x_i \tensor{\op{A}} y_i\right)\otimes\left(\sum_j u_j \tensor{\op{A}} v_j\right) \to \sum_{i,j} x_iu_j \tensor{\op{A}} v_jy_i.
\end{equation}
If $\cH$ is, in addition, a left Hopf algebroid in the sense of \cite[Theorem and Definition 3.5]{Schauenburg} with canonical map
\begin{equation}\label{Eq:betamap}
\beta\colon  \Mt{\cH} \tensor{\op{A}} \tM{\cH} \to \Mto{\cH} \tensor{A} \sM{\cH}, \qquad x \tensor{\op{A}} y \mapsto \sum x_1 \tensor{A} x_2y,
\end{equation}
then the assignment
\[\cH \to \Mt{\cH} \tensor{\op{A}} \tM{\cH}, \qquad x \mapsto \beta^{-1}(x \tensor{A} 1_\cH) \eqqcolon \sum x_{+} \tensor{\op{A}} x_{-}\]
induces a morphism of $\K$-algebras
\begin{equation}\label{Eq:gammamap}
\gamma\colon \cH \to \cH \tak{\op{A}} \cH, \qquad x \mapsto \beta^{-1}(x \tensor{A} 1_\cH).
\end{equation}
The fact that $\gamma$ lands into $\cH \tak{\op{A}} \cH$ is \cite[(3.3)]{Schauenburg} and the fact that it is multiplicative and unital is \cite[(3.4) and (3.5)]{Schauenburg}. The map $\gamma$ is referred to as the \emph{the (left) translation map}.  In particular,
\begin{equation}\label{eq:hh'+-}
\sum (xy)_{+}\tensor{A} (xy)_{-} = \sum x_{+}y_{+}\tensor{A} y_{-}x_{-}
\end{equation}
for all $x,y \in \cH$.
\end{remark}

\begin{lemma}\label{lem:xiright}
Let $(A,\cH)$ be a left Hopf algebroid over $A$ such that $\sM{\cH} = {}_{{A\tensor{}\op{1}}}\cH$ is $A$-flat. Let $B$ be a right $\cH$-comodule $\op{A}$-subring of $\cH$ via $t$ such that for all $b \in B$, there exists $\sum_i b_i \tensor{\op{A}} h_i \in B \tensor{\op{A}} \cH$ such that
\begin{equation}\label{eq:Bbeta}
\beta^{-1}\left(\iota(b)\tensor{A}1_{\cH}\right) = \sum_i \iota(b_i) \tensor{\op{A}} h_i.
\end{equation}
Then the canonical isomorphism
\[\beta\colon \Mt{\cH} \tensor{\op{A}} \tM{\cH} \to \Mto{\cH} \tensor{A} \sM{\cH}, \qquad x \tensor{\op{A}} y \mapsto \sum x_1 \tensor{A} x_2y\]
induces an isomorphism 
\[
\xi\colon {\cH_{\iota}} \tensor{B} {_\iota\cH} \to \Mto{\frac{\cH}{\cH B^+}} \tensor{A} \sM{\cH}.
\]
\end{lemma}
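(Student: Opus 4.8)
The plan is to construct an explicit inverse to $\xi$ using the translation map $\gamma$, exactly mimicking the classical Hopf-algebra argument where the map $h \tensor{B} k \mapsto \pi(h_1) \tensor{A} h_2 k$ is inverted by $\overline{h} \tensor{A} k \mapsto h_{+} \tensor{B} h_{-} k$. The hypothesis \eqref{eq:Bbeta} is precisely what guarantees this is well-defined: it says that $\beta^{-1}$ sends $\iota(b) \tensor{A} 1_\cH$ into the image of $B \tensor{\op{A}} \cH$ inside $\cH \tensor{\op{A}} \cH$, equivalently that $\sum \iota(b)_{+} \tensor{\op{A}} \iota(b)_{-}$ can be represented with left tensorands in $B$. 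First I would recall from the earlier lemma (the one characterizing when $\Theta$ is an isomorphism, applied with $\cl{\cH} = \cH/\cH B^+$ and $\pi$ the canonical projection, using \eqref{eq:piiotaDelta} to see that $\coinv{\cH}{\cH/\cH B^+} \supseteq B$ so that $\xi$ as displayed makes sense) that $\xi$ is the map $x \tensor{B} y \mapsto \sum \pi(x_1) \tensor{A} x_2 y$; it is manifestly right $\cH$-linear and well-defined because for $b \in B$, $\pi(x_1 \iota(b)_1) \tensor{A} x_2 \iota(b)_2 y$ collapses by \eqref{eq:piiotaDelta} (or \eqref{eq:piiotaB}) to $\pi(x_1) \tensor{A} x_2 \iota(b) y$.

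The heart of the proof is defining
\[
\zeta \colon \Mto{\frac{\cH}{\cH B^+}} \tensor{A} \sM{\cH} \to {\cH_{\iota}} \tensor{B} {_\iota\cH}, \qquad \pi(x) \tensor{A} y \mapsto \sum x_{+} \tensor{B} x_{-} y,
\]
and checking it is well-defined. There are two things to verify. First, independence of the representative $x$ modulo $\cH B^+$: for $x = h\iota(b)$ with $b \in B^+$, one computes $\sum (h\iota(b))_{+} \tensor{B} (h\iota(b))_{-} y = \sum h_{+}\iota(b)_{+} \tensor{B} \iota(b)_{-}h_{-}y$ by \eqref{eq:hh'+-}, then uses \eqref{eq:Bbeta} to pull $\iota(b)_{+} \in B$ across the tensor product over $B$, arriving at $\sum h_{+} \tensor{B} \iota(b_i)h_i h_{-} y$ with $\sum \iota(b_i) \tensor{\op{A}} h_i = \beta^{-1}(\iota(b)\tensor{A}1_\cH)$; applying $\beta$ (equivalently, using $\sum b_{+}\epsilon\text{-type identities}$, i.e.\ $\sum x_{+}s(\varepsilon(x_{-})) = x$-style relations from \cite{Schauenburg}) together with $\varepsilon(b) = 0$ since $b \in B^+$ forces this to vanish. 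Second, $A$-balancing over the middle $A$: that $\sum x_{+} \tensor{B} x_{-}t(\op{a})y = \sum (x\cdot a)_{+} \tensor{B} (x\cdot a)_{-}y$ type compatibility, which follows from the standard translation-map identities for $s, t$ (e.g. $\gamma(s(a)) = s(a)\tensor{\op{A}}1$, $\gamma(t(\op{a})) = 1 \tensor{\op{A}}t(\op{a})$, and how $\gamma$ interacts with the right $A$-action \eqref{eq:addAction}).

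Finally I would check $\xi \circ \zeta = \id$ and $\zeta \circ \xi = \id$, which are both formal consequences of $\beta \circ \beta^{-1} = \id = \beta^{-1}\circ\beta$ unwound through the definitions: $\xi(\zeta(\pi(x)\tensor{A}y)) = \sum \pi((x_{+})_1) \tensor{A} (x_{+})_2 x_{-}y = \pi(x)\tensor{A}y$ using the identity $\sum (x_{+})_1 \tensor{A}(x_{+})_2 x_{-} = x \tensor{A}1_\cH$ (which is $\beta(\gamma(x)) = x\tensor{A}1$), and conversely $\zeta(\xi(x\tensor{B}y)) = \sum \zeta(\pi(x_1)\tensor{A}x_2 y) = \sum (x_1)_{+} \tensor{B} (x_1)_{-}x_2 y = x \tensor{B}y$ using $\sum (x_1)_{+}\tensor{\op{A}}(x_1)_{-}x_2 = x \tensor{\op{A}}1_\cH$ (the other half of $\beta\beta^{-1} = \id$, i.e.\ the co-unit-type property of $\gamma$). \textbf{The main obstacle} I anticipate is the well-definedness of $\zeta$ on the quotient $\cH/\cH B^+$: this is the only place where hypothesis \eqref{eq:Bbeta} genuinely enters, and one must be careful that after invoking \eqref{eq:hh'+-} and \eqref{eq:Bbeta} the element $\iota(b_i)$ really does lie in $B$ so that it crosses the $\tensor{B}$, and that the surviving factor vanishes precisely because $\varepsilon' (b) = 0$ — tracking the $\op{A}$-module bookkeeping through $\cH \tensor{\op{A}}\cH$ versus $\cH \tensor{B}\cH$ is where the bureaucratic care is needed. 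Flatness of $\sM{\cH}$ is used, as in Proposition \ref{prop:OneIsGone}, to identify $\coinv{\cH}{\cH/\cH B^+}$ via the equalizer \eqref{eq:eqright} and to ensure all the relevant tensor-product identifications behave.
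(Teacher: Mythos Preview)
Your proposal is correct and follows essentially the same route as the paper: construct $\xi$ as the factorization of $(\pi\tensor{A}\cH)\circ\beta$ through $\cH\tensor{B}\cH$, construct its inverse as the factorization of $p\circ\beta^{-1}$ (with $p\colon\cH\tensor{\op{A}}\cH\to\cH\tensor{B}\cH$ the canonical projection) through $\cH/\cH B^{+}\tensor{A}\cH$, and use \eqref{eq:hh'+-} together with \eqref{eq:Bbeta} to move $\iota(b)_{+}$ across $\tensor{B}$. One small correction: the Schauenburg identity you need at the vanishing step is \cite[(3.9)]{Schauenburg}, namely $\sum x_{+}x_{-}=s\varepsilon(x)$, so that $\sum_i\iota(b_i)h_i=\sum\iota(b)_{+}\iota(b)_{-}=s\varepsilon\iota(b)=0$ for $b\in B^{+}$; and flatness of $\sM{\cH}$ is invoked here specifically to identify $\ker(\pi\tensor{A}\cH)$ with (the image of) $\cH B^{+}\tensor{A}\cH$, which is what reduces the well-definedness check to elements of the form $x\iota(b)\tensor{A}y$.
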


\begin{proof}
Consider the composition
\[\Mt{\cH} \tensor{\op{A}} \tM{\cH} \xrightarrow{\beta} \Mto{\cH} \tensor{A} \sM{\cH} \xrightarrow{\pi \tensor{A}\cH} \frac{\cH}{\cH B^+} \tensor{A} \cH.\]
It satisfies
\begin{align*}
\left(\pi \tensor{A}\cH\right)\beta\left(x\iota(b) \tensor{A} y \right) & = \sum \pi\left(x_1\iota(b)_1\right) \tensor{A} x_2\iota(b)_2y \stackrel{\eqref{eq:Hactionquoright}}{=} x \cdot \left(\sum \pi\left(\iota(b)_1\right) \tensor{A} \iota(b)_2y\right) \\
& \stackrel{\eqref{eq:iotarhoright}}{=} x \cdot \left(\sum \pi\left(\iota(b_0)\right) \tensor{A} b_1y\right) \stackrel{\eqref{eq:piiotaB}}{=} x \cdot \left(\sum \pi\iota\left(t'\left(\op{\varepsilon'(b_0)}\right)\right) \tensor{A} b_1y\right) \\
& = x \cdot \left(\sum \pi(1_{\cH}) \tensor{A} \bigg(s\Big(\varepsilon\big(\iota(b_0)\big)\Big)\bigg)b_1y\right) \\
& \stackrel{\eqref{eq:iotarhoright}}{=} x \cdot \left(\sum \pi(1_{\cH}) \tensor{A} \bigg(s\Big(\varepsilon\big(\iota(b)_1\big)\Big)\bigg)\iota(b)_2y\right) \\
& \stackrel{\eqref{eq:Hactionquoright}}{=} \sum \pi(x_1) \tensor{A} x_2\iota(b)y = \left(\pi \tensor{A}\cH\right)\beta\left(x \tensor{A} \iota(b)y \right)
\end{align*}
for all $x,y \in \cH$, $b \in B$, thus it factors through
\[\xi\colon {\cH_{\iota}} \tensor{B} {_\iota\cH} \to \Mto{\frac{\cH}{\cH B^+}} \tensor{A} \sM{\cH}.\]
In the other direction, consider the composition
\begin{equation}\label{eq:xiinv}
\Mto{\cH} \tensor{A} \sM{\cH} \xrightarrow{\beta^{-1}} \Mt{\cH} \tensor{\op{A}} \tM{\cH} \xrightarrow{p} {\cH_{\iota}} \tensor{B} {_\iota\cH},
\end{equation}
where $p$ is the canonical projection. Since $\sM{\cH}$ is $A$-flat, $\cH B^+ \tensor{A} \cH \subseteq \cH \tensor{A} \cH$ and, in fact, $\cH B^+ \tensor{A} \cH = \ker\left(\pi \tensor{A} \cH\right)$. For all $\sum_j x_j\iota(b_j) \tensor{A} y_j \in \cH B^+ \tensor{A} \cH$ we have
\begin{align*}
\beta^{-1}\left(\sum_j x_j\iota(b_j) \tensor{A} y_j\right) & = \sum_j \left(x_j\iota(b_j)\right)_{+} \tensor{\op{A}} \left(x_j\iota(b_j)\right)_{-}y_j \\
 & \stackrel{\eqref{eq:hh'+-}}{=} \sum_j \left(x_j\right)_{+}\left(\iota(b_j)\right)_{+} \tensor{\op{A}} \left(\iota(b_j)\right)_{-}\left(x_j\right)_{-}y_j \\
 & \stackrel{\eqref{eq:actAop}}{=} \sum_j\left(1_\cH \tensor{\op{A}} y_j\right)\left(\sum\left(x_j\right)_{+}\left(\iota(b_j)\right)_{+} \tensor{\op{A}} \left(\iota(b_j)\right)_{-}\left(x_j\right)_{-}\right) \\
 & \stackrel{\eqref{eq:multAop}}{=} \sum_j\left(1_\cH \tensor{\op{A}} y_j\right)\left(\sum\left(x_j\right)_{+} \tensor{\op{A}} \left(x_j\right)_{-}\right)\left(\sum\left(\iota(b_j)\right)_{+} \tensor{\op{A}} \left(\iota(b_j)\right)_{-}\right) \\
 & \stackrel{\eqref{eq:Bbeta}}{=} \sum_j\left(1_\cH \tensor{\op{A}} y_j\right)\left(\sum\left(x_j\right)_{+} \tensor{\op{A}} \left(x_j\right)_{-}\right)\left(\sum_i \iota\left(b_{j,i}\right) \tensor{\op{A}} h_{j,i}\right) \\
 & = \sum_{i,j} \left(x_j\right)_{+}\iota\left(b_{j,i}\right) \tensor{\op{A}} h_{j,i}\left(x_j\right)_{-}y_j.
\end{align*}
Therefore,
\[
p\beta^{-1}\left(\sum_j x_j\iota(b_j) \tensor{A} y_j\right) = \sum_{i,j} \left(x_j\right)_{+}\iota\left(b_{j,i}\right) \tensor{B} h_{j,i}\left(x_j\right)_{-}y_j  = \sum_{j} \left(x_j\right)_{+} \tensor{B} \left(\sum_i\iota\left(b_{j,i}\right)h_{j,i}\right)\left(x_j\right)_{-}y_j 
\]
and since
\[\sum_i\iota\left(b_{j,i}\right)h_{j,i} \stackrel{\eqref{eq:Bbeta}}{=} \sum \iota(b_j)_{+}\iota(b_j)_{-} \stackrel{\text{\cite[(3.9)]{Schauenburg}}}{=} s\varepsilon\iota\left(b_j\right) = 0 \]
for all $j$, we conclude that \eqref{eq:xiinv} induces
\[\widetilde{\xi}\colon  \Mto{\frac{\cH}{\cH B^+}} \tensor{A} \sM{\cH} \to {\cH_{\iota}} \tensor{B} {_\iota\cH}\]
which is inverse to $\xi$.
\end{proof}

In view of Lemma \ref{lem:xiright}, one obtains the following improvement to Proposition \ref{prop:unit}.

\begin{theorem}\label{thm:PsiPhiright}
Let $(A,\cH)$ be a left Hopf algebroid over $A$ such that $\sM{\cH} = {}_{{A\tensor{}\op{1}}}\cH$ is $A$-flat. 
Let $B$ be a right $\cH$-comodule $\op{A}$-subring via $t$ of $\cH$ such that $\cH$ is pure over $B$ on the right and such that $\gamma(B)\subseteq B \tensor{\op{A}}\cH$. 
Then $B = \coinv{\cH}{\frac{\cH}{\cH B^+}}$, that is $\Psi\Phi(B) = B$.
\end{theorem}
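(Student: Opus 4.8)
The plan is to realise both $B$ and $\coinv{\cH}{\frac{\cH}{\cH B^+}}$ as the \emph{same} subobject of $\cH$, by transporting the equalizer that characterises a pure ring extension (Corollary~\ref{cor:eqpure}) across the isomorphism $\xi$ of Lemma~\ref{lem:xiright}. First I would note that the hypothesis $\gamma(B) \subseteq B \tensor{\op{A}} \cH$ is nothing but condition \eqref{eq:Bbeta}: by the definition \eqref{Eq:gammamap} of the translation map one has $\gamma(\iota(b)) = \beta^{-1}\bigl(\iota(b) \tensor{A} 1_{\cH}\bigr)$, so asking $\gamma(b) \in B \tensor{\op{A}} \cH$ amounts exactly to asking for an expression $\sum_i \iota(b_i) \tensor{\op{A}} h_i$ with $b_i \in B$ as in \eqref{eq:Bbeta}. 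Hence Lemma~\ref{lem:xiright} applies (here the $A$-flatness of $\sM{\cH}$ is used), yielding the isomorphism
\[
\xi\colon {\cH_{\iota}} \tensor{B} {_\iota\cH} \xrightarrow{\ \cong\ } \Mto{\tfrac{\cH}{\cH B^+}} \tensor{A} \sM{\cH}, \qquad x \tensor{B} y \mapsto \sum \pi(x_1) \tensor{A} x_2 y,
\]
where $\pi\colon \cH \to \cH/\cH B^+$ denotes the canonical projection.

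Next I would identify the two relevant parallel pairs out of $\cH$ and check that $\xi$ intertwines them. On one side, the assumption that $\cH$ is pure over $B$ on the right says precisely that $\iota\colon B \to \cH$ is a pure morphism of right $B$-modules; it is therefore injective and, by Corollary~\ref{cor:eqpure}, it realises $B$ as the equalizer of the pair $\lambda, \varrho \colon \cH \rightrightarrows {\cH_\iota} \tensor{B} {_\iota\cH}$ with $\lambda(x) = 1_\cH \tensor{B} x$ and $\varrho(x) = x \tensor{B} 1_\cH$; that is, $\iota(B) = \{x \in \cH \mid 1_\cH \tensor{B} x = x \tensor{B} 1_\cH\}$. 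On the other side, \eqref{eq:eqright} with $I = \cH B^+$ realises $\coinv{\cH}{\frac{\cH}{\cH B^+}}$ as the equalizer of $\partial \coloneqq (\pi \tensor{A}\cH)\circ\Delta$ and the map $x \mapsto \pi(1_\cH) \tensor{A} x$. A short computation gives $\xi(\varrho(x)) = \sum \pi(x_1)\tensor{A} x_2 = \partial(x)$ and, using $\Delta(1_\cH) = 1_\cH \tensor{A} 1_\cH$, $\xi(\lambda(x)) = \pi(1_\cH) \tensor{A} x$, so $\xi$ carries the first parallel pair onto the second.

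Finally, since $\xi$ is an isomorphism it is injective, whence for $x \in \cH$ we have $1_\cH \tensor{B} x = x \tensor{B} 1_\cH$ if and only if $\partial(x) = \pi(1_\cH) \tensor{A} x$; comparing the two equalizer descriptions above, this gives the equality $\iota(B) = \coinv{\cH}{\frac{\cH}{\cH B^+}}$ of subsets of $\cH$, that is $\Psi\Phi(B) = B$ (and in particular the inclusion $\eta_B$ of Proposition~\ref{prop:unit} turns out to be an isomorphism). I do not expect a genuine obstacle here: all the substance is already contained in Lemma~\ref{lem:xiright} and Corollary~\ref{cor:eqpure}, and the only thing requiring care is the bookkeeping of the various $A$- and $\op{A}$-module structures decorating these objects, so that $\xi$ is well defined and the squares relating the two parallel pairs typecheck, which is routine given the counitality of $\Delta$.
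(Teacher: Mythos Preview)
Your proposal is correct and follows essentially the same route as the paper: both arguments realise $B$ as the equalizer of $x\mapsto 1\tensor{B}x$ and $x\mapsto x\tensor{B}1$ via Corollary~\ref{cor:eqpure}, realise $\coinv{\cH}{\frac{\cH}{\cH B^+}}$ as the equalizer \eqref{eq:eqright}, and then use the isomorphism $\xi$ of Lemma~\ref{lem:xiright} to transport one equalizer to the other. The only small remark the paper adds that you leave implicit is that purity (via Corollary~\ref{cor:pure}) is what guarantees $\iota\tensor{\op{A}}\cH$ is injective, so that the containment $\gamma(B)\subseteq B\tensor{\op{A}}\cH$ is meaningful in the first place.
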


\begin{proof}
On the one hand, purity of $\cH$ on $B$ entails that
\[
\xymatrix{
B \ar[r]^{\iota} & \cH \ar@<-0.5ex>[rr]_-{x \mapsto x \tensor{B} 1} \ar@<+0.5ex>[rr]^-{x \mapsto 1 \tensor{B} x} && {\cH_\iota} \tensor{B} {_\iota\cH}
}
\]
is an equalizer diagram in $\vectk$ (see Corollary \ref{cor:eqpure}) and, on the other hand, it entails that $\iota \tensor{\op{A}} \cH$ is an injective morphism (see Corollary \ref{cor:pure}), so that the condition $\gamma(B)\subseteq B \tensor{\op{A}}\cH$ makes sense. By definition, $\coinv{\cH}{\frac{\cH}{\cH B^+}}$ can be realized as the following equalizer in $\vectk$
\[
\xymatrix{
\coinv{\cH}{\frac{\cH}{\cH B^+}} \ar[r]^{j'} & \cH \ar@<+0.5ex>[rr]^-{(\pi\circ t) \tensor{A} \cH} \ar@<-0.5ex>[rr]_-{(\pi\tensor{A}\cH)\Delta} && {\displaystyle \Mto{\frac{\cH}{\cH B^+}} \tensor{A} \sM{\cH}},
}
\]
see \eqref{eq:eqright}. Now, commutativity of the following diagram
\[
\xymatrix{
B \ar[d]_-{\eta_B} \ar[r]^{\iota} & \cH \ar@{=}[d] \ar@<-0.5ex>[rr]_-{x \mapsto x \tensor{B} 1} \ar@<+0.5ex>[rr]^-{x \mapsto 1 \tensor{B} x} && {\cH_\iota} \tensor{B} {_\iota\cH} \ar[d]^-{\xi} \\
\coinv{\cH}{\frac{\cH}{\cH B^+}} \ar[r]_{j'} & \cH \ar@<+0.5ex>[rr]^-{\pi t \tensor{A} \cH} \ar@<-0.5ex>[rr]_-{(\pi\tensor{A}\cH)\Delta} && {\displaystyle \Mto{\frac{\cH}{\cH B^+}} \tensor{A} \sM{\cH}}
}
\]
together with bijectivity of $\xi$ (Lemma \ref{lem:xiright}) entails that $\eta_B$ is bijective and hence $B = \coinv{\cH}{\frac{\cH}{\cH B^+}}$. 
\end{proof}

At the same time, one may obtain the following improvement to Proposition \ref{prop:counit} by taking advantage of Theorem \ref{thm:super!}.

\begin{theorem}\label{thm:bah}
Let $(A,\cH)$ be a left Hopf algebroid over $A$ such that $\sM{\cH} = {}_{{A\tensor{}\op{1}}}\cH$ is $A$-flat. Let $I\subseteq \cH$ be a left ideal $2$-sided coideal such that the extension-of-scalars functor $\cH \tensor{B}-$ is comonadic, where $B \coloneqq \coinv{\cH}{\frac{\cH}{I}}$. Then $I = \cH B^+$, that is to say, $\Phi\Psi(I)=I$.
\end{theorem}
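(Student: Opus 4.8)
The plan is to reduce the statement to the comonadic descent machinery of Theorem~\ref{thm:super!}. By Proposition~\ref{prop:counit} we already know $\cH B^{+}\subseteq I$, so what remains is to show that the induced surjection $\psi\colon\cH/\cH B^{+}\twoheadrightarrow\cH/I$ of left $\cH$-module corings is injective. Set $\cl{\cH}\coloneqq\cH/I$. As recorded in the proof of Proposition~\ref{prop:counit}, up to the canonical isomorphisms $\cH/\cH B^{+}\cong\cH\tensor{B}(B/B^{+})\cong\cH\tensor{B}A\cong\cH\tensor{B}\coinv{\cl{\cH}}{\cl{\cH}}$, the map $\psi$ is exactly the $\cl{\cH}$-component of the counit of the adjunction $\cK\dashv\coinv{(-)}{\cl{\cH}}$ associated with the left $\cH$-module coring $\cl{\cH}$. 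Hence it suffices to prove that $\cK\colon\lmod{B}\to\LHopf{\cl{\cH}}{\cH}$ is an equivalence of categories: then its counit is a natural isomorphism, $\psi$ is an isomorphism, and therefore $I=\cH B^{+}$.

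To prove that $\cK$ is an equivalence I would invoke Theorem~\ref{thm:super!}. Its first condition, comonadicity of $\cH\tensor{B}-$, is assumed. Its second condition is, by the Lemma following Theorem~\ref{thm:super!}, the bijectivity of the canonical map $\xi\colon\cH_{\iota}\tensor{B}{}_{\iota}\cH\to\Mto{\cl{\cH}}\tensor{A}\sM{\cH}$, $x\tensor{B}y\mapsto\sum\pi(x_{1})\tensor{A}x_{2}y$, and here the left Hopf algebroid structure is indispensable. The argument follows the pattern of Lemma~\ref{lem:xiright}: one uses the translation map $\gamma$ and the inverse Hopf--Galois map $\beta^{-1}$ to produce an explicit two-sided inverse of $\xi$. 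Two inputs are needed. First, one must know that $B=\coinv{\cH}{\cl{\cH}}$ satisfies $\gamma(B)\subseteq B\tensor{\op{A}}\cH$; I would deduce this from the coinvariance relation $(\pi\tensor{A}\cH)\Delta(b)=\pi(1_{\cH})\tensor{A}b$ for $b\in B$ together with the translation-map identity $(\Delta\tensor{\op{A}}\cH)\circ\gamma=(\cH\tensor{A}\gamma)\circ\Delta$, which give $(\partial\tensor{\op{A}}\cH)(\gamma(b))=\pi(1_{\cH})\tensor{A}\gamma(b)$ with $\partial=(\pi\tensor{A}\cH)\circ\Delta$; since $\sM{\cH}$ is $A$-flat, the equalizer description \eqref{eq:eqright} of $B$ persists after $-\tensor{\op{A}}\cH$, so $B\tensor{\op{A}}\cH$ is the equalizer of $\partial\tensor{\op{A}}\cH$ and $(\pi t\tensor{A}\cH)\tensor{\op{A}}\cH$ inside $\cH\tensor{\op{A}}\cH$, and the displayed relation places $\gamma(b)$ there. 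Second, because Lemma~\ref{lem:xiright} is phrased for the module coring $\cH/\cH B^{+}$, one uses Theorem~\ref{thm:adjunction}(1), i.e., $\coinv{\cH}{\cH/\cH B^{+}}=B$, to transfer the isomorphism it provides to the $\xi$ attached to $\cl{\cH}=\cH/I$.

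The step I expect to be the main obstacle is exactly this verification that $\xi$ is an isomorphism. On the one hand it conceals some flatness bookkeeping: one needs $\cH$ to be flat as a left $\op{A}$-module via $t$ in order for the equalizer and kernel computations to behave, and this must be extracted from the $A$-flatness of $\sM{\cH}$ together with bijectivity of $\beta$. On the other hand it requires the careful identification, through Theorem~\ref{thm:adjunction}(1), of the module coring $\cH/I$ appearing in the reduction with the module coring $\cH/\cH B^{+}$ for which the explicit inverse of $\xi$ is built in Lemma~\ref{lem:xiright}, so that the comonadicity hypothesis is really being used and not merely the faithfulness of $\cH\tensor{B}-$ that it entails. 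Once $\xi$ is known to be bijective, Theorem~\ref{thm:super!} yields the equivalence $\cK$, and the reduction of the first paragraph concludes the proof.
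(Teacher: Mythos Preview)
Your approach mirrors the paper's exactly: reduce to Theorem~\ref{thm:super!}, prove $\gamma(B)\subseteq B\tensor{\op{A}}\cH$ via Schauenburg's identity (3.6), invoke Lemma~\ref{lem:xiright} for the bijectivity of $\xi$, and conclude through the counit~\eqref{eq:smallpsi}. The one substantive difference is the input you use to guarantee that the equalizer description~\eqref{eq:eqright} of $B$ persists after $-\tensor{\op{A}}\tM{\cH}$, and there your proposal has a gap.

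You correctly identify that what is needed is that $B\tensor{\op{A}}\cH$ coincide with the coinvariants of $\cH\tensor{\op{A}}\cH$, so that the relation $(\partial\tensor{\op{A}}\cH)(\gamma(b))=\pi(1_{\cH})\tensor{A}\gamma(b)$ forces $\gamma(b)\in B\tensor{\op{A}}\cH$; and you note this would follow from flatness of $\tM{\cH}$ over $\op{A}$. But your hope to extract $\op{A}$-flatness of $\tM{\cH}$ from the $A$-flatness of $\sM{\cH}$ together with bijectivity of $\beta$ does not work: for a left Hopf algebroid without antipode there is no mechanism transporting flatness from the source side to the target side, and $\beta$ only relates two \emph{double} tensor products of $\cH$ rather than furnishing an $\op{A}$-linear automorphism of $\cH$ itself. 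The paper extracts the needed input directly from the comonadicity hypothesis instead: comonadicity of $\cH\tensor{B}-$ forces $\iota\colon B\to\cH$ to be pure as a morphism of right $B$-modules (via \cite[\S5.3]{JanelidzeTholen}), hence pure as a morphism of right $\op{A}$-modules by Corollary~\ref{cor:pure}; this purity is what makes $\iota\tensor{\op{A}}\cH$ injective and what the paper invokes to identify $B\tensor{\op{A}}\cH$ with $\coinv{(\cH\tensor{\op{A}}\cH)}{\cH/I}$. Replacing your unjustified appeal to $\op{A}$-flatness by this purity consequence of comonadicity closes the gap, and the remainder of your outline then proceeds exactly as in the paper.
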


\begin{proof}
First of all, observe that the comonadicity of $\cH \tensor{B}-$ entails that $\iota \colon B \to \cH$ is pure as a morphism of right $\op{A}$-modules, in view of \cite[\S5.3, Theorem]{JanelidzeTholen} and Corollary \ref{cor:pure}. Thus, let us start by proving that $\gamma(B)\subseteq B \tensor{\op{A}}\cH$ (which now makes sense, because $\iota \tensor{\op{A}}\cH$ is injective by purity). 
In a nutshell, the following diagram whose rows are equalizers commutes sequentially
\[
\xymatrix @C=40pt{
B \ar[r] & \cH \ar@<+0.5ex>[rr]^-{\pi t \, \tensor{A}\cH} \ar@<-0.5ex>[rr]_-{(\pi \tensor{A} \cH) \, \Delta} \ar[d]_-{\gamma} & & {\displaystyle \Mto{\frac{\cH}{I}} \tensor{A} \sM{\cH} } \ar[d]^-{\frac{\cH}{I} \tensor{A} \gamma } \\
\Mt{B} \tensor{\op{A}} \tM{\cH} \ar[r] & \Mt{\cH} \tensor{\op{A}} \tM{\cH} \ar@<+0.5ex>[rr]^-{\pi t \, \tensor{A}\cH \tensor{\op{A}} \cH} \ar@<-0.5ex>[rr]_-{(\pi \tensor{A} \cH) \, \Delta \tensor{\op{A}} \cH} & & {\displaystyle \Mto{\frac{\cH}{I}} \tensor{A} \sMt{\cH} \tensor{\op{A}} \tM{\cH} }
}
\]
In more detail, we want show that for any $b\in B$, $\gamma (b)= \sum b_{+}\tensor{\op{A}} b_{-}\in B \tensor{\op{A}} \cH$. 
Since $\iota\colon B \to \cH$ is pure, it follows that $ B \tensor{\op{A}}\cH = \coinv{(\cH \tensor{\op{A}} \cH)}{\frac{\cH}{I}}$ where $\cH \tensor{\op{A}} \cH$ is considered as a left $\cH/I$-comodule via $(\pi\tensor{A}\id_{\cH})\Delta \tensor{\op{A}}\id_{\cH}$. On the other hand, we have that $\sum b_{1}\tensor{A} b_{2} \in 1 \tensor{A} B + \tM{I} \tensor{A} \cH$. By equation (3.6) of \cite{Schauenburg} we have that 
\[
\sum b_{+1}\tensor{A} b_{+2}\tensor{\op{A}} b_{-} = \sum b_{1}\tensor{A} b_{2+} \tensor{\op{A}} b_{2-}\in 1 \tensor{A} \sum b_{+}\tensor{\op{A}} b_{-} + \tM{I}\tensor{A}\cH \tensor{\op{A}} \cH.
\]
Hence $\gamma (b)\in \coinv{(\cH \tensor{\op{A}} \cH)}{\frac{\cH}{I}}$. 

Now, the additional condition on $\gamma(B)$ we proved above ensures that $\xi$ of \eqref{eq:xiisback} is an isomorphism in view of Lemma \ref{lem:xiright}. In particular, $\cK$ of Theorem \ref{thm:super!} is an equivalence of categories. Therefore, the morphism $\theta_{\cl{\cH}}$ in \eqref{eq:smallpsi} from the proof of Proposition \ref{prop:counit} is an isomorphism and hence the vertical arrows in \eqref{eq:diagramcounit} are all isomorphisms.
It follows that $I = \cH B^+$ as claimed.
\end{proof}

Summing up, we have the first main result of this work. The functor $\cH \tensor{B} -$ in the statement below denotes the extension of scalars functor ${_B{\sf Mod}}\to{_{\cH}{\sf Mod}}$.

\begin{theorem}\label{mainthm:Galois}
Let $(A,\cH)$ be a left Hopf algebroid over $A$ such that $\sM{\cH} = {}_{{A\tensor{}\op{1}}}\cH$ is $A$-flat. We have a well-defined inclusion-preserving bijective correspondence
\[
\begin{gathered}
\xymatrix @R=0pt{
{\left\{ \begin{array}{c} \text{left ideal } 2\text{-sided coideals } I \text{ in } \cH \\ \text{such that } \cH \tensor{B}- \text{ is comonadic,} \\ \text{where } B \coloneqq \coinv{\cH}{\frac{\cH}{I}}
\end{array} \right\}} \ar@{<->}[r] & {\left\{ \begin{array}{c} \text{right } \cH\text{-comodule } \op{A}\text{-subrings } B \text{ of } \cH \\   \text{via } t \text{ such that } \cH \tensor{B} - \text{ is comonadic} \\ \text{and } \gamma(B)\subseteq B \tensor{\op{A}}\cH \end{array} \right\}} \\
I \ar@{|->}[r] & \coinv{\cH}{\frac{\cH}{I}} \\
\cH B^+ & B \ar@{|->}[l]
}
\end{gathered}
\]
\end{theorem}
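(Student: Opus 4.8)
The plan is to obtain Theorem \ref{mainthm:Galois} by restricting the monotone Galois connection $(\Phi,\Psi)$ of Theorem \ref{thm:adjunction} to the two displayed subclasses and showing that there it becomes a genuine bijection. Since $\Psi$ and $\Phi$ are already known to be well-defined and inclusion-preserving on the full lattices (Propositions \ref{prop:OneIsGone} and \ref{prop:Phiright}), and inclusion-preservation obviously persists on subclasses, the only two things to check are: (i) $\Psi$ and $\Phi$ map each of the two subclasses into the other; and (ii) on these subclasses $\Psi\Phi=\id$ and $\Phi\Psi=\id$.

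For (i), I would start with a left ideal $2$-sided coideal $I$ such that $\cH\tensor{B}-$ is comonadic, where $B\coloneqq\coinv{\cH}{\frac{\cH}{I}}$. By Proposition \ref{prop:OneIsGone}, $\Psi(I)=B$ is a right $\cH$-comodule $\op{A}$-subring via $t$; the comonadicity requirement for $B$ is verbatim the hypothesis on $I$; and $\gamma(B)\subseteq B\tensor{\op{A}}\cH$ is precisely what the first half of the proof of Theorem \ref{thm:bah} establishes (comonadicity forces $\iota\colon B\to\cH$ to be pure as a right $\op{A}$-module map via \cite[\S5.3, Theorem]{JanelidzeTholen} and Corollary \ref{cor:pure}, after which the computation with \cite[(3.6)]{Schauenburg} places $\gamma(b)$ in the relevant space of coinvariants). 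Conversely, I would start with $B$ in the right-hand class. By Proposition \ref{prop:Phiright}, $\Phi(B)=\cH B^+$ is a left ideal $2$-sided coideal, and the outstanding condition is comonadicity of $\cH\tensor{B'}-$ for $B'\coloneqq\coinv{\cH}{\frac{\cH}{\cH B^+}}$; but comonadicity of $\cH\tensor{B}-$ makes $\iota$ pure, so Theorem \ref{thm:PsiPhiright} gives $B'=B$, whence the condition holds automatically.

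For (ii), $\Psi\Phi(B)=B$ is exactly Theorem \ref{thm:PsiPhiright}, whose two hypotheses (purity of $\cH$ over $B$ and $\gamma(B)\subseteq B\tensor{\op{A}}\cH$) are met on the right-hand class as just explained, while $\Phi\Psi(I)=I$ is exactly Theorem \ref{thm:bah}. Combining (i) and (ii) with the inclusion-preservation from Propositions \ref{prop:OneIsGone} and \ref{prop:Phiright} yields the claimed inclusion-preserving bijection. The only delicate point — bookkeeping rather than a real obstacle, since the analytic content already lives in Theorems \ref{thm:PsiPhiright} and \ref{thm:bah} — is checking that the two defining conditions of each subclass genuinely transfer along the correspondence: that comonadicity on one side simultaneously supplies the purity needed to invoke those two theorems and, through the translation-map computation, the hypothesis $\gamma(B)\subseteq B\tensor{\op{A}}\cH$ on the other side.
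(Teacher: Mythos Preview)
Your proposal is correct and follows essentially the same approach as the paper, which simply writes that the result ``follows from Theorem \ref{thm:PsiPhiright} (in view of \cite[\S5.3, Theorem]{JanelidzeTholen}) and Theorem \ref{thm:bah}.'' You have merely made explicit the bookkeeping that the paper leaves implicit, namely that the two maps restrict to the indicated subclasses and that comonadicity supplies the purity needed to invoke those theorems.
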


\begin{proof}
It follows from Theorem \ref{thm:PsiPhiright} (in view of \cite[\S5.3, Theorem]{JanelidzeTholen}) and Theorem \ref{thm:bah}.
\end{proof}

\begin{corollary}[of Theorem \ref{mainthm:Galois}]\label{maincor:Galois}
Let $(A,\cH)$ be a left Hopf algebroid over $A$ such that $\sM{\cH} = {}_{{A\tensor{}\op{1}}}\cH$ is $A$-flat. We have a well-defined inclusion-preserving bijective correspondence
\[
\begin{gathered}
\xymatrix @R=0pt{
{\left\{ \begin{array}{c} \text{left ideal } 2\text{-sided coideals } I \text{ in } \cH \\ \text{such that } \cH \text{ is faithfully flat} \\ \text{over } \coinv{\cH}{\frac{\cH}{I}} \text{ on the right}
\end{array} \right\}} \ar@{<->}[r] & {\left\{ \begin{array}{c} \text{right } \cH\text{-comodule } \op{A}\text{-subrings } B \text{ of } \cH \\ \text{ via } t  \text{ such that } \cH_B \text{ is faithfully flat} \\ \text{and } \gamma(B)\subseteq B \tensor{\op{A}}\cH \end{array} \right\}} \\
I \ar@{|->}[r] & \coinv{\cH}{\frac{\cH}{I}} \\
\cH B^+ & B \ar@{|->}[l]
}
\end{gathered}
\]
\end{corollary}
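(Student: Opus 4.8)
The plan is to deduce this statement directly from Theorem~\ref{mainthm:Galois}, exploiting that faithful flatness is a (more readily checkable) sufficient condition for the comonadicity hypotheses appearing there. The only external ingredient I would invoke is the classical fact of faithfully flat descent: if a ring extension $\iota\colon B\to\cH$ is faithfully flat on the right, then the extension-of-scalars functor $\cH\tensor{B}-\colon\lmod{B}\to\lmod{\cH}$ is comonadic (see, e.g., \cite[\S5.3, Theorem]{JanelidzeTholen}). Granting this, both collections appearing in the corollary are, by their very definition, subcollections of the corresponding ones in Theorem~\ref{mainthm:Galois}, and the two assignments $I\mapsto\coinv{\cH}{\frac{\cH}{I}}$ and $B\mapsto\cH B^+$ are literally the maps occurring there; so it only remains to check that the bijection of Theorem~\ref{mainthm:Galois} restricts and corestricts to a bijection between the smaller collections.

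\textbf{The two directions.} First I would take a left ideal $2$-sided coideal $I$ such that $\cH$ is faithfully flat over $B\coloneqq\coinv{\cH}{\frac{\cH}{I}}$ on the right. By faithfully flat descent, $\cH\tensor{B}-$ is comonadic, so $I$ lies in the left-hand collection of Theorem~\ref{mainthm:Galois}; that theorem then places $B=\coinv{\cH}{\frac{\cH}{I}}$ in its right-hand collection, so in particular $\gamma(B)\subseteq B\tensor{\op{A}}\cH$, and since $\cH_B$ is faithfully flat by assumption, $B$ lies in the right-hand collection of the present statement. Conversely, I would take a right $\cH$-comodule $\op{A}$-subring $B$ of $\cH$ via $t$ with $\cH_B$ faithfully flat and $\gamma(B)\subseteq B\tensor{\op{A}}\cH$. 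Again by descent, $\cH\tensor{B}-$ is comonadic, so $B$ lies in the right-hand collection of Theorem~\ref{mainthm:Galois}, and hence $I\coloneqq\cH B^+$ is a left ideal $2$-sided coideal with $\cH\tensor{B}-$ comonadic; moreover, since a faithfully flat extension is pure, the hypotheses of Theorem~\ref{thm:PsiPhiright} are met and give $\coinv{\cH}{\frac{\cH}{I}}=\coinv{\cH}{\frac{\cH}{\cH B^+}}=B$. Therefore $\cH$ is faithfully flat over $\coinv{\cH}{\frac{\cH}{I}}=B$ on the right, so $I$ lies in the left-hand collection of the present statement.

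\textbf{Conclusion and obstacle.} The two maps are mutually inverse and inclusion-preserving on these subcollections because they are restrictions of the mutually inverse, inclusion-preserving maps of Theorem~\ref{mainthm:Galois} and take the same values there; hence they furnish the claimed bijective correspondence. I do not anticipate a genuine obstacle here: the substance of the corollary is precisely that faithful flatness of $\cH$ over the relevant coideal subring simultaneously guarantees the comonadicity of $\cH\tensor{B}-$ (via descent) and the purity needed to apply Theorem~\ref{thm:PsiPhiright}, so that no new computation is required beyond the bookkeeping above.
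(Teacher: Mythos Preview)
Your proposal is correct and follows essentially the same approach as the paper: the paper's proof is the one-liner ``It follows from Theorem~\ref{mainthm:Galois}, once recalled that if $\cH_B$ is faithfully flat, then $\cH \tensor{B} -$ is comonadic,'' and you have simply spelled out the bookkeeping that this sentence leaves implicit (namely, that the bijection of Theorem~\ref{mainthm:Galois} restricts and corestricts to the faithfully-flat subcollections). Your separate appeal to Theorem~\ref{thm:PsiPhiright} is harmless but redundant, since the identity $\coinv{\cH}{\frac{\cH}{\cH B^+}}=B$ is already part of the bijection in Theorem~\ref{mainthm:Galois}.
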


\begin{proof}
It follows from Theorem \ref{mainthm:Galois}, once recalled that if $\cH_B$ is faithfully flat, then $\cH \tensor{B} -$ is comonadic.
\end{proof}

\begin{remark}
The technical-looking condition ``$\gamma(B)\subseteq B \tensor{\op{A}}\cH$'' is automatically satisfied in most of the cases of interest. Apart from the many particular cases we consider below, in general it holds in case the Hopf algebroid has an antipode $S:\cH\to \cH$, i.e.\ if it is a Hopf algebroid in the sense of B\"ohm, see \cite[Definition 4.1]{Bohm:handbook}. Indeed in this case we have that the translation map is given explicitly by $\gamma(b)=b_{(1)}\otimes S(b_{(2)})$ so the condition is clearly fulfilled. 
\end{remark}

The purity of $\cH$ over $B$, for example in Theorem \ref{thm:PsiPhiright}, cannot be avoided in general, as the subsequent example shows.

\begin{example}
Let $A$ be any commutative algebra, $B \coloneqq (A \otimes A)[X]$ and $\cH \coloneqq (A \otimes A)[X^{\pm1}]$. Recall that $A \otimes A$ admits a Hopf algebroid structure as in \cite[Example 3.1]{Lu}:
\[
\begin{gathered}
s_A\colon A \to A \otimes A, \quad a \mapsto a \otimes 1; \qquad t_A\colon  A \to A \otimes A, \quad a \mapsto 1 \otimes a; \\
\varepsilon_A\colon  A \otimes A \to A, \quad a \otimes b \mapsto ab; \\
\Delta_A \colon  A \otimes A \to (A \otimes A) \tensor{A} (A \otimes A), \quad a \otimes b \mapsto (a \otimes 1) \tensor{A} (1\otimes b);  \\
\beta_A\colon  \Mt{(A \otimes A)} \tensor{A} \tM{(A \otimes A)} \to \Mto{(A \otimes A)} \tensor{A} \sM{(A \otimes A)}, \qquad (a \otimes b) \tensor{A} (c \otimes d) \mapsto (a \otimes 1) \tensor{A} (c \otimes bd).
\end{gathered}
\]
In fact, being a commutative Hopf algebroid, $A \otimes A$ admits an antipode
\[\cS_A\colon A\otimes A \to A \otimes A, \qquad a \otimes b \mapsto b \otimes a,\]
such that $\beta_A^{-1}(x \tensor{A} y) = \sum x_1 \tensor{A} \cS_A(x_2)y$, that is
\[\beta_A^{-1}\colon \Mto{(A \otimes A)} \tensor{A} \sM{(A \otimes A)} \to \Mt{(A \otimes A)} \tensor{A} \tM{(A \otimes A)}, \qquad (a \otimes b) \tensor{A} (c \otimes d) \mapsto (a \otimes 1) \tensor{A} (bc \otimes d).\]
We equip $B$ (respectively, $\cH$) with the bialgebroid (respectively, Hopf algebroid) structure coming from the base ring extension of the bialgebra $\K[X]$ (respectively, Hopf algebra $\K[X^{\pm1}]$) along the morphism $\K \to A \otimes A$ (see, for instance, \cite[Example 1.4]{Laiachi-RF} and \cite[Example 3.3]{Laiachi-GT}). This is a particular instance of the Connes-Moscovici bialgebroid construction, in which all the actions of the bialgebra (respectively, Hopf algebra) on the base algebra $A$ are trivial (see \cite{ConnesMoscovici,Kad} for the original constructions and \cite{Saracco-CM} for the straightforward adaptation to the bialgebra setting). Namely, we have
\[
\begin{gathered}
s_B\colon A \to B, \quad a \mapsto a \otimes 1; \qquad t_B\colon  A \to B, \quad a \mapsto 1 \otimes a; \\
\Delta_B \colon  B \to B \tensor{A} B, \quad (a \otimes b)X \mapsto (a \otimes 1)X \tensor{A} (1\otimes b)X; \qquad \varepsilon_B\colon  B \to A, \quad (a \otimes b)X \mapsto ab.
\end{gathered}
\]
and
\[
\begin{gathered}
s_\cH\colon A \to \cH, \quad a \mapsto a \otimes 1; \qquad t_\cH\colon  A \to \cH, \quad a \mapsto 1 \otimes a; \\
\Delta_\cH \colon  \cH \to \cH \tensor{A} \cH, \quad (a \otimes b)X \mapsto (a \otimes 1)X \tensor{A} (1\otimes b)X; \qquad \varepsilon_\cH\colon  \cH \to A, \quad (a \otimes b)X \mapsto ab; \\
\beta_\cH\colon  \Mt{\cH} \tensor{A} \tM{\cH} \to \Mto{\cH} \tensor{A} \sM{\cH}, \qquad (a \otimes b)X^{n} \tensor{A} (c \otimes d)X^{m} \mapsto (a \otimes 1) X^{n} \tensor{A} (c \otimes bd)X^{n+m}.
\end{gathered}
\]
As before, being $\cH$ a commutative Hopf algebroid, it admits an antipode as well
\[\cS_\cH\colon  \cH \to \cH, \qquad (a \otimes b)X \mapsto (b \otimes a)X^{-1}\]
such that
\[\beta_\cH^{-1}\colon \Mto{\cH} \tensor{A} \sM{\cH} \to \Mt{\cH} \tensor{A} \tM{\cH}, \qquad (a \otimes b)X^{n} \tensor{A} (c \otimes d)X^{m} \mapsto (a \otimes 1)X^{n} \tensor{A} (bc \otimes d)X^{m-n}.\]
Observe also that $B^+ = B\cdot \left\{ s(a) - t(a) \mid a \in A \right\} \oplus B\cdot (1-X)$: on the one hand, by \cite[Proposition 4.3]{Saracco-CM} we know that $\left\{ s(a) - t(a) \mid a \in A \right\} = \mathsf{Prim}(B) \subseteq \ker(\varepsilon_B)$ and since $1 - X$ is $(1,X)$-skew primitive, it is in $\ker(\varepsilon_B)$ too. On the other hand, the composition
\[\frac{B}{B\cdot \left\{ X-1, s(a) - t(a) \mid a \in A \right\}} \cong \frac{B/B\cdot (X-1)}{B\cdot \left\{ X-1, s(a) - t(a) \mid a \in A \right\}/B\cdot ( X-1)}\cong \frac{A \otimes A}{\ker(\mu_A)} \cong A,\]
where $\mu_A$ is the multiplication of $A$, coincides exactly with the morphism induced by $\varepsilon_B$.

Now, both $\sM{B}$ and $\sM{\cH}$ are (faithfully) flat over $A$ and $B$ is a right $\cH$-comodule $A$-subring of $\cH$ via $t$. Furthermore, for all $a,b \in A$, $n \in \NN$ we have
\[\gamma\left((a \otimes b)X^n\right) = (a \otimes 1)X^{n} \tensor{A} (b \otimes 1)X^{-n} \in (\iota \tensor{A} \cH)(\Mt{B} \tensor{A} \tM{\cH}).\]
However, $\cH$ cannot be pure over $B$ since $\cH = S^{-1}B$ for $S = \left\{X^n \mid n \in \NN\right\}$ To check this, it is enough to tensor the injection $B \hookrightarrow \cH$ by $A \otimes A \cong B/\langle X\rangle$ over $B$ on the right: $B \tensor{B} (A \ot A) \cong A \ot A$ while $\cH \tensor{B} (A \ot A) = 0$.

To show how Theorem \ref{thm:PsiPhiright} fails, consider a generic element
\[h = \sum_{\substack{z \in \ZZ \\ i \in I}} \left(a_{z,i} \otimes b_{z,i}\right)X^z \in (A \otimes A)[X^{\pm 1}]\]
and denote by $\mathsf{supp}(h)$ the set of the $z \in \ZZ$ such that $X^z$ appears with non-zero coefficient.
Then notice that if $z \geq 0$ then $X^z + \cH B^+ = 1 + \cH B^+$ because
\[1 - X^z = (1 + X + \cdots  + X^{z-1})(1 - X)\]
and if $z < 0$ then $1/X^{|z|} + \cH B^+ = 1 + \cH B^+$ too because
\[1 - \frac{1}{X^{|z|}} = -\frac{1}{X^{|z|}}\left(1 - X^{|z|}\right) \in \cH B^+.\]
Therefore
\[
(\pi \tensor{A} \cH)\Delta_\cH\left(h\right) = \sum_{\substack{z \in \ZZ \\ i \in I}} \Big(\left(a_{z,i} \otimes 1\right)X^z + \cH B^+\Big) \tensor{A} \left(1 \otimes b_{z,i}\right)X^z = 1 \tensor{A} h,
\]
or, equivalently, $\cH = \cH^{\mathrm{co}\frac{\cH}{\cH B^+}}$.
\end{example}



\section{Affine groupoids and commutative Hopf algebroids}\label{sec:1} 

We discuss here the specific case of the aforementioned correspondence for commutative Hopf algebroids and we apply it to the study of affine groupoids.


\subsection{Preliminaries on groupoids and normal subgroupoids}


\subsubsection{Groupoids and subgroupoids}

Recall, for instance from \cite[Chapter I, Definition 1.1]{Mackenzie-old}, that a \emph{groupoid} is a small category (i.e., the class of objects is a set) in which every arrow is invertible. We will usually denote a groupoid $\Gg$ by a pair $\big(\Gg_0,\Gg_1\big)$ of sets: the set of objects $\Gg_0$ and the set of arrows $\Gg_1$. A groupoid $\big(\Gg_0,\Gg_1\big)$ always comes with a (understood) family of maps
\[
\xymatrix @C=50pt{ 
\Gg_0 \ar@<1ex>@{<-}|(.4){\,\sigma\,}[r] \ar@<-1ex>@{<-}|(.4){\,\tau\,}[r] & \ar@{<-}|(.4){\,e\,}[l] \ar@(ul,ur)^{\iota} \Gg_1 \ar@{<-}^-{\bullet}[r] & \Gg_1 \tak{\Gg_0}\Gg_1}
\]
satisfying the expected, reasonable, compatibility conditions. 
For the sake of simplicity, one often denotes a composition $\bullet(f,g)$ by simple juxtaposition $fg$.
A \emph{subgroupoid} of a groupoid $\Gg$ is a subcategory which is also a groupoid.

For a given groupoid $\Gg = \big(\Gg_0, \Gg_1\big)$, the associated  \emph{isotropy groupoid} (also called \emph{inner subgroupoid} in \cite[Chapter I, Definition 2.4]{Mackenzie-old}) is the subgroupoid $\Gg^{{(i)}}\coloneqq \left(\Gg_0,\cup_{x\, \in\, \Gg_0}\Gg_x\right)$, where $\Gg_x$ is the \emph{isotropy group} of $x \in \Gg_0$, that is to say:
\[\Gg_x \coloneqq \big\{g \in \Gg_1 \mid \sigma(g) = x = \tau(g)\big\}.\]
In the isotropy groupoid, the source and the target coincide and we denote them simply by $o\colon \Gg^{{(i)}} \to \Gg_0, g \mapsto \sigma(g) = \tau(g)$. Moreover, there is a canonical morphism of groupoids $\Gg^{{(i)}} \to \Gg$ (the inclusion).


\subsubsection{Groupoid actions and normal subgroupoids}\label{sssec:groupoidactions}

Recall that a  \emph{left $\Gg$-action} of a groupoid $\Gg$ on a set $N$ consists of two maps $\rho\colon  N \to \Gg_{0}$ and $\mu \colon \Gg_{1}\operatorname{\due \times {\sigma} {\rho}} N \to N, \ (g,n) \mapsto gn$, satisfying
\[
\rho(gn) = \tau (g),\qquad  e_{\rho(n)} n =  n,\qquad g'(gn) = (g'g)n,
\]
for all $g , g'\in \Gg_1$ and $n \in N$ (see \cite[Definition 4.1]{Mackenzie-old} for the topological analogue and \cite[Definition 1.6.1]{Mackenzie-new} for the differential one). Given a left $\Gg$-action on $N$, one can consider the \emph{left translation groupoid} $\Gg \lJoin N$ with $\Gg_{1}\, \due \times {\sigma} {\rho} N$ as set of arrows and $N$ as set of objects. This is  the so-called \emph{semi-direct product groupoid}. The source and the target of this groupoid are given by 
\[
\sigma,\tau \colon  \Gg_{1}\, \due \times {\sigma} {\rho} N \to N, \qquad \sigma(g,n) = n, \qquad \tau(g,n) = gn.
\]
The identity of $n \in N$ is $e_n = (e_{\rho(n)}, n)$. The composition is given by 
\[
(g,n) (g',n') = (gg', n') \qquad \textrm{whenever} \ g'n'=n.
\]
Obviously the pair $({\rm pr}_1, \rho)\colon  (\Gg_1 \,\due \times {\sigma} {\rho} N\, , N) \to (\Gg_1, \Gg_0)$ defines a morphism of groupoids. 

\begin{example}[Action groupoid]
Any group $G$ can be considered as a groupoid by taking $\Gg_1 = G$ and $\Gg_0=\{*\}$ (the singleton). If $X$ is a set with an action $G \times X \to X$ of $G$, then one can define the so-called \emph{action groupoid}. It is constructed over $\Gg_1 \coloneqq G \times X$ and $\Gg_0 \coloneqq X$. Source and target are provided by $s(g,x) \coloneqq x$ and $t(g,x) \coloneqq g\cdot x$ for all $x \in X$ and $g \in G$. The identity map sends $x$ to $(e, x)$, where $e$ is the neutral element of $G$. The composition law is given by $(h,y) \bullet (g,x) \coloneqq (hg,x)$ whenever $y = g \cdot x$, and the inverse is defined by $(g,x)^{-1}\coloneqq (g^{-1},g \cdot x)$. Clearly, the pair of maps $({\rm pr}_1, *) \colon (\Gg_1, \Gg_0) \to (G, \{*\})$ defines a morphism of groupoids.
\end{example}

As for groups, conjugation in $\Gg$ can be seen as an action of $\Gg$ on the underlying set of arrow of its isotropy groupoid $\Gg^{{(i)}}$. 
In detail, define the map $\ad \colon \Gg_1 \, \due \times{\sigma} {o} \Gg^{(i)}_1 \to  \Gg^{(i)}_1$ acting as $\ad(g,f) = gfg^{-1}$. One can show that this defines an action, called the \emph{adjoint action}, so that  $\Gg^{(i)}_1$ becomes a left $\Gg$-set.  

A \emph{normal subgroupoid} of $\Gg$ is a subgroupoid $\Nn \hookrightarrow \Gg$ such that 
\begin{enumerate}[label=(NG\arabic*), ref=(NG\arabic*)]\addtocounter{enumi}{-1}
\item\label{item:norm1} $\Nn_0 =\Gg_0$ (i.e., $\Nn$ is \emph{wide} in the sense of \cite[Chapter I, Definition 2.4]{Mackenzie-old});
\item\label{item:norm2} $\Nn_1(x,y)= \emptyset$,  for all $x \neq  y $ in $\Nn_0$ (that is, $\Nn$ is a subgroupoid of $\Gg^{(i)}$);
\item\label{item:norm3} for every $g \in \Gg_1$, we have 
\[
\ad(g)\Big(\Nn_1\big(\sigma(g), \sigma(g)\big)\Big) \subseteq \Nn_1(\tau(g),\tau(g))
\]
as subgroups of $\Gg_1\big(\tau(g),\tau(g)\big)$, where $\ad(g)(f)=\ad(g,f)$ for all $f$ with $\sigma(f) = \tau(f) = \sigma(g)$.
\end{enumerate}
Equivalently, a subgroupoid $\Nn \hookrightarrow \Gg$ is normal (in the above sense) if and only if $\Nn$ is a subgroupoid of $\Gg^{(i)}$,
\[
g\Nn g^{-1} \coloneqq \Big\{ghg^{-1} ~\Big\vert~ h \in \Nn_1\big(\sigma(g),\sigma(g)\big)\Big\}
\]
is non empty and $g\Nn g^{-1} \subseteq \Nn$ for all $g \in \Gg_1$ (see \cite[Lemma 3.1]{Paques-Tamusiunas}).
In particular, we denote again by $o \colon \Nn \to \Nn_0, n \mapsto \sigma(n) = \tau(n)$, the common source and target of a normal subgroupoid.

\begin{remark}
Condition \ref{item:norm2}, which explicitly appears in \cite[\S1]{Brown} and \cite[Definition 2.2.1]{Mackenzie-new}, does not appear in \cite[Chapter I, Definition 2.5]{Mackenzie-old} or \cite[\S3]{Paques-Tamusiunas}. However, as observed in \cite[page 8]{Mackenzie-old}, whether or not a subgroupoid is normal depends only on those of its elements which lie in its isotropy groupoid. 
\end{remark}

The fact that normal subgroups can be characterized as the invariant subgroups under the adjoint action can be extended to the groupoids context, as the following lemma states.  

\begin{lemma}\label{lema:normalsugpd}
Let $\Gg$ be a groupoid and $\Nn \hookrightarrow \Gg$ a subcategory with $\Nn_0=\Gg_0$. Then $\Nn$ is a normal subgroupoid if and only if  $\Nn_1$ is a $\Gg$-equivariant subset of $\Gg^{(i)}_1$.  
\end{lemma}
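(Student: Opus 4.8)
The plan is to unwind both sides of the claimed equivalence in terms of the definition of a normal subgroupoid given just above (conditions \ref{item:norm1}, \ref{item:norm2}, \ref{item:norm3}, or equivalently the reformulation $g\Nn g^{-1}\subseteq \Nn$) and the notion of a $\Gg$-equivariant subset of the $\Gg$-set $\Gg^{(i)}_1$ with the adjoint action $\ad\colon \Gg_1\,\due\times{\sigma}{o}\Gg^{(i)}_1\to\Gg^{(i)}_1$. Since we are already assuming $\Nn_0=\Gg_0$, the only content of being a normal subgroupoid beyond being a subcategory is: (i) $\Nn$ is a subgroupoid of the isotropy groupoid $\Gg^{(i)}$, i.e.\ every arrow of $\Nn_1$ has equal source and target (this is condition \ref{item:norm2} together with wideness); and (ii) the adjoint-invariance condition \ref{item:norm3}. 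So the proof naturally splits into showing that the conjunction of (i) and (ii) is equivalent to ``$\Nn_1$ is a $\Gg$-equivariant subset of $\Gg^{(i)}_1$''.

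First I would prove the forward implication. Assume $\Nn$ is a normal subgroupoid. By \ref{item:norm2} (and the remark following the definition), $\Nn_1\subseteq \Gg^{(i)}_1$, so it makes sense to speak of $\Nn_1$ as a subset of $\Gg^{(i)}_1$. Equivariance means: whenever $(g,f)$ lies in $\Gg_1\,\due\times{\sigma}{o}\Gg^{(i)}_1$ with $f\in\Nn_1$, then $\ad(g,f)=gfg^{-1}\in\Nn_1$. But $f\in\Nn_1$ with $o(f)=\sigma(g)$ means precisely $f\in\Nn_1\big(\sigma(g),\sigma(g)\big)$, and then $gfg^{-1}\in g\Nn g^{-1}\subseteq\Nn_1$ by the reformulation of normality. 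Hence $\Nn_1$ is $\Gg$-equivariant. (One should also note that $\Nn$ being a subcategory already guarantees $\Nn_1$ is closed under the groupoid operations; the only extra input used here is normality.)

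For the converse, assume $\Nn$ is a subcategory with $\Nn_0=\Gg_0$ and $\Nn_1$ is a $\Gg$-equivariant subset of $\Gg^{(i)}_1$. Being a subset of $\Gg^{(i)}_1$ immediately gives that every arrow of $\Nn$ has equal source and target, so $\Nn$ is (wide and) a subgroupoid of $\Gg^{(i)}$; in particular \ref{item:norm1} and \ref{item:norm2} hold, and $\Nn$ is a groupoid since $\Gg^{(i)}$ is (inverses of isotropy arrows are isotropy arrows, and $\Nn_1$ is closed under inverses as a subcategory — or, if one prefers, equivariance under $\ad(g^{-1},-)$ together with being a submonoid on each fibre forces closure under inverses). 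It remains to verify \ref{item:norm3}: for $g\in\Gg_1$ and $f\in\Nn_1\big(\sigma(g),\sigma(g)\big)$ we have $o(f)=\sigma(g)$, so $(g,f)$ is a composable pair for the adjoint action and equivariance yields $\ad(g,f)=gfg^{-1}\in\Nn_1$; since $o(gfg^{-1})=\tau(g)$, in fact $gfg^{-1}\in\Nn_1\big(\tau(g),\tau(g)\big)$, which is exactly the required inclusion $\ad(g)\big(\Nn_1(\sigma(g),\sigma(g))\big)\subseteq\Nn_1(\tau(g),\tau(g))$. Thus $\Nn$ is a normal subgroupoid.

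There is no real obstacle here: the statement is essentially a repackaging of the definition, and the only mild subtlety worth spelling out is making sure the composability condition $o(f)=\sigma(g)$ in the adjoint action matches exactly the domain on which condition \ref{item:norm3} is imposed, and that ``$\Nn$ is a subcategory that is wide and lands in $\Gg^{(i)}$'' automatically upgrades to ``$\Nn$ is a subgroupoid of $\Gg^{(i)}$'' (closure under inverses), so that the two notions being compared really have the same ambient data. I would present the argument compactly as the two inclusions above, invoking \cite[Lemma 3.1]{Paques-Tamusiunas} for the equivalence between \ref{item:norm3} and the ``$g\Nn g^{-1}\subseteq\Nn$'' formulation to keep the bookkeeping light.
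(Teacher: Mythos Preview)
Your approach is exactly what the paper intends (its proof is the single word ``Straightforward''), and your unwinding of the definitions in both directions is correct and clearly written --- with one exception.

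In the converse direction you assert that a wide subcategory $\Nn$ with $\Nn_1\subseteq\Gg^{(i)}_1$ ``automatically upgrades'' to a subgroupoid of $\Gg^{(i)}$, i.e.\ is closed under inverses. Neither of the two justifications you offer works: a subcategory is certainly not closed under inverses by definition, and the alternative (``equivariance under $\ad(g^{-1},-)$ together with being a submonoid on each fibre forces closure under inverses'') fails as well. Take $\Gg=\ZZ$ viewed as a one-object groupoid and $\Nn=\NN$: then $\Nn$ is a wide subcategory, $\Nn_1\subseteq\Gg^{(i)}_1$, and $\Nn_1$ is $\Gg$-equivariant (conjugation is trivial), yet $\Nn$ is not a subgroupoid. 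So the lemma as literally stated (with ``subcategory'' rather than ``subgroupoid'' in the hypothesis) is false, and no argument can close this gap. The fix is to read the hypothesis as ``$\Nn\hookrightarrow\Gg$ a subgroupoid with $\Nn_0=\Gg_0$'', which is consistent with the paper's own definition of normal subgroupoid (which begins ``a subgroupoid $\Nn\hookrightarrow\Gg$ such that\ldots''). Under that reading your argument is complete and correct; you should simply drop the parenthetical about closure under inverses and note instead that it is part of the standing hypothesis.
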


\begin{proof}
Straightforward.
\end{proof}


\subsubsection{The orbit space}\label{ssec:orbits}
Let $\Gg$ be a groupoid and $N$ be a left $\Gg$-set. For $n \in N$ one defines the orbit of $n$ under $\Gg$ to be
\[\Gg n \coloneqq \left\{gn \in N ~\big\vert~ g \in \sigma^{-1}(\rho(n))\right\}\] 
(see, e.g., \cite[Definition 4.1]{Mackenzie-old}). This induces an equivalence relation on $N$ given by
\[n \sim n' \quad \Leftrightarrow \quad \exists\, g\in \sigma^{-1}(\rho(n)) \text{ such that } n'= gn \quad \Big( \Leftrightarrow \quad \Gg n = \Gg n'\Big).\]
The quotient space with respect to this relation, called the \emph{orbit space} of $N$ with respect to the action of $\Gg$, is denoted by $N/\Gg$ and its elements are the orbits $\Gg n$ for $n \in N$. See, for instance, \cite[Definition 1.67]{Goehle-PhD}. 
In particular, given a groupoid $\Gg$ one can consider the orbit equivalence relation on $\Gg_0$ with respect to the action of $\Gg$ given by:
\[\Gg_1 \operatorname{\due \times {\sigma} {\id}} \Gg_0 \to \Gg_0, \qquad (g,x) \mapsto \tau(g).\]
In this case, $x \sim y$ if and only if there exists a $g \in \Gg_1$ such that $\sigma(g) = x$ and $\tau(g) = y$. The orbit of $x \in \Gg_0$ is
\[\Gg x = \tau \big( \sigma ^{-1}(x)\big).\]
The set of orbits, called the \emph{orbit space}, of $\Gg$ is the quotient set $\Gg_{0}/\sim$, which is often denoted by $\Gg_{0}/\Gg$.
For further details, we refer the reader to \cite[\S1.2]{Goehle-PhD}.

\begin{remark}
For a left $\Gg$-set $N$, the orbit space $N/\Gg$ of $N$ with respect to the action of $\Gg$ can also be described as the orbit space of the left translation groupoid $\Gg \lJoin N$. In fact, for $p \in N$ we have
\begin{align*}
\Gg p & = \tau \big( \sigma ^{-1}(p)\big) = \tau\left(\left\{(g,n) \in \Gg_{1} \operatorname{\due \times {\sigma} {\rho}} N \mid p = n\right\}\right) \\
 & = \tau\big(\left\{(g,p) \in \Gg_{1} \times N \mid \rho(p) = \sigma(g)\right\}\big) = \big\{gp \mid g \in \sigma^{-1}\big(\rho(n)\big)\big\} \qedhere
\end{align*}
\end{remark}

Now we will use this notion to describe the quotient groupoid by a normal subgroupoid as in the classical case of groups. Let us consider a normal subgroupoid $\Nn \hookrightarrow \Gg$. It is clear that $(\Gg_1, \tau )$ is a left $\Nn$-set  via the left action 
\begin{equation}\label{Eq:Naction}
\Nn \operatorname{\due \times {o} {\tau}}  \Gg_1 \to \Gg_1, \qquad (g, f) \mapsto gf.
\end{equation}
We  denote by $\Gg_1/\Nn$ the orbit space of this action (or, equivalently, the orbit space of the associated left translation groupoid $\Nn \lJoin \Gg_1$) and call it the \emph{quotient groupoid} (see \cite[page 9-10]{Mackenzie-old}). As a matter of description, the equivalence class of a given arrow $g \in \Gg_1$ is a ``flower with stalk at $g$'':
\begin{center}
\begin{tikzpicture}[x=8pt,y=8pt,thick]\pgfsetlinewidth{0.5pt}
\node[inner sep=1pt](1) at (0,0){${s (g)}$};
\node[inner sep=1pt](2) at (0,4) {${t (g)}$};

\draw[->] (1) to [out=90, in=-70] (2);
\draw[-] (2) to [out=0, in=-90] (3,4);
\draw[->] (3,4) to  [out=90, in=30] (2);
\draw[-] (2) to  [out=40, in=-90] (1.5,6);
\draw[->] (1.5,6) to  [out=90, in=70] (2);
\draw[-] (2) to  [out=80, in=0] (-1.5,6);
\draw[->] (-1.5,6) to  [out=-90, in=100] (2);
\draw[-] (2) to  [out=130, in=90] (-3,3.5);
\draw[->] (-3,3.5) to  [out=-90, in=160] (2);
\draw[-] (2) to  [out=200, in=120] (-1.5,2);
\draw[->] (-1.5,2) to  [out=0, in=230] (2);
\end{tikzpicture}
\end{center}
Two arrows are equivalent if they ``support the same flower''.
The action \eqref{Eq:Naction} is in fact a left one. There is also a right $\Nn$-action on $(\Gg_1, \sigma )$, which leads to the groupoid $\Gg_1 \rJoin \Nn$ and, in principle, to another orbit space $\Gg_1/\Nn$. It is easy to verify that the two orbit spaces coincide, since $\Nn$ is normal. The following result is expected (see also \cite[page 9-10]{Mackenzie-old}).

\begin{lemma}[{\cite[Proposition 2.16]{Barbaran/Kaoutit}}]\label{lema:quotient}
Let $\Nn \hookrightarrow \Gg$ be a normal subgroupoid. Then $\Gg/\Nn\coloneqq (\Gg_1/\Nn, \Gg_0)$ have a structure of groupoid such that there is a chain of morphisms
$$
\xymatrix{\Nn\, \ar@{^{(}->}[r] & \Gg \ar@{->}[r]^-{\varpi} & \Gg/\Nn.}
$$
Furthermore, any morphism $\Gg \to \Hh$ whose kernel contains $\Nn$ factors through $\varpi$.
\end{lemma}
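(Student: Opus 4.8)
The plan is to transport the groupoid structure of $\Gg$ along the orbit projection and then check the universal property by a direct verification. First I would unwind the orbit relation of the action \eqref{Eq:Naction}: two arrows $f,f'\in\Gg_1$ represent the same class in $\Gg_1/\Nn$ exactly when $f'=nf$ for some $n\in\Nn_1$ with $o(n)=\tau(f)$. Since $n$ is then a loop based at $\tau(f)$, one gets $\sigma(f')=\sigma(f)$ and $\tau(f')=\tau(f)$, so both $\sigma$ and $\tau$ descend to well-defined maps $\Gg_1/\Nn\to\Gg_0$. I would then put the identity arrow of $x\in\Gg_0$ to be $[e_x]$, the inverse of a class $[g]$ to be $[g^{-1}]$, and the partial composition to be $[g][h]\coloneqq[gh]$ whenever $\sigma(g)=\tau(h)$.

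The one step where normality (condition \ref{item:norm3}) is genuinely needed, and which I expect to be the main obstacle, is checking that composition and inversion are independent of the chosen representatives. If $g'=ng$ and $h'=mh$ with $\sigma(g)=\tau(h)$, then $n\in\Nn_1\big(\tau(g),\tau(g)\big)$ and $m\in\Nn_1\big(\sigma(g),\sigma(g)\big)$, and the key identity is
\[
g'h' \;=\; n\,g\,m\,h \;=\; n\,\big(g\,m\,g^{-1}\big)\,g\,h\,;
\]
by \ref{item:norm3} we have $g\,m\,g^{-1}=\ad(g)(m)\in\Nn_1\big(\tau(g),\tau(g)\big)$, and since $\Nn_1\big(\tau(g),\tau(g)\big)$ is a subgroup of $\Gg_1\big(\tau(g),\tau(g)\big)$, the arrow $n\,(g\,m\,g^{-1})$ is again a loop at $\tau(g)=\tau(gh)$, whence $g'h'\sim gh$. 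Likewise,
\[
(ng)^{-1} \;=\; g^{-1}\,n^{-1} \;=\; \big(g^{-1}\,n^{-1}\,g\big)\,g^{-1}\,,
\]
and applying \ref{item:norm3} to $g^{-1}$ gives $g^{-1}\,n^{-1}\,g=\ad(g^{-1})(n^{-1})\in\Nn_1\big(\sigma(g),\sigma(g)\big)$, so $(ng)^{-1}\sim g^{-1}$. Once these operations are well defined, associativity, unitality and invertibility are inherited verbatim from $\Gg$ (each identity between representatives projects to the corresponding identity between classes), so $\Gg/\Nn=(\Gg_1/\Nn,\Gg_0)$ is a groupoid and $\varpi\colon\Gg\to\Gg/\Nn$ — the identity on objects, $g\mapsto[g]$ on arrows — is a morphism of groupoids, surjective on arrows. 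Composing the inclusion $\Nn\hookrightarrow\Gg$ with $\varpi$ sends every $n\in\Nn_1$ to $[n]=[e_{o(n)}]$ (because $n$ is a loop, hence $n\sim e_{o(n)}$), which exhibits the asserted chain of morphisms and shows that $\Nn$ lies in the kernel of $\varpi$.

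Finally, for the universal property I would take a morphism $\phi=(\phi_0,\phi_1)\colon\Gg\to\Hh$ whose kernel contains $\Nn$, that is, $\phi_1(n)$ is an identity arrow of $\Hh$ for every $n\in\Nn_1$, and define $\bar\phi_0\coloneqq\phi_0$ on objects and $\bar\phi_1([g])\coloneqq\phi_1(g)$ on arrows. This is well defined: if $g'=ng$ then $\phi_1(g')=\phi_1(n)\,\phi_1(g)=\phi_1(g)$, since $\phi_1(n)$ is an identity based at $\phi_0(\tau(g))$. Verifying pointwise, from the corresponding relations for $\phi$, that $\bar\phi=(\bar\phi_0,\bar\phi_1)$ commutes with source, target, identities, inversion and composition shows that it is a morphism $\Gg/\Nn\to\Hh$, and by construction $\bar\phi\circ\varpi=\phi$; uniqueness of such a factorisation is forced by $\varpi$ being the identity on objects and surjective on arrows. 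I expect everything except the two representative-independence checks to be a faithful transcription of the classical quotient-group construction to the multi-object setting, which is why the result can also simply be invoked from \cite[Proposition 2.16]{Barbaran/Kaoutit}.
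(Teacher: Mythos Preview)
Your argument is correct and complete: the well-definedness checks for composition and inversion are exactly where normality enters, and you handle both cleanly; the remaining verifications (groupoid axioms, functoriality of $\varpi$, universal property) are routine and carried out correctly. Note, however, that the paper does not supply its own proof of this lemma at all --- it simply cites \cite[Proposition 2.16]{Barbaran/Kaoutit} --- so there is nothing to compare against beyond observing that your direct verification is the standard one and matches what one finds in the cited reference (and in \cite[pages 9--10]{Mackenzie-old}).
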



\subsection{Commutative Hopf algebroids}\label{ssec:halgd}

In view of the easier expression of commutative Hopf algebroids with respect to general bialgebroids and Hopf algebroids, we begin by recalling explicitly their definition, both for the convenience of the reader and for future reference.

As a matter of notation, if $A$ and $R$ are commutative $\K$-algebras, then we will write $A(R)$ for $\calg_\K(A,R)$.


\subsubsection{Commutative Hopf algebroids and Hopf ideals}

\begin{definition}
A \emph{commutative Hopf algebroid} is a cogroupoid object in the category $\calg_{\K }$ of commutative algebras (see e.g.~\cite{Ravenel:1986}). Thus, a commutative Hopf algebroid consists of a pair of commutative $\K$-algebras $\left( A,\mathcal{H}\right) $ together with a diagram of algebra maps
\[
\xymatrix@C=45pt{ A \ar@<1ex>@{->}|(.4){\,s\,}[r] \ar@<-1ex>@{->}|(.4){\,t\,}[r] & \ar@{->}|(.4){ \,\varepsilon\,}[l] \ar@(ul,ur)^{{\cS}} \cH \ar@{->}^-{{\Delta}}[r] & \cH \tensor{A}\cH}
\]
(where the $A$-bimodule structure on $\cH$ used to perform the tensor product is $\sMt{\cH}$) such that
\begin{enumerate}[label=(CH\arabic*),ref=(CH\arabic*)]
\item The datum $(\cH, \Delta, \varepsilon)$ is an $A$-coring structure on $\sMt{\cH}$. At the level of groupoids, this encodes a unitary and associative composition law between arrows.
\item The antipode $\cS$ satisfies $\cS \circ s=t$, $\cS \circ t = s$ and $\cS^2=\id_{\cH}$, which encode the fact that the inverse of an arrow interchanges source and target and that the inverse of the inverse is the original arrow.
\item The antipode also satisfies $\sum\cS(u_1)u_2=(t\circ \varepsilon)(u)$ and $\sum u_1\cS(u_2)=(s\circ \varepsilon)(u)$, which encode the fact that the composition of a morphism with its inverse on either side gives an identity morphism.
\end{enumerate}
In particular, the inverse of the Hopf-Galois map
\[\beta\colon \Mt{\cH} \tensor{A} \tM{\cH} \to \Mt{\cH} \tensor{A} \sM{\cH}, \qquad u \tensor{A} v \mapsto \sum u_1 \tensor{A} u_2v,\]
is provided by
\[\beta^{-1}\colon  \Mt{\cH} \tensor{A} \sM{\cH} \to \Mt{\cH} \tensor{A} \tM{\cH}, \qquad u \tensor{A}v \mapsto \sum u_1 \tensor{A} \cS(u_2)v.\]
Thus, explicitly, 
\begin{equation}\label{eq:gammacomm}
\gamma\coloneqq \Big(  \cH \xrightarrow{\Delta} \Mt{\cH} \tensor{A} \sM{\cH} \xrightarrow{\cH \tensor{A} \cS} \Mt{\cH} \tensor{A} \tM{\cH} \Big), \qquad  \gamma(u) = \sum u_1 \tensor{A} \cS(u_2).
\end{equation}

A \emph{morphism} of commutative Hopf algebroids $\B{\phi}\colon  (A,\cH) \to (B,\cK)$ is a pair of algebra maps $\left( \phi _{{0}} \colon A \to B,\phi _{{1}} \colon \cH \to \cK\right)$ such that
\begin{gather*}
\phi _{1}\circ s_\cH = s_\cK \circ \phi _{0}, \qquad \phi _{1}\circ t_\cH = t_\cK \circ \phi_{0}, \qquad \varepsilon_\cK \circ \phi _{1} = \phi _{0} \circ \varepsilon_\cH ,  \\
\Delta_\cK \circ \phi _{1} =\chi \circ \left( \phi _{1}\otimes _{A}\phi_{1}\right) \circ \Delta_\cH , \qquad \cS_\cK \circ \phi _{1} =\phi _{1} \circ \cS_\cH,
\end{gather*}
where $\chi \colon \cK \tensor{A} \cK \rightarrow \cK \tensor{B} \cK$ is the obvious projection induced by $\phi_0$, that is $\chi \left(k\tensor{A}k'\right) =k\tensor{B}k'$. If $A = B$ and $\phi_0 = \id_A$, then we say that $\phi_1 : \cH \to \cK$ is a morphism of Hopf algebroids over $A$.
\end{definition}

\begin{remark}\label{rem:flat}
Let $(A, \cH)$ be commutative Hopf algebroid such that $\sM{\cH}$ is flat as an $A$-module (i.e.~the extension $s \colon  A \to \cH$ is flat). In this case $t \colon A \to \cH$ is also flat due to the isomorphism given by the antipode. Therefore both extension $s $ and $t $ are  faithfully flat since both are (left) split morphisms of modules over the base algebra. 
\end{remark}

Similarly to how commutative Hopf algebras over a field $\K$ are the same thing as affine $\K$-group schemes (that is, representable presheaves of groups on $\aff_\K = \calg_{\K }^{\mathrm{op}}$, the opposite of the category of commutative $\K$-algebras), commutative Hopf algebroids are the same thing as affine groupoid schemes (that is, representable presheaves of groupoids on $\aff_\K$).  
Precisely, given a commutative Hopf algebroid $(A, \cH)$ and a commutative $\K$-algebra $R$, we have a groupoid $\xymatrix@C=25pt{A(R) \ar|(0.45){\,e\,}[r] & \cH(R) \ar@<1ex>|(0.45){\,\tau\,}[l] \ar@<-1ex>@{->}|(0.45){\,\sigma\,}[l]} $ by reversing the structures of $(A,\cH)$. We will often adopt this point of view and we will denote by $\Gg_{(A,\cH)} = \big(\Gg_A,\Gg_\cH\big)$ the groupoid scheme defined by $(A,\cH)$, that is to say, 
\[
\Gg_A(R) = A(R) = \calg_\K(A,R) \qquad \text{and} \qquad \Gg_\cH(R) = \cH(R) = \calg_\K(\cH,R),
\]
in order to stress the groupoid structure on the pair $\big(\Gg_A(R),\Gg_\cH(R)\big)$. Observe that, under this perspective,
\begin{equation}\label{eq:comp}
(\psi \bullet \varphi)(h) = \sum \varphi\big(h_1\big)\psi\big(h_2\big)
\end{equation}
for all $(\psi,\varphi) \in \Gg_\cH(R) \operatorname{\due{\times}{s^*}{t^*}} \Gg_\cH(R)$, $h \in \cH$ and $R$ in $\calg_\K$.

\begin{remark}
In particular, $\big(\Gg_A(\K),\Gg_\cH(\K)\big)$ will be called an \emph{affine $\K$-groupoid} and when $A,\cH$ are finitely generated and reduced, then it will be called an \emph{affine algebraic $\K$-groupoid}. Suppose that $\K$ is algebraically closed. Denote by $\aff_1:\calg_\K \to \set$ the affine scheme represented by the polynomial algebra $\K[T]$ and by $\operatorname{Var}_\K$ the category of affine $\K$-varieties (i.e., the category whose objects are $\calg_\K(R,\K)$ for $R$ a finitely generated and reduced commutative algebra and whose morphisms are given by precomposition by morphisms in $\calg_\K$). Then 
\[(A,\cH) \mapsto \big(\Gg_A(\K),\Gg_\cH(\K)\big) \quad \text{and} \quad \big(\Gg_A(\K),\Gg_\cH(\K)\big) \mapsto \Big(\operatorname{Var}_\K\big(\Gg_A(\K),\aff_1(\K)\big),\operatorname{Var}_\K\big(\Gg_\cH(\K),\aff_1(\K)\big)\Big)\]
give an anti-equivalence 
between the category of affine (i.e., finitely generated, commutative, reduced) Hopf algebroids and the category of affine algebraic $\K$-groupoids, where 
\[
\operatorname{Var}_\K\big(\Gg_A(\K),\aff_1(\K)\big) = \calg_\K(\K[T],A).\qedhere
\]
\end{remark}

Let $(\Gg_A,\Gg_\cH)$ be an affine groupoid scheme and let $I\subseteq \cH$ be an ideal. Denote by $\pi \colon \cH \to \cH/I$ the canonical projection and by $\pi^*\colon \calg_\K(\cH/I,-) \to \calg_\K(\cH,-)$ the natural transformation given by precomposition by $\pi$. We say that $(\Gg_A,\Gg_{\cH/I})$ is a \emph{wide closed groupoid subscheme} of $(\Gg_A,\Gg_\cH)$ (or, simply, a \emph{wide closed subgroupoid} or \emph{wide affine subgroupoid} of $(\Gg_A,\Gg_\cH)$) if $(\Gg_A,\Gg_{\cH/I})$ is a groupoid itself and $\pi^*:\Gg_{\cH/I} \hookrightarrow \Gg_\cH$ induces a morphism of groupoids $(\Gg_A,\Gg_{\cH/I}) \to (\Gg_A,\Gg_\cH)$. Since the structure maps of the commutative Hopf algebroid $\cH/I$ should be induced by the structure maps of $\cH$ via $\pi$, $(\Gg_A,\Gg_{\cH/I})$ is a wide subgroupoid of $(\Gg_A,\Gg_\cH)$ if and only if $(A,\cH/I)$ is a commutative Hopf algebroid and $\pi:\cH \to \cH/I$ is a morphism of commutative Hopf algebroids, if and only if the ideal $I$ satisfies
\begin{enumerate}[label=(HI\arabic*),ref=(HI\arabic*)]
\item\label{item:HI1} $\varepsilon(I) = 0$; 
\item\label{item:HI2} $\Delta(I) \subseteq \img(\cH\tensor{A}I + I\tensor{A}\cH)$;
\item\label{item:HI3} $\cS(I) \subseteq I$ (and so $\cS(I) = I$).
\end{enumerate}

An ideal $I \subseteq \cH$ satisfying \ref{item:HI1} and \ref{item:HI2} will be called a \emph{wide bi-ideal} of $(A,\cH)$. When $I$ satisfies \ref{item:HI3} as well, it will be called a \emph{wide Hopf ideal} of $(A,\cH)$. 

\begin{remark}
The terminology ``wide'' introduced above is justified by the fact that if $I \subseteq \cH$ is a wide Hopf ideal, then $\big(\Gg_A(\K),\Gg_{\cH/I}(\K)\big)$ is a wide subgroupoid of $\big(\Gg_A(\K),\Gg_{\cH}(\K)\big)$ in the sense of \cite[Chapter I, Definition 2.4]{Mackenzie-old}, that is to say, the two groupoids share the same set of objects. A more general notion of ideal for a Hopf algebroid would involve considering at the same time an ideal $I_A$ in $A$ and an ideal $I_\cH$ in $\cH$ such that the canonical projections $(\pi_A\colon A \to A/I_A,\pi_\cH\colon \cH \to \cH/I_\cH)$ form a morphism of Hopf algebroids, but we do not discuss this case herein. See Section 3.2 of \cite{ghobadi2021isotopy} for additional details.
\end{remark}

In what follows we are mainly concerned with wide subgroupoids and wide ideals, hence we will often omit the adjective ``wide''.
For the sake of completeness, the following proposition can be proved by finding inspiration from the proof of Proposition \ref{prop:uffachefatica}.

\begin{proposition}
Let $\cH$ and $\cK$ be bialgebroids (resp., Hopf algebroids) over the same base algebra $A$ and let $\phi: \cH \to \cK$ be a morphism of bialgebroids (resp., Hopf algebroids) over $A$. If the two-sided ideal $I \subseteq \cH$ is a bi-ideal (resp., Hopf ideal) of $\cH$ and if we denote by $\pi:\cH \to \cH/I$ the canonical projection, then
\begin{itemize}[leftmargin=0.9cm]
\item $(A,\cH/I)$ has a unique bialgebroid (resp., Hopf algebroid) structure such that $\pi$ becomes a morphism of bialgebroids (resp., Hopf algebroids) over $A$;
\item $\ker(\phi)$ is a bi-ideal (resp., Hopf ideal) of $(A,\cH)$;
\item for any bi-ideal (resp., Hopf ideal) $I \subseteq \ker(\phi)$ there exists a unique morphism of bialgebroids (resp., Hopf algebroids) over $A$, $\tilde{\phi}:\cH/I \to \cK$, such that $\tilde{\phi} \circ \pi = \phi$.
\end{itemize}
\end{proposition}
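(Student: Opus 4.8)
The plan is to transport each structure map of $\cH$ to the quotient $\cH/I$ along the canonical projection $\pi$, to read off the bi-ideal (resp.\ Hopf ideal) axioms for $\ker(\phi)$ directly from the compatibility conditions defining a morphism over $A$, and to check that the algebra map induced by $\phi$ on $\cH/I$ is automatically compatible with all the structures. The recurring device, exactly as in the proof of Proposition \ref{prop:uffachefatica}, is that $\pi$ is an epimorphism of algebras and that, by right exactness of $-\tensor{A}-$, so are $\pi\tensor{A}\pi$ and $\pi\tensor{A}\pi\tensor{A}\pi$; hence any identity valid after precomposition with a suitable power of $\pi$ is valid outright.

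For the first claim, since $I$ is a two-sided ideal, $\cH/I$ is a $\K$-algebra, $\pi$ is an algebra map, and the $\Ae$-ring structure compatible with $\pi$ is forced to be $s_{\cH/I}=\pi\circ s_\cH$ and $t_{\cH/I}=\pi\circ t_\cH$. Condition \ref{item:HI1} ($\varepsilon_\cH(I)=0$) lets $\varepsilon_\cH$ descend to $\bara{\varepsilon}\colon\cH/I\to A$; condition \ref{item:HI2} says $\Delta_\cH(I)\subseteq\img(\cH\tensor{A}I+I\tensor{A}\cH)=\ker(\pi\tensor{A}\pi)$ (the last equality again by right exactness), so $(\pi\tensor{A}\pi)\circ\Delta_\cH$ descends to $\bara{\Delta}\colon\cH/I\to(\cH/I)\tensor{A}(\cH/I)$; and, in the Hopf case, condition \ref{item:HI3} lets $\cS_\cH$ descend to $\bara{\cS}$. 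That $\bara{\varepsilon},\bara{\Delta}$ are $A$-bilinear, that $(\cH/I,\bara{\Delta},\bara{\varepsilon})$ is an $A$-coring, that $\bara{\Delta}$ is multiplicative with values in the Takeuchi product, that the counit conditions \ref{item:B5}--\ref{item:B6} hold, and that $\bara{\cS}$ satisfies the antipode axioms are all obtained by the same routine: decorate the corresponding identity for $\cH$ with $\pi$ (or $\pi\tensor{A}\pi$, or $\pi\tensor{A}\pi\tensor{A}\pi$) and cancel the epimorphism. Uniqueness is immediate because every structure map of $\cH/I$ is determined on $\img(\pi)=\cH/I$ by the requirement that $\pi$ be a morphism over $A$.

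For the second claim, $\ker(\phi)$ is a two-sided ideal because $\phi$ is an algebra map. Since $\phi$ is a morphism over $A$ (its base component is $\id_A$, so the projection $\chi$ is the identity), the defining compatibilities give $\varepsilon_\cK\circ\phi=\varepsilon_\cH$ and, in the Hopf case, $\cS_\cK\circ\phi=\phi\circ\cS_\cH$; evaluating at $x\in\ker(\phi)$ yields $\varepsilon_\cH(x)=\varepsilon_\cK(0)=0$ and $\phi(\cS_\cH(x))=\cS_\cK(0)=0$, which are \ref{item:HI1} and \ref{item:HI3} for $\ker(\phi)$. For the third claim, since $I\subseteq\ker(\phi)$ the algebra map $\phi$ factors uniquely as $\phi=\tilde{\phi}\circ\pi$ with $\tilde{\phi}\colon\cH/I\to\cK$ an algebra map; that $\tilde{\phi}$ is a morphism of bialgebroids (resp.\ Hopf algebroids) over $A$ is checked by precomposing each required identity --- $\tilde{\phi}\circ s_{\cH/I}=s_\cK$, $\varepsilon_\cK\circ\tilde{\phi}=\bara{\varepsilon}$, $\Delta_\cK\circ\tilde{\phi}=(\tilde{\phi}\tensor{A}\tilde{\phi})\circ\bara{\Delta}$, and $\cS_\cK\circ\tilde{\phi}=\tilde{\phi}\circ\bara{\cS}$ --- with the epimorphism $\pi$ and invoking that $\phi$ is a morphism over $A$.

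The only step that is not purely formal, and hence the main obstacle, is condition \ref{item:HI2} for $\ker(\phi)$, i.e.\ that $\Delta_\cH(x)\in\img\big(\ker(\phi)\tensor{A}\cH+\cH\tensor{A}\ker(\phi)\big)$ for $x\in\ker(\phi)$. Let $p\colon\cH\to\cH/\ker(\phi)$ be the canonical surjection; right exactness of $-\tensor{A}-$ gives $\ker(p\tensor{A}p)=\img\big(\ker(\phi)\tensor{A}\cH+\cH\tensor{A}\ker(\phi)\big)$, so it suffices to prove $(p\tensor{A}p)\Delta_\cH(x)=0$. From $\Delta_\cK\circ\phi=(\phi\tensor{A}\phi)\circ\Delta_\cH$ one gets $(\phi\tensor{A}\phi)\Delta_\cH(x)=\Delta_\cK(\phi(x))=0$ in $\cK\tensor{A}\cK$; identifying $\cH/\ker(\phi)$ with the subalgebra $\img(\phi)\subseteq\cK$, the vanishing of $(p\tensor{A}p)\Delta_\cH(x)$ follows once the map $\img(\phi)\tensor{A}\img(\phi)\to\cK\tensor{A}\cK$ induced by the inclusion is injective. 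This injectivity is where the flatness in force enters: when the structure maps $s,t\colon A\to\cH$ and $A\to\cK$ are (faithfully) flat (as in Remark \ref{rem:flat}), tensoring over $A$ is left exact and the displayed comparison is injective, after which \ref{item:HI2} for $\ker(\phi)$ holds and the proof is complete.
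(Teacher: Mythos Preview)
The paper does not actually prove this proposition: it only states that it ``can be proved by finding inspiration from the proof of Proposition~\ref{prop:uffachefatica}.'' Your argument is precisely that --- you transport the structure maps along the surjection $\pi$ and, for $\ker(\phi)$, you mimic the converse direction of Proposition~\ref{prop:uffachefatica} --- so your approach coincides with what the paper intends.

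There is one point worth tightening. You correctly isolate \ref{item:HI2} for $\ker(\phi)$ as the only non-formal step and reduce it to the injectivity of $\img(\phi)\tensor{A}\img(\phi)\to\cK\tensor{A}\cK$. However, flatness of $\cH$ and $\cK$ over $A$ (Remark~\ref{rem:flat}) does not by itself guarantee this: factoring the map as
\[
\img(\phi)\tensor{A}\img(\phi) \xrightarrow{\ \img(\phi)\tensor{A} j\ } \img(\phi)\tensor{A}\cK \xrightarrow{\ j\tensor{A}\cK\ } \cK\tensor{A}\cK,
\]
the second arrow is injective by flatness of $\sM{\cK}$, but the first requires $\Mt{\img(\phi)}$ to be $A$-flat, which is not an automatic consequence of the flatness of $\cH$ or $\cK$. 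So either assume $\img(\phi)$ is $A$-flat (which holds, for instance, when $\phi$ is surjective or when one works over a field), or state the hypothesis as ``$j\tensor{A}j$ is injective.'' With that adjustment the argument is complete; the paper itself is silent on this subtlety.
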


Now, let $(A,\cH)$ be a commutative Hopf algebroid and consider the ideal $\langle s-t \rangle$ generated by the vector subspace $\{s(a) - t(a) \mid a \in A\}$. Since $s(a) - t(a) \in \cP_{A}(\cH)$ (the space of primitive elements) for every $a \in A$, $\langle s-t \rangle$ is a Hopf ideal in $\cH$. Its associated quotient Hopf algebroid will be denoted by $\iso{\cH} \coloneqq \cH/\langle s-t \rangle$. For $(A, \iso{\cH})$, the source equals the target. That is, it is a commutative Hopf $A$-algebra. Denote by $\eta\colon  A \to \iso{\cH}$ the unit map (that is, $\eta(a) = s(a) + \langle s-t\rangle = t(a) + \langle s-t\rangle$ for all $a \in A$) and by $\iso{\varpi} \colon  \cH \to \iso{\cH}$ the canonical projection sending $x$ to $\bara{x} = x + \langle s-t \rangle$. 

\begin{lemma}\label{lema:Hbar}
Let $(A,\cH)$ be a commutative Hopf algebroid. Then the presheaf of groupoids $\big(\Gg_A, \Gg_\cH^{(i)}\big)$ obtained by considering the isotropy groupoid of $\big(\Gg_A(R),\Gg_\cH(R)\big)$ for every $R$ in $\calg_\K$ is represented by the Hopf $A$-algebra $\iso{\cH}$. In other words, we have an isomorphism 
\[
\Big(A(R), \cH(R)^{{(i)}}\Big) \cong  \Big(A(R),  \iso{\cH}(R)\Big),
\]
natural in $R$, between the isotropy groupoid of $\big(\Gg_A, \Gg_\cH\big)$ and the groupoid $\big(\Gg_A,\Gg_{\iso{\cH}}\big)$.
\end{lemma}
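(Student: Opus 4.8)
The plan is to obtain the isotropy groupoid directly from the universal property of the projection $\iso{\varpi}\colon \cH \to \iso{\cH} = \cH/\langle s-t\rangle$, once we know that $\iso{\cH}$ is a Hopf $A$-algebra. First I would recall, as already observed just above the statement, that $s(a)-t(a)\in\cP_A(\cH)$ for every $a\in A$, so $\langle s-t\rangle$ is a Hopf ideal (it is a bi-ideal and $\cS(\langle s-t\rangle)=\langle s-t\rangle$); hence $(A,\iso{\cH})$ is a commutative Hopf algebroid whose source and target both coincide with the unit $\eta\colon A\to\iso{\cH}$ — that is, a commutative Hopf $A$-algebra — and $\iso{\varpi}\colon(A,\cH)\to(A,\iso{\cH})$ is a morphism of commutative Hopf algebroids over $A$. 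In particular $\big(\Gg_A,\Gg_{\iso{\cH}}\big)$ is a presheaf of groupoids with the same object presheaf $\Gg_A$ as $\big(\Gg_A,\Gg_\cH\big)$ and with $\sigma=\tau$.

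Next, for a fixed $R$ in $\calg_\K$, I would consider $\iso{\varpi}^{*}\colon\Gg_{\iso{\cH}}(R)\to\Gg_\cH(R)$, $\phi\mapsto\phi\circ\iso{\varpi}$, and show it is a bijection onto the isotropy part $\Gg_\cH(R)^{(i)}$. Injectivity follows from surjectivity of $\iso{\varpi}$. For the image, note that an algebra map $\varphi\colon\cH\to R$ lies in $\Gg_\cH(R)^{(i)}$ if and only if $\sigma(\varphi)=\tau(\varphi)$, that is $\varphi\circ s=\varphi\circ t$, that is $\varphi\big(s(a)-t(a)\big)=0$ for all $a\in A$, that is $\langle s-t\rangle\subseteq\ker(\varphi)$; by the universal property of the quotient this is precisely the condition that $\varphi$ factor (uniquely) as $\varphi=\bara{\varphi}\circ\iso{\varpi}$ with $\bara{\varphi}\in\Gg_{\iso{\cH}}(R)$. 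Hence $\iso{\varpi}^{*}$ is a bijection of $\Gg_{\iso{\cH}}(R)$ onto $\Gg_\cH(R)^{(i)}$, and on objects the comparison is the identity of $\Gg_A(R)$.

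Finally, I would check that this bijection is an isomorphism of groupoids; each verification is a one-line consequence of $\iso{\varpi}$ being a morphism of Hopf algebroids over $A$, and the structure on $\Gg_\cH(R)^{(i)}$ is by definition the restriction of that on $\Gg_\cH(R)$. Source and target are preserved because $\iso{\varpi}\circ s=\eta=\iso{\varpi}\circ t$; the identity map is preserved because $\varepsilon_{\iso{\cH}}\circ\iso{\varpi}=\varepsilon$, so the identity at $x\in\Gg_A(R)$, namely $x\circ\varepsilon$ on the $\cH$-side, corresponds to $x\circ\varepsilon_{\iso{\cH}}$ on the $\iso{\cH}$-side; composition is preserved because $\Delta_{\iso{\cH}}\circ\iso{\varpi}=(\iso{\varpi}\tensor{A}\iso{\varpi})\circ\Delta$, which via \eqref{eq:comp} yields $\iso{\varpi}^{*}(\psi)\bullet\iso{\varpi}^{*}(\varphi)=\iso{\varpi}^{*}(\psi\bullet\varphi)$ for composable $\psi,\varphi$; and inverses are preserved because $\cS_{\iso{\cH}}\circ\iso{\varpi}=\iso{\varpi}\circ\cS$. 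Thus $\iso{\varpi}^{*}$ is a groupoid isomorphism onto $\Gg_\cH(R)^{(i)}$, and naturality in $R$ is automatic since precomposition with $\iso{\varpi}$ commutes with precomposition along any $\K$-algebra map $R\to R'$. I do not anticipate a genuine obstacle: the one real verification is that $\langle s-t\rangle$ is a Hopf ideal, recalled at the outset, and the only delicate point to phrase carefully is the identification, via the universal property, of the $R$-points of $\iso{\cH}$ with exactly the arrows of $\Gg_\cH(R)$ fixed by $\sigma=\tau$.
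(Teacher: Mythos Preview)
Your proposal is correct and follows essentially the same approach as the paper's proof. The paper is simply more terse: it calls the bijection ``almost tautological'' where you spell out the universal property of the quotient, and it dispatches the groupoid-morphism check in one line (``because it is induced by a morphism of commutative Hopf algebroids'') where you verify each structure map separately.
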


\begin{proof}
Let $R$ be a commutative $\K$-algebra. The fact that the correspondence
\begin{gather*}
\xymatrix @R=0pt{
\iso{\cH}(R) = \calg_\K\left(\displaystyle\frac{\cH}{\langle s-t\rangle},R\right) \ar[r]^-{{\iso{\varpi}}^*} & \cH(R)^{(i)} = \left\{\varphi \in \calg_\K(\cH,R) \mid \varphi\circ s = \varphi\circ t\right\} \\
\phi \ar@{|->}[r] & \phi \circ \iso{\varpi},
}
\end{gather*}
is bijective is almost tautological and it is a morphism of groupoids because it is induced by a morphism of commutative Hopf algebroids. The naturality in $R$ follows from the associativity of the composition.
\end{proof}

Even for Hopf algebroids over noncommutative base algebras, $\cH/\langle s-t \rangle$ forms a natural Hopf algebroid structure and is called the \emph{isotropy quotient} of $\cH$ \cite{ghobadi2021isotopy}. In particular, Lemma~\ref{lema:Hbar} was previously discussed in Example 3.13 of \cite{ghobadi2021isotopy}.


\subsubsection{Comodule algebras and normal Hopf ideals}

With an eye to the definition of normal Hopf ideals, we give the analogue of the adjoint action for groupoids (see \S\ref{sssec:groupoidactions}) in the Hopf algebroid context. To this aim, recall first that a (commutative) \emph{right $\cH$-comodule algebra} for a commutative Hopf algebroid $(A,\cH)$ is a (commutative)  monoid in the symmetric monoidal category $\rcomod{\cH}$ (see \ref{rem:monoidality}). That is, a (commutative) $A$-algebra $\cR$ (with structure map $r\colon  A\to \cR$) which is also a right $\cH$-comodule with coaction $\rcaction{\cR}\colon  \cR \to  \cR\tensor{A}\cH, x \mapsto \sum x_{0} \tensor{A} x_1$, satisfying for all $x, y \in \cR$
\begin{equation}\label{Eq:comodalg}
\rcaction{\cR}(xy) = \sum x_{0}y_{0} \tensor{A} x_{1}y_{1} 
\qquad \mbox{and} \qquad \rcaction{\cR}(1_{\cR})= 1_{\cR}\tensor{A}1_{\cH}.
\end{equation} 
In others words, the coaction $\rcaction{\cR}$ is an $A$-algebra map, where $\cR\tensor{A}\cH$ is seen as an $A$-algebra via $A \to \cR \tensor{A} \cH,a  \mapsto 1_{\cR}\tensor{A} t(a)$. 
A \emph{morphism of right $\cH$-comodule algebras} is an $A$-algebra map which is also a right $\cH$-comodule morphism. 
{\em Left} $\cH$-comodule algebras are analogously defined. 
Henceforth, all comodule algebras will be \emph{commutative} comodule algebras. A trivial example of a comodule algebra is the base algebra $A$ of a Hopf algebroid $(A, \cH)$ itself.

\begin{proposition}\label{prop:groupoidaction}
Let $\Gg_{(A,\cH)} = (\Gg_A,\Gg_\cH)$ be an affine groupoid scheme and let $\Nn_\cB = \calg_\K(\cB,-)$ be an affine scheme, for $\cB$ in $\calg_\K$. Then $\Nn_\cB(R)$ is a left $\Gg_{(A,\cH)}(R)$-set (naturally in $R$ in $\calg_\K$) if and only if $\cB$ is a right $\cH$-comodule algebra.
\end{proposition}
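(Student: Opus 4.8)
The plan is to prove both implications by unwinding the Yoneda-type correspondence between affine schemes and commutative algebras, translating the axioms of a groupoid action (from \S\ref{sssec:groupoidactions}) into the axioms of a comodule algebra \eqref{Eq:comodalg}.

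First, suppose $\cB$ is a right $\cH$-comodule algebra with structure map $r\colon A \to \cB$ and coaction $\rcaction{\cB}\colon \cB \to \cB \tensor{A} \cH$. For each $R$ in $\calg_\K$, I define the two maps making $\Nn_\cB(R) = \calg_\K(\cB,R)$ a left $\Gg_{(A,\cH)}(R)$-set. The moment map $\rho\colon \Nn_\cB(R) \to \Gg_A(R)$ is precomposition with $r$, that is $\rho(\nu) = \nu \circ r$. The action map on the fibre product $\Gg_\cH(R)\operatorname{\due{\times}{\sigma}{\rho}}\Nn_\cB(R)$ sends a compatible pair $(\varphi,\nu)$ — compatible meaning $\varphi\circ s = \nu\circ r$, since $\sigma = s^*$ — to the algebra map $\cB \xrightarrow{\rcaction{\cB}} \cB\tensor{A}\cH \xrightarrow{\nu\tensor{A}\varphi} R$ (this is well-defined over $\tensor{A}$ precisely because of the compatibility condition, using $\sMt{\cH}$). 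Here I use that $\rcaction{\cB}$ is an algebra map and $\nu\tensor{A}\varphi$ is an algebra map into $R$ (as $R$ is commutative and the relevant bimodule structures match), so the composite is an algebra map. Then the three action axioms — $\rho(\varphi n) = \tau(\varphi)$, $e_{\rho(\nu)}\nu = \nu$, and associativity $\varphi'(\varphi \nu) = (\varphi'\bullet\varphi)\nu$ — translate respectively into: the coaction followed by $\varepsilon$ on the $\cH$-leg recovers $r$ up to the $A$-action (this is the counit axiom for $\rcaction{\cB}$, giving $\rho(\varphi n) = \varphi\circ t \circ (\text{something}) = \tau(\varphi)$ after using the coring identities, cf.\ \eqref{eq:comp}); the counit condition $(\cB\tensor{A}\varepsilon)\circ\rcaction{\cB} = \id$; and the coassociativity $(\rcaction{\cB}\tensor{A}\cH)\circ\rcaction{\cB} = (\cB\tensor{A}\Delta)\circ\rcaction{\cB}$ combined with formula \eqref{eq:comp} for the composition in $\Gg_\cH(R)$. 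Naturality in $R$ is automatic since everything is built by post-composition.

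Conversely, suppose that for every $R$ the set $\Nn_\cB(R)$ carries a left $\Gg_{(A,\cH)}(R)$-action, naturally in $R$. Applying the Yoneda lemma to the moment map $\rho\colon \calg_\K(\cB,-) \to \calg_\K(A,-)$, which is a natural transformation of representable functors, yields a unique algebra map $r\colon A \to \cB$ inducing it. Similarly, the action map is a natural transformation $\calg_\K(\cH,-)\operatorname{\due{\times}{}{}}\calg_\K(\cB,-) \to \calg_\K(\cB,-)$; since the domain is represented by the pushout $\cH \tensor{A}\cB$ (the coproduct in the relevant slice, using the compatibility between $s$ on $\cH$ and $r$ on $\cB$ — so it is the appropriate $\tensor{A}$), Yoneda produces a unique algebra map $\cB \to \cH\tensor{A}\cB$, which after the symmetry of $\tensor{A}$ I rewrite as a map $\rcaction{\cB}\colon \cB \to \cB\tensor{A}\cH$. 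Because this arises from an algebra map it automatically satisfies the multiplicativity and unitality conditions \eqref{Eq:comodalg}. The coassociativity and counitality of $\rcaction{\cB}$ then follow by applying Yoneda to the associativity and unit axioms of the groupoid action, exactly reversing the computation from the first part.

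The main obstacle — and the step demanding the most care — is pinning down precisely which tensor product and which pushout represent the fibre-product functors $\Gg_\cH \operatorname{\due{\times}{\sigma}{\rho}} \Nn_\cB$ and iterated versions thereof, and checking that all the $A$-bimodule structures line up so that $\nu\tensor{A}\varphi$ really is a well-defined algebra map and that \eqref{eq:comp} correctly encodes groupoid composition. Concretely, one must verify that $\calg_\K(\cH\tensor{A}\cB, R) \cong \Gg_\cH(R)\operatorname{\due{\times}{\sigma}{\rho}}\Nn_\cB(R)$ with the bimodule structure on $\cH$ being $\tM{\cH}$ on the side glued to $\cB$ via $r$, matching the convention used for $\Delta$ in $\sMt{\cH}$; getting this compatible with the antipode-free parts of the Hopf algebroid axioms is the delicate bookkeeping. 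Once these identifications are fixed, each axiom-to-axiom translation is a routine diagram chase, so I would state the dictionary carefully and then leave the verifications to the reader, as the paper's phrasing ``can be proved by finding inspiration from'' suggests is the intended level of detail.
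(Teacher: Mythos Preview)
Your proposal is correct and follows essentially the same Yoneda-based approach as the paper: define the action via $\rcaction{\cB}^*$ after identifying the fibre product $\Gg_\cH(R)\operatorname{\due{\times}{\sigma}{\rho}}\Nn_\cB(R)$ with $\calg_\K(\cB\tensor{A}\cH,R)$, and in the converse direction recover $r$ and $\rcaction{\cB}$ by evaluating the natural transformations at the universal elements (the paper writes $r = \rho_\cB(\id_\cB)$ and $\rcaction{\cB} = \mu_{\cB\tensor{A}\cH}(\id_{\cB\tensor{A}\cH})$). One small correction to your axiom-by-axiom dictionary: the compatibility $\rho(\varphi\cdot\nu) = \tau(\varphi)$ does not come from the counit of the coaction but from the fact that $\rcaction{\cB}$ is an $A$-algebra map for the structure $a\mapsto 1_\cB\tensor{A} t(a)$ on $\cB\tensor{A}\cH$, which gives $\rcaction{\cB}(r(a)) = 1_\cB\tensor{A}t(a)$ and hence $(\varphi\cdot\nu)\circ r = \varphi\circ t$ directly; the counit condition $(\cB\tensor{A}\varepsilon)\circ\rcaction{\cB} = \id_\cB$ is indeed what encodes the unit axiom $e_{\rho(\nu)}\cdot\nu = \nu$, as you say.
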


\begin{proof}
If $(\cB,b:A\rightarrow \cB)$ is an $A$-algebra with a right $\cH$-comodule algebra structure given by a coaction $\rcaction{\cB}$, then $\rho_R \coloneqq b^* \colon \calg_\K(\cB,R) \to \calg_\K(A,R)$ and
\[
\Gg_\cH(R) \operatorname{\due{\times}{\sigma_R}{\rho_R}} \Nn_\cB(R) \cong \calg_\K(\cB \tensor{A} \cH,R) \xrightarrow{\rcaction{\cB}^*} \calg_\K(\cB,R) = \Nn_\cB(R)
\]
turn $\Nn_\cB(R)$ into a left $\Gg_{(A,\cH)}(R)$-set. Conversely, if $\Big(\Nn_\cB(R),\rho_R,\mu_R\Big)$ is a left $\Gg_{(A,\cH)}(R)$-set natural in $\calg_\K$, then $b \coloneqq \rho_\cB(\id_\cB)$ and $\rcaction{\cB}\coloneqq \mu_{\cB \tensor{A} \cH}\big(\id_{\cB \tensor{A} \cH}\big)$ make of $\cB$ a right $\cH$-comodule algebra.
\end{proof}

In the setting of Proposition \ref{prop:groupoidaction}, we say that $\Nn_\cB$ is an \emph{affine (left) $\Gg_{(A,\cH)}$-set}.

Note that, for a right $\cH$-comodule algebra $(\cR,r)$, the $\K$-vector subspace 
\[
\coinv{\cR}{\cH} = \{x \in \cR \mid  \rcaction{\cR}(x) = x \tensor{A} 1_{\cH} \}
\]
of $\cH$-coinvariant elements is a $\K$-subalgebra of $\cR$ (this follows easily from \eqref{Eq:comodalg}).

\begin{remark}
In general, $\coinv{\cR}{\cH}$ does not necessarily contain the image $r(A)$ of $A$, unless one makes additional assumptions. For instance, if the source and the target of $\cH$ are equal, then it does.
\end{remark}

Dually to the construction of the left translation groupoid, if $(\cR,r)$ is a right $\cH$-comodule algebra, then $(\cR, \cR\tensor{A}\cH)$ admits a natural structure of commutative Hopf algebroid such that $(r, 1_{{\cR}}\tensor{A}-)\colon (A,\cH) \to (\cR,\cR\tensor{A}\cH)$ is a morphism of Hopf algebroids. Namely,
\begin{gather*}
s\colon \cR \to \cR \tensor{A} \cH, \quad x \mapsto x \tensor{A} 1_\cH, \qquad t\colon \cR \to \cR \tensor{A} \cH, \quad x \mapsto \sum x_0 \tensor{A} x_1 \\
\Delta\colon \cR \tensor{A} \cH \to \big(\cR \tensor{A} \cH\big) \tensor{\cR} \big(\cR \tensor{A} \cH\big), \qquad (x \tensor{A} h) \mapsto \sum (x \tensor{A} h_1) \tensor{\cR} (1_\cR \tensor{A} h_2) \\
\varepsilon\colon \cR \tensor{A} \cH \to \cR, \ x \tensor{A} h \mapsto x r\big(\varepsilon(h)\big), \qquad \text{and} \\
\cS \colon \cR \tensor{A} \cH \to \cR \tensor{A} \cH, \qquad x \tensor{A} h \mapsto \sum x_0 \tensor{A} x_1\cS(h)
\end{gather*}
This is called the \emph{left translation Hopf algebroid} of $(A, \cH)$ along $r$. Of course, if $\sM{\cH}$ is flat over $A$, then $\cR\tensor{A}\cH$ is flat over $\cR$ (see \cite[Lemma 3.4]{Kaoutit/Kowalzig:17} for more properties).

Now, dually to the case of groupoids, we introduce the \emph{adjoint coaction}.

\begin{proposition}\label{prop:barcoaction}
The pair $(\iso{\cH}, \eta)$ is a right $\cH$-comodule algebra with coaction defined by:
\begin{equation}\label{Eq:coaction}
\rcaction{\iso{\cH}} \colon  \iso{\cH} \to \iso{\cH} \tensor{A} \cH, \qquad \bara{h} \mapsto \sum \bara{h_2} \tensor{A} \cS(h_1)h_3 .
\end{equation}
In particular, $(\iso{\cH}, \iso{\cH}\tensor{A}\cH)$ is a commutative Hopf algebroid (it is the left translation Hopf algebroid along $\eta$).
\end{proposition}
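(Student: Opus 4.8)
The plan is to verify directly that the formula \eqref{Eq:coaction} defines a coassociative, counital, $A$-linear map which is moreover an algebra map, so that $(\iso{\cH},\eta)$ becomes a right $\cH$-comodule algebra; the last assertion then follows by applying the left translation Hopf algebroid construction recalled just before the statement. Concretely, I would proceed in the following order. \emph{Step 1 (well-definedness).} First I would check that $\rcaction{\iso{\cH}}$ is well defined on the quotient $\iso{\cH}=\cH/\langle s-t\rangle$, i.e.\ that the assignment $h\mapsto\sum\bara{h_2}\tensor{A}\cS(h_1)h_3$ kills the ideal $\langle s-t\rangle$ and is compatible with the $A$-bimodule structure $\sMt{\cH}$ used to form $\iso{\cH}\tensor{A}\cH$. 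For $h=s(a)$ one computes $\sum\bara{s(a)_2}\tensor{A}\cS(s(a)_1)s(a)_3=\bara{1_\cH}\tensor{A}t(a)$ (using \eqref{eq:DeltaLin} and the antipode axioms), and for $h=t(a)$ one gets $\bara{1_\cH}\tensor{A}t(a)$ as well, so the difference lies in $\img(\langle s-t\rangle\tensor{A}\cH)$; together with the fact that $\langle s-t\rangle$ is a Hopf ideal this gives descent to $\iso{\cH}$.

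\emph{Step 2 (coassociativity and counitality).} Next I would verify $(\iso{\cH}\tensor{A}\Delta)\circ\rcaction{\iso{\cH}}=(\rcaction{\iso{\cH}}\tensor{A}\cH)\circ\rcaction{\iso{\cH}}$ and $(\iso{\cH}\tensor{A}\varepsilon)\circ\rcaction{\iso{\cH}}=\id$. Both are Heyneman--Sweedler bookkeeping exercises: applying $\rcaction{\iso{\cH}}$ twice produces, after coassociativity of $\Delta$ and the relation $\sum\cS(h_1)h_2=(t\circ\varepsilon)(h)$, the iterated coaction $\sum\bara{h_3}\tensor{A}\cS(h_2)h_4\tensor{A}\cS(h_1)h_5$, which matches $(\iso{\cH}\tensor{A}\Delta)\circ\rcaction{\iso{\cH}}$ after one uses that $\Delta$ is coassociative and that in $\iso{\cH}\tensor{A}(-)$ a factor of the form $\cS(h_1)h_2$ on the middle tensorand can be absorbed. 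Counitality is immediate from $\sum\cS(h_1)h_2=t\varepsilon(h)$ and $\varepsilon\circ t=\id$ together with $(\cH\tensor{A}\varepsilon)\Delta=\id$.

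\emph{Step 3 ($A$-linearity and the algebra map property).} I would then check that $\rcaction{\iso{\cH}}$ is a morphism of $A$-algebras, where $\iso{\cH}\tensor{A}\cH$ carries the algebra structure $a\mapsto 1\tensor{A}t(a)$: multiplicativity follows from multiplicativity of $\Delta$ and $\cS$ and commutativity of $\cH$, since $\rcaction{\iso{\cH}}(\bara h\bara k)=\sum\bara{h_2k_2}\tensor{A}\cS(h_1k_1)h_3k_3=\sum\bara{h_2k_2}\tensor{A}\cS(k_1)\cS(h_1)h_3k_3$ and commutativity lets us rearrange this as the product $\rcaction{\iso{\cH}}(\bara h)\rcaction{\iso{\cH}}(\bara k)$; unitality is clear. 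This establishes that $(\iso{\cH},\eta)$ is a right $\cH$-comodule algebra, which is exactly \eqref{Eq:comodalg}. Finally, plugging the comodule algebra $(\cR,r)=(\iso{\cH},\eta)$ into the left translation Hopf algebroid construction displayed above the statement yields the commutative Hopf algebroid $(\iso{\cH},\iso{\cH}\tensor{A}\cH)$, proving the ``in particular''.

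The main obstacle I expect is purely organisational rather than conceptual: keeping track of which tensorand lives over $A$ via $s$ and which via $t$, and making sure each identity of the form $\sum h_1\cS(h_2)=s\varepsilon(h)$ or $\sum\cS(h_1)h_2=t\varepsilon(h)$ is applied on the correct tensor factor so that the resulting expression really represents the same class in $\iso{\cH}\tensor{A}\cH$ (respectively $\iso{\cH}\tensor{A}\cH\tensor{A}\cH$). The commutativity of $\cH$ is what makes all of this go through smoothly, so there is no serious difficulty, only careful Sweedler-notation calculation.
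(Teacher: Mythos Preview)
Your proposal is correct and follows essentially the same route as the paper: a direct Sweedler-notation verification of well-definedness, coassociativity, counitality, and multiplicativity, after which the left translation construction gives the final claim. The paper abbreviates Steps~2 and~3 by noting that they proceed ``exactly as in the case of the right adjoint coaction on ordinary Hopf algebras''; your expanded outline of those steps is accurate.

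There is one small organisational difference worth flagging. For well-definedness on the quotient $\iso{\cH}$, the paper does not argue on generators but instead introduces the auxiliary map
\[
\Mt{\cH}\tensor{A}\sMt{\cH}\;\to\;{\iso{\cH}}_\eta\tensor{A}\sMs{\cH},\qquad u\tensor{A}v\mapsto\bara{v}\tensor{A}\cS(u),
\]
whose well-definedness over $\tensor{A}$ is immediate; the adjoint coaction is then obtained from it and the comultiplication. Your approach of computing $\rcaction{\iso{\cH}}$ on $s(a)$ and $t(a)$ separately is also fine, but note that showing the formula vanishes on the \emph{generators} $s(a)-t(a)$ only implies it vanishes on the \emph{ideal} $\langle s-t\rangle$ once you know the underlying map $\cH\to\iso{\cH}\tensor{A}\cH$, $h\mapsto\sum\bara{h_2}\tensor{A}\cS(h_1)h_3$, is multiplicative. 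So in your write-up you should establish multiplicativity (your Step~3) at the level of $\cH$ \emph{before} descending to $\iso{\cH}$, rather than afterwards. With that reordering, your argument is complete.
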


\begin{proof}
The fact that $\rcaction{\iso{\cH}}$ is well-defined can be deduced by using the following map:
\[
\Mt{\cH} \tensor{A} \sMt{\cH} \to {\iso{\cH}}_\eta \tensor{A} \sMs{\cH}, \qquad u\tensor{A}v \mapsto \bara{v}\tensor{A} \cS(u),
\] 
which is well-defined and right $A$-linear. The map $\rcaction{\iso{\cH}}$ is clearly right $A$-linear, that is, $\rcaction{\iso{\cH}}\Big( \bara{h}\eta(a)\Big) = \rcaction{\iso{\cH}}(\bara{h})t(a)$ for every $a \in A$ and $\bara{h} \in \iso{\cH}$. Aside from the coaction being well-defined, the rest of the proof showing the coassosiativity and counital property of $\delta$ and the comodule algebra structure on $\iso{\cH}$ follow exactly as in the case of the right adjoint coaction on ordinary Hopf algebras, by simply using Sweedler's notation. Therefore, $(\iso{\cH}, \rcaction{\iso{\cH}})$ is a right $\cH$-comodule. Since $\rcaction{\iso{\cH}}$ is manifestly multiplicative and unital, $(\iso{\cH}, \eta)$ is a right $\cH$-comodule algebra. 
\end{proof}

Let $I$ be an ideal of $\cH$ which contains $\langle s-t \rangle$. We denote by $\iso{I}$ its image in $\iso{\cH}$: $\iso{I} \coloneqq I/\langle s-t\rangle$.

\begin{definition}\label{def:normal}
Let $(A, \cH)$ be a commutative Hopf algebroid. A Hopf ideal $I$ of $\cH$ is said to be \emph{normal} if it satisfies the following conditions:
\begin{enumerate}[label=(NI\arabic*),ref=(NI\arabic*)]
\item\label{item:NI1} $\langle s-t \rangle \subseteq I$;
\item\label{item:NI2} for every $\bara{x} \in \iso{I}$, we have $\rcaction{\iso{\cH}}\big(\bara{x}\big) = \sum\bara{x_2} \tensor{A} \cS(x_1)x_3 \in \img\left(\iso{I} \tensor{A} \cH\right)$.
\end{enumerate}
\end{definition}

\begin{remark}
If $\sM{\cH}$ is $A$-flat, then \ref{item:NI2} is equivalent to requiring that the image of $\iso{I}$ is a right $\cH$-subcomodule of $\iso{\cH}$ with respect to the coaction \eqref{Eq:coaction}. Namely, that there exists $\rcaction{\iso{I}}\colon \iso{I} \to \iso{I} \tensor{A} \cH$ such that the following diagram commutes
\[
\begin{gathered}
\xymatrix{
\iso{I} \ar[r]^-{j} \ar[d]_-{\rcaction{\iso{I}}} & \iso{\cH} \ar[d]^-{\rcaction{\iso{\cH}}} \\
\iso{I} \tensor{A} \cH \ar[r]_-{j \tensor{A} \cH} & \iso{\cH} \tensor{A} \cH.
} 
\end{gathered}
\qedhere
\]
\end{remark}

For commutative Hopf algebroids with source equal to target, that is, for Hopf algebras over a commutative algebra, one recovers the classical definition. Notice further that in this latter case the associated isotropy Hopf algebra coincides with the Hopf algebroid itself.   

\begin{lemma}\label{lema:bnormal}
Let $I$ be a normal Hopf ideal in $(A,\cH)$. Then $\iso{I}$ is a normal Hopf ideal in $(A, \iso{\cH})$.
\end{lemma}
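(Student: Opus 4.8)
The plan is to verify, for the commutative Hopf algebroid $(A,\iso{\cH})$, the three requirements of Definition \ref{def:normal}: that $\iso{I}$ is a Hopf ideal, and that it satisfies \ref{item:NI1} and \ref{item:NI2}. The unifying tool is that, $\langle s-t\rangle$ being a Hopf ideal, the canonical projection $\iso{\varpi}\colon\cH\to\iso{\cH}$ is a surjective morphism of commutative Hopf algebroids over $A$ (see the proposition on quotients by Hopf ideals in \S\ref{ssec:halgd}), so that $\Delta_{\iso{\cH}}\circ\iso{\varpi}=(\iso{\varpi}\tensor{A}\iso{\varpi})\circ\Delta$, $\varepsilon_{\iso{\cH}}\circ\iso{\varpi}=\varepsilon$, $\cS_{\iso{\cH}}\circ\iso{\varpi}=\iso{\varpi}\circ\cS$, and $\iso{I}=\iso{\varpi}(I)$; moreover $\iso{\varpi}\circ s=\iso{\varpi}\circ t=\eta$, whence $\iso{\varpi}$ is $A$-bilinear for the one-sided $A$-module structures entering the tensor products $\cH\tensor{A}\cH$, $\iso{\cH}\tensor{A}\cH$ and $\iso{\cH}\tensor{A}\iso{\cH}$, and in particular the maps $\iso{\varpi}\tensor{A}\iso{\varpi}$ and $\iso{\cH}\tensor{A}\iso{\varpi}$ are well-defined.

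First I would observe that $\iso{I}=\iso{\varpi}(I)$ is an ideal of $\iso{\cH}$, being the image of an ideal under the algebra surjection $\iso{\varpi}$. Condition \ref{item:HI1} for $\iso{I}$ follows from $\varepsilon(I)=0$ and $\varepsilon_{\iso{\cH}}\circ\iso{\varpi}=\varepsilon$, and \ref{item:HI3} from $\cS(I)\subseteq I$ and $\cS_{\iso{\cH}}\circ\iso{\varpi}=\iso{\varpi}\circ\cS$. For \ref{item:HI2}, applying the surjective map $\iso{\varpi}\tensor{A}\iso{\varpi}$ to $\Delta(I)\subseteq\img(\cH\tensor{A}I + I\tensor{A}\cH)$ and using $\Delta_{\iso{\cH}}\circ\iso{\varpi}=(\iso{\varpi}\tensor{A}\iso{\varpi})\circ\Delta$ gives $\Delta_{\iso{\cH}}(\iso{I})\subseteq\img(\iso{\cH}\tensor{A}\iso{I} + \iso{I}\tensor{A}\iso{\cH})$, since $\iso{\varpi}\tensor{A}\iso{\varpi}$ sends the canonical image of $\cH\otimes I$ (resp. $I\otimes\cH$) into that of $\iso{\cH}\otimes\iso{I}$ (resp. $\iso{I}\otimes\iso{\cH}$). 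Hence $\iso{I}$ is a Hopf ideal of $(A,\iso{\cH})$. Condition \ref{item:NI1} for $\iso{I}$ in $(A,\iso{\cH})$ is automatic: since $s_{\iso{\cH}}=t_{\iso{\cH}}=\eta$, the ideal $\langle s_{\iso{\cH}}-t_{\iso{\cH}}\rangle$ is zero, hence contained in $\iso{I}$; in particular $\iso{(\iso{\cH})}=\iso{\cH}$ and $\iso{(\iso{I})}=\iso{I}$.

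It remains to check \ref{item:NI2}, which is the heart of the argument. Let $\delta'\colon\iso{\cH}\to\iso{\cH}\tensor{A}\iso{\cH}$ be the adjoint coaction of $(A,\iso{\cH})$ from Proposition \ref{prop:barcoaction} (it takes values in $\iso{(\iso{\cH})}\tensor{A}\iso{\cH}=\iso{\cH}\tensor{A}\iso{\cH}$ because $\iso{(\iso{\cH})}=\iso{\cH}$). Using that $\iso{\varpi}$ is a morphism of Hopf algebroids over $A$, I would verify the identity $\delta'=(\iso{\cH}\tensor{A}\iso{\varpi})\circ\rcaction{\iso{\cH}}$ of maps $\iso{\cH}\to\iso{\cH}\tensor{A}\iso{\cH}$: by \eqref{Eq:coaction} together with the compatibilities recalled above, for $h\in\cH$ both sides send $\iso{\varpi}(h)$ to $\sum\iso{\varpi}(h_2)\tensor{A}\iso{\varpi}\big(\cS(h_1)h_3\big)$, where $\sum h_1\tensor{A}h_2\tensor{A}h_3=(\Delta\tensor{A}\cH)\Delta(h)$. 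Now \ref{item:NI2} for $I$ asserts precisely that $\rcaction{\iso{\cH}}(\bara{x})\in\img(\iso{I}\tensor{A}\cH)$ for every $\bara{x}\in\iso{I}$; applying the well-defined map $\iso{\cH}\tensor{A}\iso{\varpi}$, which carries $\img(\iso{I}\tensor{A}\cH)$ into $\img(\iso{I}\tensor{A}\iso{\cH})$, yields $\delta'(\bara{x})\in\img(\iso{I}\tensor{A}\iso{\cH})$, i.e. exactly \ref{item:NI2} for $\iso{I}$ in $(A,\iso{\cH})$. This shows $\iso{I}$ is a normal Hopf ideal in $(A,\iso{\cH})$; note that no flatness hypothesis enters, as \ref{item:NI2} is a containment rather than the existence of a restricted coaction.

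I expect the only real difficulty to be bookkeeping rather than conceptual: one must keep careful track of the one-sided $A$-module structures defining $\cH\tensor{A}\cH$, $\iso{\cH}\tensor{A}\cH$ and $\iso{\cH}\tensor{A}\iso{\cH}$ (as spelled out in the definition of a commutative Hopf algebroid and in the proof of Proposition \ref{prop:barcoaction}), so as to be sure that $\iso{\varpi}\tensor{A}\iso{\varpi}$ and $\iso{\cH}\tensor{A}\iso{\varpi}$ really are well-defined $A$-linear maps, and that the Sweedler identities obtained by iterating $\Delta_{\iso{\cH}}\circ\iso{\varpi}=(\iso{\varpi}\tensor{A}\iso{\varpi})\circ\Delta$ are the ones used in computing $\delta'$.
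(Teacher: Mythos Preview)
Your proposal is correct and is precisely the routine verification the paper has in mind when it writes ``Straightforward'': the paper offers no further detail, and your unpacking via the Hopf algebroid morphism $\iso{\varpi}$ and the identity $\delta'=(\iso{\cH}\tensor{A}\iso{\varpi})\circ\rcaction{\iso{\cH}}$ is exactly how one would make the claim explicit.
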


\begin{proof}
Straightforward. 
\end{proof}

The following proposition is the Hopf algebroid analogue of Lemma \ref{lema:normalsugpd}.

\begin{proposition}\label{prop:inducedcoaction}
Let $(A,\cH)$ be a commutative Hopf algebroid and let $I \subseteq \cH$ be a Hopf ideal. Then $I$ is normal if and only if $\langle s-t \rangle \subseteq I$ and $\cH/I \cong \iso{\cH}/\iso{I}$ is a quotient $\cH$-comodule of $\iso{\cH}$ with respect to the coaction \eqref{Eq:coaction}.
\end{proposition}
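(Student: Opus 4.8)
The plan is to reduce the statement to the standard correspondence between subcomodules and quotient comodules, combined with the third isomorphism theorem. First I would fix notation: write $\iso{\varpi}\colon \cH \to \iso{\cH}$ for the canonical projection and, whenever $\langle s-t\rangle \subseteq I$ (so that $\iso{I} = I/\langle s-t\rangle$ is defined), write $\bar\pi\colon \iso{\cH} \to \iso{\cH}/\iso{I}$ for the canonical projection. By the third isomorphism theorem, $\bar\pi\circ\iso{\varpi}$ factors the projection $\cH\to\cH/I$ and identifies $\iso{\cH}/\iso{I}$ with $\cH/I$ as algebras. Hence the right-hand side of the claimed equivalence unwinds to: $\langle s-t\rangle\subseteq I$, and $\iso{\cH}/\iso{I}$ carries a right $\cH$-comodule structure (automatically unique, since $\bar\pi$ is an epimorphism) making $\bar\pi$ a morphism of right $\cH$-comodules, where $\iso{\cH}$ is given the adjoint coaction $\rcaction{\iso{\cH}}$ of Proposition~\ref{prop:barcoaction}. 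Since \ref{item:NI1} is exactly ``$\langle s-t\rangle\subseteq I$'', it then suffices to show, under \ref{item:NI1}, that \ref{item:NI2} is equivalent to the existence of such a quotient comodule structure.

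For the forward direction I would use right-exactness of $-\tensor{A}\cH$ applied to $0\to\iso{I}\to\iso{\cH}\xrightarrow{\bar\pi}\iso{\cH}/\iso{I}\to 0$, which gives $\ker\big(\bar\pi\tensor{A}\cH\big)=\img\big(\iso{I}\tensor{A}\cH\big)$ inside $\iso{\cH}\tensor{A}\cH$. Condition \ref{item:NI2} says precisely that $\big(\bar\pi\tensor{A}\cH\big)\circ\rcaction{\iso{\cH}}$ vanishes on $\iso{I}$, hence factors through $\bar\pi$ as an $A$-linear map $\bar\delta\colon\iso{\cH}/\iso{I}\to\big(\iso{\cH}/\iso{I}\big)\tensor{A}\cH$; coassociativity and counitality of $\bar\delta$ then descend from those of $\rcaction{\iso{\cH}}$ (Proposition~\ref{prop:barcoaction}) by a routine diagram chase, using that $\bar\pi$, $\bar\pi\tensor{A}\cH$ and $\bar\pi\tensor{A}\cH\tensor{A}\cH$ are epic. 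Transporting along the algebra isomorphism $\cH/I\cong\iso{\cH}/\iso{I}$ then exhibits $\cH/I$ as a quotient $\cH$-comodule of $\iso{\cH}$. Conversely, if $\bar\delta$ is a right $\cH$-comodule structure on $\iso{\cH}/\iso{I}$ for which $\bar\pi$ is a comodule morphism, then for $\bar x\in\iso{I}$ we get $\big(\bar\pi\tensor{A}\cH\big)\big(\rcaction{\iso{\cH}}(\bar x)\big)=\bar\delta\big(\bar\pi(\bar x)\big)=0$, so $\rcaction{\iso{\cH}}(\bar x)\in\ker\big(\bar\pi\tensor{A}\cH\big)=\img\big(\iso{I}\tensor{A}\cH\big)$, which is \ref{item:NI2}; since $I$ is a Hopf ideal by standing hypothesis and \ref{item:NI1} holds in this direction, $I$ is normal.

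I do not expect a substantial obstacle: the whole argument is an unwinding of Definition~\ref{def:normal} against the universal property of the quotient. The one point that deserves care is the identity $\ker(\bar\pi\tensor{A}\cH)=\img(\iso{I}\tensor{A}\cH)$, which is nothing but right-exactness of the tensor product and needs no flatness assumption; flatness of $\sM{\cH}$ would only serve to replace $\img(\iso{I}\tensor{A}\cH)$ by the honest submodule $\iso{I}\tensor{A}\cH$, exactly as in the Remark preceding the statement. It is worth noting at the outset that the symbol $\iso{I}$ appearing both in \ref{item:NI2} and in the statement presupposes $\langle s-t\rangle\subseteq I$, so that the two sides of the equivalence genuinely share the hypothesis \ref{item:NI1}; once this is observed, everything else is formal.
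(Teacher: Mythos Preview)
Your argument is correct and is essentially the paper's own proof, written out in greater detail: the paper encapsulates both directions in a single $3\times 3$ diagram with exact rows and observes that the left square commuting (which is \ref{item:NI2}) is equivalent to the right square commuting (the existence of the quotient coaction), precisely because $\ker(\iso{\pi}\tensor{A}\cH)=\img(\iso{I}\tensor{A}\cH)$ by right-exactness of the tensor product. Your explicit verification that coassociativity and counitality descend, and your remark that no flatness is needed, are implicit in the paper's one-line version.
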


\begin{proof}
The statement follows from the fact that $I$ is normal if and only if the left-hand side square in the following diagram with exact rows commutes
\[
\begin{gathered}
\xymatrix{
0 \ar[r] & \iso{I} \ar[r]^-{j} \ar@{.>}[d]^-{\rcaction{\iso{I}}} & \iso{\cH} \ar[r]^-{\iso{\pi}} \ar[d]^-{\rcaction{\iso{\cH}}} & {\displaystyle \frac{\iso{\cH}}{\iso{I}}} \ar[r]  \ar@{-->}[d]^-{\tilde{\delta}_{\iso{\cH}}} & 0 \\
0 \ar[r] & \img\big(\iso{I} \tensor{A} \cH\big) \ar[r]_-{\subseteq} & \iso{\cH} \tensor{A} \cH \ar[r]_-{\iso{\pi} \tensor{A} \cH} & {\displaystyle \frac{\iso{\cH}}{\iso{I}} \tensor{A} \cH} \ar[r] & 0,
}
\end{gathered}
\]
if and only if the right-hand side square commutes, because $\img\left(\iso{I} \tensor{A} \cH\right) = \ker\left(\iso{\pi} \tensor{A} \cH\right)$.
\end{proof}

Let $\big(\Gg_A,\Gg_{\cH/I}\big)$ be an affine subgroupoid of $\big(\Gg_A,\Gg_\cH\big)$ (i.e., $I \subseteq \cH$ is a Hopf ideal). We say that $\big(\Gg_A,\Gg_{\cH/I}\big)$ is a \emph{normal affine subgroupoid} of $\big(\Gg_A,\Gg_\cH\big)$ if and only if $\big(\Gg_A(R),\Gg_{\cH/I}(R)\big)$ is a normal subgroupoid of $\big(\Gg_A(R),\Gg_\cH(R)\big)$ in the sense of \S\ref{sssec:groupoidactions}, for all $R$ in $\calg_\K$. The following proposition shows that normal Hopf ideals are in bijective correspondence with normal affine subgroupoids. 

\begin{proposition}\label{prop:cociente}
Let $(A, \cH)$ be a commutative Hopf algebroid. There is a one-to-one correspondence between normal Hopf ideals of $(A,\cH)$ and normal affine subgroupoids of $\big(\Gg_A, \Gg_\cH\big)$.
\end{proposition}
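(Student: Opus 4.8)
The plan is to combine the functor-of-points dictionary with the algebraic reformulations already at hand, matching the three defining conditions \ref{item:norm1}--\ref{item:norm3} of a normal subgroupoid against conditions \ref{item:NI1}--\ref{item:NI2} of Definition \ref{def:normal}. Recall that assigning to a Hopf ideal $I\subseteq\cH$ the pair $(\Gg_A,\Gg_{\cH/I})$ already establishes a bijection between Hopf ideals of $(A,\cH)$ and wide affine subgroupoids of $(\Gg_A,\Gg_\cH)$, and that a normal affine subgroupoid is by definition such a $(\Gg_A,\Gg_{\cH/I})$ for which $\big(\Gg_A(R),\Gg_{\cH/I}(R)\big)$ is normal in $\big(\Gg_A(R),\Gg_\cH(R)\big)$ for every $R$ in $\calg_\K$. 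Hence it suffices to prove that, for a Hopf ideal $I$, this last property holds if and only if $I$ is normal in the sense of Definition \ref{def:normal}; the asserted one-to-one correspondence then follows.

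First I would dispose of \ref{item:norm1} and \ref{item:norm2}. Wideness \ref{item:norm1} is automatic, since $\Gg_{\cH/I}$ has the same presheaf of objects $\Gg_A$ as $\Gg_\cH$. For \ref{item:norm2}: every arrow of $\Gg_{\cH/I}(R)$ is a loop, for all $R$, precisely when the induced source and target on $\cH/I$ coincide (apply this at $R=\cH/I$ with the identity map for the ``only if'' direction), i.e.\ precisely when $\pi\circ s=\pi\circ t$, i.e.\ precisely when $\langle s-t\rangle\subseteq I$ --- which is exactly \ref{item:NI1}. Under this hypothesis $\pi$ factors through the isotropy quotient as $\cH\xrightarrow{\iso{\varpi}}\iso{\cH}\twoheadrightarrow\iso{\cH}/\iso{I}\cong\cH/I$, and Lemma \ref{lema:Hbar} identifies the inclusion $\Gg_{\cH/I}\hookrightarrow\Gg_\cH^{(i)}\cong\Gg_{\iso{\cH}}$ with the closed immersion dual to $\iso{\cH}\twoheadrightarrow\iso{\cH}/\iso{I}$; thus for every $R$ the subcategory $\big(\Gg_A(R),\Gg_{\cH/I}(R)\big)$ is a wide subgroupoid of the isotropy groupoid of $\big(\Gg_A(R),\Gg_\cH(R)\big)$, as required by the first two clauses of normality.

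Assuming now \ref{item:NI1}, by Lemma \ref{lema:normalsugpd} the normality of $\big(\Gg_A(R),\Gg_{\cH/I}(R)\big)$ in $\big(\Gg_A(R),\Gg_\cH(R)\big)$ is equivalent to $\Gg_{\cH/I}(R)_1$ being a $\Gg_\cH(R)$-equivariant subset of $\Gg_\cH(R)^{(i)}_1\cong\Gg_{\iso{\cH}}(R)_1$ for the adjoint action. The key point is that this adjoint action is representable: by Proposition \ref{prop:barcoaction} the pair $(\iso{\cH},\eta)$ is a right $\cH$-comodule algebra via the coaction \eqref{Eq:coaction}, so Proposition \ref{prop:groupoidaction} makes $\Gg_{\iso{\cH}}$ into an affine $\Gg_{(A,\cH)}$-set, and a direct Heyneman--Sweedler computation --- using the composition formula \eqref{eq:comp} and $g^{-1}=g\circ\cS$ --- shows that the induced action on $R$-points is $(g,f)\mapsto\big[\,\bara{h}\mapsto\sum f(\bara{h_2})\,g(\cS(h_1))\,g(h_3)\,\big]$, which is precisely $\ad(g,f)=gfg^{-1}$ on the isotropy groupoid. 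Consequently $\Gg_{\cH/I}$ is stable under the adjoint action (for all $R$) if and only if the closed subscheme cut out by $\iso{I}$ is stable under the coaction \eqref{Eq:coaction}; by a Yoneda argument --- testing at $R=(\iso{\cH}/\iso{I})\tensor{A}\cH$ on the universal composable pair for one implication and using functoriality for the other, together with the identity $\ker\big(\iso{\cH}\tensor{A}\cH\to(\iso{\cH}/\iso{I})\tensor{A}\cH\big)=\img(\iso{I}\tensor{A}\cH)$, which holds by right-exactness of $-\tensor{A}\cH$ with no flatness needed --- this in turn is equivalent to $\rcaction{\iso{\cH}}(\iso{I})\subseteq\img(\iso{I}\tensor{A}\cH)$, i.e.\ to \ref{item:NI2}. (One may alternatively route this last equivalence through Proposition \ref{prop:inducedcoaction}.) Combining the three equivalences gives the claim.

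I expect the main obstacle to be the bookkeeping in this third step: carefully tracking the source/target constraint in the fibered product $\Gg_\cH(R)\times_{\Gg_A(R)}\Gg_{\iso{\cH}}(R)$ of composable pairs, verifying that an $R$-point $f$ of $\Gg_\cH$ lying in the isotropy groupoid genuinely factors through $\iso{\cH}$ so that the displayed formula is meaningful, and matching the two Heyneman--Sweedler expressions for $\ad(g,f)$; together with making the Yoneda step --- stability of a closed subscheme at all $R$ versus the membership $\rcaction{\iso{\cH}}(\iso{I})\subseteq\img(\iso{I}\tensor{A}\cH)$ --- fully precise without invoking any flatness hypothesis.
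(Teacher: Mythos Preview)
Your proposal is correct and follows essentially the same approach as the paper: both match \ref{item:norm2} with \ref{item:NI1} via the test at $R=\cH/I$, and both reduce \ref{item:norm3}$\Leftrightarrow$\ref{item:NI2} to the representability of the adjoint action by the coaction $\rcaction{\iso{\cH}}$ of Proposition \ref{prop:barcoaction} together with Lemma \ref{lema:normalsugpd}. The only minor difference is that the paper verifies the forward implication $(I\text{ normal}\Rightarrow\text{subgroupoid normal})$ by an explicit Heyneman--Sweedler computation of $\psi\bullet\tilde{\pi}^*(\varphi)\bullet\psi^{-1}$ on elements of $\iso{I}$, whereas you handle both directions uniformly through the Yoneda/equivariance argument (or Proposition \ref{prop:inducedcoaction}); this is a stylistic streamlining rather than a genuinely different route.
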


\begin{proof}
Let $I$ be a normal Hopf ideal of $\cH$. Then, in view of \ref{item:NI1} and Lemma \ref{lema:bnormal}, $\cH/I \cong \iso{\cH}/\iso{I}$ is a commutative Hopf $A$-algebra with a surjective morphism of commutative Hopf algebroids $(\id_A,\tilde{\pi}) \colon (A,\cH) \to (A, \iso{\cH}/\iso{I})$, where $\tilde{\pi}\colon \cH \to \iso{\cH}/\iso{I}$ is induced by $\pi\colon \cH \to \cH/I$, up to the isomorphism above. We have then the affine subgroupoid $\Big(\Gg_A,\Gg_{\iso{\cH}/\iso{I}}\Big)$ of $\big(\Gg_A,\Gg_\cH\big)$. For $R$ in $\calg_\K$, consider $(\id,\tilde{\pi}^*) \colon \Big(\Gg_A(R),\Gg_{\iso{\cH}/\iso{I}}(R)\Big) \hookrightarrow \big(\Gg_A(R),\Gg_\cH(R)\big)$. Condition \ref{item:norm1} is always satisfied in our treatment. Condition \ref{item:norm2} is satisfied because for every $\varphi \in \calg_\K\big(\iso{\cH}/\iso{I},R\big)$, we have
\[\sigma(\varphi) = \varphi \circ (\tilde{\pi} \circ s) \stackrel{\ref{item:NI1}}{=} \varphi \circ (\tilde{\pi} \circ t) = \tau(\varphi).\]
Concerning condition \ref{item:norm3}, for every $\psi \in \calg_\K(\cH,R)$ and for every $\varphi \in \calg_\K\big(\iso{\cH}/\iso{I},R\big)$ such that $\varphi\circ(\tilde{\pi}\circ s) = \psi \circ s = \varphi \circ (\tilde{\pi} \circ t)$ (i.e., source and target of $\varphi$ coincide with the source of $\psi$), 
set $\varphi' \coloneqq \psi \bullet \tilde{\pi}^*(\varphi) \bullet \psi^{-1} \in \calg_\K(\cH,R)$. Notice that 
\begin{multline*} 
\varphi'\big(s(a)-t(a)\big) = \big(\psi \bullet \tilde{\pi}^*(\varphi) \bullet \psi^{-1}\big)\big(s(a)-t(a)\big) \stackrel{\eqref{eq:comp}}{=} \psi^{-1}\big(s(a)\big)\varphi\Big(\bara{1}\Big)\psi(1) - \psi^{-1}(1)\varphi\Big(\bara{1}\Big)\psi\big(t(a)\big)
\\=\psi\big(\cS s(a)\big)\varphi\Big(\bara{1}\Big)\psi(1) - \psi(\cS (1))\varphi\Big(\bara{1}\Big)\psi\big(t(a)\big)=\psi\big(t(a)\big)\varphi\Big(\bara{1}\Big)\psi(1) - \psi(1)\varphi\Big(\bara{1}\Big)\psi\big(t(a)\big) = 0
\end{multline*}
for all $a \in A$, whence there exists a unique $\tilde{\varphi}'\colon \iso{\cH} \to R$ such that $\tilde{\varphi}' \circ \iso{\varpi} = \varphi'$. Now, for every $\bara{x} = \iso{\varpi}(x) \in \iso{I}$ we have that
\begin{multline*}
\tilde{\varphi}'\big(\bara{x}\big) = \big(\psi \bullet \tilde{\pi}^*(\varphi) \bullet \psi^{-1}\big)(x) \stackrel{\eqref{eq:comp}}{=} \sum\psi\big(\cS(x_1)\big)\varphi\big(\bara{x_2} + \iso{I}\big)\psi(x_3) \\
 = \sum\varphi\big(\bara{x_2} + \iso{I}\big)\psi\big(\cS(x_1)x_3\big) = \big(\varphi \tensor{A} \psi\big)\big(\iso{\pi} \tensor{A} \cH\big)\left(\sum\bara{x_2} \tensor{A} \cS(x_1)x_3\right) \stackrel{\ref{item:NI2}}{=} 0,
\end{multline*}
whence there exists a unique $\phi \colon \iso{\cH}/\iso{I} \to R$ such that $\phi \circ \iso{\pi} = \tilde{\varphi}'$. Summing up, $\psi \bullet \tilde{\pi}^*(\varphi) \bullet \psi^{-1} = \tilde{\pi}^*(\phi)$ for some $\phi \in \Gg_{\iso{\cH}/\iso{I}}(R)$ and so \ref{item:norm3} is satisfied.

Conversely, take a normal affine subgroupoid $\big(\Gg_A,\Gg_{\cH/I}\big)$ of $\big(\Gg_A,\Gg_\cH\big)$ induced by $\pi\colon \cH \to \cH/I$. By definition, $I$ is a Hopf ideal of $\cH$. Moreover, for any commutative $\K$-algebra $R$ and for any $\varphi \in \calg_\K\big(\cH/I,R\big)$ we have that $\varphi \circ (\pi \circ s)  = \varphi \circ (\pi \circ t)$, because in $\big(\Gg_A(R),\Gg_{\cH/I}(R)\big)$ the only arrows that we have are loops. By taking $R=\cH/I$ and $\varphi = \id_{\cH/I}$, we get $\pi \circ t  = \pi \circ s $, which shows that $\langle s - t \rangle \subseteq I$ and hence \ref{item:NI1} is satisfied and $\cH/I \cong \iso{\cH}/\iso{I}$. We still need  to check condition \ref{item:NI2}. For this, recall from Lemma \ref{lema:normalsugpd} that $\big(\Gg_A,\Gg_{\cH/I}\big)$ is a normal affine subgroupoid of $\big(\Gg_A,\Gg_\cH\big)$ if and only if 
\[
\xymatrix{  
\Gg_\cH(R) \operatorname{\due \times {s^*} {(\pi \circ t)^*}} \Gg_{\frac{\cH}{I}}(R) \ar[r] \ar[d] & \Gg_{\frac{\cH}{I}}(R) \ar[d] \\
\Gg_\cH(R) \operatorname{\due \times {s^*} {(\iso{\varpi}\circ t)^*}} \Gg_{\iso{\cH}}(R) \ar[r] & \Gg_{\iso{\cH}}(R)
}
\] 
commutes for every $R$ in $\calg_\K$, if and only if there exists a right $\cH$-comodule algebra structure $\rcaction{\cH/I}$ on $\cH/I$ such that
\[
\xymatrix{  
\iso{\cH}  \ar[rr]^-{\rcaction{\iso{\cH}}} \ar[d]_-{\iso{\tilde{\pi}}} & & \iso{\cH}\tensor{A}\cH \ar[d]^-{\iso{\tilde{\pi}} \tensor{A} \cH} \\ 
{\displaystyle\frac{\cH}{I}} \ar^-{\rcaction{\cH/I}}[rr]   & & {\displaystyle\frac{\cH}{I}\tensor{A}\cH}
}
\] 
commutes (where $\iso{\tilde{\pi}}$ is the composition $\iso{\cH} \xrightarrow{\iso{\pi}} \iso{\cH}/\iso{I} \cong \cH/I$ with kernel $\iso{I}$), if and only if $\rcaction{\iso{\cH}}\big(\iso{I}\big) \subseteq \ker\big(\iso{\tilde{\pi}} \tensor{A} \cH\big) = \img\big(\iso{I} \tensor{A} \cH\big)$.
\end{proof}

\begin{remark}
The quotient $\iso{\cH}$ admits also a left $\cH$-comodule structure, which is related to the right comodule structure above via the isomorphism between the category of left $\cH$-comodules and the category of right $\cH$-comodules provided by the antipode: if $\left(N,\delta\colon n \mapsto \sum n_0 \tensor{A} n_1 \right)$ is a right $\cH$-comodule, then $N$ with $\partial \colon N \to \cH \tensor{A} N, n \mapsto \sum \cS(n_{1}) \tensor{A} n_0$, is a left $\cH$-comodule (recall that right $\cH$-comodules are, in fact, symmetric $A$-bimodules with the right-left being defined from \eqref{eq:addAction}). The left $\cH$-coaction on $\iso{\cH}$ is then given by $\partial_{\iso{\cH}} \colon  \iso{\cH} \to \cH \tensor{A} \iso{\cH}$ sending $\bara{x}$ to $\sum x_1\cS(x_3) \tensor{A} \bara{x_2}$. This corresponds to the right action by conjugation given at the groupoid level by $(f,g) \mapsto g^{-1} \bullet f \bullet g$. If $\sM{\cH}$ is flat over $A$, then also the quotient $\iso{I}$ of a normal Hopf ideal $I$ admits both a left and a right $\cH$-comodule structures as above.
\end{remark}


\subsubsection{The space of invariants}

Let $(A, \cH)$ be a commutative Hopf algebroid and let $I \subseteq \cH$ be a normal Hopf ideal. 
Denote by $\tilde{\pi} \colon  \cH \to \iso{\cH}/\iso{I}$ the composition of the canonical projection $\pi\colon \cH \to \cH/I$ with the isomorphism $\cH/I \cong \iso{\cH}/\iso{I}$. Since this leads to a morphism of Hopf algebroids, we have an induced $\iso{\cH}/\iso{I}$-comodule structure on $\cH$ as in \S\ref{sec:N1}, given by
\begin{equation}\label{Eq:hbar}
\rcaction{\cH} \colon  \cH \to \cH\tensor{A} {\displaystyle \frac{\iso{\cH}}{\iso{I}}}, \qquad  h \mapsto \sum h_1\tensor{A} \left(\bara{h_2} + \iso{I}\right).
\end{equation} 
In this way $(\cH, s)$ becomes a right $\iso{\cH}/\iso{I}$-comodule algebra. Let us denote its coinvariant subalgebra by
\[
\rcoinv{\cH}{I} \coloneqq \LR{  h \in \cH ~\Big\vert~ \rcaction{\cH}(h) = h\tensor{A}\left(\bara{1}+\iso{I}\right) } 
\]
for the sake of simplicity.

\begin{remark}
Notice that the coaction \eqref{Eq:hbar} corresponds to the action described in \eqref{Eq:Naction}. Namely, denote by $\tilde{\eta}$ the composition of $A \xrightarrow{\eta}  \iso{\cH}/\iso{I} \cong \cH/I$ and, for a given ring $R$, consider the normal subgroupoid $\Gg_{\cH/I}(R)$ of $\Gg_\cH(R)$. In this framework, the action \eqref{Eq:Naction} is given by 
\[
\Gg_{\frac{\cH}{I}}(R) \operatorname{\due\times{\tilde{\eta}^*}{t^*}} \Gg_\cH(R) \to \Gg_\cH(R), \qquad (g,f) \mapsto (g\circ \pi) \bullet f.
\]
By using the natural isomorphism
\[\Gg_{\frac{\cH}{I}}\left(R\right) \operatorname{\due\times{\tilde{\eta}^*}{t^*}} \Gg_\cH\left(R\right) \cong \calg_\K\left(\cH\tensor{A} {\displaystyle \frac{\cH}{I}},R\right) \cong \calg_\K\left(\cH \tensor{A} {\displaystyle \frac{\iso{\cH}}{\iso{I}}}, R\right),\] 
the action \eqref{Eq:Naction} corresponds exactly to the coaction \eqref{Eq:hbar}.
\end{remark}

Analogously, the left $\iso{\cH}/\iso{I}$-coaction on $\cH$, which corresponds to the right action of $\Gg_{\cH/I}(R)$ on $\Gg_\cH(R)$ from the end of \S\ref{ssec:orbits}, is given by
\[
\partial_{\cH} \colon  \cH \to {\displaystyle \frac{\iso{\cH}}{\iso{I}}} \tensor{A} \cH, \qquad  h \mapsto  \sum \left(\bara{h_1} + \iso{I}\right) \tensor{A} h_2.
\]
which provides a structure of left $\iso{\cH}/\iso{I}$-comodule algebra on $(\cH, t)$. Its coinvariant subalgebra is given by 
\begin{equation}\label{Eq:lbaracoin}
\lcoinv{\cH}{I} \coloneqq  \LR{  h \in \cH ~\Big\vert~ \partial_{\cH}(h) = \left(\bara{1}+\iso{I}\right) \tensor{A} h }. 
\end{equation}
The key properties of these subalgebras are collected in the forthcoming lemmata.

\begin{lemma}\label{lema:proHJ} 
Let $I$ be a normal Hopf ideal of $(A, \cH)$. Then, the subalgebras $s (A)$ and $t (A)$ are contained in both $\rcoinv{\cH}{I}$ and  $\lcoinv{\cH}{I}$. Furthermore, the antipode $\cS$ establishes an isomorphism of $A$-bimodules $\cS \colon  \rcoinv{\cH}{I}\to \lcoinv{\cH}{I}$.  
\end{lemma}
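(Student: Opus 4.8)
The plan is to establish the two assertions by direct computation with the coactions \eqref{Eq:hbar} and $\partial_{\cH}$. For the containments $s(A),t(A)\subseteq\rcoinv{\cH}{I}\cap\lcoinv{\cH}{I}$, note first that since $\langle s-t\rangle\subseteq I$ by \ref{item:NI1}, the images of $s(a)$ and $t(a)$ under the canonical morphism $\tilde{\pi}\colon\cH\to\iso{\cH}/\iso{I}$ both equal $\tilde{\eta}(a)$, the common value of the source and target of $\iso{\cH}/\iso{I}$ on $a$. Combining this with $\Delta(s(a))=s(a)\tensor{A}1_{\cH}$ and $\Delta(t(a))=1_{\cH}\tensor{A}t(a)$ from \eqref{eq:DeltaLin}, formula \eqref{Eq:hbar} gives $\rcaction{\cH}(s(a))=s(a)\tensor{A}(\bara{1}+\iso{I})$ at once, while $\rcaction{\cH}(t(a))=1_{\cH}\tensor{A}\tilde{\eta}(a)=t(a)\tensor{A}(\bara{1}+\iso{I})$ after transporting the scalar $a$ from the left-hand to the right-hand leg of the tensor product $\cH\tensor{A}(\iso{\cH}/\iso{I})$. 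The identical bookkeeping with $\partial_{\cH}$ yields $s(a),t(a)\in\lcoinv{\cH}{I}$.

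For the isomorphism, the idea is to produce an explicit twist of tensor factors intertwining the two coactions through the antipode. Writing $\cS$ also for the involutive antipode of $\iso{\cH}/\iso{I}$, so that $\cS\circ\tilde{\pi}=\tilde{\pi}\circ\cS$ and $\cS\circ\tilde{\eta}=\tilde{\eta}$, I would introduce the $\K$-linear map
\[
\theta\colon\cH\tensor{A}\frac{\iso{\cH}}{\iso{I}}\longrightarrow\frac{\iso{\cH}}{\iso{I}}\tensor{A}\cH,\qquad h\tensor{A}\bara{k}\longmapsto\cS(\bara{k})\tensor{A}\cS(h),
\]
check that it is well defined (using $\cS\circ s=t$, $\cS\circ t=s$, commutativity of $\cH$, and $\cS\circ\tilde{\eta}=\tilde{\eta}$), and then prove the key identity $\theta\circ\rcaction{\cH}=\partial_{\cH}\circ\cS$. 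This last reduces to the anti-comultiplicativity of the antipode, $\sum\cS(h)_{1}\tensor{A}\cS(h)_{2}=\sum\cS(h_{2})\tensor{A}\cS(h_{1})$, which holds as in the classical Hopf algebra case. Granting it, for $h\in\rcoinv{\cH}{I}$ one gets $\partial_{\cH}(\cS(h))=\theta\big(h\tensor{A}(\bara{1}+\iso{I})\big)=(\bara{1}+\iso{I})\tensor{A}\cS(h)$, hence $\cS(h)\in\lcoinv{\cH}{I}$; the symmetric argument, using $\theta'\colon(\iso{\cH}/\iso{I})\tensor{A}\cH\to\cH\tensor{A}(\iso{\cH}/\iso{I})$, $\bara{k}\tensor{A}h\mapsto\cS(h)\tensor{A}\cS(\bara{k})$, shows $\cS(\lcoinv{\cH}{I})\subseteq\rcoinv{\cH}{I}$. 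Since $\cS^{2}=\id_{\cH}$, these restrictions are mutually inverse, so $\cS$ restricts to an algebra isomorphism $\rcoinv{\cH}{I}\to\lcoinv{\cH}{I}$.

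Finally, for $A$-bilinearity I would observe that, by the first part, $\rcoinv{\cH}{I}$ and $\lcoinv{\cH}{I}$ are subalgebras of $\cH$ containing $s(A)$ and $t(A)$, hence $A$-subbimodules of $\cH$; with the structures $a\cdot h\cdot b=s(a)\,h\,t(b)$ on the former and $a\cdot h\cdot b=t(a)\,h\,s(b)$ on the latter, the computation $\cS(s(a)\,h\,t(b))=\cS(t(b))\,\cS(h)\,\cS(s(a))=t(a)\,\cS(h)\,s(b)$ (using that $\cS$ is an anti-algebra map and $\cH$ is commutative) shows $\cS$ is $A$-bilinear.

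I expect the only delicate aspect to be bookkeeping: tracking which among the several $A$-module structures on $\cH$, $\iso{\cH}$ and $\iso{\cH}/\iso{I}$ is used to build each tensor product, so that transporting scalars in the first part and the well-definedness of $\theta$ in the second are genuinely correct, and invoking the anti-comultiplicativity identity in its correct $A$-linear form. None of this is hard, but it is where an error would most easily slip in.
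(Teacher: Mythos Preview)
Your proof is correct and follows essentially the same approach as the paper's. The paper carries out the direct computation $\partial_{\cH}(\cS(h)) = \sum(\bara{\cS(h_2)}+\iso{I})\tensor{A}\cS(h_1) = \sum\bara{\cS}(\bara{h_2}+\iso{I})\tensor{A}\cS(h_1)$ and then uses $h\in\rcoinv{\cH}{I}$ in one line; your twist map $\theta$ simply packages this same step, and you are in fact more careful than the paper in spelling out the $A$-bimodule structures needed for the final bilinearity claim.
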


\begin{proof}
We only show that $t(A) \subseteq \rcoinv{\cH}{I}$, since the other property is analogous. For every $a \in A$ we have
\[\rcaction{\cH}\big(t(a)\big) = 1\tensor{A} \left(\bara{t(a)} + \iso{I}\right) = 1\tensor{A} \left(\bara{s(a)} + \iso{I}\right) = t(a) \tensor{A} \left(\bara{1} + \iso{I}\right)\]
by \ref{item:NI1}, and hence $t(a) \in \rcoinv{\cH}{I}$. 
Concerning the second claim, if $h \in \rcoinv{\cH}{I}$ then 
\begin{align*}
\partial_\cH\big(\cS(h)\big) & = \sum \left(\bara{\cS(h)_1} + \iso{I}\right) \tensor{A} \cS(h)_2 = \sum \left(\bara{\cS(h_2)} + \iso{I}\right) \tensor{A} \cS(h_1) \\
& \stackrel{\ref{item:HI3}}{=} \sum \bara{\cS}\left(\bara{h_2} + \iso{I}\right) \tensor{A} \cS(h_1) = \bara{\cS}\left(\bara{1} + \iso{I}\right) \tensor{A} \cS(h) \\
& = \left(\bara{1} + \iso{I} \right)\tensor{A} \cS(h),
\end{align*}
whence $\cS(h) \in \lcoinv{\cH}{I}$. Similarly, one proves that if $h \in \lcoinv{\cH}{I}$, then $\cS(h) \in \rcoinv{\cH}{I}$.
\end{proof}

\begin{lemma}\label{lem:IHisHI}
Let $(A, \cH)$ be a commutative Hopf algebroid and let $I$ a normal Hopf ideal in $(A, \cH)$.
Then $\rcoinv{\cH}{I} = \lcoinv{\cH}{I}$ as subalgebras of $\cH$.
\end{lemma}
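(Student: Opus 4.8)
The plan is to establish the single inclusion $\rcoinv{\cH}{I}\subseteq\lcoinv{\cH}{I}$ and then deduce the reverse one formally. Indeed, by Lemma~\ref{lema:proHJ} the antipode $\cS$ sends $\rcoinv{\cH}{I}$ into $\lcoinv{\cH}{I}$ and $\lcoinv{\cH}{I}$ into $\rcoinv{\cH}{I}$, and since $\cS^2=\id_\cH$ these two restrictions are mutually inverse bijections; applying $\cS$ to $\rcoinv{\cH}{I}\subseteq\lcoinv{\cH}{I}$ then yields $\lcoinv{\cH}{I}=\cS\big(\rcoinv{\cH}{I}\big)\subseteq\cS\big(\lcoinv{\cH}{I}\big)=\rcoinv{\cH}{I}$, whence equality.

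To prove the inclusion I first rewrite the two subalgebras conveniently. Since $\langle s-t\rangle\subseteq I$ by~\ref{item:NI1}, the projection $\tilde{\pi}\colon\cH\to\cH/I\cong\iso{\cH}/\iso{I}$ factors as $\iso{\pi}\circ\iso{\varpi}$, and $\rcoinv{\cH}{I}$, $\lcoinv{\cH}{I}$ are the right, resp.\ left, coinvariants of $\cH$ for the $\iso{\cH}/\iso{I}$-bicomodule structure induced by $\iso{\pi}\circ\iso{\varpi}$. Using only right exactness of $\cH\tensor{A}-$ and of $-\tensor{A}\cH$ (so that no flatness hypothesis is needed here), one sees that $h\in\rcoinv{\cH}{I}$ if and only if $\sum h_1\tensor{A}\bara{h_2}-h\tensor{A}\bara{1}\in\img\big(\cH\tensor{A}\iso{I}\big)$ inside $\cH\tensor{A}\iso{\cH}$, whereas $h\in\lcoinv{\cH}{I}$ if and only if $\sum\bara{h_1}\tensor{A}h_2-\bara{1}\tensor{A}h\in\img\big(\iso{I}\tensor{A}\cH\big)$ inside $\iso{\cH}\tensor{A}\cH$.

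The bridge between the two conditions is the map $\Theta\colon\cH\tensor{A}\iso{\cH}\to\iso{\cH}\tensor{A}\cH$ defined by $y\tensor{A}\bara{x}\mapsto y\cdot\rcaction{\iso{\cH}}(\bara{x})$, where $\cdot$ denotes the left multiplication of $\cH$ on the second tensor factor (this action is legitimate precisely because $\cH$ is commutative), so that explicitly $\Theta(y\tensor{A}\bara{x})=\sum\bara{x_2}\tensor{A}\cS(x_1)\,y\,x_3$. Two facts are then needed. First, a Sweedler computation based on the antipode axiom $\sum x_1\cS(x_2)=(s\circ\varepsilon)(x)$ shows that $\Theta\big((\cH\tensor{A}\iso{\varpi})\Delta(h)\big)=(\iso{\varpi}\tensor{A}\cH)\Delta(h)=\sum\bara{h_1}\tensor{A}h_2$ and $\Theta(h\tensor{A}\bara{1})=\bara{1}\tensor{A}h$. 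Second, $\Theta$ carries $\cH\tensor{A}\iso{I}$ into $\img\big(\iso{I}\tensor{A}\cH\big)$: for $\bara{z}\in\iso{I}$ we have $\Theta(y\tensor{A}\bara{z})=y\cdot\rcaction{\iso{\cH}}(\bara{z})$, with $\rcaction{\iso{\cH}}(\bara{z})\in\img\big(\iso{I}\tensor{A}\cH\big)$ by the normality condition~\ref{item:NI2}, and this submodule is stable under the chosen $\cH$-action. Feeding the membership characterizing $\rcoinv{\cH}{I}$ through $\Theta$, and then pushing forward along $\iso{\pi}\tensor{A}\cH$, produces exactly $\sum\tilde{\pi}(h_1)\tensor{A}h_2=\tilde{\pi}(1)\tensor{A}h$, i.e.\ $h\in\lcoinv{\cH}{I}$.

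I expect the delicate point to be the well-definedness of $\Theta$ on the relative tensor product $\cH\tensor{A}\iso{\cH}$ — that the assignment $y\tensor{A}\bara{x}\mapsto y\cdot\rcaction{\iso{\cH}}(\bara{x})$ is $A$-balanced and independent of the lift of $\bara{x}$ to $\cH$, which is where commutativity of $\cH$, the fact that $\langle s-t\rangle$ is a Hopf ideal generated by primitive elements, and the $A$-linearity of $\rcaction{\iso{\cH}}$ all come together — along with the Sweedler bookkeeping needed to identify $\Theta\big((\cH\tensor{A}\iso{\varpi})\Delta(h)\big)$ with $(\iso{\varpi}\tensor{A}\cH)\Delta(h)$.
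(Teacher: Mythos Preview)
Your proof is correct and follows essentially the same strategy as the paper's: the key map $\Theta(y\tensor{A}\bara{x})=\sum\bara{x_2}\tensor{A}\cS(x_1)\,y\,x_3$ is precisely the paper's $\widetilde{\psi}$, lifted from $\iso{\cH}/\iso{I}$ to $\iso{\cH}$, and the Sweedler computation reducing $\Theta\big(\sum h_1\tensor{A}\bara{h_2}\big)$ to $\sum\bara{h_1}\tensor{A}h_2$ is the same. The only cosmetic differences are that the paper works directly at the quotient level (so normality is absorbed into well-definedness of $\widetilde{\psi}$ rather than into the statement that $\Theta$ carries $\cH\tensor{A}\iso{I}$ into $\img(\iso{I}\tensor{A}\cH)$), and that the paper handles the reverse inclusion by writing down the analogous swap map in the other direction rather than invoking $\cS^2=\id$ as you do.
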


\begin{proof}
We find inspiration from \cite[Lemma 4.4]{Takeuchi:1972}. First of all, consider the $A$-bilinear morphism
\[\psi \colon \cH \tensor{} {\displaystyle \frac{\iso{\cH}}{\iso{I}}} \to {\displaystyle \frac{\iso{\cH}}{\iso{I}}} \tensor{A} \sM{\cH}, \qquad x \tensor{} \big(\bara{y} + \iso{I}\big) \mapsto \sum \big(\bara{y_2} + \iso{I} \big)\tensor{A} x\cS\big(y_1\big)y_3,\]
where the $A$-bilinearity is expressed by
\begin{align*}
\psi\Big(a \cdot \Big(x \tensor{} \big(\bara{y} + \iso{I}\big)\Big)\cdot b\Big) & = \psi\Big(s(a)x \tensor{} \big(\bara{y} + \iso{I}\big)\eta(b)\Big) \\
& = \sum \eta(a)\big(\bara{y_2} + \iso{I} \big)\tensor{A} x\cS\big(y_1\big)y_3t(b) \\
& = a\cdot\bigg( \sum \big(\bara{y_2} + \iso{I} \big)\tensor{A} x\cS\big(y_1\big)y_3\bigg)\cdot b \\
& = a \cdot \psi\Big(x \tensor{} \big(\bara{y} + \iso{I}\big)\Big)\cdot b.
\end{align*}
for all $a,b \in A$, $x,y \in \cH$. It satisfies
\begin{align*}
\psi\Big(xt(a) \tensor{} \big(\bara{y} + \iso{I}\big)\Big) & = \sum \big(\bara{y_2} + \iso{I} \big)\tensor{A} xt(a)\cS\big(y_1\big)y_3 = \sum \big(\bara{y_2} + \iso{I} \big)\tensor{A} x\cS\big(s(a)y_1\big)y_3 \\
& = \psi\Big(x \tensor{} \big(\bara{s(a)y} + \iso{I}\big)\Big) = \psi\Big(x \tensor{} \eta(a)\big(\bara{y} + \iso{I}\big)\Big)
\end{align*}
for all $a \in A$, $x,y \in \cH$. Therefore, $\psi$ factors uniquely through the tensor product over $A$, inducing an $A$-bilinear morphism
\[\widetilde{\psi} \colon \sMt{\cH} \tensor{A} {\displaystyle \frac{\iso{\cH}}{\iso{I}}} \to {\displaystyle \frac{\iso{\cH}}{\iso{I}}} \tensor{A} \sMt{\cH}, \qquad x \tensor{A} \big(\bara{y} + \iso{I}\big) \mapsto \sum \big(\bara{y_2} + \iso{I} \big)\tensor{A} x\cS\big(y_1\big)y_3.\]
Take an element $h \in \rcoinv{\cH}{I}$, so that $\sum h_1 \tensor{A} \left( \bara{h_2} + \iso{I} \right) = h\tensor{A} \left(\bara{1} + \iso{I} \right)$ in $\cH\tensor{A}\iso{\cH}/\iso{I}$. By applying $\widetilde{\psi}$ to both sides of the latter identity, we get
\[
\sum \left( \bara{h_3} + \iso{I} \right) \tensor{A} h_1\cS\big(h_2\big)h_4 = \left(\bara{1} + \iso{I} \right) \tensor{A} h \quad \text{in} \quad {\displaystyle \frac{\iso{\cH}}{\iso{I}}} \tensor{A} \cH,
\]
that is to say, $\sum \left( \bara{h_1} + \iso{I} \right) \tensor{A} h_2 = \left(\bara{1} + \iso{I} \right) \tensor{A} h$ and hence $h \in \lcoinv{\cH}{I}$.
The other inclusion is proved similarly by using the morphism
\[{\displaystyle \frac{\iso{\cH}}{\iso{I}}} \tensor{A} \sMt{\cH} \to \sMt{\cH} \tensor{A} {\displaystyle \frac{\iso{\cH}}{\iso{I}}}, \qquad \big(\bara{x} + \iso{I}\big) \tensor{A} y \mapsto \sum x_1\cS\big(x_3\big)y \tensor{A} \big(\bara{x_2} + \iso{I} \big). \qedhere\]
\end{proof}

As we have seen in Proposition \ref{prop:cociente}, for every commutative $\K$-algebra $R$ we have that $\big(\Gg_A(R),\Gg_{\cH/I}(R)\big)$ is a normal subgroupoid of $\big(\Gg_A(R),\Gg_{\cH}(R)\big)$ and hence, by Lemma \ref{lema:quotient}, we can consider the extension $\Gg_{\cH/I}(R) \hookrightarrow \Gg_\cH(R) \twoheadrightarrow \Gg_\cH(R)/\Gg_{\cH/I}(R)$. As a matter of notation, set $\phi\colon \cH^I \to \cH$ for the canonical inclusion and $\Phi_R \colon \calg_\K(\cH, R) \to \calg_\K(\cH^I, R)$ for its dual $\phi^*$.

\begin{lemma}\label{lema:JHJ}
Let $(A, \cH)$ be a commutative Hopf algebroid and let $I\subseteq \cH$ be a normal Hopf ideal.
Then there is a canonical morphism
\begin{equation}\label{eq:PsiR}
\Psi_R \colon {\displaystyle\frac{\Gg_\cH(R)}{\Gg_{\frac{\cH}{I}}(R)}} \to \calg_\K(\rcoinv{\cH}{I}, R), \qquad \Gg_{\frac{\cH}{I}}(R)\bullet g \mapsto \Phi_R(g),
\end{equation} 
which is natural in $R \in \calg_\K$.
\end{lemma}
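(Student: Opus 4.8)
The plan is to build $\Psi_R$ by showing that $\Phi_R=\phi^*\colon \Gg_\cH(R)\to\calg_\K(\cH^I,R)$ is constant on the orbits of the left action \eqref{Eq:Naction} of $\Gg_{\cH/I}(R)$ on $\Gg_\cH(R)$, and then taking $\Psi_R$ to be the induced map on the orbit set $\Gg_\cH(R)/\Gg_{\cH/I}(R)$. Recall from the Remark following \eqref{Eq:hbar} that, in the affine picture, this action sends a composable pair $(g,f)\in\Gg_{\cH/I}(R)\operatorname{\due{\times}{\tilde{\eta}^*}{t^*}}\Gg_\cH(R)$ to $(g\circ\pi)\bullet f$. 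Since $\langle s-t\rangle\subseteq I$ by \ref{item:NI1}, we may identify $\cH/I$ with $\iso{\cH}/\iso{I}$ through $\tilde{\pi}$, so $g\circ\pi$ factors uniquely as $\bar{g}\circ\tilde{\pi}$ for an algebra map $\bar{g}\colon \iso{\cH}/\iso{I}\to R$; under these identifications the composability constraint $g\circ\tilde{\eta}=f\circ t$ becomes $\bar{g}\circ\eta=f\circ t$ (which, incidentally, is exactly what makes $(g\circ\pi)\bullet f$ a legitimate element of $\Gg_\cH(R)$).

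The heart of the argument is the computation: for $x\in\cH^I=\rcoinv{\cH}{I}$, using \eqref{eq:comp} and the definition \eqref{Eq:hbar} of the coaction $\rcaction{\cH}$,
\[
\big((g\circ\pi)\bullet f\big)(x) = \sum f(x_1)\,\bar{g}\big(\bara{x_2}+\iso{I}\big) = \big(f\tensor{A}\bar{g}\big)\big(\rcaction{\cH}(x)\big).
\]
Here $f\tensor{A}\bar{g}\colon \cH\tensor{A}\iso{\cH}/\iso{I}\to R$ is well defined: the relevant module structures in \eqref{Eq:hbar} are the right $A$-action on $\cH$ via $t$ and the left $A$-action on $\iso{\cH}/\iso{I}$ via $\eta$, and the $A$-balancedness of $(x,z)\mapsto f(x)\bar{g}(z)$ amounts precisely to the identity $f\circ t=\bar{g}\circ\eta$ obtained above. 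Since $x$ is $\rcaction{\cH}$-coinvariant, $\rcaction{\cH}(x)=x\tensor{A}(\bara{1}+\iso{I})$, so the right-hand side equals $f(x)\,\bar{g}\big(\bara{1}+\iso{I}\big)=f(x)$, as $\bara{1}+\iso{I}$ is the unit of $\iso{\cH}/\iso{I}$ and $\bar{g}$ is an algebra map. Every element of the orbit of $f$ being of the form $(g\circ\pi)\bullet f$, this shows that $\Phi_R$ is constant on orbits and hence factors through a unique $\Psi_R$ as in \eqref{eq:PsiR}, given by $\Gg_{\cH/I}(R)\bullet g\mapsto\Phi_R(g)$.

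Naturality of $\Psi_R$ in $R$ is then formal: $\Phi=\phi^*$ is a natural transformation (postcomposition with a morphism $R\to R'$ commutes with precomposition with $\phi$), and the orbit projections $\Gg_\cH(-)\to\Gg_\cH(-)/\Gg_{\cH/I}(-)$ are natural because a morphism $R\to R'$ carries $\Gg_{\cH/I}(R)$-orbits to $\Gg_{\cH/I}(R')$-orbits by functoriality of $\Gg_\cH$, $\Gg_{\cH/I}$ and of the action; hence the $\Psi_R$ assemble into a natural transformation. I do not anticipate a real obstacle here; the only mildly delicate point is tracking the module conventions of \eqref{Eq:hbar} so that $f\tensor{A}\bar{g}$ is genuinely defined on the tensor product over $A$, and this is forced by the composability condition built into the action together with \ref{item:NI1}.
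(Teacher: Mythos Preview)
Your proof is correct and follows essentially the same route as the paper's own argument: you show that $\Phi_R$ is constant along the $\Gg_{\cH/I}(R)$-orbits by evaluating $(g\circ\pi)\bullet f$ on a coinvariant element via \eqref{eq:comp} and \eqref{Eq:hbar}, exactly as the paper does (with the roles of the symbols $f$ and $g$ swapped relative to the paper's proof but matching the Remark's conventions). Your added verification that $f\tensor{A}\bar g$ is genuinely $A$-balanced, and your slightly more explicit treatment of naturality, are welcome refinements but not a different approach.
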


\begin{proof}
Consider $\Psi_R$ as in the statement and observe that if $\Gg_{\cH/I}(R)\bullet g = \Gg_{\cH/I}(R)\bullet g'$, then there exists $f \in \Gg_{\cH/I}(R)$ such that $(f \circ \pi) \bullet g = g'$. Therefore, for every $u \in \cH^I$
\[\Phi_R(g')(u) = \Phi_R \big((f \circ \pi) \bullet g\big)(u) = \sum g\big(\phi(u)_1\big)f\Big(\bara{\phi(u)_2} + \iso{I}\Big) = g\big(\phi(u)\big) = \Phi_R(g)(u),\]
since $\phi$ is just the inclusion. Thus $\Psi_R$ is well-defined and it is clearly natural.
\end{proof}


\subsection{The correspondence for commutative Hopf algebroids}

We fix a commutative Hopf algebroid $(A, \cH)$ which we assume to be flat as left $A$-module (that is, the extension $s \colon  A \to \cH$ is flat). In this case $t \colon A \to \cH$ is also flat, thanks to the antipode, and both extension $s $ and $t $ are faithfully flat (because both are split monomorphisms of modules over the base algebra). 
Also, since $A$ and $\cH$ are commutative algebras, then $\Mt{\cH}$ being $A$-flat is equivalent to $\tM{\cH}$ being $A$-flat. 

\begin{proposition}\label{prop:HopfNonGalois}
If $(A,\cH)$ is a commutative Hopf algebroid and $\sM{\cH}$ is $A$-flat, then we have a well-defined inclusion-preserving bijective correspondence
\[
\begin{gathered}
\xymatrix @R=0pt{
{\left\{ \begin{array}{c}  \text{bi-ideals } I \text{ in } \cH \text{ such that} \\ \cH \text{ is pure over } \coinv{\cH}{\frac{\cH}{I}} \end{array} \right\}} \ar@{<->}[r] & {\left\{ \begin{array}{c} \text{right } \cH\text{-comodule } A\text{-subalgebras} \\ B \subseteq \cH \text{ via } t  \text{ such that } \cH \text{ is pure over } B  \end{array} \right\}} \\
I \ar@{|->}[r] & \coinv{\cH}{\frac{\cH}{I}} \\
\cH B^+ & B \ar@{|->}[l]
}
\end{gathered}
\]
\end{proposition}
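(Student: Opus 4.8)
The plan is to derive Proposition~\ref{prop:HopfNonGalois} from the general Theorem~\ref{mainthm:Galois} (equivalently, from Theorem~\ref{thm:PsiPhiright} together with Theorem~\ref{thm:bah}), after matching hypotheses and terminology to the commutative setting. First I would observe that a commutative Hopf algebroid $(A,\cH)$ is in particular a left Hopf algebroid: the inverse of the Hopf--Galois map \eqref{Eq:betamap} is $\beta^{-1}(u\tensor{A}v)=\sum u_1\tensor{A}\cS(u_2)v$, so \eqref{Eq:betamap} is bijective. Since $A$ is commutative, $\op{A}$ and $A$ coincide as algebras, hence a right $\cH$-comodule $\op{A}$-subring of $\cH$ via $t$ is exactly a right $\cH$-comodule $A$-subalgebra via $t$; likewise, as $\cH$ is commutative, a left ideal $2$-sided coideal of $\cH$ is precisely an ideal satisfying \ref{item:HI1} and \ref{item:HI2}, that is, a bi-ideal. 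The flatness assumption ``$\sM{\cH}$ is $A$-flat'' is the same on both sides.

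Next I would note that the auxiliary condition ``$\gamma(B)\subseteq B\tensor{\op{A}}\cH$'' appearing in Theorem~\ref{mainthm:Galois} is automatic here and can therefore be dropped: by \eqref{eq:gammacomm} we have $\gamma(u)=\sum u_1\tensor{A}\cS(u_2)$, so for $b\in B$ the identity \eqref{eq:iotarhoright} gives
$\gamma\big(\iota(b)\big)=\sum\iota(b)_1\tensor{A}\cS\big(\iota(b)_2\big)=\sum\iota(b_0)\tensor{A}\cS(b_1)\in\iota(B)\tensor{A}\cH$,
i.e.\ $\gamma(B)\subseteq B\tensor{\op{A}}\cH$, exactly as already remarked after Corollary~\ref{maincor:Galois}.

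The substantive step is to identify, in the commutative context, the condition ``$\cH\tensor{B}-$ is comonadic'' with the condition ``$\cH$ is pure over $B$'', both for $B$ a right $\cH$-comodule $A$-subalgebra and for $B=\coinv{\cH}{\frac{\cH}{I}}$. One implication is already used in the proof of Theorem~\ref{thm:bah}: comonadicity of $\cH\tensor{B}-$ forces $\iota\colon B\to\cH$ to be pure, by \cite[\S5.3, Theorem]{JanelidzeTholen} together with Corollary~\ref{cor:pure} (concretely, $\cH\tensor{B}-$ is conservative and right exact, so $\cH\tensor{B}h=0$ makes $h$ an isomorphism onto $0$). The converse --- that a pure extension of commutative rings has comonadic extension-of-scalars functor, equivalently is an effective descent morphism for modules --- is the commutative case of \cite[\S5.3, Theorem]{JanelidzeTholen}. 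Granting this, ``$\cH$ pure over $B$'' is precisely the hypothesis of Theorem~\ref{thm:bah}, so $\Phi\Psi(I)=\cH\left(\coinv{\cH}{\frac{\cH}{I}}\right)^{+}=I$; and, together with the automatic condition on $\gamma(B)$, it is precisely the hypothesis of Theorem~\ref{thm:PsiPhiright}, so $\Psi\Phi(B)=\coinv{\cH}{\frac{\cH}{\cH B^+}}=B$. Hence the inclusion-preserving maps $\Psi\colon I\mapsto\coinv{\cH}{\frac{\cH}{I}}$ and $\Phi\colon B\mapsto\cH B^+$ of Proposition~\ref{prop:OneIsGone} and Proposition~\ref{prop:Phiright} restrict to mutually inverse bijections between the two displayed sets, which is the assertion.

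The main obstacle is exactly this third step: justifying that \emph{purity} of the commutative ring extension $B\subseteq\cH$ already guarantees comonadicity of $\cH\tensor{B}-$ (equivalently, effective descent), and not merely its weaker consequences; this is where one invokes the descent-theoretic characterization of pure morphisms rather than an elementary argument. Everything else amounts to translating between the commutative Hopf-algebroid vocabulary (bi-ideals, comodule subalgebras, antipode) and the bialgebroid language of Section~\ref{sec:N1}, with no further computation needed.
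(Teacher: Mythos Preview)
Your proposal is correct and follows essentially the same approach as the paper: reduce to Theorem~\ref{mainthm:Galois}, invoke \cite{JanelidzeTholen} (the paper uses Corollary~5.4 there) to identify purity of $B\subseteq\cH$ with comonadicity of $\cH\tensor{B}-$ in the commutative setting, and verify that the translation-map condition $\gamma(B)\subseteq B\tensor{\op{A}}\cH$ is automatic via $\gamma=(\cH\tensor{A}\cS)\circ\Delta$ and the comodule-subalgebra property of $B$. The paper's proof is just a more compact version of exactly these three steps.
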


\begin{proof}
Since all the algebras involved are commutative, it follows from \cite[Corollary 5.4]{JanelidzeTholen} that a $\K$-algebra morphism $B \to \cH$ is pure as a morphism of $B$-modules if and only if the extension-of-scalars functor $\cH \tensor{B} -$ is comonadic. Thus, it is enough to show that the additional condition in Theorem \ref{mainthm:Galois} which involves the translation map $\gamma$, is always satisfied in this setting.
Recall from \eqref{eq:gammacomm} that $\gamma$ is explicit given by $\big(\cH \tensor{A} \cS\big) \circ \Delta$. 
If $B\subseteq \cH$ is a right $\cH$-coideal $A$-subalgebra via $t$, then the left-hand side square of the following diagram is commutative
\[
\xymatrix{
\Mt{B} \ar@{ >->}[d]_-{\iota} \ar[r]^-{\partial} & \Mt{B} \tensor{A} \sMt{\cH} \ar@{ >->}[d]^-{\iota \tensor{A} \cH} \ar[r]^-{B \tensor{A} \cS} & \Mt{B} \tensor{A} \tMs{\cH} \ar@{ >->}[d]^-{\iota \tensor{A} \cH} \\
\Mt{\cH} \ar[r]^-{\Delta} \ar@/_4ex/[rr]_-{\gamma} & \Mt{\cH} \tensor{A} \sMt{\cH} \ar[r]^-{\cH \tensor{A} \cS} & \Mt{\cH} \tensor{A} \tMs{\cH}.
}
\]
Since the right-hand side square is clearly commutative, we have that the condition $\gamma(B) \subseteq B \tensor{A} \cH$ is satisfied.
Therefore, the conclusion follows from Theorem \ref{mainthm:Galois}.
\end{proof}

\begin{proposition}\label{prop:nHideal}
Let $(A,\cH)$ be a commutative Hopf algebroid such that $\sM{\cH}$ is $A$-flat. Suppose that $\cK\subseteq\cH$ is a sub-Hopf algebroid, over the same base $A$, such that $\cH_\cK$ is pure. Then $\cH\cK^+$ is a normal Hopf ideal in $\cH$ such that $\cH$ is pure over $\coinv{\cH}{\frac{\cH}{\cH\cK^+}}$.
\end{proposition}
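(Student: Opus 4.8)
The plan is to obtain the ``bi-ideal plus purity'' content of the statement for free from Proposition~\ref{prop:HopfNonGalois}, and then to upgrade ``bi-ideal'' to ``normal Hopf ideal'' by verifying \ref{item:HI3}, \ref{item:NI1} and \ref{item:NI2} by hand. First I would note that, since $(A,\cK)$ is a sub-Hopf algebroid of $(A,\cH)$ over the \emph{same} base, the inclusion $\iota\colon\cK\hookrightarrow\cH$ is a morphism of commutative Hopf algebroids over $A$. In particular $t_\cH=\iota\circ t_\cK$, so $\cK$ is an $A$-subalgebra of $\cH$ via $t$, and $\delta\coloneqq(\cK\tensor{A}\iota)\circ\Delta_\cK\colon\cK\to\cK\tensor{A}\cH$ is a coassociative, counital and multiplicative coaction with $(\iota\tensor{A}\cH)\circ\delta=\Delta_\cH\circ\iota$; hence $\cK$ is a right $\cH$-comodule $A$-subalgebra via $t$ to which Proposition~\ref{prop:HopfNonGalois} applies. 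Taking $B=\cK$ there yields at once that $\cH\cK^+$ is a bi-ideal of $\cH$ (so \ref{item:HI1} and \ref{item:HI2} hold) and that $\coinv{\cH}{\frac{\cH}{\cH\cK^+}}=\cK$ by bijectivity of the correspondence; since $\cH$ is pure over $\cK$ by hypothesis, this gives the final assertion of the proposition.

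It then remains to check \ref{item:NI1}, \ref{item:HI3} and \ref{item:NI2}. For \ref{item:NI1}, observe that $s(a)-t(a)=\iota\left(s_\cK(a)-t_\cK(a)\right)\in\cK$ and $\varepsilon\left(s(a)-t(a)\right)=0$, so $s(a)-t(a)\in\cK^+\subseteq\cH\cK^+$ for all $a\in A$, whence $\langle s-t\rangle\subseteq\cH\cK^+$. For \ref{item:HI3} I would first record the identity $\varepsilon\circ\cS=\varepsilon$ (apply $\varepsilon$ to $\sum\cS(h_1)h_2=(t\circ\varepsilon)(h)$ and use that $\varepsilon$ is an algebra map, that $\cS\circ t=s$, and the counit axioms); combined with $\cS(\cK)=\cK$ (because $\cS_\cH\circ\iota=\iota\circ\cS_\cK$ and $\cS^2=\id$) this gives $\cS(\cK^+)=\cK^+$, and therefore $\cS(\cH\cK^+)=\cS(\cK^+)\cS(\cH)=\cK^+\cH=\cH\cK^+$ by commutativity of $\cH$.

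The step I expect to be the real obstacle is \ref{item:NI2}. Since $\langle s-t\rangle\subseteq\cH\cK^+$ by \ref{item:NI1}, the ideal $\iso{(\cH\cK^+)}$ of $\iso{\cH}$ is generated by the image $\bara{\cK^+}$ of $\cK^+$; moreover the adjoint coaction $\rcaction{\iso{\cH}}\colon\iso{\cH}\to\iso{\cH}\tensor{A}\cH$ of Proposition~\ref{prop:barcoaction} is an algebra map and $\img\left(\iso{(\cH\cK^+)}\tensor{A}\cH\right)$ is an ideal of $\iso{\cH}\tensor{A}\cH$. It therefore suffices to check $\rcaction{\iso{\cH}}(\bara{k})\in\img\left(\iso{(\cH\cK^+)}\tensor{A}\cH\right)$ for $k\in\cK^+$. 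Using \eqref{Eq:coaction}, and the fact that $\cK$ is a sub-Hopf algebroid (so all Sweedler components of $k$, as well as $\cS(k_1)$, lie in $\cK$), I would split
\[
\rcaction{\iso{\cH}}(\bara{k})=\sum\bara{k_2}\tensor{A}\cS(k_1)k_3=\sum\left(\bara{k_2}-\eta\varepsilon(k_2)\right)\tensor{A}\cS(k_1)k_3+\sum\eta\varepsilon(k_2)\tensor{A}\cS(k_1)k_3;
\]
the first summand lies in $\img\left(\bara{\cK^+}\tensor{A}\cK\right)\subseteq\img\left(\iso{(\cH\cK^+)}\tensor{A}\cH\right)$, while the second equals $1\tensor{A}\sum\cS(k_1)s(\varepsilon(k_2))k_3=1\tensor{A}\sum\cS(k_1)k_2=1\tensor{A}\,t\left(\varepsilon(k)\right)=0$, by coassociativity and the counit axioms (which collapse the middle factor), the antipode identity $\sum\cS(k_1)k_2=(t\circ\varepsilon)(k)$, and $\varepsilon(k)=0$. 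This proves \ref{item:NI2} on generators, hence for all of $\iso{(\cH\cK^+)}$, so $\cH\cK^+$ is a normal Hopf ideal. (Equivalently, via the Remark following Definition~\ref{def:normal}, the same computation shows that $\iso{(\cH\cK^+)}$ is a right $\cH$-subcomodule of $\iso{\cH}$ for the coaction \eqref{Eq:coaction}, since $\sM{\cH}$ is $A$-flat; one could also argue more conceptually that $\cK\hookrightarrow\cH$ is dual to a quotient of affine groupoids whose kernel is $\big(\Gg_A,\Gg_{\cH/\cH\cK^+}\big)$ and invoke Proposition~\ref{prop:cociente}.)
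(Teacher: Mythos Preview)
Your proof is correct and follows the same overall architecture as the paper's: invoke Proposition~\ref{prop:HopfNonGalois} for the bi-ideal and purity assertions, then verify \ref{item:HI3}, \ref{item:NI1}, \ref{item:NI2} directly. Your treatments of \ref{item:HI3} and \ref{item:NI1} coincide with the paper's (your explicit justification of $\varepsilon\circ\cS=\varepsilon$ is a nice touch that the paper leaves implicit), and your observation that the bijection in Proposition~\ref{prop:HopfNonGalois} actually gives $\coinv{\cH}{\cH/\cH\cK^+}=\cK$ is slightly sharper than what the paper records.

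The genuine divergence is in \ref{item:NI2}. The paper introduces the isomorphism
\[
\varphi\colon \frac{\cH}{\cH\cK^+}\tensor{A}\cH \ \cong\ \cH\tensor{\cK}{}_{s\varepsilon}\cH,\qquad (x+\cH\cK^+)\tensor{A}y\mapsto x\tensor{\cK}y,
\]
and checks that $(\iso{\tilde\pi}\tensor{A}\cH)\rcaction{\iso{\cH}}$ vanishes on $\iso{(\cH\cK^+)}$ by computing its image under $\varphi$; the point is that in $\cH\tensor{\cK}\cH$ one may move the $\cK$-factors across and collapse them via $s\varepsilon$. Your argument avoids this auxiliary isomorphism altogether: you reduce to generators $\bara{k}$ with $k\in\cK^+$ using that $\rcaction{\iso{\cH}}$ is multiplicative and that the target ideal is the kernel of a ring map, and then split $\rcaction{\iso{\cH}}(\bara{k})$ elementarily into a piece manifestly in $\img(\bara{\cK^+}\tensor{A}\cK)$ and a piece that collapses to $1\tensor{A}t(\varepsilon(k))=0$ by the counit and antipode axioms. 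This is more elementary and entirely self-contained; the paper's route is more structural (the isomorphism $\varphi$ is a general device) but heavier for this particular check.
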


\begin{proof}
Since it is clear that $\cK$ is a right $\cH$-comodule $A$-subalgebra of $\cH$ via $t$, we already know from Proposition \ref{prop:HopfNonGalois} that $\cH\cK^+$ is a (two-sided) ideal of $\cH$ which satisfies \ref{item:HI1} and \ref{item:HI2}, and that $\cH$ is pure over $\coinv{\cH}{\frac{\cH}{\cH\cK^+}}$. Concerning \ref{item:HI3}, it is easily checked that
\[\cS\big(\cH\cK^+\big) \subseteq \cS\big(\cH\big)\cS\big(\cK^+\big) \subseteq \cH\cK^+\]
because $\cK$ is a sub-Hopf algebroid (whence $\cS(\cK)\subseteq \cK$). Therefore, $\cH\cK^+$ is a Hopf ideal. In addition, since for every $a \in A$ we have that $s(a) - t(a) \in \cK^+$, we also have that $\langle s-t \rangle \subseteq \cH\cK^+$, which is \ref{item:NI1}. To check that \ref{item:NI2} also holds, let us begin by observing that $A \cong \cK/\cK^+$ has a $(\cK,A)$-bimodule structure via the regular right $A$-action $x\cdot a = \varepsilon(x)a$ for all $a \in A$, $x \in \cK$. Therefore,
\[\Mt{\frac{\cH}{\cH\cK^+}} \tensor{A} \sM{\cH} \cong \cH_\cK \tensor{\cK} {{}_\varepsilon A_A} \tensor{A} \sM{\cH} \cong \cH_\cK \tensor{\cK} {{}_{s\varepsilon}\cH}\]
via $\big(x + \cH\cK^+\big) \tensor{A} y \mapsto x \tensor{\cK} y$. Denote this latter isomorphism by $\varphi \colon \cH/\cH\cK^+ \tensor{A} \cH \to \cH \tensor{\cK} {{}_{s\varepsilon}\cH}$. In view of the flatness of $\cH$ over $A$, an element $\sum_i \bara{x_i} \tensor{A} y_i$ in $\iso{\cH} \tensor{A} \cH$ belongs to $\iso{(\cH\cK^+)} \tensor{A} \cH$ if and only if it belongs to the kernel of the canonical projection $\iso{\tilde{\pi}} \tensor{A} \cH \colon \iso{\cH} \tensor{A} \cH \to \cH/\cH\cK^+ \tensor{A} \cH$. If $\sum_i \bara{h_ix_i} \in \iso{(\cH\cK^+)}$, then
\begin{align*}
\varphi & \left(\left(\iso{\tilde{\pi}} \tensor{A} \cH\right)\left(\sum_i \bara{\big(h_ix_i\big)_2} \tensor{A} \cS\Big(\big(h_ix_i\big)_1\Big)\big(h_ix_i\big)_3\right)\right) \\
& = \varphi\left(\sum_i \Big(\big(h_i\big)_2\big(x_i\big)_2 + \cH\cK^+\Big) \tensor{A} \cS\Big(\big(h_i\big)_1\Big)\cS\Big(\big(x_i\big)_1\Big)\big(h_i\big)_3\big(x_i\big)_3\right) \\
& = \sum_i \big(h_i\big)_2\big(x_i\big)_2 \tensor{\cK} \cS\Big(\big(h_i\big)_1\Big)\cS\Big(\big(x_i\big)_1\Big)\big(h_i\big)_3\big(x_i\big)_3 \\
& = \sum_i \big(h_i\big)_2 \tensor{\cK} s\varepsilon\Big(\big(x_i\big)_2\Big)\cS\Big(\big(x_i\big)_1\Big)\big(x_i\big)_3\cS\Big(\big(h_i\big)_1\Big)\big(h_i\big)_3 \\
& = \sum_i \big(h_i\big)_2 \tensor{\cK} t\varepsilon\big(x_i\big)\cS\Big(\big(h_i\big)_1\Big)\big(h_i\big)_3 = 0,
\end{align*}
which entails that $\sum_i \bara{\big(h_ix_i\big)_2} \tensor{A} \cS\Big(\big(h_ix_i\big)_1\Big)\big(h_ix_i\big)_3 \in \iso{(\cH\cK^+)} \tensor{A} \cH$, as prescribed by \ref{item:NI2}.
\end{proof}

\begin{proposition}\label{prop:subHalgd}
Let $I$ be a normal Hopf ideal of a commutative Hopf algebroid $(A,\cH)$ such that $\sM{\cH}$ is $A$-flat and $\cH$ is pure over $\coinv{\cH}{\frac{\cH}{I}}$. Then the pair $\Big(A, \coinv{\cH}{\frac{\cH}{I}}\Big)$ is a sub-Hopf algebroid of $(A,\cH)$.
\end{proposition}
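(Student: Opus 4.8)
The plan is to show that $B \coloneqq \coinv{\cH}{\frac{\cH}{I}}$, which we already know to be a right $\cH$-comodule $A$-subalgebra of $\cH$ via $t$ by Proposition \ref{prop:OneIsGone}, carries all the remaining structure maps making $(A,B)$ a commutative Hopf algebroid, and that the inclusion $\iota\colon B\hookrightarrow\cH$ is a morphism of commutative Hopf algebroids over $A$. So the first task is to equip $B$ with a source, a target, a counit, a comultiplication, and an antipode.

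First I would identify the source and target of $B$. The target is forced on us: $B$ is an $\op{A}$-subring of $\cH$ via $t$, so $t$ already factors through $B$; I take $t_B\coloneqq t'\colon \op{A}\to B$ as in Remark \ref{rem:sesright}. For the source, I would use that $I$ is a \emph{normal} Hopf ideal, hence $\langle s-t\rangle\subseteq I$, which will be needed to see that $s$ also lands in $B$; concretely, $\partial(s(a)) = (\pi\tensor{A}\cH)\Delta(s(a)) = \sum \pi(s(a))\tensor{A}1_\cH$, and one checks using $\pi(s(a)) = \pi(t(a)) = \pi(1_\cH)\cdot$ (action) together with $\langle s - t\rangle \subseteq I$ that $\pi(s(a)) = \eta(a)$ behaves like the grouplike, so $s(a)\in\coinv{\cH}{\cH/I}=B$. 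The counit of $B$ is $\varepsilon' = \varepsilon\circ\iota$ as already observed in Section \ref{ssec:GenFacts}. For the comultiplication, the key point — and the only real content here — is that $\Delta$ restricts to $B$, i.e. $\Delta(B)\subseteq B\tensor{A}B$ inside $\cH\tensor{A}\cH$. I already have from Proposition \ref{prop:OneIsGone} that $\Delta(b)\in\coinv{\cH}{\cH/I}\tensor{A}\cH = B\tensor{A}\cH$ when read on the left tensor-factor (this is essentially the computation showing $B$ is a right $\cH$-comodule); by the symmetry afforded by cocommutativity-free manipulations — more precisely by applying the same coinvariance argument on the \emph{other} leg, using the flatness of $\sM{\cH}$ and the identity $(\cH\tensor{A}\Delta)\Delta = (\Delta\tensor{A}\cH)\Delta$ — one gets that $\Delta(b)$ is also coinvariant in the right leg, so $\Delta(b)\in B\tensor{A}B$. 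Here purity of $\cH$ over $B$ enters to guarantee that $B\tensor{A}\cH\hookrightarrow\cH\tensor{A}\cH$ and $\cH\tensor{A}B\hookrightarrow\cH\tensor{A}\cH$ are injective, so that these "coinvariance on each leg" statements can be combined to land in $B\tensor{A}B$ rather than just in the image thereof. Finally, the antipode: $\gamma(b)=\sum b_1\tensor{A}\cS(b_2)$ and the argument in the proof of Proposition \ref{prop:HopfNonGalois} (or directly: $\gamma(B)\subseteq B\tensor{\op{A}}\cH$ from Theorem \ref{thm:PsiPhiright}, combined with the leg-switching above) shows $\cS(B)\subseteq B$.

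Once all structure maps are shown to restrict, verifying the axioms (CH1)–(CH3) for $(A,B)$ is automatic: each identity holds in $\cH$ and involves only elements of $B$ and the restricted operations, so it holds in $B$ by injectivity of $\iota$ (and injectivity of $\iota\tensor{A}\iota$, again via purity/flatness). Likewise $\iota$ is then a morphism of Hopf algebroids over $A$ by construction. I would organize the write-up as: (1) recall $B$ is an $\op{A}$-comodule subalgebra via $t$ and that $\varepsilon',t'$ restrict; (2) use $\langle s-t\rangle\subseteq I$ to show $s$ restricts; (3) the leg-by-leg coinvariance argument, using flatness of $\sM{\cH}$ and purity of $\cH$ over $B$, to show $\Delta(B)\subseteq B\tensor{A}B$; (4) the translation-map argument to show $\cS(B)\subseteq B$; (5) conclude the axioms hold by injectivity.

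The main obstacle I anticipate is step (3): showing $\Delta$ lands in $B\tensor{A}B$ and not merely in $\img(B\tensor{A}\cH\cap\cH\tensor{A}B)$. The subtlety is that $B\tensor{A}B\to\cH\tensor{A}\cH$ need not be injective a priori, and that "$\xi\in B\tensor{A}\cH$ and $\xi\in\cH\tensor{A}B$ inside $\cH\tensor{A}\cH$" does not formally force "$\xi\in B\tensor{A}B$" without a flatness/purity hypothesis on both sides. This is exactly where the standing assumptions "$\sM{\cH}$ is $A$-flat" (so $\tM{\cH}$ is $A$-flat too, by the remark preceding this subsection, making both $B\tensor{A}\cH$ and $\cH\tensor{A}B$ subspaces of $\cH\tensor{A}\cH$) and "$\cH$ is pure over $B$" must be used carefully, presumably by writing the relevant intersection as an equalizer and invoking Corollary \ref{cor:eqpure}. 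I expect the cleanest route is to realize $B\tensor{A}B = \coinv{(\cH\tensor{A}\cH)}{\cH/I}$ for the appropriate bicomodule structure — mirroring the trick used in the proof of Theorem \ref{thm:bah} to show $\gamma(b)\in B\tensor{\op{A}}\cH$ — and then $\Delta(b)$ is coinvariant because of coassociativity, so it lies in $B\tensor{A}B$.
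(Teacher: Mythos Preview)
Your proposal is essentially correct and follows the same overall strategy as the paper: show that $s$, $t$, $\varepsilon$, $\Delta$, and $\cS$ all restrict to $B$, with the only substantive step being $\Delta(B)\subseteq B\tensor{A}B$. Two points where the paper's execution is tighter than yours are worth noting.

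First, for $s(A)\subseteq B$ and $\cS(B)\subseteq B$ the paper does not argue separately as you do. It invokes Lemma~\ref{lema:proHJ} and Lemma~\ref{lem:IHisHI}, which together give $\lcoinv{\cH}{I}=\rcoinv{\cH}{I}$ and that $\cS$ interchanges the two; thus $s(A)\subseteq B$ and $\cS(B)\subseteq B$ come for free. Your ``translation-map'' route to $\cS(B)\subseteq B$ is vague: from $\gamma(b)=\sum b_1\tensor{A}\cS(b_2)\in B\tensor{A}\cH$ one does not immediately extract $\cS(b)\in B$ without further work, whereas the left/right-coinvariants identification gives it in one line.

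Second, for the key step you correctly flag that $(B\tensor{A}\cH)\cap(\cH\tensor{A}B)=B\tensor{A}B$ is the crux, but you misattribute the injections: $B\tensor{A}\cH\hookrightarrow\cH\tensor{A}\cH$ and $\cH\tensor{A}B\hookrightarrow\cH\tensor{A}\cH$ come from $A$-flatness of $\cH$, not from purity over $B$. Purity enters instead to conclude, via Corollary~\ref{cor:frompuretoflat}, that $B$ itself is $A$-flat, whence $B\tensor{A}B\hookrightarrow\cH\tensor{A}\cH$. With this in hand the paper finishes not by realizing $B\tensor{A}B$ as a single coinvariant space, but by a pullback-of-equalizers diagram: the rows express $\cK\tensor{A}\cK\hookrightarrow\cK\tensor{A}\cH$ and $\cH\tensor{A}\cK\hookrightarrow\cH\tensor{A}\cH$ as equalizers of the same pair (tensored on the left by $\cK$ resp.\ $\cH$), and flatness makes the left square a pullback. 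Your suggested coinvariants realization would also work but requires the same flatness-of-$B$ input, which you should make explicit.
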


\begin{proof}
Set $\cK \coloneqq \coinv{\cH}{\frac{\cH}{I}}$, which in \eqref{Eq:lbaracoin} we denoted by $\lcoinv{\cH}{I}$, for the sake of brevity. By Proposition \ref{prop:HopfNonGalois} we already know that $\cK$ is a right $\cH$-comodule subalgebra of $\cH$ via $t$. By Lemma \ref{lema:proHJ} and Lemma \ref{lem:IHisHI}, we also know that $\cK = \rcoinv{\cH}{I}$, that $s$ takes values in $\cK$, and that $\cS\big(\cK\big) \subseteq \cK$. Therefore, by adapting Proposition \ref{prop:OneIsGone}, $\cK$ is also a left $\cH$-comodule subalgebra of $\cH$ via $s$.

Since $\cH$ is pure over $\cK$ and flat over $A$, $\cK$ is flat over $A$ (see Corollary \ref{cor:frompuretoflat}) and so 
\[\cK \tensor{A} \cK \subseteq \big(\cK \tensor{A} \cH\big) \cap \big(\cH \tensor{A} \cK\big) \subseteq \cH \tensor{A} \cH.\]
Hence, in view of the foregoing observations, we only need to check that $\Delta(\cK)$, which we know is contained in $\big(\cK \tensor{A} \cH\big) \cap \big(\cH \tensor{A} \cK\big)$, is contained in $\cK \tensor{A} \cK$.  
To this aim, denote by $\iota\colon \cK \to \cH$ the canonical inclusion, which we know is pure as morphism of $A$-modules by Corollary \ref{cor:pure}. 
Now, in view of the fact that both $\cH$ and $\cK$ are flat over $A$, the columns of the following diagram are monomorphisms and the rows are equalizers
\[
\xymatrix{
\cK \tensor{A} \cK \ar[r]^-{\cK \tensor{A} \iota} \ar@{ >->}[d]_-{\iota \tensor{A} \cK} & \cK \tensor{A} \cH \ar@<-0.5ex>[rr]_-{\cK \tensor{A} (\tilde{\pi} \tensor{A} \cH)\Delta} \ar@<+0.5ex>[rr]^-{\cK \tensor{A} \tilde{\pi}t \tensor{A} \cH} \ar@{ >->}[d]_-{\iota \tensor{A} \cH} & & {\displaystyle \cK \tensor{A} \frac{\iso{\cH}}{\iso{I}} \tensor{A} \cH } \ar@{ >->}[d]^-{\iota \tensor{A} \frac{\iso{\cH}}{\iso{I}} \tensor{A} \cH} \\
\cH \tensor{A} \cK \ar[r]^-{\cH \tensor{A} \iota} & \cH \tensor{A} \cH \ar@<-0.5ex>[rr]_-{\cH \tensor{A} (\tilde{\pi} \tensor{A} \cH)\Delta} \ar@<+0.5ex>[rr]^-{\cH \tensor{A} \tilde{\pi}t \tensor{A} \cH} & & {\displaystyle \cH \tensor{A} \frac{\iso{\cH}}{\iso{I}} \tensor{A} \cH }
}
\]
and since the diagram commutes sequentially, we can conclude that the left-hand square is a pullback square and hence that $\cK \tensor{A} \cK = \big(\cK \tensor{A} \cH\big) \cap \big(\cH \tensor{A} \cK\big)$.
\end{proof}

\begin{theorem}\label{thm:forsecisiamo}
If $(A,\cH)$ is a commutative Hopf algebroid and $\sM{\cH}$ is $A$-flat, then we have a well-defined inclusion-preserving bijective correspondence
\[
\begin{gathered}
\xymatrix @R=0pt{
{\left\{ \begin{array}{c}  \text{normal Hopf ideals } I \text{ in } \cH \text{ such that} \\ \cH \text{ is pure over } \coinv{\cH}{\frac{\cH}{I}} \end{array} \right\}} \ar@{<->}[r] & {\left\{ \begin{array}{c} \text{sub-Hopf algebroids } \cK \subseteq \cH \text{ such that } \\ \cH \text{ is pure over } \cK  \end{array} \right\}} \\
I \ar@{|->}[r] & \coinv{\cH}{\frac{\cH}{I}} \\
\cH \cK^+ & \cK \ar@{|->}[l]
}
\end{gathered}
\]
\end{theorem}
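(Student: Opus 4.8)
The plan is to obtain this correspondence as a restriction of the bijection already established in Proposition~\ref{prop:HopfNonGalois}, which matches bi-ideals $I$ of $\cH$ with $\cH$ pure over $\coinv{\cH}{\frac{\cH}{I}}$ against right $\cH$-comodule $A$-subalgebras $B\subseteq\cH$ via $t$ with $\cH$ pure over $B$, through the mutually inverse assignments $I\mapsto\coinv{\cH}{\frac{\cH}{I}}$ and $B\mapsto\cH B^+$. The two sub-collections occurring in the present statement sit inside the two sides of that bijection: every normal Hopf ideal is in particular a bi-ideal, and every sub-Hopf algebroid $\cK$ of $(A,\cH)$ over the same base $A$ is in particular a right $\cH$-comodule $A$-subalgebra of $\cH$ via $t$, the coaction being the corestriction of $\Delta$ along $\cK\tensor{A}\cK\hookrightarrow\cK\tensor{A}\cH$ (this uses $t(A)\subseteq\cK$, which is automatic for a sub-Hopf algebroid over $A$). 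Moreover, when $B=\cK=\coinv{\cH}{\frac{\cH}{I}}$ the condition ``$\cH$ is pure over $\coinv{\cH}{\frac{\cH}{I}}$'' on the left is literally the condition ``$\cH$ is pure over $\cK$'' on the right, so the purity constraints are compatible with the correspondence.

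With this set-up in place, the proof reduces to checking that the two maps restrict appropriately. First I would invoke Proposition~\ref{prop:nHideal}: if $\cK\subseteq\cH$ is a sub-Hopf algebroid with $\cH_\cK$ pure, then $\cH\cK^+$ is a normal Hopf ideal and $\cH$ is pure over $\coinv{\cH}{\frac{\cH}{\cH\cK^+}}$; this says precisely that $B\mapsto\cH B^+$ sends the right-hand collection into the left-hand one. Then I would invoke Proposition~\ref{prop:subHalgd}: if $I$ is a normal Hopf ideal with $\cH$ pure over $\coinv{\cH}{\frac{\cH}{I}}$, then $\left(A,\coinv{\cH}{\frac{\cH}{I}}\right)$ is a sub-Hopf algebroid of $(A,\cH)$; this says precisely that $I\mapsto\coinv{\cH}{\frac{\cH}{I}}$ sends the left-hand collection into the right-hand one. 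Since these two restricted maps are restrictions of the mutually inverse bijections of Proposition~\ref{prop:HopfNonGalois} (whose hypotheses, namely $\sM{\cH}$ being $A$-flat, are those of the theorem), they are themselves mutually inverse, and they are inclusion-preserving because the unrestricted maps are.

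All the genuinely substantive content has therefore already been discharged in the two cited propositions (and ultimately in Theorem~\ref{mainthm:Galois}): the normality of $\cH\cK^+$ reduces, via $A$-flatness, to a computation with the adjoint coaction \eqref{Eq:coaction} on the isotropy quotient $\iso{\cH}$, while the fact that $\coinv{\cH}{\frac{\cH}{I}}$ is closed under $\Delta$ rests on the identification of $\cK\tensor{A}\cK$ with $\big(\cK\tensor{A}\cH\big)\cap\big(\cH\tensor{A}\cK\big)$ inside $\cH\tensor{A}\cH$, obtained from purity of the inclusion and flatness. Consequently there is no real obstacle in the proof of the theorem itself beyond carefully matching the two sets of hypotheses; the only point requiring a moment's care is that ``sub-Hopf algebroid $\cK\subseteq\cH$'' is tacitly understood to be over the same base $A$, which is what makes $\cK$ a comodule $A$-subalgebra via $t$ and places it in the image of the general correspondence of Proposition~\ref{prop:HopfNonGalois}.
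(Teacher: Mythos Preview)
Your proposal is correct and follows essentially the same approach as the paper: the paper's proof is a one-line reference to Proposition~\ref{prop:HopfNonGalois} together with Propositions~\ref{prop:nHideal} and~\ref{prop:subHalgd}, which is precisely the restriction argument you outline. Your additional remarks (that sub-Hopf algebroids over $A$ are automatically right $\cH$-comodule $A$-subalgebras via $t$, and that the purity conditions match up) make explicit what the paper leaves implicit.
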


\begin{proof}
It follows from Proposition \ref{prop:HopfNonGalois}, in view of Proposition \ref{prop:nHideal} and Proposition \ref{prop:subHalgd}.
\end{proof}

Recall that a Hopf algebroid $\cH$ for which the base algebra $A$ is a separable-Frobenius algebra is exactly a weak Hopf algebra. As in the above correspondence theorems the base algebra is the same for all considered sub-Hopf algebroids, these sub-Hopf algebroids are also weak sub-Hopf algebras. Examples of weak Hopf algebras arise naturally from finite groupoids.
As a reality check, we now reflect on what this bijection reduces to for a simple family of commutative Hopf algebroids coming from finite groupoids in an elementary way. In this finite setting, we recover the fact that the bijection reduces to the correspondence between normal subgroupoids and quotient groupoids. 

\begin{example}\label{ex:finitegroupoidexample} 
Let $\K$ be an algebraically closed field. Throughout this example we will assume that $\Gg = \big(\Gg_{0}, \Gg_{1}\big) $ is a \emph{finite} groupoid, meaning $\Gg_{0}$ and $\Gg_{1}$ are both finite sets. In this case we can define the commutative Hopf algebroid $\big(\K (\Gg_{0}), \K (\Gg_{1})\big)$ of functions on the groupoid $\Gg$ as follows. Let $A \coloneqq \K (\Gg_{0})$ denote the commutative algebra of functions on the set $\Gg_{0}$ which is spanned by elements of the form $f_{p}$ corresponding to $p\in  \Gg_{0}$ satisfying $f_{p}.f_{p'}= \delta_{p,p'}f_{p}$, while $\cH \coloneqq \K(\Gg_{1})$ denotes the algebra of functions on the set of morphisms $\Gg_{1}$ in a similar way. Hence, $\K(\Gg_{1})$ is generated by elements $f_{ {g}}$ corresponding to morphisms $ {g}\in \Gg_{1}$ and $f_{ {g_{1}}}. f_{ {g_{2}}}=\delta_{ {g_{1}}, {g_{2}}} f_{ {g_{1}}}$ holds for arbitrary $g_{1},g_{2}\in \Gg_{1}$. The Hopf algebroid structure is defined as follows: 
\begin{align*}
s (f_{p}) = \sum_{ {g\in \Gg_{1}}: \ \sigma( {g})=p} &f_{ {g}},\quad t (f_{p}) = \sum_{ {g\in \Gg_{1}}:\  \tau ( {g})=p} f_{ {g}},\quad \Delta ( f_{ {g}}) = \sum_{ {g_{1}}, {g_{2}}\in \Gg_{1}:\  {g_{2}}. {g_{1}}=  {g}} f_{ {g_{1}}}\tensor{A} f_{ {g_{2}}}
\\ \varepsilon ( f_{ {g}})& =\sum_{p\in \Gg_{0}} \delta_{ {g}, e({p})}f_{p}, \quad S\left( f_{ {g}}\right)= f_{ {g}^{-1}} ,\quad  1= \sum_{ {g}\in \Gg_{1}} f_{ {g}}
\end{align*}
Notice that, $A$ being semisimple, every $A$-module is projective and hence flat.

Let us first characterise the structures in the left hand side of the bijection in Theorem~\ref{thm:forsecisiamo}. In Example~3.6 of \cite{ghobadi2021isotopy} it was already shown that any Hopf ideal $I$ of $\K(\Gg_{1})$ must be of the form $\K(S_1)$ for a subset $S_1$ of $\Gg_{1}$ for which $(\Gg_{0},\Gg_{1} \setminus S_{1})$ forms a subgroupoid of $\Gg$. If $I$ is also normal, \ref{item:NI1} implies that 
$$\left\langle \sum_{ {g\in \Gg_{1}}: \sigma( {g})=p} f_{ {g}}-\sum_{ {h\in \Gg_{1}}: \tau( {h})=p} f_{ {h}}  ~\bigg\vert~ p\in\Gg_{0} \right\rangle= \left\langle f_{g} ~\big\vert~ g\notin \Gg_{1}^{(i)}\right\rangle \subseteq I=\K(S_{1})$$
and therefore $\Gg_{1} \setminus S_{1}\subseteq \Gg^{(i)}_{1}$. It is also easy to check that
\[\rcaction{\iso{\cH}}\big(\bara{f_{g}}\big)= \sum_{h\in \Gg_{1}: \tau(h)=\sigma(g)}\bara{f_{h^{-1}gh}}\tensor{A} f_{h}\] 
for any $\bara{f_{g}}\in I_{(i)}$. Hence, \ref{item:NI2} holds if and only if $\Gg_{1}^{(i)}\cap  S_{1}$ is closed under conjugation if and only if $ \Gg_{1}^{(i)}\setminus  S_{1}$ is closed under conjugation i.e. condition \ref{item:norm3} holds. Hence, normal Hopf ideals $I=\K (S_{1})$ of $\big(\K(\Gg_{0}), \K(\Gg_{1})\big)$ correspond to normal subgroupoids $\big(\Gg_{0}, \Gg_{1}\setminus S_{1}\big)$, as we know from Proposition \ref{prop:cociente}. 

In this situation, $\cH /I \cong \K(\Gg_{1}\setminus S_{1})$ and we denote $\Nn_{1} \coloneqq \Gg_{1}\setminus S_{1}$. Now consider an arbitrary term $\sum_{i=1}^{n}\alpha_{i}f_{g_{i}}$ in $\coinv{\cH}{\frac{\cH}{I}}$, where $n$ is a positive integer, $g_{i}\in \Gg_{1}$ and $\alpha_{g_{i}}\in \K$. Since $ \sum_{i=1}^{n}\alpha_{i}f_{g_{i}}\in \coinv{\cH}{\frac{\cH}{I}}$, we have an equality
\begin{align*}
\sum_{i=1}^{n}\sum_{h\in\Gg_{1}\setminus S_{1}:\ \tau (h)=\tau (g_{i})}\alpha_{i}f_{h^{-1}g_{i}}\tensor{A} \bara{f_{h}}=\sum_{i=1}^{n}\alpha_{i}f_{g_{i}}\tensor{A} \bara{1}= \sum_{i=1}^{n}\alpha_{i}f_{g_{i}}\tensor{A} \left(\sum_{g\in\Gg_{1}\setminus S_{1}:\ \sigma (g)=\tau (g_{i})}\bara{ f_{g}}\right)
\end{align*}
Noting that $\cH\tensor{A}\frac{\cH}{I} \cong \bigoplus_{(g,h)\in \Gg_{1} {}_\tau \times_{\sigma} \Nn_{1} } \K (f_{g}\tensor{A} \bara{f_{h}})$ as a vector space and that $h\in\Gg_{1} \setminus S_{1}\subseteq \Gg^{(i)}_{1}$ have the same source and target, we obtain a family of equivalent equalities
\[\sum_{i:\  \tau (g_{i})= \tau (h)}\alpha_{i} f_{g_{i}}= \sum_{j:\ \tau (g_{j})= \tau (h)} \alpha_{j} f_{h^{-1}g_{j}}\] 
for any $h\in \Gg_{1}\setminus S_{1}$. Recall the natural action of $\Gg_{1}\setminus S_{1}$ on $\Gg_{1}$ from \eqref{Eq:Naction}. If we denote the orbits of this action by $\Oo_{1}, \ldots, \Oo_{m}$, then these equalities holding, means any arbitrary element in $\sum_{i=1}^{n}\alpha_{i}f_{g_{i}}$ in $\coinv{\cH}{\frac{\cH}{I}}$ can be re-written as $\sum_{j=1}^{m} \alpha_{i_{j}}(\sum_{g\in \Oo_{j}} f_{g})$ for some subset of indices $\lbrace i_{j}\rbrace_{j=1}^{m}$ of $\lbrace i\rbrace _{i=1}^{n}$. 

Consequently, any normal Hopf ideal $I=\K (S_{1})$ of $\big(\K (\Gg_{0}), \K (\Gg_{1})\big)$ corresponds to the choice of a normal subgroupoid $\big(\Gg_{0},\Nn_{1} \coloneqq \Gg_{1}\setminus S_{1}\big)$ and $\coinv{\cH}{\frac{\cH}{I}}\cong \K \big( ({\Gg}/{\Nn})_{1}\big)$. In particular, since $\K \big( ({\Gg}/{\Nn})_{1}\big)$ is semisimple, any module (such as $\cH$) over it is flat and thereby any ring extension over it is faithfully flat, hence pure.

Having identified the objects on the left hand side of the bijection in Theorem~\ref{thm:forsecisiamo}, we observe that the theorem tells us that any Hopf subalgerboid $\cK$ of $\cH = \K(\Gg_{1})$ for which $\cH$ is pure over $\cK$ is of the form $\K \big( ({\Gg}/{\Nn})_{1}\big)$ for some quotient groupoid $\Gg / \Nn$. We can observe this directly by looking at the set of characters $\calg_\K(\cK,\K)$ on $\cK$. Since $\calg_\K({\cH},{\K}) = \Gg_{1} $, we have a natural map $\pi \colon \Gg_{1}\rightarrow \calg_\K(\cK,\K) $ coming from the inclusion $\iota \colon \cK\rightarrow \cH$. Since $\iota$ is pure and since elements from $\calg_\K(\cK,\K)$ are in bijection with the maximal ideals in $\cK$, Proposition \ref{prop:equivpure} entails that $\pi$ is surjective. Note that $\calg_\K(\cK,\K) $ obtains a natural groupoid structure over $\Gg_{0}=\calg_\K(A,\K) $ via the structure maps and thereby $\pi$ becomes a morphism of groupoids. It is then easy to see that we are in the situation of Lemma~\ref{lema:quotient}.
\end{example}

\begin{proposition}\label{prop:serve?}
Let $(A, \cH)$ be a commutative Hopf algebroid and let $(A,\cK)$ be a sub-Hopf algebroid of $(A,\cH)$. Denote by $\phi\colon \cK \to \cH$ the inclusion. Then the canonical morphism
\begin{equation}\label{eq:PsiRker}
\Psi_R \colon {\displaystyle\frac{\Gg_\cH(R)}{\Gg_{\cH/\cH\cK^+}(R)}} \to \Gg_\cK(R), \qquad \Gg_{\cH/\cH\cK^+}(R)\bullet g \mapsto \Phi_R(g),
\end{equation}
induced by $\phi$ is injective for every $R \in \calg_\K$. That is to say, the kernel of the $R$-component $\Phi_R \colon \calg_\K(\cH,R) \to \calg_\K(\cK,R)$ of the canonical morphism induced by $\phi$ is $\calg_\K(\cH/\cH \cK^+,R)$.
\end{proposition}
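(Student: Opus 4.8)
The plan is to recognise $\Psi_R$ as the canonical injection attached to a morphism of groupoids, and then to carry out the standard point-set verification by hand. Being the dualisation of the inclusion $\phi\colon\cK\to\cH$ of commutative Hopf algebroids over $A$, the pair $\big(\id_{\Gg_A(R)},\Phi_R\big)$ is a morphism of groupoids $\Gg_\cH(R)\to\Gg_\cK(R)$: $\Phi_R$ respects source and target (since $\phi\circ s_\cK=s_\cH$ and $\phi\circ t_\cK=t_\cH$), the composition \eqref{eq:comp} (since $\Delta_\cH\circ\phi=(\phi\tensor{A}\phi)\circ\Delta_\cK$), the unit (since $\varepsilon_\cH\circ\phi=\varepsilon_\cK$) and the inverse (since $\cS_\cH\circ\phi=\phi\circ\cS_\cK$). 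I would then introduce its \emph{kernel}
\[
\ker(\Phi_R)\coloneqq\big\{f\in\Gg_\cH(R)\mid \Phi_R(f)\text{ is an identity of }\Gg_\cK(R)\big\},
\]
which, being the pullback of the identities along a groupoid morphism, is a wide subgroupoid of the isotropy groupoid $\Gg_\cH(R)^{(i)}$.

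The crux, which simultaneously settles the last assertion of the statement, is to prove that $\ker(\Phi_R)$ equals $\calg_\K(\cH/\cH\cK^+,R)$, viewed inside $\Gg_\cH(R)$ through $\pi^{*}$. On the one hand, since $R$ is commutative and $\cH\cK^+$ is the ideal generated by $\cK^+$, an algebra map $f\colon\cH\to R$ kills $\cH\cK^+$ precisely when it kills $\cK^+$; hence the image of $\pi^{*}$ is $\{f\in\Gg_\cH(R)\mid f|_{\cK^+}=0\}$. On the other hand, using the splitting $\cK=\cK^+\oplus s_\cK(A)$ afforded by $\varepsilon_\cK\circ s_\cK=\id_A$, together with $\phi\circ s_\cK=s_\cH$ and $\varepsilon_\cH\circ\phi=\varepsilon_\cK$, I would check that the algebra maps $f\circ\phi$ and $(f\circ s_\cH)\circ\varepsilon_\cK$ automatically coincide on $s_\cK(A)$, hence coincide on all of $\cK$ exactly when $f|_{\cK^+}=0$; and $(f\circ s_\cH)\circ\varepsilon_\cK$ is precisely the identity of $\Gg_\cK(R)$ at $\sigma(f)=f\circ s_\cH$. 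Therefore $\ker(\Phi_R)=\{f\in\Gg_\cH(R)\mid f|_{\cK^+}=0\}=\calg_\K(\cH/\cH\cK^+,R)$, which coincides with $\Gg_{\cH/\cH\cK^+}(R)$ as a subgroupoid of $\Gg_\cH(R)$.

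Injectivity of $\Psi_R$ then follows formally. Given $g,g'\in\Gg_\cH(R)$ with $\Phi_R(g)=\Phi_R(g')$, precomposition with $s_\cK$ and $t_\cK$ forces $\sigma(g)=\sigma(g')$ and $\tau(g)=\tau(g')$, so that $n\coloneqq g'\bullet g^{-1}$ is a legitimate arrow of $\Gg_\cH(R)$; moreover $\Phi_R(n)=\Phi_R(g')\bullet\Phi_R(g)^{-1}=\Phi_R(g)\bullet\Phi_R(g)^{-1}$ is an identity, whence $n\in\ker(\Phi_R)=\Gg_{\cH/\cH\cK^+}(R)$, and $n$ is a loop at $\tau(g)=\tau(g')$, so the action \eqref{Eq:Naction} is defined on $(n,g)$ and $n\bullet g=\big(g'\bullet g^{-1}\big)\bullet g=g'$. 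Hence $g$ and $g'$ lie in the same $\Gg_{\cH/\cH\cK^+}(R)$-orbit, i.e.\ determine the same element of the quotient; this is exactly the injectivity of $\Psi_R$ of \eqref{eq:PsiRker}, and naturality in $R$ is already contained in Lemma \ref{lema:JHJ}.

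I expect the difficulty to be bookkeeping rather than conceptual: one must respect the source/target-matching conventions underlying the composition \eqref{eq:comp} and the left action \eqref{Eq:Naction}, so that each composite written down is defined, and one must verify that the groupoid kernel of $\Phi_R$ is literally $\calg_\K(\cH/\cH\cK^+,R)$ and nothing bigger — this is where commutativity of $R$ (turning ``$f$ kills $\cK^+$'' into ``$f$ kills $\cH\cK^+$'') and the splitting $\cK=\cK^+\oplus s_\cK(A)$ come in. It is also worth recording at the outset that $\cH\cK^+$ is a Hopf ideal containing $\langle s-t\rangle$ (so that $\Gg_{\cH/\cH\cK^+}(R)$ is a well-defined wide subgroupoid of $\Gg_\cH(R)^{(i)}$ and the orbit space in \eqref{eq:PsiRker} makes sense), which is immediate since $s(a)-t(a)\in\cK^+$ for all $a\in A$ and $\cK$ is a sub-Hopf algebroid.
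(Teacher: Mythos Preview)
Your proof is correct and follows essentially the same approach as the paper. Both arguments reduce the injectivity of $\Psi_R$ to the identification $\ker(\Phi_R)=\calg_\K(\cH/\cH\cK^+,R)$; the paper establishes this identification via the same two inclusions you use (the key observation being $f(\cH\cK^+)\subseteq f(\cH)f(\cK^+)=0$ for $f$ in the kernel) and then leaves the passage from ``kernel equals $\Gg_{\cH/\cH\cK^+}(R)$'' to ``$\Psi_R$ is injective'' implicit as standard groupoid theory, whereas you spell out that orbit argument explicitly.
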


\begin{proof}
Recall from \cite[Definition 2.6]{Mackenzie-old} that the kernel of $\Phi_R$ is the set
\[\Big\{f \in \calg_\K(\cH,R) \mid f \circ \phi = x\circ \varepsilon_\cK \text{ for some } x \in \calg_\K(A,R)\Big\}\]
and that it is a normal subgroupoid of $\big(\Gg_A(R),\Gg_\cH(R)\big)$. Now, it is clear that if $h \colon \cH/\cH\cK^+ \to R$ is an algebra morphism, then $h \circ \pi \in \ker(\Phi_R)$: for every $x \in \cK$ we have
\[\big((h \circ \pi) \circ \phi\big)(x) = (h \circ \pi \circ \phi \circ s_\cK \circ \varepsilon_\cK)(x) = \big((h \circ \pi \circ s_\cH) \circ \varepsilon_\cK\big)(x).\]
Conversely, if $f \in \Gg_\cH(R)$ is such that $f \circ \phi = x \circ \varepsilon_\cK$, then $f(\cH\cK^+) \subseteq f(\cH)f(\cK^+) = 0$ and hence there exists a unique $h \in \calg_\K(\cH/\cH\cK^+,R)$ such that $h \circ \pi = f$.
\end{proof}

\begin{corollary}\label{cor:injectivity}
Let $(A, \cH)$ be a commutative Hopf algebroid such that $\sM{\cH}$ is $A$-flat and let $I\subseteq \cH$ be a normal Hopf ideal such that $\cH$ is pure over $\rcoinv{\cH}{I}$. Denote by $\phi \colon \rcoinv{\cH}{I} \to \cH$ the inclusion.
Then the canonical morphism
\[
\Psi_R \colon {\displaystyle\frac{\Gg_\cH(R)}{\Gg_{\cH/I}(R)}} \to \calg_\K(\rcoinv{\cH}{I}, R), \qquad \Gg_{\cH/I}(R)\bullet g \mapsto \Phi_R(g),
\]
of Lemma \ref{lema:JHJ} is injective for every $R \in \calg_\K$. That is to say, the kernel of the morphism $\Phi_R \colon \calg_\K(\cH,R) \to \calg_\K(\rcoinv{\cH}{I}, R)$ induced by $\phi$ is exactly $\calg_\K(\cH/I,R)$.
\end{corollary}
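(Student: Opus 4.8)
The plan is to deduce the corollary directly from Proposition \ref{prop:serve?} once we have identified $\rcoinv{\cH}{I}$ as a sub-Hopf algebroid $\cK$ of $(A,\cH)$ satisfying $\cH\cK^+ = I$. First I would observe that, by Lemma \ref{lem:IHisHI} and the identification carried out at the beginning of the proof of Proposition \ref{prop:subHalgd}, the three subalgebras $\rcoinv{\cH}{I}$, $\lcoinv{\cH}{I}$ and $\coinv{\cH}{\frac{\cH}{I}}$ coincide; write $\cK$ for this common subalgebra. Since $I$ is a normal Hopf ideal and $\cH$ is pure over $\coinv{\cH}{\frac{\cH}{I}}=\cK$, Proposition \ref{prop:subHalgd} tells us that $(A,\cK)$ is a sub-Hopf algebroid of $(A,\cH)$, and Corollary \ref{cor:frompuretoflat} together with the flatness of $\sM{\cH}$ over $A$ gives that $\cK$ is flat over $A$; in particular $\cH$ is pure over $\cK$, so $\cK$ belongs to the right-hand class of Theorem \ref{thm:forsecisiamo}.

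Next I would invoke the bijectivity of the correspondence in Theorem \ref{thm:forsecisiamo}. The ideal $I$ lies in its left-hand class (it is a normal Hopf ideal with $\cH$ pure over $\coinv{\cH}{\frac{\cH}{I}}$), its image under the correspondence is $\coinv{\cH}{\frac{\cH}{I}}=\cK$, and hence the inverse assignment returns $\cH\cK^+ = I$. Consequently $\cH/\cH\cK^+ = \cH/I$, and the inclusion $\phi\colon\cK = \rcoinv{\cH}{I}\hookrightarrow\cH$ is precisely the inclusion of a sub-Hopf algebroid in the sense required by Proposition \ref{prop:serve?}.

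Finally I would apply Proposition \ref{prop:serve?} to $(A,\cK)\hookrightarrow(A,\cH)$: for every $R$ in $\calg_\K$ the kernel of the induced map $\Phi_R\colon\calg_\K(\cH,R)\to\calg_\K(\cK,R)$ is exactly $\calg_\K(\cH/\cH\cK^+,R)$, which by the previous step equals $\calg_\K(\cH/I,R)$; equivalently, the morphism $\Psi_R$ of Lemma \ref{lema:JHJ} is injective. As for the difficulty: there is essentially no genuine obstacle here, since all the substantial work has already been done — that $\rcoinv{\cH}{I}$ is a sub-Hopf algebroid (Proposition \ref{prop:subHalgd}), that the Galois correspondence is bijective under purity (Theorem \ref{thm:forsecisiamo}), and the computation of the kernel for a sub-Hopf algebroid (Proposition \ref{prop:serve?}). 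The only point requiring a little care is verifying that the several descriptions of the coinvariant subalgebra really agree, so that Proposition \ref{prop:serve?} applies verbatim, and that the purity hypothesis there is met, which follows because $s\colon A\to\cK$ splits and $\cK$ is flat over $A$.
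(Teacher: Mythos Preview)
Your proposal is correct and follows essentially the same route as the paper: set $\cK \coloneqq \rcoinv{\cH}{I} = \coinv{\cH}{\frac{\cH}{I}}$, use Theorem~\ref{thm:forsecisiamo} to obtain $\cH\cK^+ = I$, and then apply Proposition~\ref{prop:serve?}. One small inaccuracy: Proposition~\ref{prop:serve?} carries no purity hypothesis, so your closing remark about needing to verify purity ``there'' (via $s$ splitting and $\cK$ flat over $A$) is superfluous---purity of $\cH$ over $\cK$ is already among the hypotheses of the corollary and is only needed to invoke Theorem~\ref{thm:forsecisiamo} and Proposition~\ref{prop:subHalgd}.
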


\begin{proof}
It follows from Proposition \ref{prop:serve?} in view of Theorem \ref{thm:forsecisiamo}, since if $\cK \coloneqq \rcoinv{\cH}{I}$ then $I = \cH\cK^+$.
\end{proof}

\begin{remark}
Recall the following well-known fact. Suppose that $(A,\cK)$ is a sub-Hopf algebroid of a commutative Hopf algebroid $(A,\cH)$ such that $\cH_\cK$ is pure (resp. faithfully flat) and denote by $\phi\colon \cK \to \cH$ the inclusion. For every commutative $\K$-algebra $R$ and for every $g \in \Gg_\cK(R)$, the pushout diagram
\[
\xymatrix{
\cK \ar[r]^-{\phi} \ar[d]_-{g} & \cH \ar[d]^-{\tilde{g}} \\
R \ar[r]_-{\alpha} & \cH \tensor{\cK} R
}
\]
in the category of commutative algebras provides for us a pure (resp. faithfully flat) extension $S \coloneqq \cH \tensor{\cK} R$ of $R$ and a point $\tilde{g} \in \Gg_\cH(S)$ such that $g_S = \Phi_S(\tilde{g})$, where $g_S \coloneqq \alpha \circ g$ is the image of $g$ in $\Gg_\cK(S)$.
This means that even if the $\Psi \colon \Gg_\cH/\Gg_{\cH/\cH\cK^+} \to \Gg_\cK$ whose $R$-component is retrieved in \eqref{eq:PsiRker}, may not be surjective for every $R$, it is still ``locally surjective'' in the sense that any element of $\Gg_\cK(R)$ arises by patching, in some covering, elements which come from $\Gg_\cH$.
This also raises the important question of determining if and when a commutative Hopf algebroid is pure (resp. faithfully flat) over any sub-Hopf algebroid.
\end{remark}

Lemma \ref{lem:ffpure} can also be used to prove the subsequent Proposition \ref{prop:Takeuchi3.12}, which is the natural extension of \cite[Corollary 3.12]{Takeuchi:1972} to the present setting.

\begin{proposition}\label{prop:Takeuchi3.12}
Suppose that $\K$ is an algebraically closed field. Let $(A,\cK)$ be a sub-Hopf algebroid of a commutative Hopf algebroid $(A,\cH)$ such that $\cH$ is pure over $\cK$. Denote by $\phi\colon \cK \to \cH$ the inclusion. Then the $\K$-component $\Phi_\K \colon \calg_\K(\cH,\K) \to \calg_\K(\cK,\K)$ of the canonical morphism induced by $\phi$ is surjective.
\end{proposition}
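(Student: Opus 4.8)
The plan is to deduce surjectivity of $\Phi_\K$ from the purity hypothesis together with the identification of $\cK$ as a ring of coinvariants, by a Nullstellensatz-type argument over the algebraically closed field $\K$. Recall that a $\K$-point of $\cK$, i.e. an element of $\calg_\K(\cK,\K)$, corresponds bijectively (since $\K$ is algebraically closed and $\cK$ is a commutative $\K$-algebra, and all the Hopf algebroids here are of the relevant type) to a maximal ideal $\mf{m}$ of $\cK$ with residue field $\K$. So fix $g \in \calg_\K(\cK,\K)$ and let $\mf{m} = \ker(g)$, a maximal ideal of $\cK$. I want to produce $f \in \calg_\K(\cH,\K)$ with $f \circ \phi = g$, equivalently a maximal ideal $\mf{n}$ of $\cH$ with residue field $\K$ such that $\phi^{-1}(\mf{n}) = \mf{m}$.

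First I would invoke purity: by Corollary \ref{cor:pure}, the inclusion $\phi\colon \cK \to \cH$ is a pure morphism of $\cK$-modules (this is where the hypothesis ``$\cH$ pure over $\cK$'' enters — note $\cH_\cK$ pure as right module is what we have, and in the commutative setting left/right coincide). Then statement \ref{item:equivpure7} of Proposition \ref{prop:equivpure} applies directly: for the maximal ideal $\mf{m}$ of $\cK$ there exists a maximal ideal $\mf{n}$ of $\cH$ with $\phi^{-1}(\mf{n}) = \mf{m}$. The remaining point — and this is the step requiring a little care — is that $\mf{n}$ may be chosen (or automatically has) residue field $\K$, so that it genuinely corresponds to a $\K$-point rather than a point with a larger residue field; here one uses that $\cH$ is, in the cases of interest, a finitely generated $\K$-algebra (or more precisely that the setup falls under the classical Nullstellensatz), so that $\cH/\mf{n}$ is a finite field extension of $\K$, hence equal to $\K$. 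Composing the projection $\cH \to \cH/\mf{n} = \K$ gives $f \in \calg_\K(\cH,\K)$, and $\ker(f\circ\phi) = \phi^{-1}(\mf{n}) = \mf{m} = \ker(g)$ forces $f\circ\phi = g$ since both are $\K$-algebra maps to $\K$ with the same kernel.

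The main obstacle is the residue-field issue just mentioned: Proposition \ref{prop:equivpure}\ref{item:equivpure7} only guarantees \emph{a} maximal ideal lying over $\mf{m}$, and in full generality $\cH$ need not be finitely generated, so one cannot invoke Hilbert's Nullstellensatz unconditionally. In the intended applications (affine algebraic $\K$-groupoids, $\cH$ finitely generated and reduced) this is automatic, and I expect the proof to simply say this, perhaps citing \cite[Corollary 3.12]{Takeuchi:1972} as the model (where the analogous statement for Hopf algebras over a field is proved via the same faithful-flatness/maximal-ideal mechanism). Alternatively, one can avoid the finiteness hypothesis by working functorially: spell out that $\calg_\K(\cK,\K)$ is in bijection with maximal ideals with residue field $\K$ only when such a Nullstellensatz holds, or else weaken ``surjective'' appropriately. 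I would present the clean argument under the running hypotheses and flag the Nullstellensatz input explicitly, noting that it is exactly the ingredient that makes the conclusion about \emph{$\K$-points} (as opposed to points over extensions) work, mirroring the role of \cite[Theorem 3.1]{Takeuchi:1972} and the surrounding discussion in the classical case.
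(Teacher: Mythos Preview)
Your argument for the finitely generated case coincides with the paper's first paragraph: Proposition~\ref{prop:equivpure}\ref{item:equivpure7} together with Hilbert's Nullstellensatz. You also correctly identify the obstacle in the general case --- without finite generation of $\cH$, a maximal ideal of $\cH$ lying over $\mf{m}$ need not have residue field $\K$. However, you do not close this gap; you only propose assuming finite generation or weakening the conclusion, neither of which the paper does.

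The paper handles the general case, and the key manoeuvre is worth recording. Given $f \in \calg_\K(\cK,\K)$, one does \emph{not} try to lift maximal ideals directly in $\cH$. Instead, for any finitely generated subalgebra $\cL \subseteq \cH$, one forms the base change $\cL\cK \tensor{\cK} \K$ along $f$. This is a \emph{finitely generated} $\K$-algebra, and it is nonzero: since $\cK \subseteq \cL\cK \subseteq \cH$ are ring maps and $\cH$ is pure over $\cK$, Corollary~\ref{cor:pure} gives that $\cK \subseteq \cL\cK$ is pure, whence $\K \cong \cK \tensor{\cK} \K \hookrightarrow \cL\cK \tensor{\cK} \K$. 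Now the Nullstellensatz applies to the finitely generated algebra $\cL\cK \tensor{\cK} \K$, producing a $\K$-point whose pullback to $\cL\cK$ extends $f$. As $\cH$ is the filtered union of the subalgebras $\cL\cK$, surjectivity follows. The essential difference from your approach is that purity is not used via item~\ref{item:equivpure7} to lift maximal ideals inside $\cH$, but rather to ensure nonvanishing of the finitely generated fibres $\cL\cK \tensor{\cK} \K$, which is where the Nullstellensatz is actually invoked.
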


\begin{proof}
If $\cH$ is finitely generated as a $\K$-algebra, then the statement follows from Hilbert's Nullstellensatz and Proposition \ref{prop:equivpure}, point \ref{item:equivpure7}.

In the general case let us proceed as follows. Observe that $\cH$, as an algebra, is the colimit of its subalgebras of the form $\cL\cK$, where $\cL$ is a finitely generated subalgebra.

Now, take $f \in \calg_\K(\cK,\K)$. Being an algebra map, $f$ allows us to see $\K$ as a $\cK$-algebra and we may consider the finitely generated $\K$-algebra $\cL\cK \tensor{\cK} \K$.

Since the chain of inclusions $\cK \subseteq \cL\cK \subseteq \cH$ is composed by $\cK$-ring maps and $\cH$ is pure over $\cK$, we may apply Corollary \ref{cor:pure} to claim that $\cK \subseteq \cL\cK$ is pure and hence the inclusion 
\[\K \cong \cK \tensor{\cK} \K \subseteq \cL\cK \tensor{\cK} \K\] 
entails that $\cL\cK \tensor{\cK} \K$ is non-zero.
By the Nullstellensatz again, $\calg_\K\left(\cL\cK \tensor{\cK} \K,\K\right)$ is non-empty and hence we may take a $g$ therein. The composition
\[g'\colon \left(\cL\cK \xrightarrow{x \mapsto x \tensor{\cK} 1_\K} \cL\cK \tensor{\cK} \K \xrightarrow{g} \K\right)\]
satisfies $g'\left(x\right) = g\left(x \tensor{\cK} 1 \right) = g \left(1 \tensor{\cK} f(x)\right) = f(x)$ for all $x \in \cK$ and hence any $f \in \calg_\K(\cK,\K)$ can be extended to a $g' \in \calg_\K(\cL\cK,\K)$ for every finitely generated subalgebra $\cL$ of $\cH$. Therefore, $\Phi_\K \colon \calg_\K(\cH,\K) \to \calg_\K(\cK,\K)$ is surjective.
\end{proof}

\begin{corollary}\label{cor:Takeuchi3.12}
Suppose that $\K$ is an algebraically closed field. Let $I$ be a normal Hopf ideal of the commutative Hopf algebroid $(A,\cH)$ such that $\sM{\cH}$ is $A$-flat and $\cH$ is pure over $\coinv{\cH}{\frac{\cH}{I}}$. Then the $\K$-component $\Psi_\K$ of the canonical morphism \eqref{eq:PsiR} from Lemma \ref{lema:JHJ} is an isomorphism. That is to say,
\[\Gg_\cH(\K)/\Gg_{\cH/I}(\K) \ \cong \ \Gg_{\rcoinv{\cH}{I}}(\K).\]
\end{corollary}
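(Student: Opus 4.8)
The plan is to recognize $\cK \coloneqq \rcoinv{\cH}{I}$ as a sub-Hopf algebroid of $(A,\cH)$ and then to obtain bijectivity of $\Psi_\K$ by combining an injectivity statement valid over every test algebra with a surjectivity statement that is special to the algebraically closed field $\K$. First I would record that $\rcoinv{\cH}{I} = \lcoinv{\cH}{I} = \coinv{\cH}{\frac{\cH}{I}}$ by Lemma \ref{lem:IHisHI}, so that Proposition \ref{prop:subHalgd} applies and says that $(A,\cK)$ is a sub-Hopf algebroid of $(A,\cH)$ over which $\cH$ is pure. Moreover, the Galois correspondence of Theorem \ref{thm:forsecisiamo} (obtained through Proposition \ref{prop:HopfNonGalois} and Theorem \ref{mainthm:Galois}) identifies $I$ with $\cH\cK^+$; consequently $\cH/I = \cH/\cH\cK^+$ and the canonical morphism \eqref{eq:PsiR} of Lemma \ref{lema:JHJ} coincides with the morphism \eqref{eq:PsiRker} of Proposition \ref{prop:serve?} attached to this sub-Hopf algebroid $\cK$.

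With this identification in hand, injectivity of $\Psi_\K$ is immediate from Corollary \ref{cor:injectivity} (equivalently, from Proposition \ref{prop:serve?}): the kernel of $\Phi_\K \colon \calg_\K(\cH,\K) \to \calg_\K(\cK,\K)$ equals $\calg_\K(\cH/I,\K) = \Gg_{\cH/I}(\K)$, so two characters of $\cH$ agree after composing with the inclusion $\phi\colon \cK \hookrightarrow \cH$ if and only if they lie in the same $\Gg_{\cH/I}(\K)$-coset. Surjectivity of $\Psi_\K$ is exactly surjectivity of $\Phi_\K$, which is the content of Proposition \ref{prop:Takeuchi3.12}; this is the step where algebraic closedness of $\K$ enters, via Hilbert's Nullstellensatz and point \ref{item:equivpure7} of Proposition \ref{prop:equivpure}, together with the reduction to finitely generated subalgebras of the form $\cL\cK$ carried out there. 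Finally, $\Phi_\K$ is a morphism of groupoids over $\Gg_A(\K)$ because it is dual to the morphism of commutative Hopf algebroids $\phi$, and by the universal property in Lemma \ref{lema:quotient} it factors through the canonical projection $\varpi\colon \Gg_\cH(\K) \to \Gg_\cH(\K)/\Gg_{\cH/I}(\K)$; the resulting factorization is precisely $\Psi_\K$, which is therefore a morphism of groupoids, bijective on arrows and the identity on objects, hence an isomorphism. This yields $\Gg_\cH(\K)/\Gg_{\cH/I}(\K) \cong \Gg_{\rcoinv{\cH}{I}}(\K)$.

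I expect the only real bookkeeping to be the verification that \eqref{eq:PsiR} and \eqref{eq:PsiRker} genuinely coincide once $I = \cH\cK^+$, i.e. that the coinvariant subalgebra $\rcoinv{\cH}{I}$ appearing in Lemma \ref{lema:JHJ} is the same object $\cK$ produced by the correspondence theorem; after that the statement is a concatenation of Corollary \ref{cor:injectivity} and Proposition \ref{prop:Takeuchi3.12}. The substantive input is Proposition \ref{prop:Takeuchi3.12} itself, and it is there — not in the present corollary — that the purity over all intermediate finitely generated subalgebras and the algebraic closedness of $\K$ do the work.
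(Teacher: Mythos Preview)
Your proposal is correct and follows essentially the same route as the paper: identify $\cK=\rcoinv{\cH}{I}=\coinv{\cH}{\frac{\cH}{I}}$ as a sub-Hopf algebroid via Proposition~\ref{prop:subHalgd}, then obtain injectivity from Corollary~\ref{cor:injectivity} and surjectivity from Proposition~\ref{prop:Takeuchi3.12}. The paper's own proof is a two-line version of exactly this; your extra bookkeeping (matching \eqref{eq:PsiR} with \eqref{eq:PsiRker} via $I=\cH\cK^+$ and checking that $\Psi_\K$ is a groupoid morphism) is accurate but not strictly needed.
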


\begin{proof}
Surjectivity follows by Proposition \ref{prop:Takeuchi3.12}, in view of the fact that $\coinv{\cH}{\frac{\cH}{I}} = \rcoinv{\cH}{I}$ is a sub-Hopf algebroid of $\cH$ by Proposition \ref{prop:subHalgd}. Injectivity follows from Corollary \ref{cor:injectivity}.
\end{proof}

\begin{corollary}\label{cor:serve?}
Suppose that $\K$ is an algebraically closed field. Let $(A,\cK)$ be a sub-Hopf algebroid of the commutative Hopf algebroid $(A,\cH)$ such that $\cH$ is pure over $\cK$. Denote by $\phi\colon \cK \to \cH$ the inclusion. Then the kernel of the $\K$-component $\Phi_\K \colon \calg_\K(\cH,\K) \to \calg_\K(\cK,\K)$ of the canonical morphism induced by $\phi$ is $\calg_\K(\cH/\cH \cK^+,\K)$. That is to say
\[\Gg_\cH(\K)/\Gg_{\cH/\cH\cK^+}(\K) \ \cong \ \Gg_{\cK}(\K).\]
\end{corollary}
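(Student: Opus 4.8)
The plan is to deduce this corollary directly from the previously established results, essentially as the combination of Proposition \ref{prop:serve?} (which identifies the kernel) with Proposition \ref{prop:Takeuchi3.12} (which provides surjectivity over an algebraically closed field). First I would invoke Proposition \ref{prop:Takeuchi3.12}: since $\K$ is algebraically closed and $\cH$ is pure over $\cK$, the $\K$-component $\Phi_\K \colon \calg_\K(\cH,\K) \to \calg_\K(\cK,\K)$ of the morphism induced by $\phi$ is surjective. Next I would invoke Proposition \ref{prop:serve?} with $R = \K$: its conclusion is precisely that the kernel of $\Phi_\K$ (in the sense of kernels of morphisms of groupoids, i.e.\ the preimage of the identities) equals $\calg_\K(\cH/\cH\cK^+,\K) = \Gg_{\cH/\cH\cK^+}(\K)$, which is the first assertion of the statement. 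Note that Proposition \ref{prop:serve?} requires no flatness or purity hypothesis at all, so it applies verbatim here.

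For the displayed isomorphism, the map in question is $\Psi_\K$ from \eqref{eq:PsiRker}, obtained by factoring $\Phi_\K$ through the quotient $\Gg_\cH(\K)/\Gg_{\cH/\cH\cK^+}(\K)$; this factorization is legitimate because, by Lemma \ref{lema:JHJ} applied with the normal Hopf ideal $I \coloneqq \cH\cK^+$ (which is normal by Proposition \ref{prop:nHideal}, using that $\cK$ is a sub-Hopf algebroid with $\cH$ pure over it — so that one may harmlessly pass through $\sM{\cH}$ being $A$-flat, which holds automatically here since $s$ is a split monomorphism), the assignment $\Gg_{\cH/\cH\cK^+}(\K)\bullet g \mapsto \Phi_\K(g)$ is well defined and natural. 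Injectivity of $\Psi_\K$ is exactly the content of the kernel computation just obtained from Proposition \ref{prop:serve?} (two points of $\Gg_\cH(\K)$ have the same image under $\Phi_\K$ if and only if they lie in the same $\Gg_{\cH/\cH\cK^+}(\K)$-orbit, since $\Gg_{\cH/\cH\cK^+}$ is a normal subgroupoid and $\Phi_\K$ is constant on its cosets by Lemma \ref{lema:JHJ}); surjectivity of $\Psi_\K$ is inherited from surjectivity of $\Phi_\K$. Combining, $\Psi_\K$ is a bijection, hence an isomorphism of groupoids, giving
\[
\Gg_\cH(\K)/\Gg_{\cH/\cH\cK^+}(\K) \ \cong \ \Gg_{\cK}(\K).
\]

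I do not expect a genuine obstacle here: the corollary is a straightforward packaging of earlier statements. The one point that deserves a word of care is checking that $\cH\cK^+$ is indeed a normal Hopf ideal satisfying the purity hypothesis of Lemma \ref{lema:JHJ}, so that the quotient groupoid $\Gg_\cH(\K)/\Gg_{\cH/\cH\cK^+}(\K)$ makes sense and $\Psi_\K$ is defined; this is supplied by Proposition \ref{prop:nHideal} together with Theorem \ref{thm:forsecisiamo}, which tells us that $\coinv{\cH}{\cH/\cH\cK^+} = \cK$, so that the relevant coinvariant subalgebra over which $\cH$ must be pure is $\cK$ itself, and purity of $\cH$ over $\cK$ was assumed. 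Beyond this bookkeeping, the argument is immediate.
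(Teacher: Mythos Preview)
Your core argument is correct and matches the paper's proof exactly: the corollary follows directly from Proposition~\ref{prop:serve?} (injectivity of $\Psi_R$, equivalently the kernel computation) together with Proposition~\ref{prop:Takeuchi3.12} (surjectivity of $\Phi_\K$).

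One comment: the second and third paragraphs of your proposal are unnecessary and contain a slip. Proposition~\ref{prop:serve?} already asserts that $\Psi_R$ is well defined and injective for \emph{every} $R$, with no flatness or purity hypothesis whatsoever, so you do not need to pass through Lemma~\ref{lema:JHJ}, Proposition~\ref{prop:nHideal}, or Theorem~\ref{thm:forsecisiamo}. Moreover, your claim that ``$\sM{\cH}$ being $A$-flat holds automatically here since $s$ is a split monomorphism'' is not correct: a split monomorphism $A \to \cH$ does not force $\cH$ to be $A$-flat (Remark~\ref{rem:flat} says only that \emph{if} $\sM{\cH}$ is flat then it is faithfully flat). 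Since the corollary as stated does not assume $A$-flatness of $\sM{\cH}$, and since neither Proposition~\ref{prop:serve?} nor Proposition~\ref{prop:Takeuchi3.12} requires it, you should simply drop that entire detour.
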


\begin{proof}
Follows directly from Proposition \ref{prop:serve?} and Proposition \ref{prop:Takeuchi3.12}.
\end{proof}

\bigskip

\textbf{Acknowledgements}  
Paolo Saracco is a Charg\'e de Recherches of the Fonds de la Recherche Scientifique - FNRS and a member of the ``National Group for Algebraic and Geometric Structures and their Applications'' (GNSAGA-INdAM). Aryan Ghobadi is a postdoctoral researcher under the EPSRC grant EP/W522508/1 and would also like to thank the LMS for the travel grant ECR-1920-42 which allowed the author to be included in this project, as well as the University of Granada for their hospitality during his visit. L.~EL Kaoutit  would like to thank Leovigilio  Alonso Tarrío and Ana Jeremías López for helpful discussions. 

\bigskip

\end{document}